% !TEX encoding = UTF-8 Unicode
\documentclass[a4paper,12pt]{article}

\usepackage{amsmath}
\usepackage{amsthm}
\usepackage{dsfont}
\usepackage{suetterl}
\usepackage{adforn}
\usepackage{calligra}
\usepackage{pbsi}
\usepackage{aurical}
\usepackage[T1]{fontenc}
\usepackage{textgreek}

\usepackage[utf8]{inputenc}
\usepackage{makeidx}
\usepackage{enumerate}
\usepackage{longtable}
\usepackage{pb-diagram}
\usepackage{multirow}

\usepackage{tikz}
\usetikzlibrary{arrows,snakes,backgrounds}
\usetikzlibrary{matrix}

%\usefonttheme[onlymath]{serif}

\usepackage{calc}
\usepackage{ifthen}
\usepackage{latexsym}
\usepackage{amssymb}
\usepackage{amsfonts}
\usepackage[mathscr]{eucal}
\usepackage{amscd}
\usepackage{array}
\usepackage{amsrefs}

%%%%%%%%%%%%%%%%%%%%%%%%%%%%%%%%%%%%%%%%%%%%%%%%%%%%%%%%%%%%%%%%%%%%%%%%%%%%%%%%%%%%%%%%%

 %\DeclareMathOperator{\ind}{Ind\,}

\DeclareMathOperator{\ch}{char} \DeclareMathOperator{\End}{End}

%%%%%%%%%%%%%%%%%%%%%%%%%%%%%%%%%%%%%%%%%%%%%%%%%%%%%%%%%%%%%%%%%%%%%%%%%%%%%%%%%%%%%

\theoremstyle{plain}
 \newtheorem{theorem}{Theorem}
 \newtheorem{lemma}[theorem]{Lemma}
 \newtheorem{proposition}[theorem]{Proposition}
 \newtheorem{corollary}[theorem]{Corollary}

\theoremstyle{remark}
 \newtheorem{remark}[theorem]{Remark}

\theoremstyle{definition}

 \newtheorem{definition}[theorem]{Definition}

%%%%%%%%%%%%%%%%%%%%%%%%%%%%%%%%%%%%%%%%%%%%%%%%%%%%%%%%%%%%%%%%%%%%%%%%%%%%%%%%%%%%%%%%%

\tikzstyle{point}=[circle,draw,inner sep=0pt,minimum size=4.5mm]
\tikzstyle{ordinario}=[circle,draw,inner sep=0pt,minimum size=2mm]
\tikzstyle{grande}=[circle,draw,inner sep=0pt,minimum size=4.5mm]

%%%%%%%%%%%%%%%%%%%%%%%%%%%%%%%%%%%%%%%%%%%%%%%%%%%%%%%%%%%%%%%%%%%%%%%%%%%%%%%%%%%%%%%%%

\makeindex

\begin{document}

\title{Representations and corepresentations of $p$-equipped posets}

\author{Raymundo Bautista \\ \textit{\small{raymundo@matmor.unam.mx}} \\ 
\text{\small{Centro de Ciencias Matem\'aticas, Morelia}}\\ \text{\small{Universidad Nacional Aut\'onoma de M\'exico}}\\ \\
\text{Ivon Dorado} \\ \textit{\small{iadoradoc@unal.edu.co}} 
\\ \text{\small{Departamento de Matem\'aticas, Bogot\'a}}\\ \text{\small{Universidad Nacional de Colombia}} }
\date{}
\maketitle

%\centerline{\textit{To the memory of Alexander Zavadskij}}

\begin{abstract}
\noindent {\footnotesize For $p$ a prime number and $\mathscr{P}$ a $p$-equipped finite partially ordered set we construct two different right-peak algebras (in the sense of \cite{KS}) $\Lambda^{(r)}$ and $\Lambda^{(c)}$. We consider the category $\mathcal{U}^{(r)}$ $\left(\mathcal{U}^{(c)}\right)$ consisting of the finitely generated right $\Lambda^{(r)}$-modules ($\Lambda^{(c)}$-modules) which are socle-projective. The categories $\mathcal{U}^{(r)}$ and $\mathcal{U}^{(c)}$ have almost split sequences. We describe he Auslander-Reiten components $\mathcal{C}_{\mathcal{U}}^{(r)}$ and $\mathcal{C}_{\mathcal{U}}^{(c)}$ of the corresponding simple projective modules in $\mathcal{U}^{(r)}$ and $\mathcal{U}^{(c)}$. Then we prove that there is a bijective correspondence between $\mathcal{C}_{\mathcal{U}}^{(r)}$ and $\mathcal{C}_{\mathcal{U}}^{(c)}$, although the corresponding almost split sequences have different shapes.}
\end{abstract}

 {\small {\it Mathematics Subject Classification}: 06A11, 16G20, 16G30.\\

 {\small {\it Keywords}: Equipped poset, Algebraically equipped poset,  Representation, Corepresentation, Morphism category, Pseudo hereditary projective module, Auslander-Reiten component.

%%%%%%%%%%%%%%%%%%%%%%%%%%%%%%%%%%%%%%%%%%%%%%%%%%%%%%%%%%%%%%%%%%%%%%%%%%%%%%%%%%%%%%%%%
%%%%%%%%%%%%%%%%%%%%%%%%%%%%%%%%%%%%%%%%%%%%%%%%%%%%%%%%%%%%%%%%%%%%%%%%%%%%%%%%%%%%%%%%%

\section{Introduction}\label{intro}

In the following $F$ is an arbitrary field. For certain pairs of hereditary $F$-algebras $C$ and $D$, one has that
 there are bijective correspondences between the representations of the preprojective component of $C\mathrm{mod}$ and the
 preprojective component of $D\mathrm{mod}$ and the representations of the preinjective components of $C\mathrm{mod}$ and the preinjective component of $D\mathrm{mod}$.  
 
 For example, consider the quiver
 $$
  \begin{tikzpicture}
   \node (s) at (0,0) [ordinario] {};
      \node (w) at (1,0) [ordinario] {};
   \draw [<-] (w)to(s);
  \end{tikzpicture}$$

For a prime number $p$, there are two algebras given in terms of a normal extension $G$ of $F$ of degree $p$.

 $$
  \begin{tikzpicture}
   \node (wR) at (0,0) {$G$};
      \node (sR) at (2,0) {$F$};
      \node (a) at (1,0.3) {$G$};
   \draw [->] (wR)to(sR);
      \node (wC) at (4,0) {$F$};
      \node (sC) at (6,0) {$G$};
      \node (a) at (5,0.3) {$G$};
   \draw [->] (wC)to(sC);
  \end{tikzpicture}$$

In general, we have pairs of algebras determined by the field extension $G/F$. They are in fact matrix algebras in which appear only the bimodules $_{F}F_{F} $, $_{F}G_{G}$, $_{G}G_{F}$, $_{G}G_{G}$, and one algebra of the pair is obtained by the other simply exchanging $F$ and $G$. For the previous example we have the pair
 $$C=\begin{bmatrix}
 G & G \\
 0 & F
 \end{bmatrix}
 \hspace{2cm}
D= \begin{bmatrix}
 F & G \\
 0 & G
 \end{bmatrix}.$$
 
These algebras are of finite representation type if and only if $p\in \{ 2,3\}$ (see \cite{KS}). In this case, the whole categories $C\mathrm{mod}$ and  $D\mathrm{mod}$ are in a bijective correspondence. The dimension of their modules over $F$, are vectors in $\mathbb{Z}^2$. The rows of each matrix correspond to the projective modules and the lower row is in the radical of the other. By using the dimension vectors and the Coxeter matrix, one can draw the respective Auslander-Reiten quivers.

For $p=2$
 $$
  \begin{tikzpicture}
  \node (P1) at (0,0) {$(0,1)$};
  \node (P2) at (1,1) {$(2,2)$};
  \draw [->] (P1)to(P2);
  \node (TP1) at (2,0) {$(2,1)$};
  \node (TP2) at (3,1) {$(2,0)$};
  \draw [->] (TP1)to(TP2);
  \draw [->] (P2)to(TP1);
  \node (name) at (1.5,-0.5) {$C\mathrm{mod}$};
  \node (P1) at (7.5,0) {$(0,2)$};
  \node (P2) at (8.5,1) {$(1,2)$};
  \draw [->] (P1)to(P2);
  \node (TP1) at (9.5,0) {$(2,2)$};
  \node (TP2) at (10.5,1) {$(1,0)$};
  \draw [->] (TP1)to(TP2);
  \draw [->] (P2)to(TP1);
  \node (name) at (9,-0.5) {$D\mathrm{mod}$};
  \end{tikzpicture}$$
For $p=3$ 
 $$
  \begin{tikzpicture}
  \node (P1) at (0,0) {$(0,1)$};
  \node (P2) at (1,1) {$(3,3)$};
  \draw [->] (P1)to(P2);
  \node (TP1) at (2,0) {$(3,2)$};
  \node (TP2) at (3,1) {$(6,3)$};
  \draw [->] (TP1)to(TP2);
  \draw [->] (P2)to(TP1);
  \node (T2P1) at (4,0) {$(3,1)$};
  \node (T2P2) at (5,1) {$(3,0)$};
  \draw [->] (T2P1)to(T2P2);
  \draw [->] (TP2)to(T2P1);
  \node (name) at (2.5,-0.5) {$C\mathrm{mod}$};
  \node (P1) at (7.5,0) {$(0,3)$};
  \node (P2) at (8.5,1) {$(1,3)$};
  \draw [->] (P1)to(P2);
  \node (TP1) at (9.5,0) {$(3,6)$};
  \node (TP2) at (10.5,1) {$(2,3)$};
  \draw [->] (TP1)to(TP2);
  \draw [->] (P2)to(TP1);
  \node (T2P1) at (11.5,0) {$(3,3)$};
  \node (T2P2) at (12.5,1) {$(1,0)$};
  \draw [->] (T2P1)to(T2P2);
  \draw [->] (TP2)to(T2P1);
  \node (name) at (10,-0.5) {$D\mathrm{mod}$};
  \end{tikzpicture}$$

Consider the pair
 $$
  \begin{tikzpicture}
   \node (w1R) at (0,0) {$G$};
      \node (w2R) at (2,0) {$G$};
      \node (a) at (1,0.3) {$G$};
   \draw [->] (w1R)to(w2R);
      \node (sR) at (4,0) {$F$};
      \node (a) at (3,0.3) {$G$};
   \draw [->] (w2R)to(sR);
      \node (w1C) at (6,0) {$F$};
      \node (w2C) at (8,0) {$F$};
      \node (a) at (7,0.3) {$F$};
   \draw [->] (w1C)to(w2C);
   \node (sC) at (10,0) {$G$};
      \node (b) at (9,0.3) {$G$};
   \draw [->] (w2C)to(sC);
  \end{tikzpicture}$$

 $$C=\begin{bmatrix}
 G & G & G \\
 0 & G & G \\
 0 & 0 & F
 \end{bmatrix}
 \hspace{3cm}
D= \begin{bmatrix}
 F & F & G \\
 0 & F & G \\
 0 & 0 & G
 \end{bmatrix}.$$
 
 If $p=3$, these algebras are of tame representation type. The preprojective components of their Auslander-Reiten quivers have the form
 $$
  \begin{tikzpicture}
  \node (P1) at (0,0) {$(0,0,1)$};
  \node (P2) at (1,1) {$(0,3,3)$};
  \node (P3) at (2,2) {$(3,3,3)$};
  \draw [->] (P1)to(P2);
  \draw [->] (P2)to(P3);
  \node (tP1) at (2,0) {$(0,3,2)$};
  \node (tP2) at (3,1) {$(3,9,6)$};
  \node (tP3) at (4,2) {$(0,6,3)$};
  \draw [->] (tP1)to(tP2);
  \draw [->] (tP2)to(tP3);
  \draw [->] (P2)to(tP1);
  \draw [->] (P3)to(tP2);
  \node (t2P1) at (4,0) {$(3,6,4)$};
  \node (t2P2) at (5,1) {$(6,15,9)$};
  \node (t2P3) at (6,2) {$(6,9,6)$};
  \draw [->] (t2P1)to(t2P2);
  \draw [->] (t2P2)to(t2P3);
  \draw [->] (tP2)to(t2P1);
  \draw [->] (tP3)to(t2P2);
  \node (t3P1) at (6,0) {$(3,9,5)$};
  \node (t3P2) at (7,1) {$(9,21,12)$};
  \node (t3P3) at (8,2) {$(3,12,6)$};
  \draw [->] (t3P1)to(t3P2);
  \draw [->] (t3P2)to(t3P3);
  \draw [->] (t2P2)to(t3P1);
  \draw [->] (t2P3)to(t3P2);
  \node (dots1) at (7.5,0) {$\cdots$};
  \node (dots2) at (8.5,1) {$\cdots$};
  \node (dots3) at (9.5,2) {$\cdots$};  
  \end{tikzpicture}$$

 $$
  \begin{tikzpicture}
  \node (P1) at (0,0) {$(0,0,3)$};
  \node (P2) at (1,1) {$(0,1,3)$};
  \node (P3) at (2,2) {$(1,1,3)$};
  \draw [->] (P1)to(P2);
  \draw [->] (P2)to(P3);
  \node (tP1) at (2,0) {$(0,3,6)$};
  \node (tP2) at (3,1) {$(1,3,6)$};
  \node (tP3) at (4,2) {$(0,2,3)$};
  \draw [->] (tP1)to(tP2);
  \draw [->] (tP2)to(tP3);
  \draw [->] (P2)to(tP1);
  \draw [->] (P3)to(tP2);
  \node (t2P1) at (4,0) {$(3,6,12)$};
  \node (t2P2) at (5,1) {$(2,5,9)$};
  \node (t2P3) at (6,2) {$(2,3,6)$};
  \draw [->] (t2P1)to(t2P2);
  \draw [->] (t2P2)to(t2P3);
  \draw [->] (tP2)to(t2P1);
  \draw [->] (tP3)to(t2P2);
  \node (t3P1) at (6,0) {$(3,9,15)$};
  \node (t3P2) at (7,1) {$(3,7,12)$};
  \node (t3P3) at (8,2) {$(1,4,6)$};
  \draw [->] (t3P1)to(t3P2);
  \draw [->] (t3P2)to(t3P3);
  \draw [->] (t2P2)to(t3P1);
  \draw [->] (t2P3)to(t3P2);
  \node (dots1) at (7.5,0) {$\cdots$};
  \node (dots2) at (8.5,1) {$\cdots$};
  \node (dots3) at (9.5,2) {$\cdots$};  
  \end{tikzpicture}$$

Changing the orientation, we have
 $$
  \begin{tikzpicture}
   \node (w1R) at (0,0) {$G$};
      \node (w2R) at (2,0) {$F$};
      \node (a) at (1,0.3) {$G$};
   \draw [->] (w1R)to(w2R);
      \node (sR) at (4,0) {$F$};
      \node (a) at (3,0.3) {$F$};
   \draw [<-] (w2R)to(sR);
      \node (w1C) at (6,0) {$F$};
      \node (w2C) at (8,0) {$G$};
      \node (a) at (7,0.3) {$G$};
   \draw [->] (w1C)to(w2C);
   \node (sC) at (10,0) {$G$};
      \node (b) at (9,0.3) {$G$};
   \draw [<-] (w2C)to(sC);
  \end{tikzpicture}$$

 $$C=\begin{bmatrix}
 G & 0 & G \\
 0 & F & F \\
 0 & 0 & F
 \end{bmatrix}
 \hspace{3cm}
D= \begin{bmatrix}
 F & 0 & G \\
 0 & G & G \\
 0 & 0 & G
 \end{bmatrix};$$
 with the following preprojective components for $p=3$
  $$
  \begin{tikzpicture}
  \node (P2) at (1,1) {$(0,0,1)$};
  \node (P3) at (2,2) {$(3,0,3)$};
  \draw [->] (P2)to(P3);
  \node (tP1) at (2,0) {$(0,1,1)$};
  \node (tP2) at (3,1) {$(3,1,3)$};
  \node (tP3) at (4,2) {$(6,3,6)$};
  \draw [->] (tP1)to(tP2);
  \draw [->] (tP2)to(tP3);
  \draw [->] (P2)to(tP1);
  \draw [->] (P3)to(tP2);
  \node (t2P1) at (4,0) {$(3,0,2)$};
  \node (t2P2) at (5,1) {$(6,2,5)$};
  \node (t2P3) at (6,2) {$(12,3,9)$};
  \draw [->] (t2P1)to(t2P2);
  \draw [->] (t2P2)to(t2P3);
  \draw [->] (tP2)to(t2P1);
  \draw [->] (tP3)to(t2P2);
  \node (t3P1) at (6,0) {$(3,2,3)$};
  \node (t3P2) at (7,1) {$(9,3,7)$};
  \node (t3P3) at (8,2) {$(15,6,12)$};
  \draw [->] (t3P1)to(t3P2);
  \draw [->] (t3P2)to(t3P3);
  \draw [->] (t2P2)to(t3P1);
  \draw [->] (t2P3)to(t3P2);
  \node (P1) at (8,0) {$(6,1,4)$};
  \draw [->] (t3P2)to(P1);  
  \node (dots1) at (9.5,0) {$\cdots$};
  \node (dots2) at (8.5,1) {$\cdots$};
  \node (dots3) at (9.5,2) {$\cdots$};  
  \end{tikzpicture}$$
 
 $$
  \begin{tikzpicture}
  \node (P2) at (1,1) {$(0,0,3)$};
  \node (P3) at (2,2) {$(1,0,3)$};
  \draw [->] (P2)to(P3);
  \node (tP1) at (2,0) {$(0,3,3)$};
  \node (tP2) at (3,1) {$(3,3,9)$};
  \node (tP3) at (4,2) {$(2,3,6)$};
  \draw [->] (tP1)to(tP2);
  \draw [->] (tP2)to(tP3);
  \draw [->] (P2)to(tP1);
  \draw [->] (P3)to(tP2);
  \node (t2P1) at (4,0) {$(3,0,6)$};
  \node (t2P2) at (5,1) {$(6,6,15)$};
  \node (t2P3) at (6,2) {$(4,3,9)$};
  \draw [->] (t2P1)to(t2P2);
  \draw [->] (t2P2)to(t2P3);
  \draw [->] (tP2)to(t2P1);
  \draw [->] (tP3)to(t2P2);
  \node (t3P1) at (6,0) {$(3,6,9)$};
  \node (t3P2) at (7,1) {$(9,9,21)$};
  \node (t3P3) at (8,2) {$(5,6,12)$};
  \draw [->] (t3P1)to(t3P2);
  \draw [->] (t3P2)to(t3P3);
  \draw [->] (t2P2)to(t3P1);
  \draw [->] (t2P3)to(t3P2);
  \node (P1) at (8,0) {$(6,3,12)$};
  \draw [->] (t3P2)to(P1);  
  \node (dots1) at (9.5,0) {$\cdots$};
  \node (dots2) at (8.5,1) {$\cdots$};
  \node (dots3) at (9.5,2) {$\cdots$};  
  \end{tikzpicture}$$

We have a similar correspondence between the preprojective components of the representations of the following wild algebras

 $$
  \begin{tikzpicture}
   \node (w1R) at (0,0) {$G$};
      \node (w2R) at (2,0) {$F$};
      \node (a) at (1,0.3) {$G$};
   \draw [->] (w1R)to(w2R);
      \node (sR) at (4,0) {$G$};
      \node (a) at (3,0.3) {$G$};
   \draw [<-] (w2R)to(sR);
      \node (w1C) at (6,0) {$F$};
      \node (w2C) at (8,0) {$G$};
      \node (a) at (7,0.3) {$G$};
   \draw [->] (w1C)to(w2C);
   \node (sC) at (10,0) {$F$};
      \node (b) at (9,0.3) {$G$};
   \draw [<-] (w2C)to(sC);
  \end{tikzpicture}$$

 $$C=\begin{bmatrix}
 G & 0 & G \\
 0 & G & G \\
 0 & 0 & F
 \end{bmatrix}
 \hspace{3cm}
D= \begin{bmatrix}
 F & 0 & G \\
 0 & F & G \\
 0 & 0 & G
 \end{bmatrix};$$
% with the following preprojective components for $p=3$
 $$
  \begin{tikzpicture}
  \node (P2) at (1,1) {$(0,0,1)$};
  \node (P3) at (2,2) {$(3,0,3)$};
  \draw [->] (P2)to(P3);
  \node (tP1) at (2,0) {$(0,3,3)$};
  \node (tP2) at (3,1) {$(3,3,5)$};
  \node (tP3) at (4,2) {$(6,9,12)$};
  \draw [->] (tP1)to(tP2);
  \draw [->] (tP2)to(tP3);
  \draw [->] (P2)to(tP1);
  \draw [->] (P3)to(tP2);
  \node (t2P1) at (4,0) {$(9,6,12)$};
  \node (t2P2) at (5,1) {$(12,12,19)$};
  \node (t2P3) at (6,2) {$(30,27,45)$};
  \draw [->] (t2P1)to(t2P2);
  \draw [->] (t2P2)to(t2P3);
  \draw [->] (tP2)to(t2P1);
  \draw [->] (tP3)to(t2P2);
  \node (t3P1) at (6,0) {$(27,30,45)$};
  \node (t3P2) at (7,1) {$(45,45,71)$};
  \node (t3P3) at (8.3,2) {$(105,108,198)$};
  \draw [->] (t3P1)to(t3P2);
  \draw [->] (t3P2)to(t3P3);
  \draw [->] (t2P2)to(t3P1);
  \draw [->] (t2P3)to(t3P2);
  \node (P1) at (8.3,0) {$(108,105,198)$};
  \draw [->] (t3P2)to(P1);  
  \node (dots1) at (10,0) {$\cdots$};
  \node (dots2) at (8.5,1) {$\cdots$};
  \node (dots3) at (10,2) {$\cdots$};  
  \end{tikzpicture}$$

 $$
  \begin{tikzpicture}
  \node (P2) at (1,1) {$(0,0,3)$};
  \node (P3) at (2,2) {$(1,0,3)$};
  \draw [->] (P2)to(P3);
  \node (tP1) at (2,0) {$(0,1,3)$};
  \node (tP2) at (3,1) {$(3,3,15)$};
  \node (tP3) at (4,2) {$(2,3,12)$};
  \draw [->] (tP1)to(tP2);
  \draw [->] (tP2)to(tP3);
  \draw [->] (P2)to(tP1);
  \draw [->] (P3)to(tP2);
  \node (t2P1) at (4,0) {$(3,2,12)$};
  \node (t2P2) at (5,1) {$(12,12,57)$};
  \node (t2P3) at (6,2) {$(10,9,45)$};
  \draw [->] (t2P1)to(t2P2);
  \draw [->] (t2P2)to(t2P3);
  \draw [->] (tP2)to(t2P1);
  \draw [->] (tP3)to(t2P2);
  \node (t3P1) at (6,0) {$(9,10,45)$};
  \node (t3P2) at (7,1) {$(45,45,213)$};
  \node (t3P3) at (8,2) {$(35,36,198)$};
  \draw [->] (t3P1)to(t3P2);
  \draw [->] (t3P2)to(t3P3);
  \draw [->] (t2P2)to(t3P1);
  \draw [->] (t2P3)to(t3P2);
  \node (P1) at (8,0) {$(36,35,198)$};
  \draw [->] (t3P2)to(P1);  
  \node (dots1) at (9.5,0) {$\cdots$};
  \node (dots2) at (8.5,1) {$\cdots$};
  \node (dots3) at (9.5,2) {$\cdots$};  
  \end{tikzpicture}$$
  
 The main purpose of this paper is to prove a similar relation between two  different algebras $\Lambda ^{(r)}$
 and $\Lambda ^{(c)}$ associated to the same $p$-equipped poset $\mathscr{P}$ (see definition bellow) and to a
 normal extension of fields $G/F$, with $[G:F]=p$. Both algebras are $1$-Gorenstein and right and left pick algebras in the sense of \cite{KS}.
 Denote by $\mathcal{U}^{(r)}$ and $\mathcal{U}^{(c)}$, the corresponding full subacategories of the category  of right modules whose objects are the finitely generated right modules which are socle-projective. It is known from \cite{BM}, that the categories $\mathcal{U}^{(r)}$ and $\mathcal{U}^{(c)}$ have almost split sequences. Denote by $\mathcal{C}_{\mathcal{U}}^{(r)}$, and $\mathcal{C}_{\mathcal{U}}^{(c)}$ the corresponding components of the simple projective module in the
 Auslander-Reiten quiver of $\mathcal{U}^{(r)}$ and $\mathcal{U}^{(c)}$ respectively. We will see that there is a bijective correspondence $\alpha:\mathcal{C}_{\mathcal{U}}^{(r)}\rightarrow \mathcal{C}_{\mathcal{U}}^{(c)}$, such that if
$X,Y\in \mathcal{C}_{\mathcal{U}}^{(r)}$ then there is an irreducible morphism $X\rightarrow Y$ if and only if there is
an irreducible morphism $\alpha(X)\rightarrow \alpha(Y)$.  Moreover $X$ is a projective (injective) object in 
$\mathcal{C}_{\mathcal{U}}^{(r)}$ if and only if $\alpha(X)$ is a projective (injective) object in the category $\mathcal{C}_{\mathcal{U}}^{(c)}$. However if $X$ is not injective, the almost split sequence starting in $X$ has a different shape
than the almost split sequence starting at $\alpha(X)$.

%%%%%%%%%%%%%%%%%%%%%%%%%%%%%%%%%%%%%%%%%%%%%%%%%%%%%%%%%%%%%%%%%%%%%%%%%%%%%%%%%%%%%%%%%
%%%%%%%%%%%%%%%%%%%%%%%%%%%%%%%%%%%%%%%%%%%%%%%%%%%%%%%%%%%%%%%%%%%%%%%%%%%%%%%%%%%%%%%%%

\section{$p$-equipped posets and its representations}\label{SectionPequipped}

%\subsection{$p$-equipped posets}\label{defPequipados}

In order to establish the objects we want to deal with, let us recall some definitions and results from \cite{BD2017}.

We introduced $p$-equipped posets, for a prime number $p$, as follows:

A finite poset $(\mathscr{P},\leq )$ is called \textit{p-equipped} if to every pair $(x,y)\in\  \leq$, it is assigned a value $\ell\in \{1,2,\dots,p\}$, with the notation $x\leq ^\ell y$, and the following condition holds:

\begin{equation}\label{definition}
\text{If }x\leq ^\ell y\leq ^mz\ \text{ and  }x\leq ^nz\text{, then }n\geq \min \{ \ell+m-1, p \}.
\end{equation}

A relation $x\leq y$ is called \textit{weak} or \textit{strong}, if $x\leq ^1 y$ or $x\leq ^p y$, respectively. It follows that the composition of a strong relation with any other relation is strong. 

For each point $x\in \mathscr{P}$, we have that $x\leq^\ell x\leq^\ell x$ implies $\ell \geq \min \{2\ell -1,p\}$. This is $x\leq ^1 x$, or $x\leq ^p x$. In the first case, we call $x$ a \textit{weak} point, and in the second one \textit{strong}. A relation between an arbitrary point and a strong point is always strong.

We write $x<^\ell y$, if $x\leq^\ell y$ and $x\neq y$. In
particular, if $\ell\in \{2,3,\dots,p-1\}$ we have $x<^\ell y$.

If a $p$-equipped poset $\mathscr{P}$ is \textit{trivially equipped}, i. e. it contains only strong points, then it is an \textit{ordinary} poset.

The next definition lead us to "copy" the structure of a $p$-equipped poset into a collection of subspaces of an algebra. 

\begin{definition}\label{sisAdm}
Let $\textsf{K}$ be a field, $A$ be a $\textsf{K}$-algebra and $\mathscr{P}$ be a partially ordered set with a maximal element {\large \Fontlukas m}. An \textit{admissible system} $\mathcal{R}=\{\mathcal{R}_{i,j}\}_{i\leq j}$, is a collection 
$$\mathcal{R}=\{\mathcal{R}_{i,j}\subseteq A \ | \ i,j\in \mathscr{P} \ \text{ such that } i\leq j,\} , $$
of $\textsf{K}$-vector subspaces $\mathcal{R}_{i,j}$ of $A$, that satisfy the following conditions:
\begin{enumerate}[\text{A}.1]
\item For every $i,j,l\in \mathscr{P}$ with $i\leq j\leq l$, we have $\mathcal{R}_{i,j}\mathcal{R}_{j,l}\subseteq \mathcal{R}_{i,l}$.
\item For every $i\in \mathscr{P}$, the space $\mathcal{R}_i=\mathcal{R}_{i,i}$ is a division $\textsf{K}$-ring, with unit $1_i$, such that for all $r\in \mathcal{R}_{i,j}$ it holds $1_ir=r=r1_j$, for each $j\in \mathscr{P}$, $i\leq j$.
\item If $j< \text{\large \Fontlukas m}$ and $x\in \mathcal{R}_{i,j}$ is different from 0, then there exists $y\in \mathcal{R}_{j,l}$, with $l\neq j$, such that $xy\neq 0$.
\end{enumerate}
\end{definition}

From a poset $\mathscr{P}$ with $m$ elements (including its maximal point), an \textsf{F}-algebra $A$, and an admissible system $\mathcal{R}$, we construct a matrix algebra $\Lambda(\mathcal{R})$.

Given the algebra $\mathscr{M}_m(A)$, of matrices of size $m$ over $A$, its standard basis elements are the matrices $e_{i,j}$, with 1 at the place $(i,j)$ and 0 otherwise. 

Notice that $\mathcal{R}_{i,j}$ is a $\mathcal{R}_i$-$\mathcal{R}_j$-bimodule, for all $i,j\in \mathscr{P}$.

We define 
\[\Lambda=\Lambda(\mathcal{R})=\bigoplus_{\substack{i\leq j \\ i,j\in \mathscr{P}}}e_{i,j}\mathcal{R}_{i,j};\]
which is a subset of the $\textsf{F}$-algebra $\bigoplus_{\substack{i\leq j \\ i,j\in \mathscr{P}}}e_{i,j}A\subset \mathscr{M}_m(A)$\index{$\Lambda=\Lambda(\mathcal{R})$}. 

With the usual matrix operations, $\Lambda$ has a $\textsf{F}$-algebra structure, but it is not always a subalgebra of $\mathscr{M}_m(A)$, because their units are not necessarily the same.

For an ordinary poset $\mathscr{P}$, and $\mathcal{R}_{i,j}=\textsf{F}$, for every $i\leq j$ in $\mathscr{P}$, we recall that $\Lambda$ is the \textit{incidence algebra} of $\mathscr{P}$, see for instance \cite{ASS}.

We denote $e_i=e_{i,i}1_i$, for all $i\in \mathscr{P}$. If 1 is the unit of $\Lambda$, then $1=\sum_{i\in \mathscr{P}}e_i$ is a decomposition in a sum of ortogonal idempotents.

The subalgebra $S=\sum_{i\in \mathscr{P}}e_{i,i}\mathcal{R}_i\subset \Lambda$, determines a decomposition of $\Lambda$ as a direct sum of $S$-$S$-bimodules:
\begin{equation}\label{semisimpleRadical}
\Lambda=S\oplus J, \ \ \text{ where } \ \ J=\bigoplus_{\substack{i< j \\ i,j\in \mathscr{P}}}e_{i,j}\mathcal{R}_{i,j}.
\end{equation}

In \cite{BD2017}, we have introduced representations and corepresentations of $p$-equipped posets. These can be studied through posets with a maximal and a minimal strong points. Then from now on, $\mathscr{P}$ will denote a $p$-equipped poset with a maximal strong element {\large \Fontlukas m} and a minimal strong point $0$.

We associate to $\mathscr{P}$ two different admissible systems determined by a pair of fields $(\textsf{F},\textsf{G})$, where \textsf{G} is a normal extension of degree $p$ over \textsf{F}. We know that $\textsf{G}=\textsf{F}(\xi)$, for some primitive element $\xi$, such that $\xi^p=q\in \textsf{F}$.

Depending on the characteristic of \textsf{F}, the extension $\textsf{G}/\textsf{F}$ may be cyclic or purely inseparable. 

When $\ch \textsf{F}=p$, the extension $\textsf{G}/\textsf{F}$ is purely inseparable. In this case, there is a natural derivation $\delta$ of \textsf{G}, which is an endomorphism of \textsf{G}, satisfying 
\[\delta(a)=0, \text{\ for all\ } a\in \textsf{F},\ \ \delta(\xi^i)=i\xi^{i-1}, \ \  \text{\ for each \ } i<p;\] 
and the Leibniz rule, for every $a,b\in \textsf{G}$,
$$\delta(ab)=a\delta(b)+\delta(a)b.$$

If $\ch \textsf{F}\neq p$, let $\sigma : \textsf{G}\rightarrow \textsf{G}$ be a generator of the Galois group. 

Consider the \textsf{F}-algebra
$$A=(\End_{\textsf{F}}\textsf{G})^{op}.$$

The multiplication in $A$ is denoted by $h*h'=h'h$, for all $h,h' \in \End_{\textsf{F}}\textsf{G}$.

From every $g\in \textsf{G}$, we obtain an \textsf{F}-endomorphism $\mu_g:\textsf{G}\rightarrow \textsf{G}$, given by, 
$$\mu_g(a)=ga\text{, for all }a\in \textsf{G}.$$

We denote 
\begin{equation}\label{defvartheta}
\vartheta= \begin{cases} 
\sigma, & \text{ if } \ch \textsf{F}\neq p,\\
\delta, & \text{ if } \ch \textsf{F}= p,
\end{cases}
\end{equation}
to define the colection 
$$\mathcal{T}=\{\mathcal{T}_{x,y} = A_\ell \ | \ x\leq^\ell y\}_{x,y\in \mathscr{P}},$$  
where, to every $x\leq^\ell y$ in $\mathscr{P}$, it is assigned the following \textsf{F}-subspace of $A$,
\[A_\ell =\textsf{F} \left\langle \sum_{i=0}^{\ell-1}  \vartheta^i * \mu_{g_i}\  |\ \  g_i\in \textsf{G} \right\rangle = \textsf{F} \left\langle \sum_{i=0}^{\ell-1} \mu_{g_i} \vartheta^i \  |\ \  g_i\in \textsf{G} \right\rangle.\]

Notice that
\[\mathcal{T}_x \cong \textsf{G},  \text{ if } x \text{ is weak, and}\]
\[\mathcal{T}_x = A,  \text{ if } x \text{ is strong}.\]

By the decomposition 
$$\textsf{G}=\textsf{F}\oplus \textsf{F}\xi \oplus \cdots \oplus \textsf{F}\xi^{p-1};$$
we have the idempotent 
\begin{equation}\label{varepsilon}
\varepsilon=i\pi; 
\end{equation}
where $i:\textsf{F}\longrightarrow \textsf{G}$ and $\pi:\textsf{G}\longrightarrow \textsf{F}$ are the canonical inclusion and projection, respectively. For a point $x\in \mathscr{P}$, we choose the idempotent
$$\varepsilon_x= \begin{cases} 
\varepsilon, & \text{ if }  x \text{ is strong},\\
\mu_1, & \text{ if }  x \text{ is weak}.
\end{cases}$$

Now we define
$$\mathcal{R}^{(r)}_{x,y}=\varepsilon_x \mathcal{T}_{x,y}\varepsilon_y.$$

The collection of \textsf{F}-subspaces of $A$ 
$$\mathcal{R}^{(r)}=\left\{ \mathcal{R}^{(r)}_{x,y}\right\}_{\substack{x\leq y \\ x,y\in \mathscr{P}}};$$ 
is an admissible system such that $\mathcal{R}^{(r)}_x\cong \textsf{F}$ if $x$ is strong and $\mathcal{R}^{(r)}_x\cong \textsf{G}$ if $x$ is weak.

The field $\textsf{G}$, is an \textsf{F}-algebra, so we can define another admissible system 
$$\mathcal{R}^{(c)}=\left\{\mathcal{R}^{(c)}_{x,y}\subseteq \textsf{G} \ | \ x,y\in \mathscr{P} \ \text{ such that } x\leq y,\right\};$$ 
by assigning to  each $x\leq^\ell y$ in $\mathscr{P}$, the \textsf{F}-space
\[\mathcal{R}^{(c)}_{x,y}=\textsf{F} \langle 1, \xi, \xi^2, \ldots, \xi^{\ell-1} \rangle.\]

In this case $\mathcal{R}^{(c)}_x = \textsf{G}$ if $x$ is strong, and $\mathcal{R}^{(c)}_x = \textsf{F}$ if $x$ is weak.

For a $p$-equipped poset $\mathscr{P}$, we have defined in \cite{BD2017} the category of representations and the category of co-representations. Then we defined two equivalences of categories; one between the representations of $\mathscr{P}$ and the socle-projective $\Lambda \left(\mathcal{R}^{(r)}\right)$-modules, and the other one between the corepresentations of $\mathscr{P}$ and the socle-projective $\Lambda \left(\mathcal{R}^{(c)}\right)$-modules.

Let us denote by $\Lambda^{(r)}$ and $\Lambda^{(c)}$ the algebras $\Lambda \left(\mathcal{R}^{(r)}\right)$ and $\Lambda \left(\mathcal{R}^{(c)}\right)$, and by $\mathcal{U}^{(r)}$ and $\mathcal{U}^{(c)}$ the categories of finitely generated socle-projective modules over $\Lambda^{(r)}$ and $\Lambda^{(c)}$ which do not contain $e_0\Lambda^{(r)}$ and $e_0\Lambda^{(c)}$ as a direct summand, respectively.

%%%%%%%%%%%%%%%%%%%%%%%%%%%%%%%%%%%%%%%%%%%%%%%%%%%%%%%%%%%%%%%%%%%%%%%%%%%%%%%%%%%%%%%%%%%%%%%%%%%%%%%%%%%%%%%%%%%%%%%%%%%%%%%%%%%%%%%%%%%%%%%%%%%%%%%%%%%%%%%%%%%%%%%%%%%%%%%%%%%%%%%%%%%%%%%%%%%%%%%%%%%%%%%%%%%%%%%%%%%%%%%%%%%%%%%%%%%%%%%%%%%%%%%%%%%%%%%%%%%%

\section{Auslander-Reiten sequences}

In this section we will prove some properties for $\Lambda^{(r)}$ and $\Lambda^{(c)}$ using the same methods. Then we will write just $\Lambda$ to denote $\Lambda^{(r)}$ as well as $\Lambda^{(c)}$, and $\mathcal{U}$ for $\mathcal{U}^{(r)}$ and $\mathcal{U}^{(c)}$. Let us use the same conventions for all the notation we will introduce through this paper. 

The category $\mathcal{U}$ is a full subcategory of $ \mathrm{mod}\, \Lambda $ closed under extensions, so it has an
exact structure given by those sequences in $\mathcal{U}$ which are exact in $\mathrm{mod}\, \Lambda $. 

The object $e_0\Lambda$ is the only projective-injective in $ \mathrm{mod}\, \Lambda $ and $e_\text{\large \Fontlukas m}\Lambda$ is the only simple projective in $ \mathrm{mod}\, \Lambda $. We have
$$\text{soc}(e_0\Lambda)=(e_\text{\large \Fontlukas m}\Lambda)^t;$$
$$\text{top}(e_0\Lambda)=D(e_\text{\large \Fontlukas m}\Lambda)^{t^\prime};$$
for some natural numbers $t$ and $t^\prime$.

Denote by $\mathcal{V}$ the category of finitely generated top-injective $\Lambda $-modules which do not contain $e_{0}\Lambda $ as a direct summand. It is a a full subcategory of $ \mathrm{mod}\, \Lambda $ closed under extensions, so as in the case of $\mathcal{U}$, the category $\mathcal{V}$ has an exact structure given by those sequences of morphisms in $\mathcal{V}$ which are
exact in $ \mathrm{mod}\, \Lambda $. There exists an exact functor $F:\mathcal{U}\rightarrow \mathcal{V}$ which is
an equivalence of categories (see \cite{BD2017}, Proposition 31).

\begin{theorem}\label{objetosARseq} 
If $M$ is an indecomposable non-projective object in $\mathcal{U}$ then there is an almost split sequence in $\mathcal{U}$:
$$0\rightarrow N\rightarrow E\rightarrow M\rightarrow 0.$$

If $M$ is an indecomposable non-injective object in $\mathcal{U}$, then there exists an almost split sequence in
$\mathcal{U}$:
$$0\rightarrow M\rightarrow F\rightarrow L\rightarrow 0.$$
\end{theorem}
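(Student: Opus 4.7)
The strategy is to reduce both assertions to the general existence theorem for almost split sequences in categories of socle-projective modules over right peak algebras proved in \cite{BM}, and to use the equivalence $F$ to handle the non-injective case. By construction in Section~\ref{SectionPequipped}, the algebra $\Lambda$ (meaning either $\Lambda^{(r)}$ or $\Lambda^{(c)}$) arises from an admissible system on a poset with a unique maximal strong point $\text{\large \Fontlukas m}$ and a unique minimal strong point $0$; hence $e_{\text{\large \Fontlukas m}}\Lambda$ is the only indecomposable simple projective and $e_0\Lambda$ the only indecomposable projective--injective in $\mathrm{mod}\,\Lambda$. This is exactly the right peak setting of \cite{KS}, and as noted just before the theorem $\mathcal{U}$ is closed under extensions in $\mathrm{mod}\,\Lambda$, so it is an exact category with finite-dimensional Hom spaces.

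First I would treat the non-projective case. Given an indecomposable non-projective $M\in\mathcal{U}$, I apply directly the existence theorem of \cite{BM} for socle-projective modules over a right peak algebra. Its construction starts from a minimal projective presentation of $M$ in $\mathrm{mod}\,\Lambda$ and applies a transpose adapted to socle-projectives; the $1$-Gorenstein property of $\Lambda$, recorded in the introduction, ensures that the syzygy obtained is again socle-projective and acquires no $e_0\Lambda$-summand, so that both the middle and the left terms lie in $\mathcal{U}$. This produces the desired almost split sequence $0\to N\to E\to M\to 0$ in $\mathcal{U}$.

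For the second assertion I would transport the problem to $\mathcal{V}$ through the exact equivalence $F:\mathcal{U}\to\mathcal{V}$ given in \cite[Proposition~31]{BD2017}. Since $F$ is an equivalence of exact categories, $M$ is non-injective in $\mathcal{U}$ if and only if $F(M)$ is non-injective in $\mathcal{V}$. The category $\mathcal{V}$ plays, for the opposite algebra $\Lambda^{\mathrm{op}}$ (which is again a peak algebra), the role that $\mathcal{U}$ plays for $\Lambda$, so the dual of the first step, i.e.\ the \cite{BM} theorem applied on the other side, furnishes an almost split sequence $0\to F(M)\to E'\to L'\to 0$ in $\mathcal{V}$ starting at $F(M)$. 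Applying $F^{-1}$ yields the required almost split sequence $0\to M\to F\to L\to 0$ in $\mathcal{U}$.

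The step I expect to be the main obstacle is the verification, when invoking \cite{BM}, that the transposed sequence really stays inside $\mathcal{U}$: one must check that the kernel produced by the transpose is socle-projective and has no $e_0\Lambda$-summand, and symmetrically on the $\mathcal{V}$-side that the cokernel is top-injective with no $e_0\Lambda$-summand. This relies on the precise shape of the admissible systems $\mathcal{R}^{(r)}$ and $\mathcal{R}^{(c)}$ together with the $1$-Gorenstein property of the associated peak algebra, which are the specific features that allow the general machinery of \cite{BM} to be applied.
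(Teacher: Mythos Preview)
Your proposal is correct and, for the first (non-projective) case, coincides with the paper's proof: both simply invoke the existence theorem for almost split sequences in socle-projective modules over a $1$-Gorenstein right peak algebra from \cite{BM} (the paper cites Propositions~3.3 and~3.7 there).

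For the second (non-injective) case there is a minor difference of route. The paper again cites \cite{BM} directly (Proposition~3.7), obtaining the almost split sequence starting at $M$ inside $\mathcal{U}$ without passing through $\mathcal{V}$. You instead transport $M$ to $\mathcal{V}$ via the exact equivalence $F$, invoke the dual of \cite{BM} there (using that top-injective right $\Lambda$-modules correspond under $D$ to socle-projective $\Lambda^{\mathrm{op}}$-modules), and then pull back via $F^{-1}$. Your detour is perfectly valid and yields the same conclusion; it has the mild advantage of making explicit why the non-injective case is formally dual to the non-projective one, at the cost of an extra passage through $\mathcal{V}$ and $\Lambda^{\mathrm{op}}$. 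The paper's direct citation is shorter because Proposition~3.7 of \cite{BM} already packages this duality. The ``main obstacle'' you flag---checking that the transposed sequence remains in $\mathcal{U}$---is precisely what \cite{BM} handles for $1$-Gorenstein peak algebras, so no additional verification specific to $\mathcal{R}^{(r)}$ or $\mathcal{R}^{(c)}$ is needed beyond noting that $\Lambda$ is of that type.
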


\begin{proof}
Follows from Proposition 3.3 and $3.7$ of \cite{BM}.
\end{proof}

\begin{definition}
(See Definition 2.6 of \cite{SL}) Let $\mathcal{A}$ be a Krull-Schmidt category. This category is called
\textit{Auslander-Reiten} if each indecomposable $X\in \mathcal{A}$ has a sink and a source morphism in $X$.
Moreover if the sink morphism is not a monomorphism then there exists an almost split sequence ending in $X$ and if
the source morphism in $X$ is not an epimorphism, then there is an almost split sequence starting in $X$.
\end{definition}

Throughout this work if $\mathcal{A}$ is a Krull-Schmidt category and $M$ is an indecomposable in $\mathcal{A}$, such that there is an almost split sequence $x$ ending in $M$, we denote by $\tau M$ the left end of $x$. Similarly if
$y$ is an almost split sequence starting in $M$, we denote by $\tau ^{-1}M$ the right end of $y$.

\begin{proposition} 
The categories $\mathcal{U}$ and $\mathcal{V}$ are Auslander-Reiten.
\end{proposition}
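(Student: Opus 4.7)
The plan is to verify the axioms of an Auslander--Reiten category directly for $\mathcal{U}$, using Theorem~\ref{objetosARseq} to produce almost split sequences and Hom-finiteness to handle the relative projectives and injectives, and then to transfer the conclusion to $\mathcal{V}$ along the exact equivalence $F:\mathcal{U}\to\mathcal{V}$ recalled at the start of the section. First I would check that $\mathcal{U}$ is Krull--Schmidt: since $\Lambda$ is a finite-dimensional $\textsf{F}$-algebra, $\mathrm{mod}\,\Lambda$ is Hom-finite and Krull--Schmidt, and $\mathcal{U}$ is a full additive subcategory closed under direct summands (socle-projectivity and the absence of $e_0\Lambda$-summands both pass to summands), so it inherits these properties.

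Next, for each indecomposable $M\in\mathcal{U}$ I need a sink and a source morphism. If $M$ is non-projective in $\mathcal{U}$, Theorem~\ref{objetosARseq} provides an almost split sequence $0\to N\to E\to M\to 0$, and the map $E\to M$ is a sink morphism at $M$ that is not a monomorphism (because $N\neq 0$), matching the axioms in the definition. Dually, if $M$ is non-injective, the almost split sequence $0\to M\to F\to L\to 0$ yields a source morphism $M\to F$ that is not an epimorphism. For the remaining indecomposables---the finitely many $P$ that are projective and $Q$ that are injective in $\mathcal{U}$---the sink morphism at $P$ would be obtained from a right $\mathcal{U}$-approximation of the inclusion $\rad(P)\hookrightarrow P$ composed with that inclusion, and the source morphism at $Q$ dually from a left $\mathcal{U}$-approximation of $Q\twoheadrightarrow Q/\soc(Q)$. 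These approximations exist by the functorial finiteness of $\mathcal{U}$ in $\mathrm{mod}\,\Lambda$, a standard feature of socle-projective categories over right-peak algebras, and by passing to the image one makes the sink monic and the source epic, consistently with the absence of almost split sequences ending at a projective or starting at an injective.

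Finally, since $F:\mathcal{U}\to\mathcal{V}$ is an exact equivalence, it preserves indecomposability, almost split sequences, monomorphisms, epimorphisms, and sink and source morphisms, so the Auslander--Reiten property of $\mathcal{V}$ follows immediately from that of $\mathcal{U}$. The main obstacle will be the argument at the projective and injective indecomposables of $\mathcal{U}$, which fall outside the scope of Theorem~\ref{objetosARseq}; it rests on the functorial finiteness of $\mathcal{U}$ inside $\mathrm{mod}\,\Lambda$, a property that should be invoked explicitly or derived from the right-peak structure of $\Lambda$.
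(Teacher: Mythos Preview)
Your outline matches the paper's in spirit---use Theorem~\ref{objetosARseq} for the non-projectives and non-injectives, deal with the boundary cases by hand, and transfer along $F$---but the treatment of those boundary cases is where you diverge, and there your argument is both more complicated and partly incorrect.

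For the sink at a relative projective $P$ in $\mathcal{U}$, the paper observes that $P$ is an honest projective $\Lambda$-module and that $\rad P$ already lies in $\mathcal{U}$ (it is a submodule of a socle-projective module and cannot contain $e_0\Lambda$). Hence the inclusion $\rad P\hookrightarrow P$ is itself a monic sink morphism in $\mathcal{U}$; no approximation or appeal to functorial finiteness is needed. Your route via a right $\mathcal{U}$-approximation of $\rad P$ would work but is redundant, and the ``pass to the image'' step you sketch is really just rediscovering that $\rad P\in\mathcal{U}$.

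The source morphism at a relative injective is where your proposal has a genuine gap. Relative injectives $Q$ in $\mathcal{U}$ are \emph{not} injective $\Lambda$-modules, so $Q\twoheadrightarrow Q/\soc Q$ is not the standard source morphism in $\mathrm{mod}\,\Lambda$, and there is no reason that composing with a left $\mathcal{U}$-approximation of $Q/\soc Q$ yields a right almost split map in $\mathcal{U}$. The paper sidesteps this entirely: it passes to $\mathcal{V}$ via $F$, where the relative injectives \emph{are} injective $\Lambda$-modules $N$ and $N/\soc N\in\mathcal{V}$, so the epimorphism $N\to N/\soc N$ is directly a source morphism in $\mathcal{V}$. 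It then pulls the source morphisms back to $\mathcal{U}$ along $F$. This asymmetric treatment---sinks built in $\mathcal{U}$, sources built in $\mathcal{V}$---is the key maneuver you are missing.
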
 

\begin{proof}
Take $M$ an indecomposable in $\mathcal{U}$. If $M$ is not projective in $\mathcal{U}$, by the first part of Theorem \ref{objetosARseq}, there is an almost split sequence in $\mathcal{U}$, ending in $M$, so there is a sink morphism in $M$.
If $M$ is projective in $\mathcal{U}$, $M$ is a projective right $\Lambda $-module, so the inclusion
$i_{M}:\mathrm{rad}M\rightarrow M$ is a sink morphism in $\mathrm{mod}\, \Lambda $. Here $\mathrm{rad}M$ is an object in $\mathcal{U}$, then $i_{M}$ is a sink morphism in $\mathcal{U}$ ending in $M$. 

Using the equivalence $F:\mathcal{U}\rightarrow \mathcal{V}$, we can see that if $N$ is any indecomposable in $\mathcal{V}$ there is a sink  morphism in $N$. Now if $N$ is not injective we know that the second part of Theorem \ref{objetosARseq} holds for $\mathcal{V}$, therefore there is
an almost split sequence in $\mathcal{V}$ starting in $N$, so there is a source morphism in $N$. In case $N$ is injective in $\mathcal{V}$, it is an injective right $\Lambda $-module, so there is an epimorphism
$j_{N}:N\rightarrow N/\mathrm{soc}N$ which is a source morphism in $\mathrm{mod}\, \Lambda $. In this case 
$N/\mathrm{soc}N\in \mathcal{V}$, so $j_{N}$ is a source morphism in $\mathcal{V}$. Therefore for any indecomposable $N$ in $\mathcal{V}$ there is a source and sink morphism in $N$. Using the equivalence $F$ one conclude that the same property holds in $\mathcal{U}$. 

Suppose $f:Z\rightarrow M$ is a sink morphism. If $f$ is not a monomorphism, then $M$ can not be projective, otherwise $f$ is isomorphic to the inclusion $i_{M}:\mathrm{rad}M\rightarrow M$, and there can not exists an almost split sequence in $\mathcal{U}$ ending in $M$, which contradicts $M$ is projective, by Theorem \ref{objetosARseq}.

Take now $g:M\rightarrow Y$ a source morphism in $\mathcal{U}$ which is not an epimorphism. This means that there is a non zero morphism $s:Y\rightarrow W$ in $\mathcal{U}$ such that $sg=0$. Clearly $F(g):F(M)\rightarrow F(Y)$ is a source morphism which is not an epimorphism in $\mathcal{V}$. Then $F(M)$ can not be injective in $\mathcal{V}$, otherwise $F(g)$ is isomorphic to the epimorphism $F(M)\rightarrow F(M)/\mathrm{soc}F(M)$.

Since $F$ is an exact equivalence, $M$ is not injective in $\mathcal{U}$, which implies, by Theorem \ref{objetosARseq}, that there exists an almost split sequence in $\mathcal{U}$ starting in $M$. 

This proves that $\mathcal{U}$ and therefore $\mathcal{V}$ are Auslander-Reiten categories.
\end{proof}

We recall that if $\mathcal{A}$ is an Auslander-Reiten category then a subset $\mathcal{S}$ of non-isomorphic indecomposable objects of $\mathcal{A}$ is a section if for any $X\in \mathcal{S}$ and an irreducible morphism
$X\rightarrow Y$ either $Y\in \mathcal{S}$ or $\tau Y\in \mathcal{S}$ but not both $Y$ and $\tau Y$ are in $\mathcal{S}$.

By Theorem 2.8 of \cite{SL}, if $\mathcal{S}$ is a section in $\mathcal{U}$ or in $\mathcal{V}$, then $\mathcal{S}$ has not oriented cycles.

Let $\underline{\mathcal{U}}$ be the category $\mathcal{U}$ modulo the ideal generated by the morphisms in $\mathcal{U}$ which
factorizes through projectives, and $\overline{\mathcal{U}}$ the category $\mathcal{U}$ modulo the ideal generated by those morphisms which are factorized through injectives.  Then by Proposition 3.7 of \cite{Bautista04} there is an equivalence of categories
$\Phi :\underline{\mathcal{U}}\rightarrow \overline{\mathcal{U}}$ such that if $M$ is any indecomposable non projective in
$\mathcal{U}$, then $\Phi (M)\cong \tau (M).$ Similarly we denote by $\underline{\mathcal{V}}$ the category $\mathcal{V}$
modulo the ideal generated by the maps which factorizes through projectives and $\overline{\mathcal{V}}$ the category 
$\mathcal{V}$ modulo the ideal generated by the morphisms which factorizes through injectives. Then using
the equivalence $F:\mathcal{U}\rightarrow \mathcal{V}$ we obtain an equivalence of categories
$\Psi :\underline{\mathcal{V}}\rightarrow \overline{\mathcal{V}}$ such that if $N$ is an indecomposable non projective in $\mathcal{V}$ then
$\Psi (N)\cong \tau (N)$.

\begin{proposition}\label{EndXisomEndtauX}
Let $M$ and $N$ be indecomposables non projectives in $\mathcal{U}$ and $\mathcal{V}$, respectively, then there are isomorphisms of $\textsf{F}$-algebras:
$$\mathrm{End}_{\mathcal{U}}(M)/\mathrm{rad}\mathrm{End}_{\mathcal{U}}(M)\cong 
\mathrm{End}_{\mathcal{U}}(\tau M)/\mathrm{rad}\mathrm{End}_{\mathcal{U}}(\tau M),$$
$$\mathrm{End}_{\mathcal{V}}(N)/\mathrm{rad}\mathrm{End}_{\mathcal{V}}(N)\cong 
\mathrm{End}_{\mathcal{V}}(\tau N)/\mathrm{rad}\mathrm{End}_{\mathcal{V}}(\tau N).$$
\end{proposition}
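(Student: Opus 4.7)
The plan is to route the two endomorphism rings through the stable categories $\underline{\mathcal{U}}$ and $\overline{\mathcal{U}}$, and exploit the equivalence $\Phi\colon \underline{\mathcal{U}}\to\overline{\mathcal{U}}$ recalled just before the statement, which sends an indecomposable non-projective $M$ to $\tau M$. Symbolically, I want to establish the chain
\[
\mathrm{End}_{\mathcal{U}}(M)/\mathrm{rad}
\;\cong\;
\mathrm{End}_{\underline{\mathcal{U}}}(M)/\mathrm{rad}
\;\stackrel{\Phi}{\cong}\;
\mathrm{End}_{\overline{\mathcal{U}}}(\tau M)/\mathrm{rad}
\;\cong\;
\mathrm{End}_{\mathcal{U}}(\tau M)/\mathrm{rad},
\]
and then argue that the whole composition is $\mathsf{F}$-algebra linear.

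The key intermediate fact is that, for an indecomposable non-projective object $M\in\mathcal{U}$, the ideal $\mathcal{P}(M,M)$ of endomorphisms of $M$ which factor through a projective of $\mathcal{U}$ is contained in $\mathrm{rad}\,\mathrm{End}_{\mathcal{U}}(M)$. Since $M$ is indecomposable and $\mathcal{U}$ is Krull--Schmidt, $\mathrm{End}_{\mathcal{U}}(M)$ is local, so it suffices to see that $\mathcal{P}(M,M)$ is a proper ideal. But if $\mathrm{id}_M=gh$ with $h\colon M\to P$, $g\colon P\to M$ and $P$ projective, then $h$ splits and $M$ is a direct summand of $P$, forcing $M$ projective, contradiction. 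Hence $\mathcal{P}(M,M)\subsetneq\mathrm{End}_{\mathcal{U}}(M)$ and therefore lies inside the unique maximal ideal; consequently $\mathrm{End}_{\underline{\mathcal{U}}}(M)=\mathrm{End}_{\mathcal{U}}(M)/\mathcal{P}(M,M)$ is again local with residue field $\mathrm{End}_{\mathcal{U}}(M)/\mathrm{rad}$. The dual argument, using that $M$ is a summand of any injective through which $\mathrm{id}_M$ factors, shows that for an indecomposable non-injective object $L\in\mathcal{U}$ the ideal of endomorphisms factoring through injectives is contained in $\mathrm{rad}\,\mathrm{End}_{\mathcal{U}}(L)$, so $\mathrm{End}_{\overline{\mathcal{U}}}(L)/\mathrm{rad}\cong\mathrm{End}_{\mathcal{U}}(L)/\mathrm{rad}$.

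To apply the dual half to $L=\tau M$, I need $\tau M$ to be indecomposable non-injective. Indecomposability follows from the fact that $\tau M$ is the left-hand term of an almost split sequence in $\mathcal{U}$ (Theorem \ref{objetosARseq}). For non-injectivity: if $\tau M$ were injective in $\mathcal{U}$, the almost split sequence $0\to\tau M\to E\to M\to 0$ would split, contradicting $M$ being non-projective indecomposable. With this in hand, combining the two isomorphisms above with the equivalence $\Phi$, which induces an $\mathsf{F}$-algebra isomorphism $\mathrm{End}_{\underline{\mathcal{U}}}(M)\to\mathrm{End}_{\overline{\mathcal{U}}}(\tau M)$ and hence identifies their radicals, completes the proof for $\mathcal{U}$.

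The statement for $\mathcal{V}$ is formally identical: replace $\mathcal{U}$, $\underline{\mathcal{U}}$, $\overline{\mathcal{U}}$, $\Phi$ by $\mathcal{V}$, $\underline{\mathcal{V}}$, $\overline{\mathcal{V}}$, $\Psi$, and repeat the same two reductions and the same splitting argument, which is available because $\mathcal{V}$ is also an Auslander--Reiten Krull--Schmidt category with almost split sequences provided by Theorem \ref{objetosARseq} transported through the equivalence $F\colon\mathcal{U}\to\mathcal{V}$. The only delicate point in the whole argument, though purely formal, is to make sure the maps constructed in each step are $\mathsf{F}$-algebra homomorphisms, not just ring maps; this is automatic because factoring out $\mathsf{F}$-linear ideals and applying the $\mathsf{F}$-linear functor $\Phi$ (resp.\ $\Psi$) preserves the $\mathsf{F}$-action on $\mathrm{End}$-rings.
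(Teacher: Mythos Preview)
Your argument is correct and follows exactly the approach of the paper, which simply asserts that the equivalences $\Phi:\underline{\mathcal{U}}\to\overline{\mathcal{U}}$ and $\Psi:\underline{\mathcal{V}}\to\overline{\mathcal{V}}$ induce the stated isomorphisms. What you have done is spell out the implicit steps---that the ideals of maps factoring through projectives (resp.\ injectives) lie in the radical of the local endomorphism ring, and that $\tau M$ is indecomposable non-injective---which the paper takes for granted.
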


\begin{proof}
The equivalence of categories $\Phi :\underline{\mathcal{U}}\rightarrow \overline{\mathcal{U}}$ induces the first isomorphism and
the equivalence of categories $\Psi :\underline{\mathcal{V}}\rightarrow \overline{\mathcal{V}}$ induces the second one.
\end{proof}

Let $\mathcal{P}(\Lambda )$ be the category whose objects are morphisms $X:X^{1}\rightarrow X^{2}$, where
$X^{1}$ and $X^{2}$ are finitely generated projectives right $\Lambda $-modules. 
If $X_{1}:X_{1}^{1}\rightarrow X_{1}^{2}$ and $X_{2}:X_{2}^{1}\rightarrow X_{2}^{2}$ are objects in $\mathcal{P}(\Lambda )$, a morphism from $X_{1}$ to $X_{2}$ is given by a pair of morphisms $(u_{1},u_{2})$ 
$$u_{1}:X_{1}^{1}\rightarrow X_{2}^{1};$$ 
$$u_{2}:X_{1}^{2}\rightarrow X_{2}^{2};$$ 
such that $u_{2}X_{1}=X_{2}u_{1}$.

The category $\mathcal{P}(\Lambda)$ has an exact structure, where the exact sequences are pairs of morphisms
$X_{1}\stackrel{u}{\longrightarrow }X_{2}\stackrel{v}{\longrightarrow }X_{3}$ such that the following sequences of
right $\Lambda$-modules are exact:
$$0\longrightarrow X_{1}^{1}\stackrel{u_{1}}{\longrightarrow }X_{2}^{1}\stackrel{v_{1}}{\longrightarrow }X_{3}^{1}\longrightarrow 0,
\quad 0\longrightarrow X_{1}^{2}\stackrel{u_{2}}{\longrightarrow }X_{2}^{2}\stackrel{v_{2}}{\longrightarrow }X_{3}^{2}\longrightarrow 0.$$
This category has enough projectives and enough injectives. The injectives are the objects of the form
$T(P)=P\rightarrow 0$ and $I(P)=P\stackrel{id_{P}}{\longrightarrow }P$, the projectives have the form $I(P)$ and $S(P)=0\rightarrow P$, where $P$ is a finitely generated projective right $\Lambda $-module. Moreover the category
$\mathcal{P}(\Lambda )$ has almost split sequences (see Theorem 5.1 of \cite{Bautista04}).

\begin{definition}\label{catProblema}
We denote by $\mathcal{M}$ the full subcategory of $\mathcal{P}(\Lambda)$ whose objects are morphisms of the form $P\stackrel{\phi }{\longrightarrow }(e_{0}\Lambda)^{\nu }$, with $\nu$ a set, and $P$ a projective $\Lambda$-module such that $Pe_{0}=0$.  Observe that $\mathcal{M}$ is in fact a full subcategory
of $\mathcal{P}^{1}(\Lambda )$, which is the full subcategory of $\mathcal{P}(\Lambda)$ whose objects are morphisms
$X:X^{1}\rightarrow X^{2}$ such that $\mathrm{Im}X\subset \mathrm{rad}X^{2}$.
\end{definition}

\begin{proposition} 
The category $\mathcal{M}$ has almost split sequences.
\end{proposition}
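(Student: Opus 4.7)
The plan is to exploit that the ambient category $\mathcal{P}(\Lambda)$ already has almost split sequences (Theorem 5.1 of \cite{Bautista04}) and to show that $\mathcal{M}$ inherits this property by embedding into $\mathcal{P}(\Lambda)$ as an exact subcategory with enough relative projectives and injectives.

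First I would verify closure under extensions. Given an exact sequence $0 \to X' \to Y \to X \to 0$ in $\mathcal{P}(\Lambda)$ with $X,X' \in \mathcal{M}$, its two component sequences of right $\Lambda$-modules both split: the sequence $0 \to (e_0\Lambda)^{\nu'} \to Y^2 \to (e_0\Lambda)^{\nu} \to 0$ splits because $e_0\Lambda$ is projective-injective, yielding $Y^2 \cong (e_0\Lambda)^{\nu \sqcup \nu'}$; and the sequence $0 \to X'^1 \to Y^1 \to X^1 \to 0$ splits because $X^1$ is projective, so $Y^1 \cong X'^1 \oplus X^1$ is projective with $Y^1 e_0 = 0$. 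Hence $Y \in \mathcal{M}$, which gives $\mathcal{M}$ an inherited exact structure.

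Next I would determine the relative projectives and injectives in $\mathcal{M}$. Among the $\mathcal{P}(\Lambda)$-projectives $S(P) = (0 \to P)$ and $I(P) = (P \xrightarrow{\mathrm{id}} P)$, only the objects $S((e_0\Lambda)^\nu)$ lie in $\mathcal{M}$ (since $I(P) \in \mathcal{M}$ forces $P = 0$), and dually only the injectives $T(P) = (P \to 0)$ with $Pe_0 = 0$ lie in $\mathcal{M}$. For any $X = (\phi: P \to (e_0\Lambda)^\nu) \in \mathcal{M}$, the pair of maps $(0,\mathrm{id}): S((e_0\Lambda)^\nu) \to X$ and $(\mathrm{id},0): X \to T(P)$ fit into a short exact sequence $0 \to S((e_0\Lambda)^\nu) \to X \to T(P) \to 0$, which one checks is admissible in $\mathcal{P}(\Lambda)$ and hence in $\mathcal{M}$. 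This exhibits enough relative projectives and enough relative injectives.

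For the main step, given an indecomposable $X \in \mathcal{M}$ which is not relatively projective in $\mathcal{M}$ (equivalently, not a direct summand of some $S((e_0\Lambda)^\nu)$), consider the almost split sequence $0 \to \tau_{\mathcal{P}}X \to E \to X \to 0$ in $\mathcal{P}(\Lambda)$. The splitting arguments from the first step show that as soon as $\tau_{\mathcal{P}}X \in \mathcal{M}$, also $E \in \mathcal{M}$, and the universal property of the almost split sequence transfers verbatim to $\mathcal{M}$. The crux of the proof — and the place I expect the main obstacle — is therefore verifying that $\tau_{\mathcal{P}}X \in \mathcal{M}$, i.e., that $(\tau_{\mathcal{P}}X)^1$ is projective with no $e_0$-summand and $(\tau_{\mathcal{P}}X)^2$ is a power of $e_0\Lambda$. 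I would attack this by computing $\tau_{\mathcal{P}}$ from an explicit minimal projective presentation of $X$ in $\mathcal{P}(\Lambda)$ (a transpose-type construction) and using that the projective-injectivity of $e_0\Lambda$, together with the absence of $e_0$-summands on the left, is preserved by this construction. The dual argument at a non-(relatively-injective) indecomposable $X$, using the $T(P)$'s in place of $S((e_0\Lambda)^\nu)$'s, produces the almost split sequences starting at $X$, completing the proof.
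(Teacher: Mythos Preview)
Your route is quite different from the paper's. The paper does not try to restrict almost split sequences from $\mathcal{P}(\Lambda)$ at all: it invokes Lemma~3.4 of \cite{BM}, which gives an exact equivalence between $\mathcal{M}$ and $\mathrm{mod}\,\mathcal{D}^{e}$ for an idempotent-deleted Drozd ditalgebra $\mathcal{D}^{e}$ of $\Lambda$, and then applies Theorem~7.18 of \cite{BSZ}, a general existence result for almost split sequences over finite-dimensional Roiter ditalgebras with semisimple layer. No computation of $\tau_{\mathcal{P}}$ is needed.

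Your plan has a genuine gap beyond the one you acknowledge. The parenthetical ``equivalently, not a direct summand of some $S((e_{0}\Lambda)^{\nu})$'' is incorrect: an extension-closed full subcategory can have more projectives than the ambient projectives it happens to contain, and $\mathcal{M}$ does. The paper later shows (Proposition~\ref{XisPiffCokisP}) that the indecomposable $\mathcal{M}$-projectives are the minimal projective presentations $X_{F(e_{i}\Lambda)}$, and these are not of the form $S(e_{0}\Lambda)$ since $F(e_{i}\Lambda)\in\mathcal{V}$ is never a projective $\Lambda$-module. For any such $X_{F(e_{i}\Lambda)}$ the almost split sequence in $\mathcal{P}(\Lambda)$ ending at it does exist (it is not $\mathcal{P}(\Lambda)$-projective), but its left end \emph{cannot} lie in $\mathcal{M}$: if it did, your own extension-closure argument would place the whole sequence in $\mathcal{M}$, producing an almost split sequence in $\mathcal{M}$ ending at an $\mathcal{M}$-projective, a contradiction. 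Hence the property $\tau_{\mathcal{P}}X \in \mathcal{M}$ that you single out as the crux \emph{must fail} on these objects, and the verification you propose for all $X \not\cong S(e_{0}\Lambda)$ cannot go through. A salvage would require first identifying the $\mathcal{M}$-projectives correctly and then arguing only on the complement, but the paper's identification of those projectives itself uses that $\mathcal{M}$ already has almost split sequences. (As a smaller matter, the sequence $0 \to S((e_{0}\Lambda)^{\nu}) \to X \to T(P) \to 0$ has $X$ in the middle; it supplies neither an admissible epimorphism from a projective onto $X$ nor an admissible monomorphism from $X$ into an injective, so it does not witness enough projectives or injectives as claimed.)
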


\begin{proof}
By Lemma 3.4 of \cite{BM}, the category $\mathcal{M}$ is equivalent as an exact category to the category
$\mathrm{mod}\, \mathcal{D}^{e}$, where $\mathcal{D}^{e}$ is obtained by deletion of some idempotent $e^{\prime}=1-e$ of the Drozd ditalgebra $\mathcal{D}$ of $\Lambda $. Therefore $\mathcal{D}^{e}$ is a finite-dimensional
Roiter ditalgebra with semisimple layer. By Theorem 7.18 of \cite{BSZ}, the category $\mathrm{mod}\, \mathcal{D}^{e}$ has almost split sequences, so $\mathcal{M}$ has almost split sequences.
\end{proof}

In the following, if $M$ is a right $\Lambda$-module, we denote by $X_{M}\in \mathcal{P}(\Lambda)$ a minimal projective presentation of $M$.

\begin{proposition}\label{ASsequence}
For $i\in \mathscr{P}$, $i\neq 0$, there is an almost split sequence in $\mathcal{M}$:
$$X_{D(\Lambda e_{i})}\rightarrow X_{D(\Lambda e_{i})/\mathrm{soc}D(\Lambda e_{i})}\oplus T(Z)\rightarrow T(e_{i}\Lambda )$$
where $Z$ is some finitely generated projective right $\Lambda $-module.
\end{proposition}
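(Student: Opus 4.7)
The plan is to build the claimed sequence by hand, starting from a canonical short exact sequence of $\Lambda$-modules, applying the horseshoe lemma to pass to the morphism category $\mathcal{P}(\Lambda)$, and then invoking the preceding proposition (existence of almost split sequences in $\mathcal{M}$) together with a uniqueness argument to conclude that what we built is the desired almost split sequence.

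First I would check that $T(e_i\Lambda)$ is indecomposable and non-projective in $\mathcal{M}$: the projectives in $\mathcal{P}(\Lambda)$ are of the form $I(P)$ and $S(P)$, and $T(e_i\Lambda)=e_i\Lambda\to 0$ is neither, so it is not projective in $\mathcal{P}(\Lambda)$; one then exhibits a non-split extension in $\mathcal{M}$ (for instance the extension obtained in Step 2 below) to rule out projectivity in the smaller exact category $\mathcal{M}$. Hence, by the previous proposition, there exists a unique almost split sequence in $\mathcal{M}$ ending at $T(e_i\Lambda)$.

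Next I would construct the explicit candidate. Start from the short exact sequence
\[0 \to \operatorname{soc} D(\Lambda e_i) \to D(\Lambda e_i) \to D(\Lambda e_i)/\operatorname{soc} D(\Lambda e_i) \to 0\]
in $\operatorname{mod}\Lambda$, note that the tops of $D(\Lambda e_i)$ and $D(\Lambda e_i)/\operatorname{soc} D(\Lambda e_i)$ agree so that their projective covers are the same $P_0=Q_0$, and apply the horseshoe lemma to minimal projective presentations. This produces a short exact sequence in $\mathcal{P}(\Lambda)$
\[0 \to X_{\operatorname{soc} D(\Lambda e_i)} \to X_{D(\Lambda e_i)} \oplus I(P) \to X_{D(\Lambda e_i)/\operatorname{soc} D(\Lambda e_i)} \to 0\]
for a projective $P$. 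Using that $\Lambda$ is $1$-Gorenstein with $e_0\Lambda$ the unique projective-injective whose socle is $(e_{\text{\large \Fontlukas m}}\Lambda)^t$, I would identify $\operatorname{soc} D(\Lambda e_i)$ and pin down its minimal presentation; the resulting cokernel of the induced map $X_{D(\Lambda e_i)} \to X_{D(\Lambda e_i)/\operatorname{soc}\, D(\Lambda e_i)}$, after absorbing the injective $I(P)$-contribution into a $T(Z)$-summand on the middle term, lands on $T(e_i\Lambda)$. This yields the candidate
\[X_{D(\Lambda e_i)} \to X_{D(\Lambda e_i)/\operatorname{soc}\, D(\Lambda e_i)} \oplus T(Z) \to T(e_i\Lambda),\]
and one checks that each term has the shape required of objects in $\mathcal{M}$ (codomains a power of $e_0\Lambda$, domains annihilated by $e_0$) using that $e_0\Lambda$ is the injective envelope and the $P_j$'s are projective with no $e_0\Lambda$ summand.

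Finally I would verify that the candidate is almost split. Exactness in $\mathcal{P}(\Lambda)$ is immediate from the horseshoe construction: the bottom row $P_0\to Q_0\to 0$ is $\operatorname{id}\to 0$ (after isolating $I(P)$), and the top row is the syzygy sequence. For the almost split property, since by the previous proposition there is at most one almost split sequence ending at $T(e_i\Lambda)$, it suffices to check that the right-hand morphism is right almost split in $\mathcal{M}$: any non-retraction $Y\to T(e_i\Lambda)$ lifts through the minimal presentation of $D(\Lambda e_i)/\operatorname{soc} D(\Lambda e_i)$ together with the correction $T(Z)$, by a diagram chase using minimality of the presentations. The main obstacle I expect is Step 2: correctly pinning down the auxiliary projective $Z$ produced by the horseshoe and showing that after this adjustment the cokernel in $\mathcal{P}(\Lambda)$ is \emph{exactly} $T(e_i\Lambda)$, with no stray projective summand left over; this requires a careful use of the Nakayama/$1$-Gorenstein structure of $\Lambda$ and of the identification of $\operatorname{soc} D(\Lambda e_i)$ in terms of the projective $e_i\Lambda$.
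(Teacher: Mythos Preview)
Your approach diverges from the paper's and has a genuine structural gap. The paper's proof is two lines: Proposition~5.9 of \cite{Bautista04} already furnishes the almost split sequence
\[
X_{D(\Lambda e_{i})}\rightarrow X_{D(\Lambda e_{i})/\mathrm{soc}\,D(\Lambda e_{i})}\oplus T(Z)\rightarrow T(e_{i}\Lambda )
\]
in the ambient category $\mathcal{P}(\Lambda)$; one then only checks that every term lies in the full subcategory $\mathcal{M}$ (using that $D(\Lambda e_i)$ and $D(\Lambda e_i)/\mathrm{soc}$ are in $\mathcal{V}$, and that $Z$, being a summand of $\mathrm{rad}\,e_i\Lambda$, satisfies $Ze_0=0$). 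Fullness of $\mathcal{M}\subset\mathcal{P}(\Lambda)$ then gives the result immediately.

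Your construction, by contrast, does not produce the right sequence. The horseshoe lemma applied to $0\to\mathrm{soc}\,D(\Lambda e_i)\to D(\Lambda e_i)\to D(\Lambda e_i)/\mathrm{soc}\to 0$ yields a conflation in $\mathcal{P}(\Lambda)$ with $X_{\mathrm{soc}\,D(\Lambda e_i)}$ on the \emph{left} and $X_{D(\Lambda e_i)/\mathrm{soc}}$ on the \emph{right}; the object $X_{D(\Lambda e_i)}$ sits in the \emph{middle}. The target sequence, however, has $X_{D(\Lambda e_i)}$ on the left and $T(e_i\Lambda)$ on the right. Your sentence ``the resulting cokernel of the induced map $X_{D(\Lambda e_i)}\to X_{D(\Lambda e_i)/\mathrm{soc}}$ \dots\ lands on $T(e_i\Lambda)$'' is attempting to extract a second, different conflation from the horseshoe data, but this step is neither the horseshoe lemma nor obvious: in the exact category $\mathcal{P}(\Lambda)$ the componentwise cokernel of that map need not be an object of $\mathcal{P}(\Lambda)$ at all, and you have not shown the map is an inflation. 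Even granting the construction, your verification of the right almost split property (``a diagram chase using minimality of the presentations'') is precisely the content of Proposition~5.9 of \cite{Bautista04} and is not a routine chase; you would in effect be reproving that proposition from scratch without supplying the argument. Either cite the result, or give an honest proof that any non-retraction $Y\to T(e_i\Lambda)$ factors through your middle term.
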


\begin{proof}
Since $i\neq 0$, so $D(\Lambda e_{i})$ is a non simple injective, thus by Proposition $5.9$ of \cite{Bautista04}, there
is an almost split sequence in $\mathcal{P}(\Lambda )$:
$$(a)\quad  X_{D(\Lambda e_{i})}\rightarrow X_{D(\Lambda e_{i})/\mathrm{soc}D(\Lambda e_{i})}\oplus T(Z)\rightarrow T(e_{i}\Lambda )$$
where $Z$ is a finitely generated projective right $\Lambda $-module.

Here $D(\Lambda e_{i})$, $D(\Lambda e_{i})/\mathrm{soc}D(\Lambda e_{i})$ are in $\mathcal{V}$, therefore
the objects $X_{D(\Lambda e_{i})},$ and  $ X_{D(\Lambda e_{i})/\mathrm{soc}D(\Lambda e_{i})}$ are in $\mathcal{M}$.

Now if $T(Z)$ appears, then there is a monomorphism $Z\rightarrow \mathrm{rad}e_{i}\Lambda $, thus
$Ze_{0}=0$, which implies that $T(Z)\in \mathcal{M}$. Therefore the almost split sequence $(a)$ lies in
$\mathcal{M}$. This completes the proof.
\end{proof}

\begin{proposition}\label{InjectiveIsom}
The injective indecomposables in $\mathcal{M}$ are isomorphic to the objects $T(e_{i}\Lambda )$ with $i\neq 0$ and $X_{D(\Lambda e_{0})}$.
\end{proposition}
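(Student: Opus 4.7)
The plan is to verify three things in succession: (a) the listed objects indeed belong to $\mathcal{M}$, (b) each is injective there, and (c) any indecomposable injective of $\mathcal{M}$ is isomorphic to one of them. For (a), since $0$ is the minimal element of $\mathscr{P}$ one has $e_i\Lambda e_0=\mathcal{R}_{i,0}=0$ for $i\neq 0$, giving $T(e_i\Lambda)\in\mathcal{M}$; and for $X_{D(\Lambda e_0)}=(P_1\stackrel{f}{\to} e_0\Lambda)$, the projective cover $P_1$ of $\mathrm{rad}(e_0\Lambda)$ satisfies $P_1e_0=0$ because the top of $\mathrm{rad}(e_0\Lambda)$ only involves simples $S_j$ with $j>0$.

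For (b), the object $T(e_i\Lambda)$ is already injective in the ambient $\mathcal{P}(\Lambda)$ and therefore stays injective in the exact subcategory $\mathcal{M}$. For $X_{D(\Lambda e_0)}$ I would verify injectivity by a direct $\mathrm{Ext}^1$ computation: given $W=(W^1\stackrel{w}{\to} W^2)\in\mathcal{M}$ and an extension $0\to X_{D(\Lambda e_0)}\to E\to W\to 0$ in $\mathcal{M}$, both level sequences split (because $W^1$ is projective and $(e_0\Lambda)^\mu$ is injective), so after choosing splittings the extension is encoded by a single homomorphism $b\colon W^1\to e_0\Lambda$ modulo the equivalence $b\sim b+fs-tw$ with $s\in\mathrm{Hom}(W^1,P_1)$ and $t\in\mathrm{Hom}(W^2,e_0\Lambda)$. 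Since $W^1 e_0=0$ forces $\mathrm{Im}\,b\subset\mathrm{rad}(e_0\Lambda)$ and $f\colon P_1\twoheadrightarrow\mathrm{rad}(e_0\Lambda)$ is a projective cover, the projectivity of $W^1$ lets us lift $b$ to some $s$ with $fs=b$; thus every such extension splits.

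For (c), I would show that every $Y=(P\stackrel{\phi}{\to}(e_0\Lambda)^\nu)\in\mathcal{M}$ embeds monomorphically into a direct sum of the listed injectives. The hypothesis $Pe_0=0$ forces $\mathrm{Im}\,\phi\subset\mathrm{rad}((e_0\Lambda)^\nu)=\mathrm{Im}\,f^\nu$; by projectivity of $P$ this yields a lift $s\colon P\to P_1^\nu$ with $f^\nu s=\phi$. Pairing $(s,\mathrm{id}_P)$ at level one with the identity of $(e_0\Lambda)^\nu$ at level two produces a morphism $Y\to X_{D(\Lambda e_0)}^\nu\oplus T(P)$ that is monic at both levels (thanks to $\mathrm{id}_P$), commutes with the structure maps, and has cokernel $T(P_1^\nu)\in\mathcal{M}$. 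This exhibits a short exact sequence in $\mathcal{M}$; if $Y$ is injective it splits, so $Y$ is a direct summand of $X_{D(\Lambda e_0)}^\nu\oplus\bigoplus_j T(e_j\Lambda)^{n_j}$, and Krull-Schmidt forces $Y$ to be isomorphic to one of the listed indecomposables.

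The main obstacle is the vanishing of $\mathrm{Ext}^1$ in (b): unlike $T(e_i\Lambda)$, the object $X_{D(\Lambda e_0)}$ is \emph{not} injective in $\mathcal{P}(\Lambda)$, so injectivity in $\mathcal{M}$ becomes visible only after using both constraints defining $\mathcal{M}$ (codomain a power of $e_0\Lambda$ and domain annihilating $e_0$) together with the defining property of $f$ as a projective cover of $\mathrm{rad}(e_0\Lambda)$.
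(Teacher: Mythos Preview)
Your argument is correct and, for part (c), essentially identical to the paper's: both embed an arbitrary $Y=(P\stackrel{\phi}{\to}(e_0\Lambda)^\nu)\in\mathcal{M}$ into $X_{D(\Lambda e_0)}^\nu\oplus T(P)$ by lifting $\phi$ through the sink morphism $\mu\colon Q\to e_0\Lambda$ (your $f\colon P_1\to e_0\Lambda$), obtain cokernel $T(Q^\nu)\in\mathcal{M}$, and conclude via Krull--Schmidt.

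The only genuine difference is in part (b). The paper does not compute $\mathrm{Ext}^1_{\mathcal{M}}(-,X_{D(\Lambda e_0)})$ directly; it simply observes that $T(e_i\Lambda)$ and $X_{D(\Lambda e_0)}$ are already injective in the larger exact category $\mathcal{P}^1(\Lambda)$ (this is a standard fact from \cite{Bautista04}, where the injectives of $\mathcal{P}^1(\Lambda)$ are exactly the $T(P)$'s and the minimal presentations $X_I$ of the injective $\Lambda$-modules $I$), and since $\mathcal{M}\subset\mathcal{P}^1(\Lambda)$ is extension-closed, injectivity descends. Your direct splitting argument is a valid self-contained alternative: it trades the citation for an explicit use of the two defining constraints of $\mathcal{M}$ (namely $W^1e_0=0$ forcing $\mathrm{Im}\,b\subset\mathrm{rad}(e_0\Lambda)$, and the codomain being a power of the projective-injective $e_0\Lambda$). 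The paper's route is shorter if one already has the injectives of $\mathcal{P}^1(\Lambda)$ at hand; yours makes the proof independent of that reference and highlights precisely which features of $\mathcal{M}$ are responsible for the injectivity of $X_{D(\Lambda e_0)}$.
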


\begin{proof}
We first prove that for any $X\in \mathcal{M}$ there is an exact sequence in $\mathcal{M}$
$$X\stackrel{g}{\rightarrow } W\stackrel{h}{\rightarrow }X^{\prime }$$
where $W$ is a direct sum of objects of the form $T(e_{i}\Lambda ) $ with $i\neq 0$ and $X_{D(\Lambda e_{0})}$. 

Observe that the morphism $X_{D(\Lambda e_{0})}=\mu :Q\rightarrow e_{0}\Lambda $ is a sink morphism in
$\mathrm{proj}\, \Lambda $. Now suppose $X=u:P\rightarrow (e_{0}\Lambda )^{\nu }$ for $\nu $ a natural number
and $P$ a projective. We have $u=(u_{1},...,u_{\nu })^{t}:P\rightarrow (e_{0}\Lambda )^{\nu }$. Here
$\mathrm{Im}u_{i}\subset \mathrm{rad}(e_{0}\Lambda )$ for $i=1,...,\nu$, then since $\mu $ is a sink morphism
there are morphisms $s_{i}:P\rightarrow Q$ such that $\mu s_{i}=u_{i}$ for $i=1,...,\nu $.

Take $s=(s_{1},...,s_{\nu })^{t}:P\rightarrow Q^{\nu }$ and $\mu ^{\nu }:Q^{\nu }\rightarrow (e_{0}\Lambda )^{\nu }$,
then $\mu ^{\nu }s=u$.

We consider the following objects in $\mathcal{M}$:
$$W=(P\longrightarrow 0)\oplus (Q^{\nu }\stackrel{\mu ^{\nu }}{\longrightarrow }(e_{0}\Lambda )^{\nu }), \quad
X^{\prime }=(Q^{\nu }\longrightarrow 0).$$

Then we have the exact sequence:
$$X\stackrel{g}{\longrightarrow }W\stackrel{h}{\longrightarrow }X^{\prime },$$
where $g=(g_{1},g_{2})^{t}$ with $g_{1}=(id_{P},0),\  g_{2}=(s,id _{(e_{0}\Lambda )^{\nu }})$, 
$h=(h_{1},h_{2})$ with $h_{1}=(-s,0), h_{2}=(id_{Q^{\nu }},0)$. 

Therefore if $X$ is an indecomposable injective object in $\mathcal{M}$, it is a direct summand of $W$, consequently
$X$ is isomorphic to some object of the form $T(e_{i}\Lambda )$ with $i\neq 0$ or $X\cong X_{D(\Lambda e_{0})}$.

Now the objects $T(e_{i}\Lambda )$ with $i\neq 0$, and $X_{D(\Lambda e_{0})}$ are injectives in $\mathcal{P}^{1}(\Lambda )$, so they are injectives in $\mathcal{M}$, this proves our result.
\end{proof}

We recall that we have a functor $\mathrm{Cok}:\mathcal{M}\rightarrow \mathcal{V}$ such that $\mathrm{Cok}X=\mathrm{Coker}X$ for any $X:X^{1}\rightarrow X^{2}\in \mathcal{P}(\Lambda )$.

This functor induces an equivalence from the category $\mathcal{M}$ module the ideal $\mathcal{I}$ in $\mathcal{M}$  generated by those morphisms which are factorized through objects of the form $P\rightarrow 0$. For $X, Y\in \mathcal{M}$ we denote by $\mathcal{I}(X,Y)$ the $\textsf{F}$-vector space consisting of those morphisms from 
$X$ to $Y$ which are factorized through objects of the form $P\rightarrow 0$.

We have the following useful result.

\begin{lemma}\label{lemaIdeal} 
Let $f:X\rightarrow Y$ be a morphism in $\mathcal{M}$.

\begin{enumerate}
\item If  $\,Y=X_{F(e_{i}\Lambda )}$, and $f\in \mathcal{I}(X,Y)$, then $f=0$.

\item Suppose  $X$, $Y$ are indecomposable objects with $\mathrm{Cok}(X)\neq 0$ and $\mathrm{Cok}(Y)\neq 0$. Then $f$ is an isomorphism if and only if $\mathrm{Cok}(f)$ is an isomorphism.

\item  If  $X$ is indecomposable and $\mathrm{Cok}(X)\neq 0$,
then $f$ is a section if and only if $\mathrm{Cok}(f)$ is a section. If $Y$ is indecomposable with $\mathrm{Cok}(Y)\neq 0$, then
$f$ is a retraction if and only if $\mathrm{Cok}(f)$ is a retraction.

\item The ideal $\mathcal{I}$ is admissible in the sense of the definition $1.6$ of \cite{SL}.

\item Let $X$ be indecomposable and $\mathrm{Cok}(X)\neq 0$. If $f$ is a source morphism then $\mathrm{Cok}(f)$ is a source morphism. If $\mathrm{Cok}(f)$ is a source morphism and $Y$ has not direct summnds of the form $P\rightarrow 0$, then there is a source morphism $(f,f^{\prime })^{T}:X\rightarrow Y\oplus Z$ with $Z=Q\rightarrow 0$.

\item Let $Y$ be indecomposable with $\mathrm{Cok}(Y)\neq 0$. If $f$ is a sink morphism then $\mathrm{Cok}(f)$ is
a sink morphism. If $\mathrm{Cok}(f)$ is a sink morphism and $X$ has not direct summands of the form $P\rightarrow 0$, then there is an  object of the form $Z=Q\rightarrow 0$ and a sink morphism $(f,f^{\prime }):X\oplus Z  \rightarrow Y$.

\item If $X$ is an indecomposable object of the category $\mathcal{M}$ and $\mathrm{Cok}(X)\neq 0$, then
$$\mathrm{End}_{\mathcal{M}}(X)/\mathrm{rad}\,\mathrm{End}_{\mathcal{M}}(X)\cong \mathrm{End}_{\mathcal{V}}(\mathrm{Cok}(X))/\mathrm{rad}\,\mathrm{End}_{\mathcal{V}}(\mathrm{Cok}(X));$$
as $\textsf{F}$-algebras.

\end{enumerate}
\end{lemma}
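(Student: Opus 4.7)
The engine of the entire lemma is the induced equivalence $\overline{\mathrm{Cok}}\colon \mathcal{M}/\mathcal{I}\xrightarrow{\sim}\mathcal{V}$ provided by the cokernel functor. The plan is to first establish (1), derive (4) and (7) from it, then obtain (2) and (3) by lifting through $\overline{\mathrm{Cok}}$, and finally handle (5) and (6), where the main obstacle lies.

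For (1), write the factorization as $f=hg$ with $g\colon X\to W$ and $h\colon W\to Y$ where $W=(P\to 0)$. In components $g=(g_{1},0)$ and $h=(h_{1},0)$, and the commutativity condition for $h$ in $\mathcal{P}(\Lambda)$ reduces to $\phi_{Y}h_{1}=0$, where $\phi_{Y}\colon Y^{1}\to Y^{2}$ is the structural map of $Y$. When $Y=X_{F(e_{i}\Lambda)}$, the module $F(e_{i}\Lambda)$ is identified with an injective $\Lambda$-module $D(\Lambda e_{i})$ in $\mathcal{V}$ (as in Proposition~\ref{ASsequence}), and since $\Lambda$ is $1$-Gorenstein, such an injective has projective dimension at most $1$. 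Consequently $\phi_{Y}$ is a monomorphism in the minimal projective presentation, so $h_{1}=0$ and $f=0$.

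For (4) and (7), the foundational observation is $\mathcal{I}(X,X)\subseteq\mathrm{rad}\,\mathrm{End}_{\mathcal{M}}(X)$ for every indecomposable $X$ with $\mathrm{Cok}(X)\neq 0$: any $f\in\mathcal{I}(X,X)$ has $\mathrm{Cok}(f)=0$ and hence cannot be invertible in the local ring $\mathrm{End}_{\mathcal{M}}(X)$. Admissibility in the sense of Definition~1.6 of \cite{SL} follows by combining this radical containment with the factorization and additivity properties of $\mathcal{I}$ inherited from $\overline{\mathrm{Cok}}$. Item (7) is then immediate: $\overline{\mathrm{Cok}}$ gives $\mathrm{End}_{\mathcal{M}}(X)/\mathcal{I}(X,X)\cong\mathrm{End}_{\mathcal{V}}(\mathrm{Cok}(X))$, and taking the further quotient by the respective radicals yields the stated isomorphism. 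For (2) and (3), the ``only if'' directions are pure functoriality; for each converse, lift the candidate inverse, retraction, or section $v$ in $\mathcal{V}$ to $\tilde v$ in $\mathcal{M}$ via $\overline{\mathrm{Cok}}$, so that the witnessing composition with $f$ equals the relevant identity modulo $\mathcal{I}$. By (4) this composition is a unit, producing a genuine inverse, retraction, or section of $f$.

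For (5) and (6) the direct implications follow by applying $\overline{\mathrm{Cok}}$ and invoking (3). The converse is the main difficulty. Given a non-split $u\colon X\to W$, lift the factorization of $\mathrm{Cok}(u)$ through $\mathrm{Cok}(f)$ to some $\tilde v\colon Y\to W$, so that $\tilde v f-u\in\mathcal{I}(X,W)$. The error $\tilde v f-u$ is a sum of maps factoring through individual objects $(P\to 0)$; the key step is to amalgamate these finitely many factorizations into a single auxiliary object $Z=(Q\to 0)$ and a single map $f'\colon X\to Z$ depending only on $X$ and the source data of $f$ (not on $u$ or $W$), so that $(f,f')^{T}\colon X\to Y\oplus Z$ is the universal source morphism. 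Item (6) is dual, with $(f,f')\colon X\oplus Z\to Y$ constructed by the same amalgamation from the sink side.
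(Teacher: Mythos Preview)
Your treatment of items (2), (3), (7) and the admissibility part (4) follows the same line as the paper, with the equivalence $\mathcal{M}/\mathcal{I}\simeq\mathcal{V}$ and the inclusion $\mathcal{I}(X,X)\subset\mathrm{rad}\,\mathrm{End}_{\mathcal{M}}(X)$ doing the work. Two points, however, need correction.

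In (1) you claim that $F(e_{i}\Lambda)$ coincides with the injective $D(\Lambda e_{i})$, citing Proposition~\ref{ASsequence}. That proposition concerns $X_{D(\Lambda e_{i})}$ as the Auslander--Reiten translate of $T(e_{i}\Lambda)$ and says nothing about $F$; in general $F(e_{i}\Lambda)\not\cong D(\Lambda e_{i})$. The paper's argument is more direct: by the very construction of the equivalence $F:\mathcal{U}\to\mathcal{V}$ one has a short exact sequence $0\to e_{i}\Lambda\to (e_{0}\Lambda)^{\ell}\to F(e_{i}\Lambda)\to 0$, so $X_{F(e_{i}\Lambda)}$ is the monomorphism $e_{i}\Lambda\hookrightarrow (e_{0}\Lambda)^{\ell}$ itself, and any $(s,0):(P\to 0)\to Y$ is forced to have $s=0$.

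The more serious gap is in your converse to (5) and (6). You propose to ``amalgamate'' the error terms $\tilde v f-u\in\mathcal{I}(X,W)$ into a single $f':X\to Z$ with $Z=(Q\to 0)$ \emph{independent of $u$ and $W$}, but you give no construction of such a universal $Z$, nor any argument that the resulting $(f,f')^{T}$ is right minimal (which a source morphism must be). The paper avoids this entirely: since $\mathcal{M}$ has almost split sequences, there exists an honest source morphism $g:X\to W$; applying the forward direction, $\mathrm{Cok}(g)$ is also a source morphism, hence $\mathrm{Cok}(W)\cong\mathrm{Cok}(Y)$; splitting $W=Z_{1}\oplus Z$ with $Z$ the $(P\to 0)$-part, one gets $\mathrm{Cok}(Z_{1})\cong\mathrm{Cok}(Y)$, and since both $Y$ and $Z_{1}$ are minimal projective presentations of this common cokernel, $Z_{1}\cong Y$. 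This yields the desired source morphism $X\to Y\oplus Z$ without any amalgamation step. You should replace your argument by this comparison-with-an-existing-source-morphism approach, and dually for (6).
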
 

\begin{proof}
\begin{enumerate}
\item Here $Y$ has the form $u:e_{i}\Lambda \rightarrow (e_{0}\Lambda )^{\ell}$, where $\ell=1$ or $\ell=p$ and $u$ is a monomorphism. Then if $(s,0):(P\rightarrow 0)\rightarrow Y$ is a morphism, we have $us=0$, so $s=0$.

\item If $f$ is an isomorphism, clearly $\mathrm{Cok}(f)$ is an isomorphism. Conversely if $\mathrm{Cok}(f)$ is an isomorphism, there is a morphism $s:Y\rightarrow X$ with 
$$\mathrm{Cok}(sf)=\mathrm{Cok}(id_{X}) \text{ and } \mathrm{Cok}(fs)=\mathrm{Cok}(id_{Y}),$$ 
then $sf=id_{X}+h$ and $fs=id_{Y}+g$ with $h\in \mathcal{I}(X,X)$ and
$g\in \mathcal{I}(Y,Y)$. But $h\in \mathrm{rad}\mathrm{End}_{\mathcal{M}}(X)$ which is a nilpotent ideal, so $sf=u_{X}$ is an isomorphism (similarly $fs=u_{Y}$ is an isomorphism). Therefore $f$ is an isomorphism because
$$u_{X}^{-1}sf=id_{X} \text{ and } fu_{X}^{-1}s=id_{Y}.$$ 

\item If $f$ is a section, clearly $\mathrm{Cok}(f)$ is a section. Assume now $\mathrm{Cok}(f)$ is a section. There
is a morphism $s:Y\rightarrow X$ such that $\mathrm{Cok}(sf)$ is the identity on $X$, so by the previous item, $sf$ is an isomorphism. Then, $f$ is a section. The second part is proved in a similar way.

\item Let $f:X\rightarrow Y$ be a morphism between indecomposable objects $X$ and $Y$, which are not of the form $P\rightarrow 0$, and $f\in \mathcal{I}(X,Y)$. It is clear that $f\in \mathrm{rad}^{2}(X,Y)$. Then, the condition $(1)$ of Definition $1.6$ of \cite{SL}, holds.

Consider $f$ a source morphism and $X$ not of the form $P\rightarrow 0$. For any object $W\in \mathcal{M}$ and any morphism $g\in \mathcal{I}(X,W)$, we have $g=vu$ with
$u:X\rightarrow (P\rightarrow 0)$ for some $\Lambda $-projective module $P$. As $u$ can not be a section, there is a morphism $h:Y\rightarrow (P\rightarrow 0)$ such that $hf=u$, then $g=vhf$ with $vh\in \mathcal{I}(Y,W)$. This proves that condition $(2)$ of $1.6$ of \cite{SL} is satisfied. Condition $(3)$ of the mentioned definition holds in a similar way.

\item Suppose $f:X\rightarrow Y$ is a source morphism with $\mathrm{Cok}(X)\neq 0$. From $(2)$ of Lemma $1.7$ of \cite{SL} we conclude that $\mathrm{Cok}(f)$ is a source morphism.

Now let $\mathrm{Cok}(f)$ be a source morphism. The category $\mathcal{M}$ has almost split sequences, then  there is a source morphism 
 $g:X\rightarrow W$, for some $W\in \mathcal{M}$. By the first part of our item $\mathrm{Cok}(g):\mathrm{Cok}(X)\rightarrow \mathrm{Cok}(W)$ is a source morphism. If $W=Z_{1}\oplus Z$ with $Z_{1}$ without direct summands of the form $P\rightarrow 0$ and $Z=Q\rightarrow 0$, we have $\mathrm{Cok}(Z_{1})\cong \mathrm{Cok}(Y)$. By our hypothesis on $Y$ and $Z_{1}$, observe that $Y$ and $Z_{1}$ are minimal projective covers of $\mathrm{Cok}(Y)$ and $\mathrm{Cok}(Z_{1})$, respectively. Then $Z_{1}\cong Y$. From here we obtain our result.

\item The proof of this item is similar to the previous one.

\item Here $\mathcal{I}(X,X)\subset \mathrm{rad}\,\mathrm{End}_{\mathcal{M}}(X)$, therefore the functor
$\mathrm{Cok}$ induces the required isomorphism.

\end{enumerate}
\end{proof}

\begin{proposition}\label{ARseq} 
Suppose
$$X\stackrel{u}{\rightarrow }Y\stackrel{v}{\rightarrow }Z$$
is an almost split sequence in $\mathcal{M}$. Then if $\mathrm{Cok}X\neq 0, \mathrm{Cok}Y\neq 0$ and
$\mathrm{Cok}Z\neq 0$, the sequence
$$0\longrightarrow \mathrm{Cok}X\xrightarrow{\ \mathrm{Cok}\, u\ }\mathrm{Cok}Y\xrightarrow{\ \mathrm{Cok}\, v\ } \mathrm{Cok}Z\longrightarrow 0$$
is an almost split sequence in $\mathcal{V}$.
\end{proposition}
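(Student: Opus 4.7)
The plan is to verify, in turn, that the displayed sequence in $\mathcal{V}$ is exact, non-split, and has the source/sink property characteristic of an almost split sequence; once these three properties are established, the conclusion follows.

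The source/sink property is the easiest part, since Lemma~\ref{lemaIdeal} has been set up for precisely this. Because $X \xrightarrow{u} Y \xrightarrow{v} Z$ is almost split in $\mathcal{M}$, the morphism $u$ is a source morphism for $X$ and $v$ is a sink morphism for $Z$; both $X$ and $Z$ are indecomposable with non-zero cokernels, so Lemma~\ref{lemaIdeal}(5) gives that $\mathrm{Cok}\,u$ is a source morphism for $\mathrm{Cok}\,X$ in $\mathcal{V}$, and Lemma~\ref{lemaIdeal}(6) dually gives that $\mathrm{Cok}\,v$ is a sink morphism for $\mathrm{Cok}\,Z$, any direct summands of $Y$ of the form $P \to 0$ disappearing under $\mathrm{Cok}$.

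For non-splitness I argue by contradiction. If $\mathrm{Cok}\,u$ admitted a retraction $s:\mathrm{Cok}\,Y \to \mathrm{Cok}\,X$ in $\mathcal{V}$, then using that $\mathrm{Cok}$ is full modulo the ideal $\mathcal{I}$, one lifts $s$ to some $\tilde s:Y\to X$ in $\mathcal{M}$ satisfying $\mathrm{Cok}(\tilde s u)=\mathrm{Cok}(\mathrm{id}_X)$. Since $X$ is indecomposable with $\mathrm{Cok}\,X\neq 0$, Lemma~\ref{lemaIdeal}(2) forces $\tilde s u$ to be an isomorphism in $\mathcal{M}$; this would make $u$ a section, contradicting its almost split role.

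The hard part is exactness. Right-exactness of $\mathrm{Cok}$ yields exactness at $\mathrm{Cok}\,Y$ and at $\mathrm{Cok}\,Z$ for free, so the remaining issue is the injectivity of $\mathrm{Cok}\,u$. The exact structure on $\mathcal{P}(\Lambda)$ produces split short exact rows $0\to X^{i}\to Y^{i}\to Z^{i}\to 0$ at each level $i=1,2$, and the snake lemma then gives
$$0\to \mathrm{Ker}\,X\to \mathrm{Ker}\,Y\to \mathrm{Ker}\,Z\xrightarrow{\ \delta\ }\mathrm{Cok}\,X\to \mathrm{Cok}\,Y\to \mathrm{Cok}\,Z\to 0.$$
One must show $\delta=0$. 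After fixing splittings, $Y$ is realized as an upper-triangular block morphism with diagonal entries $X$ and $Z$ and off-diagonal entry some map $\beta:Z^{1}\to X^{2}$; the connecting morphism $\delta$ is induced by $\beta$, so its vanishing is equivalent to the inclusion $\beta(\mathrm{Ker}\,Z)\subseteq \mathrm{Im}\,X$. I expect this inclusion to be extracted by combining the hypothesis $\mathrm{Cok}\,X\neq 0$ with the particular form of objects in $\mathcal{M}$ (whose codomains are copies of the projective-injective $e_{0}\Lambda$) and the almost split structure; this bookkeeping step is where I anticipate the main effort.
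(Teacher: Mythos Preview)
Your handling of the source/sink property via Lemma~\ref{lemaIdeal}(5),(6) and of non-splitness via Lemma~\ref{lemaIdeal}(2) is correct. The gap is exactly where you locate it: you do not establish that the connecting map $\delta:\mathrm{Ker}\,Z\to\mathrm{Cok}\,X$ vanishes, and the sentence ``I expect this inclusion to be extracted\ldots'' is not an argument. Nothing you have written ties the vanishing of $\delta$ to the almost split hypothesis or to the special shape of objects in $\mathcal{M}$, so as it stands the proof does not produce a short exact sequence in $\mathcal{V}$ and is incomplete.

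The paper avoids this computation altogether. Rather than checking exactness, source, sink, and non-splitness separately, it observes that $\mathrm{Cok}$ realises $\mathcal{V}$ as the quotient $\mathcal{M}/\mathcal{I}$ and that $\mathcal{I}$ is an \emph{admissible} ideal in the sense of \cite{SL} (this is Lemma~\ref{lemaIdeal}(4)). Lemma~1.7 of \cite{SL} then gives in one stroke that an almost split sequence in $\mathcal{M}$ whose three terms survive the quotient descends to an almost split sequence in $\mathcal{M}/\mathcal{I}\simeq\mathcal{V}$; exactness in $\mathcal{V}$ is part of that conclusion, not something to be verified by hand via the snake lemma. Note that your uses of Lemma~\ref{lemaIdeal}(5),(6) are themselves applications of that same Lemma~1.7, so once the admissible-ideal framework is available, invoking it once for the whole sequence is both shorter and complete. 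If you insist on a direct argument, one way to close the gap is to use that $\mathcal{V}$ is Auslander--Reiten: since $\mathrm{Cok}\,v$ is a sink morphism that is not mono (because $\mathrm{Cok}\,u\neq 0$ and $\mathrm{Cok}\,v\circ\mathrm{Cok}\,u=0$), there is an almost split sequence in $\mathcal{V}$ ending in $\mathrm{Cok}\,Z$, and uniqueness of sink morphisms identifies $\mathrm{Cok}\,v$ with its epimorphism; matching this with the dual statement for $\mathrm{Cok}\,u$ forces the sequence to be short exact.
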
 

\begin{proof}
From Proposition 35 of \cite{BD2017}, we have that the functor $\mathrm{Cok}$ induces an
equivalence $\mathcal{M}/\mathcal{I}\rightarrow \mathcal{V}$. By 2 of the previous lemma, $\mathcal{I}$ is an admissible ideal in the sense of Definition $1.6$ of \cite{SL}. Then our result follows from Lemma $1.7$ of \cite{SL}.
\end{proof}

 %%%%%%%%%%%%%%%%%%%%%%%%%%%%%%%%%%%%%%%%%%%%%%%%%%%%%%%%%%%%%%%%%%%%%%% 
%%%%%%%%%%%%%%%%%%%%%%%%%%%%%%%%%%%%%%%%%%%%%%%%%%%%%%%%%%%%%%%%%%%%%%%%%%%%%%%%%%%%%%%%%

\section{The Euler form}

In this section, for $X,Y\in \mathcal{M}$ we calculate in Proposition \ref{EulerForm}  the Euler form

$$\langle X,Y\rangle =\mathrm{dim}_\textsf{F}\mathrm{Hom}_{\mathcal{M}}(X,Y)-\mathrm{dim}_\textsf{F}\mathrm{Ext}_{\mathcal{M}}(X,Y).$$

For doing this we first observe that the category $\mathcal{M}$ is a full subcategory of $\mathcal{P}(\Lambda )$ closed under extensions, then for $X,Y\in \mathcal{M}$ we have
$$\langle X,Y\rangle =\mathrm{dim}_\textsf{F}\mathrm{Hom}_{\mathcal{P}(\Lambda )}(X,Y)-\mathrm{dim}_\textsf{F}\mathrm{Ext}_{\mathcal{P}(\Lambda )}(X,Y).$$

As a consequence of Proposition \ref{EulerForm} we will prove Proposition \ref{IsomObjects} which will be used in  subsequent sections.

The Euler form will be expressed in terms of the \textit{coordinates} of $X$ and $Y$ according with the following.

\begin{definition}
Let $X:X^{1}\rightarrow X^{2}$ be an object in $\mathcal{M}$ with
$X^{1}=\oplus _{0<i\in \mathscr{P}}(e_{i}\Lambda )^{d^{X}_{i}}$, $X^{2}=(e_{0}\Lambda )^{d^{X}_{0}}$.
Then the coordinates of $X$ is the element $\mathrm{cd}(X)\in \mathbb{Q}^{|\mathscr{P}|}$ such that $\mathrm{cd}(X)(i)=d^{X}_{i}$, for $i\in \mathscr{P}$.
\end{definition}

Let $\mathscr{P}$ be a $p$-equipped poset, define for every $i\in\mathscr{P}$
$$\ell^{(r)}_{i,i}=\begin{cases} 
1, & \text{ if }  i \text{ is strong},\\
p, & \text{ if }  i \text{ is weak};
\end{cases}$$
and for every pair $i,j\in\mathscr{P}$
$$\ell^{(r)}_{i,j}=\begin{cases} 
\ell \ell_{i,i}\ell_{j,j}/p, & \text{ if }  i<^\ell j,\\
0, & \text{ otherwise};
\end{cases}$$
$$\ell^{(c)}_{i,j}=\begin{cases} 
\ell, & \text{ if }  i\leq^\ell j,\\
0, & \text{ otherwise}.
\end{cases}$$

 Associated to the $p$-equipped poset $\mathscr{P}$ we have two bilinear forms:
 
 $$\mathfrak{b}_{\mathscr{P}}^{(r)}, \mathfrak{b}_{\mathscr{P}}^{(c)}:\mathbb{Q}^{|\mathscr{P}|}\times \mathbb{Q}^{|\mathscr{P}|}\rightarrow \mathbb{Q}$$
  defined for $d_{1},d_{2}\in \mathbb{Q}^{|\mathscr{P}|}$ as follows:

$$\mathfrak{b}_{\mathscr{P}}^{(r)}(d_{1},d_{2})=\sum _{0\leq i\in \mathscr{P}}\ell^{(r)}_{i,i}d_{1}(i)d_{2}(i)+\sum _{0<i<j \in \mathscr{P}}\ell^{(r)}_{i,j}d_{1}(i)d_{2}(j)-d_{1}(0)\sum _{0<i\in \mathscr{P}}\ell^{(r)}_{i,i}d_{2}(i);$$

$$\mathfrak{b}_{\mathscr{P}}^{(c)}(d_{1},d_{2})=\sum _{0\leq i\in \mathscr{P}}\ell^{(c)}_{i,i}d_{1}(i)d_{2}(i)+\sum _{0<i<j \in \mathscr{P}}\ell^{(c)}_{i,j}d_{1}(i)d_{2}(j)-pd_{1}(0)\sum _{0<i\in \mathscr{P}}\ell^{(c)}_{i,i}d_{2}(i);$$
and the corresponding quadratic forms on $\mathbb{Q}^{|\mathscr{P}|}$,

$$q^{(r)}(d)= q_{\mathscr{P}}^{(r)}(d)=\sum _{0\leq i\in \mathscr{P}}\ell^{(r)}_{i,i}d(i)^{2}+\sum _{0<i<j \in \mathscr{P}}\ell^{(r)}_{i,j}d(i)d(j)-d(0)\sum _{0<i\in \mathscr{P}}\ell^{(r)}_{i,i}d(i);$$

$$q^{(c)}(d)=q_{\mathscr{P}}^{(c)}(d)=\sum _{0\leq i\in \mathscr{P}}\ell^{(c)}_{i,i}d(i)^{2}+\sum _{0<i<j \in \mathscr{P}}\ell^{(c)}_{i,j}d(i)d(j)-pd(0)\sum _{0<i\in \mathscr{P}}\ell^{(c)}_{i,i}d(i).$$

\begin{proposition}\label{EulerForm}
For $X,Y\in \mathcal{M}$ the Euler quadratic form
$$\langle X,Y\rangle =\mathfrak{b}_{\mathscr{P}}(\mathrm{cd}(Y),\mathrm{cd}(X)).$$
\end{proposition}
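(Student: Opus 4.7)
The plan is to compute the Euler form on $\mathcal{P}(\Lambda)$---which by the discussion preceding the proposition equals the Euler form on $\mathcal{M}$---by writing a length-one projective resolution of $X$ in $\mathcal{P}(\Lambda)$ and translating everything into Hom-dimensions over $\Lambda$. Recall that the projective objects of $\mathcal{P}(\Lambda)$ are the objects $I(P)=(P\xrightarrow{\mathrm{id}}P)$ and $S(P)=(0\to P)$ with $P$ projective. For any $X\colon X^{1}\to X^{2}$ in $\mathcal{P}(\Lambda)$ there is a functorial short exact sequence
$$0\to S(X^{1})\to I(X^{1})\oplus S(X^{2})\to X\to 0,$$
in which the first map is $a\mapsto (-a,X(a))$ and the second is $(a,b)\mapsto (a,X(a)+b)$. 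Since $X^{1}$ is projective, $S(X^{1})$ is projective too, so this really is a projective resolution.

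Applying $\mathrm{Hom}_{\mathcal{P}(\Lambda)}(-,Y)$ and using the standard isomorphisms $\mathrm{Hom}_{\mathcal{P}(\Lambda)}(I(P),Y)\cong \mathrm{Hom}_{\Lambda}(P,Y^{1})$ (via $(f_{1},Yf_{1})\mapsto f_{1}$) and $\mathrm{Hom}_{\mathcal{P}(\Lambda)}(S(P),Y)\cong \mathrm{Hom}_{\Lambda}(P,Y^{2})$ (via $(0,f_{2})\mapsto f_{2}$), I obtain the key identity
$$\langle X,Y\rangle = \dim_{\textsf{F}}\mathrm{Hom}_{\Lambda}(X^{1},Y^{1}) + \dim_{\textsf{F}}\mathrm{Hom}_{\Lambda}(X^{2},Y^{2}) - \dim_{\textsf{F}}\mathrm{Hom}_{\Lambda}(X^{1},Y^{2}).$$
Now I would expand each Hom-space using the coordinate decompositions $X^{1}=\bigoplus_{0<i}(e_{i}\Lambda)^{d^{X}_{i}}$, $X^{2}=(e_{0}\Lambda)^{d^{X}_{0}}$ (and likewise for $Y$), together with the identification $\mathrm{Hom}_{\Lambda}(e_{i}\Lambda,e_{j}\Lambda)\cong e_{j}\Lambda e_{i}\cong \mathcal{R}_{j,i}$, which vanishes unless $j\leq i$. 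Grouping the resulting contributions into diagonal terms ($i=j$), strictly off-diagonal terms ($0<j<i$), and the piece coming from $\mathrm{Hom}_{\Lambda}(X^{1},Y^{2})$ (which involves only the strong relations $0\leq^{p}i$), one expects to reproduce, summand by summand, the three pieces of $\mathfrak{b}_{\mathscr{P}}(\mathrm{cd}(Y),\mathrm{cd}(X))$.

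The technical heart of the argument is therefore the pointwise identity $\dim_{\textsf{F}}\mathcal{R}_{j,i}=\ell^{(?)}_{j,i}$. In the $(c)$ case this is immediate from the definition $\mathcal{R}^{(c)}_{j,i}=\textsf{F}\langle 1,\xi,\ldots,\xi^{\ell-1}\rangle$, which has $\textsf{F}$-dimension $\ell$. In the $(r)$ case it is the main obstacle: one must analyse $\varepsilon_{j}A_{\ell}\varepsilon_{i}$ separately in the four cases according to the weak/strong type of $j$ and $i$. Here the key geometric input is that $\varepsilon=i\pi$ has $\textsf{F}$-rank one on $\textsf{G}$ while $\mu_{1}=\mathrm{id}_{\textsf{G}}$ has rank $p$, so the factors $\ell^{(r)}_{j,j}$ and $\ell^{(r)}_{i,i}$ appear multiplicatively, yielding $\dim_{\textsf{F}}\varepsilon_{j}A_{\ell}\varepsilon_{i}=\ell\,\ell^{(r)}_{j,j}\ell^{(r)}_{i,i}/p=\ell^{(r)}_{j,i}$. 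This must be checked uniformly in $\mathrm{char}\,\textsf{F}$, i.e.\ for $\vartheta=\sigma$ and $\vartheta=\delta$ alike. Once this dimension count is in hand, matching the Euler expression against the three sums defining $\mathfrak{b}_{\mathscr{P}}$ is routine bookkeeping.
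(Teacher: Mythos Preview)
Your proposal is correct and follows essentially the same route as the paper: reduce the Euler form on $\mathcal{P}(\Lambda)$ to the identity
\[
\langle X,Y\rangle=\dim_{\textsf{F}}\Hom_{\Lambda}(X^{1},Y^{1})+\dim_{\textsf{F}}\Hom_{\Lambda}(X^{2},Y^{2})-\dim_{\textsf{F}}\Hom_{\Lambda}(X^{1},Y^{2}),
\]
and then expand using $\dim_{\textsf{F}}\Hom_{\Lambda}(e_{i}\Lambda,e_{j}\Lambda)=\dim_{\textsf{F}}e_{j}\Lambda e_{i}=\ell_{j,i}$. The only cosmetic difference is that the paper obtains this formula by quoting an exact sequence and Proposition~3.1 from \cite{Bautista04}, whereas you build the explicit projective resolution $0\to S(X^{1})\to I(X^{1})\oplus S(X^{2})\to X\to 0$ yourself; your version is more self-contained, and your discussion of why $\dim_{\textsf{F}}\varepsilon_{j}A_{\ell}\varepsilon_{i}=\ell^{(r)}_{j,i}$ in the four weak/strong cases fills in a step the paper leaves as ``a straightforward calculation.''
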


\begin{proof}
We recall from \cite{Bautista04} that for $X:X^{1}\rightarrow X^{2}\in \mathcal{P}(\Lambda)$ there exists an exact sequence 
$$0\longrightarrow T(X^{1})\longrightarrow T(X^{2}) \oplus I(X^{1}) \longrightarrow X \longrightarrow 0;$$
then for $Y:Y^{1}\rightarrow Y^{2}\in \mathcal{P}(\Lambda)$, we have
$$0\longrightarrow \mathrm{Hom}_{\mathcal{P}(\Lambda)}(X,Y)\longrightarrow \mathrm{Hom}_{\mathcal{P}(\Lambda)}(T(X^{2}),Y) \oplus \mathrm{Hom}_{\mathcal{P}(\Lambda)}(I(X^{1}),Y)$$ 
$$\hspace{4cm}\longrightarrow \mathrm{Hom}_{\mathcal{P}(\Lambda)}(T(X^{1}),Y)\longrightarrow \mathrm{Ext}_{\mathcal{P}(\Lambda)}(X,Y)\longrightarrow 0.$$

From here we obtain an expression for the Euler form, with $Y:Y^{1}\rightarrow Y^{2}$
$$\mathrm{dim}_\textsf{F} \mathrm{Hom}_{\mathcal{P}(\Lambda)}(X,Y) - \mathrm{dim}_\textsf{F}\mathrm{Ext}_{\mathcal{P}(\Lambda)}(X,Y)$$
$$= \mathrm{dim}_\textsf{F} \mathrm{Hom}_{\mathcal{P}(\Lambda)}(T(X^{2}),Y) + \mathrm{dim}_\textsf{F} \mathrm{Hom}_{\mathcal{P}(\Lambda)}(I(X^{1}),Y) - \mathrm{dim}_\textsf{F}\mathrm{Hom}_{\mathcal{P}(\Lambda)}(T(X^{1}),Y);$$
by using the proposition 3.1 of \cite{Bautista04}
$$\mathrm{dim}_\textsf{F} \mathrm{Hom}_{\mathcal{P}(\Lambda)}(X,Y) - \mathrm{dim}_\textsf{F}\mathrm{Ext}_{\mathcal{P}(\Lambda)}(X,Y)$$
$$=\mathrm{dim}_\textsf{F} \mathrm{Hom}_{\Lambda}(X^{2},Y^{2}) + \mathrm{dim}_\textsf{F} \mathrm{Hom}_{\Lambda}(X^{1},Y^{1}) - \mathrm{dim}_\textsf{F} \mathrm{Hom}_{\Lambda}(X^{1},Y^{2}).$$

Consider
$$X:\oplus _{0<i\in \mathscr{P}}(e_{i}\Lambda )^{d^{X}_{i}}\longrightarrow (e_{0}\Lambda )^{d^{X}_{0}};$$
$$Y:\oplus _{0<i\in \mathscr{P}}(e_{i}\Lambda )^{d^{Y}_{i}}\longrightarrow (e_{0}\Lambda )^{d^{Y}_{0}};$$
and notice that 
$$\mathrm{dim}_\textsf{F} \mathrm{Hom}_{\Lambda}(e_i\Lambda, e_j\Lambda)= \ell_{j,i}.$$

Now our result is obtained by a straightforward calculation. 
\end{proof}

\begin{proposition}\label{IsomObjects}
Suppose $X, Y$ are indecomposable objects in $\mathcal{M}$ such that $\mathrm{Ext}_{\mathcal{M}}(X,X)=0$ and
$\mathrm{Ext}_{\mathcal{M}}(Y,Y)=0$. Then $X\cong Y$ if and only if $\mathrm{cd}(X)=\mathrm{cd}(Y)$.
\end{proposition}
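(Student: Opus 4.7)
The forward direction is immediate: $\mathrm{cd}(X)$ is determined by the Krull--Schmidt decompositions of $X^{1}$ and $X^{2}$, which are preserved under isomorphism in $\mathcal{M}$, so $X\cong Y$ forces $\mathrm{cd}(X)=\mathrm{cd}(Y)$.

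For the converse, assume $\mathrm{cd}(X)=\mathrm{cd}(Y)=d$. The strategy has three steps: (i) use Proposition \ref{EulerForm} and the vanishing of self-extensions to obtain dimension bounds for the Hom spaces; (ii) extract non-zero morphisms in both directions; (iii) upgrade such morphisms to an isomorphism. For (i), since $\mathfrak{b}_\mathscr{P}$ depends only on the coordinates, Proposition \ref{EulerForm} gives
$$\langle X,X\rangle =\langle X,Y\rangle =\langle Y,X\rangle =\langle Y,Y\rangle =\mathfrak{b}_\mathscr{P}(d,d),$$
and combining with $\mathrm{Ext}_\mathcal{M}(X,X)=\mathrm{Ext}_\mathcal{M}(Y,Y)=0$ yields
$$\dim_\textsf{F}\mathrm{End}_\mathcal{M}(X)=\dim_\textsf{F}\mathrm{End}_\mathcal{M}(Y)=\dim_\textsf{F}\mathrm{Hom}_\mathcal{M}(X,Y)-\dim_\textsf{F}\mathrm{Ext}_\mathcal{M}(X,Y).$$
Since $\mathrm{id}_X\neq 0$, this forces $\dim_\textsf{F}\mathrm{Hom}_\mathcal{M}(X,Y)\geq \dim_\textsf{F}\mathrm{End}_\mathcal{M}(X)\geq 1$, and symmetrically $\dim_\textsf{F}\mathrm{Hom}_\mathcal{M}(Y,X)\geq 1$; this settles step (ii).

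For step (iii), fix non-zero morphisms $f\colon X\to Y$ and $g\colon Y\to X$. Because $X$ and $Y$ are indecomposable in the Krull--Schmidt category $\mathcal{M}$, both $\mathrm{End}_\mathcal{M}(X)$ and $\mathrm{End}_\mathcal{M}(Y)$ are local, so any composition $gf\in\mathrm{End}_\mathcal{M}(X)$ is either a unit or lies in $\mathrm{rad}\,\mathrm{End}_\mathcal{M}(X)$. In the former case $f$ is a split monomorphism, and indecomposability of $Y$ together with $X\neq 0$ forces $f$ to be an isomorphism, completing the proof.

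The main obstacle is precisely that the simple dimension count above does not by itself produce an invertible composition: if $X\not\cong Y$, Krull--Schmidt already forces every morphism between them to be radical, so every composition automatically lands in $\mathrm{rad}\,\mathrm{End}_\mathcal{M}(X)$. To genuinely force $X\cong Y$, one must extract contradictory information from the Euler-form identity itself. The plan is to apply Proposition \ref{EulerForm} to the object $X\oplus Y$ (whose coordinates are $2d$), and combine the resulting equality with the description of $\mathcal{M}$ as a module category over a Roiter ditalgebra used in Proposition \ref{ASsequence}; in that setting, indecomposable modules with vanishing self-extensions are uniquely determined by their dimension vectors (a Kac-type rigidity statement), and Proposition \ref{EulerForm} is the translation that makes this applicable here, ruling out that every composition $gf$ is radical and hence furnishing the required isomorphism.
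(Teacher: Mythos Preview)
Your Euler-form argument (steps (i) and (ii)) is correct and does produce non-zero morphisms $f\colon X\to Y$ and $g\colon Y\to X$. But, as you yourself note, this does not force $gf$ to be a unit: if $X\not\cong Y$ then by Krull--Schmidt every morphism between them is radical, and all your compositions sit in $\mathrm{rad}\,\mathrm{End}_{\mathcal{M}}(X)$ automatically. The final paragraph does not close this gap. Applying Proposition~\ref{EulerForm} to $X\oplus Y$ gives, by bilinearity, exactly the four quantities you have already computed and nothing more. And the appeal to a ``Kac-type rigidity statement'' for modules over the Roiter ditalgebra $\mathcal{D}^{e}$ is not a proof: that rigidity statement \emph{is} the proposition you are trying to establish, so invoking it is circular unless you supply or cite an independent argument for it.

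The paper avoids this entirely. It observes that $\mathcal{M}$ is a full extension-closed subcategory of $\mathcal{P}^{1}(\Lambda)$, so the vanishing of $\mathrm{Ext}_{\mathcal{M}}(X,X)$ coincides with vanishing of $\mathrm{Ext}_{\mathcal{P}^{1}(\Lambda)}(X,X)$, and then cites Theorems~7.1 and~8.1 of \cite{Bautista04}, which state directly that an indecomposable $X\colon X^{1}\to X^{2}$ in $\mathcal{P}^{1}(\Lambda)$ with vanishing self-extensions is determined up to isomorphism by the pair $(X^{1},X^{2})$. Since $\mathrm{cd}(X)=\mathrm{cd}(Y)$ means precisely $X^{1}\cong Y^{1}$ and $X^{2}\cong Y^{2}$, the conclusion follows. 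The real work (an orbit/open-orbit argument in the matrix-problem description of $\mathcal{P}^{1}(\Lambda)$) lives in that reference; the Euler form plays no role in the paper's proof of this proposition.
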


\begin{proof}
We have that $\mathcal{M}$ is a full subcategory of $\mathcal{P}^{1}(\Lambda )$, closed under extensions.
Then for $X,Y$ indecomposable objects of $\mathcal{M}$, with $X:X^{1}\rightarrow X^{2}$, $Y:Y^{1}\rightarrow Y^{2}$
$$\mathrm{Ext}_{\mathcal{M}}(X,X)=\mathrm{Ext}_{\mathcal{P}^{1}(\Lambda )}(X,X),  \mathrm{Ext}_{\mathcal{M}}(Y,Y)=\mathrm{Ext}_{\mathcal{P}^{1}(\Lambda )}(Y,Y).$$
Then from Theorem $7.1$, and Theorem $8.1$ of [5], follows that if $\mathrm{Ext}_{\mathcal{M}}(X,X)=0$ and
$\mathrm{Ext}_{\mathcal{M}}(Y,Y)=0$ then $X\cong Y$ if and only if $X^{1}\cong Y^{1}$ and $X^{2}\cong Y^{2}$
as $\Lambda $-modules. In our case, this is equivalent to $\mathrm{cd}(X)=\mathrm{cd}(Y)$.
\end{proof}

\begin{corollary}\label{corIsomObjects}
Let $X,Y$ be indecomposable objects in $\mathcal{M}$ such that $\mathrm{Ext}_{\mathcal{M}}(X,X)$ is cero, $\mathrm{cd}(X)=\mathrm{cd}(Y)$,  and
$\mathrm{dim}_{\textsf{F}}\mathrm{End}_{\mathcal{M}}(X)=\mathrm{dim}_{\textsf{F}}\mathrm{End}_{\mathcal{M}}(Y)$, then
$X\cong Y$.
\end{corollary}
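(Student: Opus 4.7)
The plan is to reduce this to Proposition \ref{IsomObjects}, whose hypotheses require vanishing of $\mathrm{Ext}_{\mathcal{M}}$ on both objects; here we are only given vanishing on $X$, so the task is to derive $\mathrm{Ext}_{\mathcal{M}}(Y,Y)=0$ from the equality of coordinates together with the equality of endomorphism dimensions.

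First I would invoke Proposition \ref{EulerForm} to express the Euler form in terms of coordinates:
$$\langle X,X\rangle = \mathfrak{b}_{\mathscr{P}}(\mathrm{cd}(X),\mathrm{cd}(X)), \qquad \langle Y,Y\rangle = \mathfrak{b}_{\mathscr{P}}(\mathrm{cd}(Y),\mathrm{cd}(Y)).$$
Since $\mathrm{cd}(X)=\mathrm{cd}(Y)$ by hypothesis, the right-hand sides agree, so $\langle X,X\rangle = \langle Y,Y\rangle$. Unpacking the definition of the Euler form gives
$$\dim_{\textsf{F}}\mathrm{End}_{\mathcal{M}}(X) - \dim_{\textsf{F}}\mathrm{Ext}_{\mathcal{M}}(X,X) = \dim_{\textsf{F}}\mathrm{End}_{\mathcal{M}}(Y) - \dim_{\textsf{F}}\mathrm{Ext}_{\mathcal{M}}(Y,Y).$$

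Next, using the given equality $\dim_{\textsf{F}}\mathrm{End}_{\mathcal{M}}(X) = \dim_{\textsf{F}}\mathrm{End}_{\mathcal{M}}(Y)$, the endomorphism terms cancel and we obtain $\dim_{\textsf{F}}\mathrm{Ext}_{\mathcal{M}}(X,X) = \dim_{\textsf{F}}\mathrm{Ext}_{\mathcal{M}}(Y,Y)$. The hypothesis $\mathrm{Ext}_{\mathcal{M}}(X,X)=0$ then forces $\mathrm{Ext}_{\mathcal{M}}(Y,Y)=0$.

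Finally, with both $\mathrm{Ext}_{\mathcal{M}}(X,X)=0$ and $\mathrm{Ext}_{\mathcal{M}}(Y,Y)=0$ established, and with $\mathrm{cd}(X)=\mathrm{cd}(Y)$, Proposition \ref{IsomObjects} applies and yields $X\cong Y$. There is no real obstacle here: the whole argument is a bookkeeping computation with the Euler form, and the only nontrivial ingredient is that the Euler form is a function of coordinates alone (Proposition \ref{EulerForm}), which is precisely why an a priori asymmetric-looking hypothesis (vanishing of Ext only on $X$) suffices to conclude the symmetric statement needed by Proposition \ref{IsomObjects}.
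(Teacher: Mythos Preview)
Your proof is correct and follows essentially the same approach as the paper: use that the Euler form depends only on coordinates (Proposition \ref{EulerForm}) to get $\langle X,X\rangle=\langle Y,Y\rangle$, combine this with the equality of endomorphism dimensions and the vanishing of $\mathrm{Ext}_{\mathcal{M}}(X,X)$ to force $\mathrm{Ext}_{\mathcal{M}}(Y,Y)=0$, then apply Proposition \ref{IsomObjects}. The paper writes the same computation slightly more compactly via the quadratic form $q$, but the logic is identical.
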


\begin{proof}
Since $\mathrm{Ext}_{\mathcal{M}}(X,X)=0$, 
$$\mathrm{dim}_{\textsf{F}}\mathrm{End}_{\mathcal{M}}(X)=\langle X,X \rangle =q(\mathrm{cd}(X))=q(\mathrm{cd}(Y))$$
$$= \langle Y,Y \rangle =\mathrm{dim}_{\textsf{F}}\mathrm{End}_{\mathcal{M}}(Y) -
\mathrm{dim}_{\textsf{F}}\mathrm{Ext}_{\mathcal{M}}(Y,Y)$$ 
Then $\mathrm{Ext}_{\mathcal{M}}(Y,Y)=0$, and our result follows
from Proposition \ref{IsomObjects}.
\end{proof}

 %%%%%%%%%%%%%%%%%%%%%%%%%%%%%%%%%%%%%%%%%%%%%%%%%%%%%%%%%%%%%%%%%%%%%%% 
%%%%%%%%%%%%%%%%%%%%%%%%%%%%%%%%%%%%%%%%%%%%%%%%%%%%%%%%%%%%%%%%%%%%%%%%%%%%%%%%%%%%%%%%%

\section{Pseudo hereditary projectives}

In this section we consider pseudo hereditary projective modules according with the definition given bellow. We study some
properties of these objects in the categories $\mathcal{U}$, $\mathcal{V}$ and $\mathcal{M}$.

\begin{definition}\label{PseudoHP}
Let $\mathcal{A}$ be a Krull-Schmidt category with an exact structure having enough projectives and
enough injectives. We say that an indecomposable objet $M\in \mathcal{A}$ is \textit{pseudo hereditary projective} if for
any chain of irreducible morphisms in $\mathcal{A}$:
$$X_{1}\rightarrow X_{2}\rightarrow \cdots \rightarrow X_{l}\rightarrow M$$
one has that $X_{1},...,X_{l}$ are projective objects in $\mathcal{A}$.
\end{definition}

Recall that a projective right module is said to be \textit{hereditary} if its submodules are all projective.

\begin{proposition}\label{PseudoHisH}
A projective $\Lambda $-module in $\mathcal{U}$ is pseudo hereditary if and only if it is hereditary.
\end{proposition}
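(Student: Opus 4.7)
The plan is to prove both implications by induction on the Loewy length of $P$. First I would set up the key observation that, because $P$ is a projective $\Lambda$-module lying in $\mathcal{U}$, and because $\mathcal{U}$ is closed under submodules (a submodule of $M \in \mathcal{U}$ cannot contain $e_{0}\Lambda$ as a direct summand, since $e_{0}\Lambda$ is injective and would then split off from $M$), the inclusion $\rad P \hookrightarrow P$ lies in $\mathcal{U}$ and is the sink morphism at $P$, exactly as used in the proof that $\mathcal{U}$ is Auslander-Reiten. Hence any irreducible morphism $X \to P$ in $\mathcal{U}$ with $X$ indecomposable realizes $X$, up to isomorphism, as an indecomposable direct summand of $\rad P$. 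This translation between the categorical notion of irreducibility in $\mathcal{U}$ and the module-theoretic structure of $P$ is the main point requiring care; once it is in place, both directions proceed by straightforward induction, with trivial base case $\rad P = 0$ (so $P$ is simple projective).

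For $(\Leftarrow)$, assuming $P$ hereditary, I would decompose $\rad P = X_{1} \oplus \cdots \oplus X_{n}$ into indecomposables. Each $X_{i}$ is a submodule of the hereditary projective $P$, hence projective; moreover, every submodule of $X_{i}$ is a submodule of $P$, so each $X_{i}$ is itself hereditary. Each $X_{i}$ has strictly smaller Loewy length than $P$, so the inductive hypothesis gives that every $X_{i}$ is pseudo hereditary projective. For any chain $Y_{1} \to \cdots \to Y_{l} \to P$ of irreducible morphisms in $\mathcal{U}$ with indecomposable terms, the last arrow identifies $Y_{l}$ with one of the $X_{i}$'s, and the pseudo hereditary projectivity of $Y_{l}$ then forces $Y_{1}, \dots, Y_{l-1}$ to be projective; since $Y_{l}$ itself is projective, $P$ is pseudo hereditary projective.

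For $(\Rightarrow)$, assuming $P$ pseudo hereditary projective, I would again decompose $\rad P = X_{1} \oplus \cdots \oplus X_{n}$ into indecomposables. Each inclusion $X_{i} \hookrightarrow P$ is irreducible in $\mathcal{U}$, so the hypothesis immediately yields that each $X_{i}$ is projective; moreover, prepending this irreducible morphism to any irreducible chain ending at $X_{i}$ produces an irreducible chain ending at $P$, so each $X_{i}$ inherits the pseudo hereditary projective property. By induction, each $X_{i}$ is hereditary. To conclude that $\rad P$ is hereditary I would invoke the standard fact that a finite direct sum of hereditary projectives is again hereditary: for $N \subseteq \rad P$, the exact sequence $0 \to N \cap X_{1} \to N \to N/(N \cap X_{1}) \to 0$ has projective kernel (from $X_{1}$ hereditary) and projective cokernel, since it embeds into $X_{2} \oplus \cdots \oplus X_{n}$, which is hereditary by induction on $n$; the sequence splits and $N$ is projective. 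Since $P$ is an indecomposable projective over the Artin algebra $\Lambda$, its unique maximal submodule is $\rad P$, so every proper submodule of $P$ lies in $\rad P$ and is projective; combined with the projectivity of $P$ itself, this shows $P$ is hereditary.
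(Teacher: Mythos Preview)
Your proof is correct and follows essentially the same strategy as the paper's: both directions pivot on identifying irreducible morphisms into the projective $P$ with inclusions of indecomposable summands of $\rad P$, and then descending through the radical series. The paper is simply terser---dispatching $(\Leftarrow)$ as ``clearly'' and handling $(\Rightarrow)$ by showing directly that every $\rad^{l}P$ is projective via length-$l$ chains of irreducibles---while you make the induction on Loewy length and the auxiliary direct-sum-of-hereditary-projectives lemma explicit.
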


\begin{proof}
If $P$ is a hereditary projective module in $\mathcal{U}$, clearly it is pseudo hereditary projective. Now suppose $P$ is a pseudo hereditary projective, then for each $Q$ indecomposable direct summand of $\mathrm{rad}P$ we have an irreducible morphism $Q\rightarrow P$ in $\mathrm{mod}\, \Lambda $, since $Q\in \mathcal{U}$, then $Q\rightarrow P$ is an irreducible morphism in $\mathcal{U}$, therefore $Q$ is projective, so $\mathrm{rad}P$ is a projective
$\Lambda $-module. 

If $Q_{1}$ is a direct summand of $\mathrm{rad}^{2}P$, then we have irreducible morphisms
$Q_{1}\rightarrow Z\rightarrow P$, then $Q_{1}$ is projective and $\mathrm{rad}^{2}P$ is projective. Proceeding
in this way we can prove that $\mathrm{rad}^{l}P$ is projective for all $l\in \mathbb{N}$. This implies that $P$ is a hereditary projective
$\Lambda $-module.
\end{proof}

\begin{proposition}\label{XisPiffCokisP}
Let $X$ be an indecomposable in $\mathcal{M}$. Then $X$ is projective in $\mathcal{M}$
if and only if $\mathrm{Cok}X$ is a no null projective object of $\mathcal{V}$.
\end{proposition}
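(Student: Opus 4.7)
My plan is to combine the equivalence $\mathrm{Cok}\colon \mathcal{M}/\mathcal{I}\to \mathcal{V}$ with the AR-theoretic ingredients already developed (Propositions \ref{ASsequence}, \ref{InjectiveIsom}, \ref{ARseq} and Lemma \ref{lemaIdeal}). First I dispose of the case $\mathrm{Cok}(X)=0$: if $X\colon X^{1}\to X^{2}=(e_{0}\Lambda)^{\nu}$ is indecomposable in $\mathcal{M}$ with $\mathrm{Cok}(X)=0$, then $X$ is surjective, but $\mathrm{Im}(X)\subseteq \mathrm{rad}(X^{2})$ by definition of $\mathcal{M}$, so Nakayama forces $X^{2}=0$ and indecomposability gives $X\cong T(e_{i}\Lambda)$ for some $i\neq 0$. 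Proposition \ref{ASsequence} exhibits an AR sequence in $\mathcal{M}$ terminating at $T(e_{i}\Lambda)$, so $X$ is not projective. Thus the forward direction forces $M:=\mathrm{Cok}(X)\neq 0$, and it remains to show that, assuming $M\neq 0$, $X$ is projective in $\mathcal{M}$ iff $M$ is projective in $\mathcal{V}$.

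For $(\Rightarrow)$, I test projectivity of $M$ directly. Given any short exact sequence $0\to A\to B\xrightarrow{g} M\to 0$ in $\mathcal{V}$, I lift $B$ to $\tilde B\in \mathcal{M}$ (with no summand of the form $P\to 0$) and lift $g$ to $\tilde g\colon \tilde B\to X$ with $\mathrm{Cok}(\tilde g)=g$, using that $\mathrm{Cok}$ is essentially surjective and fully faithful modulo $\mathcal{I}$. Surjectivity of $g$ together with $\mathrm{Im}(X)\subseteq\mathrm{rad}(X^{2})$ yields surjectivity of $\tilde g_{2}$ by Nakayama. Choosing a projective $Q$ surjecting onto $X^{1}/\tilde g_{1}(\tilde B^{1})$ (with $Qe_{0}=0$, since $X^{1}e_{0}=0$) and a lift $g'\colon Z:=(Q\to 0)\to X$ of that surjection, the combined map $(\tilde g, g')\colon \tilde B\oplus Z\to X$ is a deflation in $\mathcal{M}$. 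Projectivity of $X$ splits this SES via some $\sigma\colon X\to \tilde B\oplus Z$, and applying the functor $\mathrm{Cok}$ (using $\mathrm{Cok}(Z)=0$, so $\mathrm{Cok}(\tilde B\oplus Z)=B$) gives $g\circ \mathrm{Cok}(\sigma)=\mathrm{id}_{M}$. Hence $M$ is projective in $\mathcal{V}$.

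For $(\Leftarrow)$, assume $M$ is non-null projective in $\mathcal{V}$ and suppose for contradiction that an AR sequence $0\to Y\to E\to X\to 0$ exists in $\mathcal{M}$. As $Y$ is the left end of an AR sequence, it is non-injective; by Proposition \ref{InjectiveIsom} the injective indecomposables in $\mathcal{M}$ are $T(e_{i}\Lambda)$ with $i\neq 0$ and $X_{D(\Lambda e_{0})}$, so $Y$ is not of the form $P\to 0$, and $\mathrm{Cok}(Y)\neq 0$. If $E$ were itself of the form $P\to 0$, the second components of the underlying pair of short exact sequences of $\Lambda$-modules would force $X^{2}=0$, contradicting $M\neq 0$; hence $\mathrm{Cok}(E)\neq 0$. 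Proposition \ref{ARseq} then produces an AR sequence in $\mathcal{V}$ ending at $M$, contradicting its projectivity. The main obstacle is the construction in Step 2 of the supplementary summand $Z$ and morphism $g'$ ensuring that $(\tilde g, g')$ is an honest deflation in $\mathcal{M}$ rather than just a morphism lifting $g$; once this is carried out, projectivity of $X$ in $\mathcal{M}$ transfers cleanly to projectivity of $M$ in $\mathcal{V}$ through the functor $\mathrm{Cok}$.
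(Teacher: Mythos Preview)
Your argument for the case $\mathrm{Cok}(X)=0$ and for the implication $(\Leftarrow)$ is correct and essentially coincides with the paper's.

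The implication $(\Rightarrow)$, however, has a genuine gap. A morphism $g'=(g'_1,g'_2)\colon (Q\to 0)\to (X^1\xrightarrow{X}X^2)$ in $\mathcal{P}(\Lambda)$ must satisfy $X\circ g'_1=g'_2\circ 0=0$, i.e.\ $g'_1(Q)\subseteq \ker X$. Hence your ``lift $g'$ of that surjection'' can only produce a map whose image lies in $\ker X$, and the combined map $(\tilde g_1,g'_1)$ is surjective onto $X^1$ only when $\tilde g_1(\tilde B^1)+\ker X=X^1$. This last equality is equivalent to $\tilde g_2(\mathrm{Im}\,\tilde B)=\mathrm{Im}\,X$; applying the snake lemma to
\[
\begin{array}{ccccccccc}
0 &\to& \mathrm{Im}\,\tilde B &\to& \tilde B^{2} &\to& B &\to& 0\\
  &   & \downarrow          &   & \downarrow\,\tilde g_{2} & & \downarrow\, g & & \\
0 &\to& \mathrm{Im}\,X       &\to& X^{2} &\to& M &\to& 0
\end{array}
\]
one sees that $\mathrm{Im}\,X/\tilde g_2(\mathrm{Im}\,\tilde B)$ is a quotient of $A=\ker g$, and there is no reason for it to vanish for an arbitrarily chosen lift $\tilde g$. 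So the deflation you need does not exist in general with $Z$ of the form $Q\to 0$.

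The idea is salvageable: instead of lifting $g$ and then patching with $(Q\to 0)$, lift the whole exact sequence. By the horseshoe lemma applied to $0\to A\to B\to M\to 0$ and the minimal presentations $X_A$, $X_M=X$, one obtains an exact pair
\[
X_A\longrightarrow X'\longrightarrow X
\]
in $\mathcal{M}$ with $X'^{i}=X_A^{i}\oplus X^{i}$ and $\mathrm{Cok}(X')\cong B$, the deflation inducing $g$ on cokernels. Projectivity of $X$ in $\mathcal{M}$ splits this sequence, and applying $\mathrm{Cok}$ yields a section of $g$. This repairs your argument.

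For comparison, the paper avoids this construction altogether: it argues $(\Rightarrow)$ by contradiction via Auslander--Reiten theory. Assuming $\mathrm{Cok}(X)$ non-projective, one takes the almost split sequence in $\mathcal{V}$ ending at $\mathrm{Cok}(X)$, observes that its left term $\tau\,\mathrm{Cok}(X)$ is non-injective, hence $X_{\tau\,\mathrm{Cok}(X)}$ is non-injective in $\mathcal{M}$, and therefore starts an almost split sequence $X_{\tau\,\mathrm{Cok}(X)}\to Y\to W$ in $\mathcal{M}$. One checks $\mathrm{Cok}(W)\neq 0$ (using Proposition~\ref{ASsequence} to rule out $W\cong T(e_i\Lambda)$), so Proposition~\ref{ARseq} produces an almost split sequence in $\mathcal{V}$ starting at $\tau\,\mathrm{Cok}(X)$ and ending at $\mathrm{Cok}(W)$; uniqueness forces $\mathrm{Cok}(W)\cong\mathrm{Cok}(X)$, whence $W\cong X$ and $X$ is not projective. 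Your direct approach is conceptually cleaner once the lifting is done correctly, while the paper's AR-theoretic argument reuses exactly the same machinery already needed for $(\Leftarrow)$.
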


\begin{proof}
If $X$ is projective, then $\mathrm{Cok}X\neq 0$, otherwise $X\cong T(e_{i}\Lambda )$ for some $i\in \mathscr{P}, i\neq0$,  but by Proposition \ref{ASsequence}, there is an almost split sequence in $\mathcal{M}$ ending in $T(e_{i}\Lambda )$.

Suppose now that $X$ is projective and $\mathrm{Cok}X$ is not a projective object in $\mathcal{V}$. In this case, we have an almost
split sequence in $\mathcal{V}$:
$$\tau \mathrm{Cok}X\rightarrow E\rightarrow \mathrm{Cok}X.$$

Now by Proposition \ref{InjectiveIsom} the indecomposable injectives in $\mathcal{M}$ are isomorphic to objects
$T(e_{i}\Lambda )$ or $X_{D(\Lambda e_{0})}$. Here $\tau \mathrm{Cok}X$ is not injective in $\mathcal{V}$,
therefore $X_{\tau \mathrm{Cok}X}$ is not injective in $\mathcal{M}$. Then we have an almost split sequence in
$\mathcal{M}$:
$$X_{\tau \mathrm{Cok}X}\rightarrow Y\rightarrow W.$$
If $\mathrm{Cok}W=0$, then $W\cong T(e_{i}\Lambda )$ for some $i$, but then by Proposition \ref{ASsequence}
there is an almost split sequence:
$$X_{D(\Lambda e_{i})}\rightarrow X_{D(\Lambda e_{i})/\mathrm{soc}D(\Lambda e_{i})}\oplus T(Z)\rightarrow T(e_{i}\Lambda ).$$

Then $X_{D(\Lambda e_{i})}\cong X_{\tau\mathrm{Cok}X}$, so $D(\Lambda e_{i})\cong \tau\mathrm{Cok}X$ which is
not possible. Therefore $\mathrm{Cok}W\neq 0$, this implies $\mathrm{Cok}Y\neq 0$. By Proposition \ref{ARseq}, we have an almost split sequence  starting in $\tau \mathrm{Cok}X$ and ending in $\mathrm{Cok}W$, that is $\mathrm{Cok}X\cong \mathrm{Cok}W$, so $X\cong W$, which contradicts $X$ is projective. We conclude that $\mathrm{Cok}X$ is a projective object in $\mathcal{V}$.

Now consider $\mathrm{Cok}X\neq 0$ and $\mathrm{Cok}X$ projective in $\mathcal{V}$. If $X$ is not projective, there is an
almost split sequence in $\mathcal{M}$:
$$Z\rightarrow Y\rightarrow X;$$
where $Z$ is not injective so $\mathrm{Cok}Z\neq 0$, and as $\mathrm{Cok}X\neq 0$, then
$\mathrm{Cok}Y\neq 0$. By Proposition \ref{ARseq}, we have an almost split sequence ending in $\mathrm{Cok}X$, which is a contradiction. Therefore $X$ is projective and the proof is complete.
\end{proof}

\begin{lemma}\label{functorres}
For every $M\in \mathcal{U}$, the restriction to the socle of $M$ induces an injective morphism of
$\textsf{F}$-algebras:
$$\mathrm{res}:\mathrm{End}_{\Lambda }(M)\rightarrow \mathrm{End}_{\Lambda }(\mathrm{soc}(M)).$$
In particular if $\mathrm{soc}(M)$ is a simple module and $\Lambda =\Lambda ^{(r)}$, then $\mathrm{End}_{\Lambda }(M)=\textsf{F}$.
\end{lemma}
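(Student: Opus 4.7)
The plan is to exploit the identity $\mathrm{soc}(M) = Me_{\text{\large \Fontlukas m}}$, valid for every $M \in \mathcal{U}$, and then deduce injectivity by combining right $\Lambda$-linearity of endomorphisms with essentiality of the socle.

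For the identity, the inclusion $Me_{\text{\large \Fontlukas m}} \subseteq \mathrm{soc}(M)$ holds for any finitely generated right $\Lambda$-module: the point $\text{\large \Fontlukas m}$ is maximal in $\mathscr{P}$, so $\mathcal{R}_{\text{\large \Fontlukas m},j} = 0$ for $j \neq \text{\large \Fontlukas m}$ in either admissible system, which forces $e_{\text{\large \Fontlukas m}}\,\mathrm{rad}\Lambda = 0$ (this is precisely the simplicity of $e_{\text{\large \Fontlukas m}}\Lambda$, already recorded in Section 3). Therefore $(Me_{\text{\large \Fontlukas m}})\cdot \mathrm{rad}\Lambda = 0$ and $Me_{\text{\large \Fontlukas m}} \subseteq \mathrm{soc}(M)$. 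The reverse inclusion is where socle-projectivity enters: since $M \in \mathcal{U}$, we have $\mathrm{soc}(M) \cong (e_{\text{\large \Fontlukas m}}\Lambda)^s$, and because $e_{\text{\large \Fontlukas m}}\Lambda = e_{\text{\large \Fontlukas m}}\Lambda e_{\text{\large \Fontlukas m}}$, every element of $\mathrm{soc}(M)$ is fixed by right multiplication with $e_{\text{\large \Fontlukas m}}$, giving $\mathrm{soc}(M) = \mathrm{soc}(M) e_{\text{\large \Fontlukas m}} \subseteq Me_{\text{\large \Fontlukas m}}$.

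Once the identity is established, injectivity is immediate. If $f \in \mathrm{End}_\Lambda(M)$ satisfies $\mathrm{res}(f) = 0$, then right $\Lambda$-linearity yields
$$f(M)\,e_{\text{\large \Fontlukas m}} \;=\; f(Me_{\text{\large \Fontlukas m}}) \;=\; f(\mathrm{soc}(M)) \;=\; 0,$$
hence $f(M) \cap Me_{\text{\large \Fontlukas m}} = 0$, i.e.\ $f(M) \cap \mathrm{soc}(M) = 0$ (any element of the intersection is equal to its right multiple by $e_{\text{\large \Fontlukas m}}$, which lies in $f(M)e_{\text{\large \Fontlukas m}} = 0$). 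Since $M$ has finite length over the finite-dimensional algebra $\Lambda$, its socle is essential, so $f(M) \cap \mathrm{soc}(M) = 0$ forces $f(M) = 0$, proving $f = 0$.

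For the last assertion, suppose $\mathrm{soc}(M)$ is simple and $\Lambda = \Lambda^{(r)}$. Then $\mathrm{soc}(M) \cong e_{\text{\large \Fontlukas m}}\Lambda^{(r)}$, and a direct computation gives $\mathrm{End}_{\Lambda^{(r)}}\!\bigl(e_{\text{\large \Fontlukas m}}\Lambda^{(r)}\bigr) = e_{\text{\large \Fontlukas m}}\Lambda^{(r)}e_{\text{\large \Fontlukas m}} = \mathcal{R}^{(r)}_{\text{\large \Fontlukas m}} \cong \textsf{F}$, since $\text{\large \Fontlukas m}$ is strong and $\mathcal{R}^{(r)}_x \cong \textsf{F}$ at strong points $x$ (cf.\ Section~\ref{SectionPequipped}). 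The embedding $\mathrm{res}$ gives $\mathrm{End}_\Lambda(M) \hookrightarrow \textsf{F}$, which is an equality because $\textsf{F}\cdot\mathrm{id}_M$ already sits inside $\mathrm{End}_\Lambda(M)$. I don't foresee a real obstacle; the whole argument rests on the combinatorial fact that $\text{\large \Fontlukas m}$ is the maximal (strong) point of $\mathscr{P}$, together with the socle-projectivity built into the definition of $\mathcal{U}$.
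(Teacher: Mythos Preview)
Your proof is correct and follows essentially the same line as the paper's: both hinge on the identity $\mathrm{soc}(M)=Me_{\text{\large \Fontlukas m}}$ and the vanishing of $f(M)e_{\text{\large \Fontlukas m}}$ forcing $f=0$. The paper is terser (it writes $f(M)e_{\text{\large \Fontlukas m}}=\mathrm{soc}(f(M))$ directly and concludes), whereas you supply the justification for $\mathrm{soc}(M)=Me_{\text{\large \Fontlukas m}}$ and phrase the conclusion via essentiality of the socle; these are the same argument at different levels of detail.
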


\begin{proof}
We have $\mathrm{soc}(M)=Me_\text{\large \Fontlukas m}$, then for $f\in \mathrm{End}_{\Lambda }(M)$
$$f(\mathrm{soc}(M))=f(Me_\text{\large \Fontlukas m})=f(M)e_\text{\large \Fontlukas m}=\mathrm{soc}(f(M))\subset \mathrm{soc}(M).$$
If $f(\mathrm{soc}(M))=0$, then $\mathrm{soc}(f(M))=0$ and so $f=0$. To finish the proof, recall that the only simple projective of $\Lambda ^{(r)}$ is isomorphic to $\textsf{F}$.
\end{proof}

\begin{proposition}\label{radicalLambdar}
Consider $\Lambda =\Lambda ^{(r)}$, and $i\in \mathscr{P}$.
If $i$ is a weak point and for all $j$ with $i<j$, we have $i<^{p}j$, then
$$\mathrm{rad}(e_{i}\Lambda )\cong T_{i}^{p},$$
where $T_{i}$ is an indecomposable $\Lambda $-module, and $\mathrm{End}_{\Lambda }(T_{i})=\textsf{F}$.

Otherwise, $\mathrm{rad}(e_{i}\Lambda )$ is indecomposable and $\mathrm{End}_{\Lambda }(\mathrm{rad}(e_{i}\Lambda ))\cong \textsf{G}$ if $i$ is weak and $\mathrm{End}_{\Lambda }(\mathrm{rad}(e_{i}\Lambda ))=\textsf{F}$ if $i$ is strong.
\end{proposition}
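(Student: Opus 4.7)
My plan is to analyze $\mathrm{rad}(e_{i}\Lambda)=\bigoplus_{j>i} e_{i,j}\mathcal{R}^{(r)}_{i,j}$ via the matrix realization of $A=(\End_{\textsf{F}}\textsf{G})^{op}\cong M_{p}(\textsf{F})^{op}$ in the $\textsf{F}$-basis $\{1,\xi,\ldots,\xi^{p-1}\}$ of $\textsf{G}$, under which $\varepsilon$ becomes the matrix unit $E_{0,0}$. Because every relation to the strong point {\large \Fontlukas m} is strong, the admissibility axioms give $\mathrm{soc}(e_{i}\Lambda)=e_{i,\text{\large \Fontlukas m}}\mathcal{R}^{(r)}_{i,\text{\large \Fontlukas m}}$, which equals $e_{i,\text{\large \Fontlukas m}}\textsf{F}\varepsilon\cong e_{\text{\large \Fontlukas m}}\Lambda$ when $i$ is strong and $e_{i,\text{\large \Fontlukas m}}A\varepsilon\cong(e_{\text{\large \Fontlukas m}}\Lambda)^{p}$ when $i$ is weak. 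Lemma \ref{functorres} will then be the key tool for bounding endomorphism rings in each case.

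For the first assertion (with $i$ weak and every $i<j$ strong), for each $k\in\{0,\ldots,p-1\}$ I would define the subspace $T_{i}^{(k)}\subseteq\mathrm{rad}(e_{i}\Lambda)$ consisting, at each strong slot $j$, of the line $\textsf{F}E_{0,k}\subseteq A\varepsilon$, and, at each weak slot $j$, of the column-$k$ matrices inside $\mathcal{R}^{(r)}_{i,j}=A$. A direct matrix computation across the four cases (strong/weak $j$ going to strong/weak $l$) shows that each $T_{i}^{(k)}$ is closed under right multiplication by $\mathcal{R}^{(r)}_{j,l}$, hence is a right $\Lambda$-submodule, and that $\mathrm{rad}(e_{i}\Lambda)=\bigoplus_{k}T_{i}^{(k)}$. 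Slotwise right multiplication by $E_{0,k}$ is right $\Lambda$-linear by associativity of matrix multiplication, giving an isomorphism $T_{i}^{(0)}\to T_{i}^{(k)}$; setting $T_{i}=T_{i}^{(0)}$ yields $\mathrm{rad}(e_{i}\Lambda)\cong T_{i}^{p}$. The socle $e_{i,\text{\large \Fontlukas m}}\textsf{F}E_{0,0}$ of $T_{i}$ is simple, so Lemma \ref{functorres} gives $\End_{\Lambda}(T_{i})=\textsf{F}$, which forces indecomposability.

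For the ``otherwise'' case: if $i$ is strong, then $\mathrm{soc}(\mathrm{rad}(e_{i}\Lambda))\cong e_{\text{\large \Fontlukas m}}\Lambda$ is already simple and Lemma \ref{functorres} immediately yields $\End_{\Lambda}(\mathrm{rad}(e_{i}\Lambda))=\textsf{F}$ and indecomposability. If instead $i$ is weak but some $j$ satisfies $i<^{\ell}j$ with $\ell<p$, then left multiplication by $\mathcal{R}^{(r)}_{i}\cong\textsf{G}$ commutes with the right $\Lambda$-action and embeds $\textsf{G}\hookrightarrow\End_{\Lambda}(\mathrm{rad}(e_{i}\Lambda))$, faithfully because $\textsf{G}$ acts on the socle $A\varepsilon$ as a subfield of $M_{p}(\textsf{F})$ via its regular representation. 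The main obstacle is the reverse inclusion: for $f\in\End_{\Lambda}(\mathrm{rad}(e_{i}\Lambda))$, its restriction $\bar f_{j}$ to slot $j$ must preserve the proper subspace $A_{\ell}\varepsilon_{j}\subsetneq A\varepsilon_{j}$, and the compatibility $f(rs)=\bar f_{j}(r)\,s$ for $s\in\mathcal{R}^{(r)}_{j,\text{\large \Fontlukas m}}$ then forces the socle restriction of $f$ to lie in the image of $\textsf{G}$ inside $M_{p}(\textsf{F})$. The technical heart is that the stabilizer of $A_{\ell}$ inside $A$ under right composition equals $\mu_{\textsf{G}}$ --- this rests on the relations $\sigma\mu_{g}=\mu_{\sigma(g)}\sigma$ (when $\ch\textsf{F}\neq p$) or $\delta\mu_{g}=\mu_{g}\delta+\mu_{\delta(g)}$ (when $\ch\textsf{F}=p$), which encode $A_{\ell}$ precisely because $\ell<p$. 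Once $\End=\textsf{G}$ is established, indecomposability follows since $\textsf{G}$ is a field.
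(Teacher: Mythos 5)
Your proposal is correct. On two of the three cases it essentially retraces the paper's own proof: for $i$ strong, both arguments are identical (simple socle plus Lemma \ref{functorres}); and for $i$ weak with some $i<^{\ell}j$, $\ell<p$, your ``stabilizer of $A_{\ell}$ equals $\mu_{\textsf{G}}$'' is exactly the paper's computation --- there the socle action is left multiplication by some $a\in A$, transported to the slot $j$ via a decomposition $1=\sum_{i=1}^{p}x_{i}\varepsilon y_{i}$ into conjugates of $\varepsilon$ (a step you gloss as ``compatibility forces\dots'' but which needs this fullness of $\varepsilon$ to cancel the right factors $s$), and then $a=f(1)\in A_{\ell}$ is forced into $\mu_{\textsf{G}}$ by the $\vartheta$-degree argument: if $a=\mu_{g_{0}}+\cdots +\mu_{g_{r}}\vartheta ^{r}$ with $r\geq 1$, then $a\vartheta ^{\ell-r}\notin A_{\ell}$, exactly the content of the commutation relations you cite. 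The genuine divergence is the first case. The paper never decomposes $\mathrm{rad}(e_{i}\Lambda)$ internally: it sets $T_{i}=\bigoplus_{x>i}e_{0}\Lambda e_{x}\subseteq e_{0}\Lambda$, uses the inclusion $\mathrm{rad}(e_{i}\Lambda)\rightarrow (e_{0}\Lambda)^{p}$ coming from $\mathrm{soc}(e_{i}\Lambda)=(e_{\text{\large \Fontlukas m}}\Lambda )^{p}$, and concludes $\mathrm{rad}(e_{i}\Lambda)\cong T_{i}^{p}$ by a slotwise dimension count ($p$ versus $1$ at strong slots, $p^{2}$ versus $p$ at weak ones). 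Your intrinsic column decomposition into the $T_{i}^{(k)}$ buys an explicit internal direct sum with explicit matrix-unit isomorphisms, avoiding both the embedding into $(e_{0}\Lambda)^{p}$ and the dimension bookkeeping; the paper's version buys a concrete model of $T_{i}$ as a submodule of $e_{0}\Lambda$, which it quietly reuses later when computing $\mathrm{cd}\left(X_{F(T_{i})}\right)$ in Proposition \ref{radicalCoordinates}. One convention caution in your construction: since $A=(\mathrm{End}_{\textsf{F}}\textsf{G})^{op}$, the right $\Lambda$-action on a slot is \emph{ordinary left} matrix multiplication --- which is why column supports are preserved --- and your ``right multiplication by $E_{0,k}$'' must be read in the ordinary composition order (i.e., as left multiplication in $A$); with the orders fixed consistently, your four-case closure check and the $\Lambda$-linearity-by-associativity claim are sound.
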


\begin{proof}
Suppose $i$ is weak and for all $j$ with $i<j$, we have $i<^{p}j$. As $i$ is a weak point, $e_{i}\Lambda e_{\text{\large \Fontlukas m}}\cong G$, then $\mathrm{soc}\, e_{i}\Lambda =(e_{\text{\large \Fontlukas m}}\Lambda )^{p}$,
Then we have an inclusion:
$$\iota :\mathrm{rad}\, e_{i}\Lambda \longrightarrow (e_{0}\Lambda )^{p}.$$

Now consider $T_{i}$ the $\Lambda $-submodule of $e_{0}\Lambda $ given by
$$T_{i}=\bigoplus_{x\in\mathscr{P}, i<x}e_{0}\Lambda e_{x};$$
then $\mathrm{Im}\iota \subset T_{i}^{p}$. 

Now for any $x>i$, we have $i<^{p}x$, so
$$\mathrm{dim}_{\textsf{F}}\, e_{i}\Lambda e_{x} = \mathrm{dim}_{\textsf{F}} (\mathrm{rad}\, e_{i}\Lambda) e_x
=\begin{cases} 
p, & \text{ if }  x \text{ is strong},\\
p^2, & \text{ if }  x \text{ is weak};
\end{cases}$$
also
$$\mathrm{dim}_{\textsf{F}}\, e_{0}\Lambda e_{x} = \mathrm{dim}_{\textsf{F}} (T_i)e_x
=\begin{cases} 
1, & \text{ if }  x \text{ is strong},\\
p, & \text{ if }  x \text{ is weak};
\end{cases}$$
therefore
$$\mathrm{rad}(e_{i}\Lambda)\cong T_{i}^{p}.$$

We have $\mathrm{dim}_{\textsf{F}}(\mathrm{soc}(T_{i}))=1$, which implies that the socle of $T_{i}$ is simple and then
$\mathrm{End}_{\Lambda }(T_{i})=\textsf{F}$, by Lemma \ref{functorres}.

Suppose there is a $j\in \mathscr{P}$ with $i<^{\ell}j$ and $\ell<p$. Here
$\mathrm{rad}(e_{i}\Lambda )=\oplus _{i<t}e_{i}\Lambda e_{t}$ and each $e_{i}\Lambda e_{t}$ is a left
$\textsf{G}$-vector space, then the left multiplication by the elements of  $\textsf{G}$ gives a monomorphism of $\textsf{F}$-algebras
$$\textsf{G}\rightarrow \mathrm{End}_{\Lambda }(\mathrm{rad}(e_{i}\Lambda )).$$

Take now $j\in \mathscr{P}$, with $i<^{\ell}j$ and $\ell<p$. Consider $u$ an endomorphism of
$\mathrm{rad}(e_{i}\Lambda)$. From Section \ref{SectionPequipped}, 
$$e_{i}\Lambda e_j\cong A_{\ell} \text{ and } e_{i}\Lambda e_\text{\large \Fontlukas m}\cong A\varepsilon;$$
then $u$ induces  $\textsf{F}$-linear transformations $f:A_{\ell}\rightarrow A_{\ell}$,
and $g:A\varepsilon \rightarrow A\varepsilon $,
such that $u(ze_{i,j})=f(z)e_{i,j}$, for all $z\in A_{\ell}$ and $u(we_{i,\text{\large \Fontlukas m}})=g(w)e_{i,\text{\large \Fontlukas m}}$ for all $w\in A\varepsilon $.

Since any $\textsf{F}$-linear endomorphism of $A\varepsilon$ is given by the left multiplication for some element in $A$, we have
an $a\in A$ such that $u(we_{i,j})=awe_{i,j}$ for all $w\in A\varepsilon$.

Consider $\varepsilon=\varepsilon_1$ and $1=\varepsilon_1 +\varepsilon _{2}+...+\varepsilon _{p}$ a decomposition of $1\in A$ into a sum of
primitive orthogonal idempotents. We know that for each $i=1,...,p$, there are elements $x_{i}\in A\varepsilon, y_{i}\in \varepsilon A\varepsilon_{i}$ such that $\varepsilon _{i}=x_{i}\varepsilon y_{i}$, therefore
$$1=\sum _{i=1}^{p}x_{i} \varepsilon y_{i}.$$ 

Also  $x_{i}\varepsilon e_{j,\text{\large \Fontlukas m}}\in e_{j}\Lambda e_{\text{\large \Fontlukas m}}$, so if
$z\in A_{\ell}$,
$$u(ze_{i,j}x_{i}\varepsilon e_{j,\text{\large \Fontlukas m}})=u(ze_{i,j})x_{i}\varepsilon e_{j, \text{\large \Fontlukas m}}=f(z)e_{i,j}x_{i}\varepsilon e_{j,\text{\large \Fontlukas m}}=f(z)x_{i}\varepsilon e_{i,\text{\large \Fontlukas m}}.$$
Moreover:
\begin{equation}\label{endsocle}
u(ze_{i,j}x_{i}\varepsilon e_{j, \text{\large \Fontlukas m}})=u(zx_{i}\varepsilon e_{i,\text{\large \Fontlukas m}})=azx_{i}\varepsilon e_{i, \text{\large \Fontlukas m}};
\end{equation}
thus $f(z)x_{i}\varepsilon =azx_{i}\varepsilon $.

Then 
$$f(z)=\sum _{i=1}^{p}f(z)x_{i}\varepsilon y_{i}=\sum _{i=1}^{p}azx_{i}\varepsilon y_{i}=az;$$
for all $z\in A_{\ell}$. We have $1\in A_{\ell}$ and $f(1)=a\in A_{\ell}$, therefore
$a=g_{0}1+...+g_{r}\vartheta ^{r}$ with $\vartheta$ as in (\ref{defvartheta}), $g_0,\ldots g_r \in \textsf{G}$, $g_{r}\neq 0$ and $r\leq \ell-1<p-1$. Suppose $r\geq 1$, in this case $\vartheta ^{\ell-r}\in A_{\ell}$, because $\ell-r\leq \ell-1$, but $f(\vartheta ^{\ell-r})=a\vartheta ^{\ell-r}=g_{0}\vartheta ^{\ell-r}+..+g_{r}\vartheta ^{\ell}$ which is not in $A_{\ell}$, therefore $a=g_{0}=g$.

Equation (\ref{endsocle}) implies that $u$ and the left multiplication by $g$
coincide when restricted to the socle of $\mathrm{rad}(e_{i}\Lambda )$, so by Lemma \ref{functorres}, $u$ is the left multiplication by $g$. Then $\mathrm{End}_{\Lambda }(\mathrm{rad}(e_{i}\Lambda ))\cong \textsf{G}$, and $\mathrm{rad}(e_{i}\Lambda )$
is an indecomposable $\Lambda $-module. 

Now if $i$ is strong then $\mathrm{soc}(\mathrm{rad}(e_{i}\Lambda ))$ is
a simple $\Lambda $-module, this implies that $\mathrm{rad}(e_{i}\Lambda )$ is indecomposable and its endomorphism ring coincides with $\textsf{F}$. The proof is complete.
\end{proof}

\begin{proposition}\label{radicalLambdac}
If $\Lambda =\Lambda ^{(c)}$, then the radical of any  indecomposable projective right $\Lambda$-module is indecomposable.
\end{proposition}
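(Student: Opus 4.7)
The plan is to show that $\mathrm{End}_{\Lambda }(\mathrm{rad}(e_{i}\Lambda ))$ is local for every $i\in \mathscr{P}$. If $i=\text{\large \Fontlukas m}$ the radical is zero and there is nothing to prove, so I assume $i\neq \text{\large \Fontlukas m}$.

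First I identify the socle of $e_{i}\Lambda $. Since $\text{\large \Fontlukas m}$ is strong, every relation $j\leq \text{\large \Fontlukas m}$ is strong, and therefore $\mathcal{R}^{(c)}_{j,\text{\large \Fontlukas m}}=\textsf{F}\langle 1,\xi,\ldots,\xi^{p-1}\rangle=\textsf{G}$; as a right module over $\mathcal{R}^{(c)}_{\text{\large \Fontlukas m}}=\textsf{G}$ this is one-dimensional, hence isomorphic as a right $\Lambda $-module to the simple projective $e_{\text{\large \Fontlukas m}}\Lambda $. Condition A.3 of Definition \ref{sisAdm} implies that no nonzero element of $e_{i}\Lambda e_{j}$ with $j<\text{\large \Fontlukas m}$ is annihilated by $\mathrm{rad}(\Lambda )$, so every simple submodule of $e_{i}\Lambda $ must sit at the peak. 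Thus $\mathrm{soc}(e_{i}\Lambda )=e_{i}\Lambda e_{\text{\large \Fontlukas m}}\cong e_{\text{\large \Fontlukas m}}\Lambda $ is simple, and because $i\neq \text{\large \Fontlukas m}$ it is contained in $\mathrm{rad}(e_{i}\Lambda )$, giving $\mathrm{soc}(\mathrm{rad}(e_{i}\Lambda ))=\mathrm{rad}(e_{i}\Lambda )e_{\text{\large \Fontlukas m}}\cong e_{\text{\large \Fontlukas m}}\Lambda $.

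Now the argument in the proof of Lemma \ref{functorres} applies verbatim to $M=\mathrm{rad}(e_{i}\Lambda )$ (it only uses that $\mathrm{soc}(M)=Me_{\text{\large \Fontlukas m}}$), yielding an injective $\textsf{F}$-algebra homomorphism
$$\mathrm{End}_{\Lambda }(\mathrm{rad}(e_{i}\Lambda ))\hookrightarrow \mathrm{End}_{\Lambda }(e_{\text{\large \Fontlukas m}}\Lambda )=\mathcal{R}^{(c)}_{\text{\large \Fontlukas m}}=\textsf{G}.$$
Hence $\mathrm{End}_{\Lambda }(\mathrm{rad}(e_{i}\Lambda ))$ is a finite-dimensional $\textsf{F}$-subalgebra of the field $\textsf{G}$, so an integral domain of finite $\textsf{F}$-dimension, i.e., a subfield of $\textsf{G}$ (either $\textsf{F}$ or $\textsf{G}$, since $[\textsf{G}:\textsf{F}]=p$ is prime). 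Fields are local, so $\mathrm{rad}(e_{i}\Lambda )$ is indecomposable. The proof for $\Lambda ^{(c)}$ is much shorter than the parallel Proposition \ref{radicalLambdar} for $\Lambda ^{(r)}$ precisely because in the $(c)$-case the endomorphism ring of the peak simple is already a field; no case analysis involving $\vartheta $ is required, and the only delicate point—which is the main obstacle I anticipated—is verifying that the socle of $\mathrm{rad}(e_{i}\Lambda )$ is exactly one copy of $e_{\text{\large \Fontlukas m}}\Lambda $, which reduces to the general observation that all simple submodules lie at the unique strong peak $\text{\large \Fontlukas m}$.
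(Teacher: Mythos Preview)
Your proof is correct and follows essentially the same approach as the paper: both arguments hinge on the observation that $\mathrm{soc}(e_{i}\Lambda^{(c)})=e_{i}\Lambda e_{\text{\large \Fontlukas m}}\cong \textsf{G}$ is simple (as a module over $e_{\text{\large \Fontlukas m}}\Lambda e_{\text{\large \Fontlukas m}}=\textsf{G}$), from which indecomposability of the radical follows. The paper simply states this in one line and concludes directly (a nonzero module with simple socle cannot split), whereas you justify the socle computation via condition A.3 and then route the conclusion through Lemma~\ref{functorres} to get $\mathrm{End}_{\Lambda}(\mathrm{rad}(e_{i}\Lambda))\hookrightarrow \textsf{G}$; this extra step is unnecessary for indecomposability but does yield the bonus information that the endomorphism ring is exactly $\textsf{F}$ or $\textsf{G}$.
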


\begin{proof}
In this case for all $i\in \mathscr{P}$, $\mathrm{soc}(e_{i}\Lambda)=e_{i}\Lambda e_{\text{\large \Fontlukas m}}\cong \textsf{G}$. Therefore the socle of $e_{i}\Lambda$ is simple, consequently $\mathrm{rad}(e_{i}\Lambda)$ is indecomposable.
\end{proof}

\begin{lemma}\label{irreducibleInVandM}
Suppose $g:X\rightarrow Y$ is a morphism in $\mathcal{M}$, with $X$ and $Y$ indecomposable objects
such that $\mathrm{Cok}X\neq 0$ and  $\mathrm{Cok}Y\neq 0$. Then  $\mathrm{Cok}(g):\mathrm{Cok}X\rightarrow \mathrm{Cok}Y$ is an irreducible morphism
in $\mathcal{V}$, if and only if $g$ is an irreducible morphism in $\mathcal{M}$.
\end{lemma}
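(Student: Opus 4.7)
My approach is to characterize irreducibility by membership in $\mathrm{rad}\setminus \mathrm{rad}^{2}$ and to transfer this membership between $\mathcal{M}$ and $\mathcal{V}$ via $\mathrm{Cok}$, using parts (2) and (4) of Lemma \ref{lemaIdeal}. Recall that for indecomposable $X,Y$ in a Krull-Schmidt category with almost split sequences, a morphism $g:X\to Y$ is irreducible exactly when $g\in \mathrm{rad}(X,Y)\setminus \mathrm{rad}^{2}(X,Y)$. I therefore need to show that $\mathrm{Cok}$ identifies these two layers.

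For the radical itself, I will use Lemma \ref{lemaIdeal}(2): $g$ is an isomorphism if and only if $\mathrm{Cok}(g)$ is. Combining this with the equivalence $\mathcal{M}/\mathcal{I}\simeq \mathcal{V}$ induced by $\mathrm{Cok}$ (Proposition 35 of \cite{BD2017}), one obtains that $X\cong Y$ in $\mathcal{M}$ if and only if $\mathrm{Cok}X\cong \mathrm{Cok}Y$ in $\mathcal{V}$. Since between indecomposables the radical consists either of all morphisms (for non-isomorphic objects) or of the non-isomorphisms (for isomorphic ones), it follows that $g\in \mathrm{rad}_{\mathcal{M}}(X,Y)$ if and only if $\mathrm{Cok}(g)\in \mathrm{rad}_{\mathcal{V}}(\mathrm{Cok}X,\mathrm{Cok}Y)$.

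The substantive step is the analogous statement for $\mathrm{rad}^{2}$. One direction is immediate from functoriality of $\mathrm{Cok}$ together with the radical correspondence just established. For the converse, suppose $\mathrm{Cok}(g)=\sum_{i}b'_{i}a'_{i}$ in $\mathcal{V}$ with each $a'_{i},b'_{i}$ in $\mathrm{rad}_{\mathcal{V}}$. The plan is to lift the intermediate objects $K'_{i}$ and the morphisms $a'_{i},b'_{i}$ through $\mathcal{M}/\mathcal{I}\simeq \mathcal{V}$ to obtain objects $K_{i}\in\mathcal{M}$ and morphisms $a_{i}:X\to K_{i}$, $b_{i}:K_{i}\to Y$ in $\mathcal{M}$, which by the previous paragraph still lie in $\mathrm{rad}_{\mathcal{M}}$. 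Then $\mathrm{Cok}(g-\sum b_{i}a_{i})=0$, so the difference lies in $\mathcal{I}(X,Y)$, and Lemma \ref{lemaIdeal}(4) (admissibility of $\mathcal{I}$) gives $\mathcal{I}(X,Y)\subseteq \mathrm{rad}^{2}_{\mathcal{M}}(X,Y)$, from which $g\in \mathrm{rad}^{2}_{\mathcal{M}}(X,Y)$ follows. The main obstacle is exactly this last absorption step: the lift through $\mathcal{M}/\mathcal{I}\simeq \mathcal{V}$ is only well-defined modulo $\mathcal{I}$, and it is the admissibility condition that lets me treat the residual $\mathcal{I}$-term as already part of $\mathrm{rad}^{2}$.
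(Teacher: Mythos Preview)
Your argument is correct and essentially self-contained. The paper takes a different route: for the implication ``$\mathrm{Cok}(g)$ irreducible $\Rightarrow$ $g$ irreducible'' it works directly with the section/retraction definition, factoring $g=vu$, applying $\mathrm{Cok}$, and then invoking part (3) of Lemma~\ref{lemaIdeal} (not part (2)) to pull the section/retraction back to $\mathcal{M}$. For the reverse implication the paper simply cites Lemma~1.7(1) of \cite{SL}, which is the general statement that quotients by admissible ideals preserve irreducibility. Your approach instead unpacks that citation: you transfer membership in $\mathrm{rad}$ via Lemma~\ref{lemaIdeal}(2), and in $\mathrm{rad}^{2}$ via lifting through the equivalence $\mathcal{M}/\mathcal{I}\simeq\mathcal{V}$ together with the inclusion $\mathcal{I}(X,Y)\subset\mathrm{rad}^{2}(X,Y)$ from Lemma~\ref{lemaIdeal}(4). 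What you gain is an argument that does not leave the paper; what the paper's version gains is brevity. One small point worth making explicit in your write-up: when you push a $\mathrm{rad}^{2}$-factorization $g=\sum v_j u_j$ forward, intermediate summands $Z_j$ with $\mathrm{Cok}(Z_j)=0$ lie outside the scope of your radical correspondence, but those terms contribute $0$ under $\mathrm{Cok}$ anyway; and when lifting backward you may assume the intermediate $K_i'$ are indecomposable so that the chosen lifts $K_i$ are as well.
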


\begin{proof}
Suppose $\mathrm{Cok}(g)$ is irreducible. If $g=vu$ with $u:X\rightarrow Z, v:Z\rightarrow Y$, then $\mathrm{Cok}(g)=\mathrm{Cok}(v)\mathrm{Cok}(u)$. Therefore $\mathrm{Cok}(u)$ is a section or $\mathrm{Cok}(v)$ is a retraction. 
By $3.$ of Lemma \ref{lemaIdeal}, $u$ is a section or $v$ is a retraction. This implies that $g$ is an irreducible morphism.

Suppose now that $g$ is an irreducible morphism. Then by Lemma $1.7 (1)$ of \cite{SL},  $\mathrm{Cok}(g)$ is an irreducible morphism in $\mathcal{V}$. The proof is complete.
\end{proof}

\begin{proposition}\label{psHereditaryEquivF}
An indecomposable projective $X\in \mathcal{M}$ is a pseudo hereditary projective
if and only if $X\cong X_{F(e_{i}\Lambda )}$ where $e_{i}\Lambda $ is a hereditary projective in $\mathrm{mod}\, \Lambda $ and $F:\mathcal{U}\rightarrow \mathcal{V}$ is the equivalence of categories of \cite{BD2017}, Proposition $31$.
\end{proposition}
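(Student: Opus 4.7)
The plan is to classify the indecomposable projectives of $\mathcal{M}$ and then transfer pseudo hereditary projectivity from $\mathcal{M}$ to $\mathcal{U}$ through the chain of equivalences $\mathcal{M}/\mathcal{I} \xrightarrow{\mathrm{Cok}} \mathcal{V} \xleftarrow{F} \mathcal{U}$, concluding via Proposition \ref{PseudoHisH}. First, by Proposition \ref{XisPiffCokisP} an indecomposable $X\in\mathcal{M}$ is projective iff $\mathrm{Cok}(X)$ is a nonzero indecomposable projective of $\mathcal{V}$; since $F$ is an equivalence and the indecomposable projectives of $\mathcal{U}$ are the $e_i\Lambda$ with $i\in\mathscr{P}$, $i\neq 0$, every indecomposable projective of $\mathcal{M}$ has the form $X\cong X_{F(e_i\Lambda)}$ for some such $i$. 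The statement therefore reduces to the equivalence: $X_{F(e_i\Lambda)}$ is pseudo hereditary projective in $\mathcal{M}$ iff $e_i\Lambda$ is hereditary, which by Proposition \ref{PseudoHisH} is the same as $e_i\Lambda$ being pseudo hereditary projective in $\mathcal{U}$.

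For ($\Rightarrow$), given a chain of irreducible morphisms $M_1\to\cdots\to M_l\to e_i\Lambda$ in $\mathcal{U}$ with indecomposable terms, I would push it forward via $F$ to a chain in $\mathcal{V}$ ending at $F(e_i\Lambda)$, and then lift each object to its minimal projective presentation $Y_j=X_{F(M_j)}\in\mathcal{M}$. Using the equivalence $\mathrm{Cok}\colon\mathcal{M}/\mathcal{I}\to\mathcal{V}$ noted in the proof of Proposition \ref{ARseq}, I would produce morphisms $Y_j\to Y_{j+1}$ (and $Y_l\to X$) in $\mathcal{M}$ whose cokernels are the given irreducible morphisms; by Lemma \ref{irreducibleInVandM} these lifts are themselves irreducible in $\mathcal{M}$. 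Pseudo hereditary projectivity of $X$ then forces each $Y_j$ projective in $\mathcal{M}$, whence $F(M_j)=\mathrm{Cok}(Y_j)$ is projective in $\mathcal{V}$ by Proposition \ref{XisPiffCokisP}, and applying $F^{-1}$ shows each $M_j$ is projective in $\mathcal{U}$.

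For the converse, assume $e_i\Lambda$ is hereditary and take any irreducible chain $Z_1\to\cdots\to Z_l\to X$ in $\mathcal{M}$ with $Z_j$ indecomposable. The regular case is when $\mathrm{Cok}(Z_j)\neq 0$ for every $j$: Lemma \ref{irreducibleInVandM} pushes the chain forward along $\mathrm{Cok}$ to an irreducible chain in $\mathcal{V}$ ending at $F(e_i\Lambda)$, which via $F^{-1}$ becomes an irreducible chain in $\mathcal{U}$ ending at $e_i\Lambda$; pseudo hereditary projectivity of $e_i\Lambda$ forces every term projective in $\mathcal{U}$, and Proposition \ref{XisPiffCokisP} transfers this back to projectivity of each $Z_j$ in $\mathcal{M}$.

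The main obstacle, and what I expect to require the most care, is to rule out the singular case in which some $Z_j$ has $\mathrm{Cok}(Z_j)=0$, i.e.\ $Z_j\cong T(e_k\Lambda)$ for some $k\neq 0$. By Proposition \ref{InjectiveIsom} such an object is injective in $\mathcal{M}$ but not projective, so its appearance on the chain would already refute the conclusion; the task is therefore to show that no such $Z_j$ can occur when $e_i\Lambda$ is hereditary. I would attack this by analyzing factorizations of irreducible morphisms through objects of the form $P\to 0$, using injectivity of $T(e_k\Lambda)$ in $\mathcal{M}$ and the explicit description of the almost split sequences of $\mathcal{M}$ from Proposition \ref{ASsequence}, to argue that any irreducible morphism from a $T$-type object to an object with nonzero cokernel cannot sit on a chain terminating at the projective $X_{F(e_i\Lambda)}$.
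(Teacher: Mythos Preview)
Your overall strategy matches the paper's, and your ($\Rightarrow$) direction is essentially the paper's argument (it makes no difference whether you start from a chain in $\mathcal{U}$ or in $\mathcal{V}$, since $F$ is an equivalence).

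For ($\Leftarrow$), however, you are overcomplicating the exclusion of $T$-type objects, and the tools you propose do not obviously do the job: Proposition~\ref{ASsequence} describes irreducible morphisms \emph{into} $T(e_k\Lambda)$, not out of it, and injectivity of $T(e_k\Lambda)$ by itself does not forbid irreducible maps $T(e_k\Lambda)\to Z$ to objects with nonzero cokernel in general. The paper's device is much simpler and you already have it available: item~$1$ of Lemma~\ref{lemaIdeal} says that every morphism from an object of the form $P\to 0$ to $X_{F(e_j\Lambda)}$ is zero (since the latter is a monomorphism $e_j\Lambda\hookrightarrow (e_0\Lambda)^{\ell}$). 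Hence the last term $Z_l$ cannot be $T$-type, so $\mathrm{Cok}(Z_l)\neq 0$; pushing the irreducible $Z_l\to X$ along $\mathrm{Cok}$ and using pseudo hereditary projectivity of $F(e_i\Lambda)$ in $\mathcal{V}$ forces $\mathrm{Cok}(Z_l)$ projective, i.e.\ $Z_l\cong X_{F(e_u\Lambda)}$ for some $u$. Now the \emph{same} Lemma~\ref{lemaIdeal}(1) applies to $Z_{l-1}\to Z_l$, and one descends the chain step by step.

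This also shows why your ``regular case / singular case'' split is the wrong organization: you cannot assume in advance that all $\mathrm{Cok}(Z_j)\neq 0$, because that is exactly what gets proved along the way. Replace the case split by the inductive descent just described and the argument closes immediately.
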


\begin{proof}
If $X$ is a hereditary projective, $X$ is projective, then by Proposition \ref{XisPiffCokisP}, $\mathrm{Cok}X$ is a projective object in $\mathcal{V}$. Take now any chain of irreducible morphisms between indecomposable objects in $\mathcal{V}$:
$$M_{1}\rightarrow \cdots \rightarrow M_{l}\rightarrow \mathrm{Cok}X.$$

By Lemma \ref{irreducibleInVandM} there are irreducible morphisms
$$X_{M_{1}} \rightarrow \cdots \rightarrow X_{M_{l}}\rightarrow X;$$
since $X$ is pseudo-hereditary, $X_{M_{1}}, \ldots , X_{M_{l}}$ are projectives. Then  $M_{1}, \ldots , M_{l}$ are projectives in $\mathcal{V}$, using Proposition \ref{XisPiffCokisP}. Therefore $\mathrm{Cok}X$ is pseudo hereditary projective in
$\mathcal{V}$. 

Since $F:\mathcal{U}\rightarrow \mathcal{V}$ is an equivalence of categories, $\mathrm{Cok}X\cong F(e_{i}\Lambda )$, with $e_{i}\Lambda $ pseudo-hereditary. Then by Proposition \ref{PseudoHisH}, $e_{i}\Lambda $ is an hereditary
projective in $\mathcal{U}$. Therefore $X\cong X_{F(e_{i}\Lambda )}$ where $e_{i}\Lambda $ is a hereditary projective.

Conversely, assume $X=X_{F(e_{i}\Lambda )}$ with $e_{i}\Lambda $ a hereditary projective. We claim that
$X$ is a pseudo hereditary projective object in $\mathcal{M}$. Here $e_{i}\Lambda $ is a pseudo hereditary projective
object in the category $\mathcal{U}$, then $F(e_{i}\Lambda )$ is a pseudo hereditary projective object of $\mathcal{V}$.

Let $X_{1}\rightarrow \cdots \rightarrow X_{l}\rightarrow X_{F(e_{i}\Lambda )}$ be a chain of irreducible morphisms between
indecomposable objects in $\mathcal{M}$.  By $1.$ of Lemma \ref{lemaIdeal}, there are not non zero morphisms from an object of the form $P\rightarrow 0$ to $X_{F(e_{i}\Lambda )}$,
this implies that $\mathrm{Cok}X_{l}$ is non zero, therefore the irreducible morphism $X_{l}\rightarrow X_{F(e_{i}\Lambda )} $ is sent into an irreducible morphism $\mathrm{Cok}X_{l}\rightarrow F(e_{i}\Lambda )$, this implies that
$\mathrm{Cok}X_{l}$ is a projective object in $\mathcal{V}$,  so $\mathrm{Cok}X_{l}\cong F(e_{u}\Lambda )$ for some $u$, and $X_{l}=X_{F(e_{u}\Lambda )}$.  By Proposition \ref{XisPiffCokisP}, $X_{l}$ is a projective, as before $X_{l-1}$ is not of the form $P\rightarrow 0$, again as before there is an irreducible morphism $\mathrm{Cok}X_{l-1}\rightarrow \mathrm{Cok}X_{l}$. Then $\mathrm{Cok}X_{l-1}$ is projective and consequently $X_{l-1}$ is a projective
object in $\mathcal{M}$, following this way we can prove that $X_{1},\ldots ,X_{l}$ are projective objects of $\mathcal{M}$. This proves that $X$ is a pseudo hereditary projective object of $\mathcal{M}$. The proof is complete.
\end{proof}

%%%%%%%%%%%%%%%%%%%%%%%%%%%%%%%%%%%%%%%%%%%%%%%%%%%%
%%%%%%%%%%%%%%%%%%%%%%%%%%%%%%%%%%%%%%%%%%%%%%%%%%%%
%%%%%%%%%%%%%%%%%%%%%%%%%%%%%%%%%%%%%%%%%%%%%%%%%%%%%

\section{Sections in $\mathcal{M}$}

We recall that $\mathcal{M}$ is the full subcategory of $\mathcal{P}(\Lambda )$ whose objects are morphisms of the
form $P\stackrel{u}{\rightarrow }(e_{0}\Lambda )^{\nu}$ where $\nu $ is a non negative integer number. By Proposition $8$, this category has almost split sequences. In this section we describe the Auslander-Reiten component of the object
$X_{F(e_{\text{\large \Fontlukas m}}\Lambda )}$, which is the minimal projective presentation of the simple injective $F(e_{\text{\large \Fontlukas m}}\Lambda )$, in $\mathcal{V}$, where $e_{\text{\large \Fontlukas m}}\Lambda $ is the simple projective object in $\mathcal{U}$. We will need the following fact.

\begin{proposition}\label{isomTroughIrreducible}
Assume $X\in \mathcal{M}$ is an indecomposable projective, and $Y\rightarrow X$ 
$Y_{1}\rightarrow X$ are irreducible  morphisms with  $Y$ and $Y_{1}$  indecomposable, then $Y\cong Y_{1}$.
\end{proposition}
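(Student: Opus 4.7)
The plan is to reduce the question to the radical structure of indecomposable projective $\Lambda$-modules by transporting irreducible morphisms through the cokernel functor. First, by Proposition \ref{XisPiffCokisP}, $\mathrm{Cok}X$ is a nonzero indecomposable projective object of $\mathcal{V}$, so via the equivalence $F\colon \mathcal{U}\to \mathcal{V}$ we may write $\mathrm{Cok}X\cong F(e_i\Lambda)$ for some $i\in \mathscr{P}$ with $i\neq 0$. Since $X$ is an indecomposable object of $\mathcal{P}^1(\Lambda)$ with nonzero cokernel, it cannot be of the form $(Q\to 0)$, so it must be the minimal projective presentation of its cokernel, i.e., $X\cong X_{F(e_i\Lambda)}$.

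Next, I would rule out the possibility that either $Y$ or $Y_1$ is of the form $(Q\to 0)$. If, say, $Y=(Q\to 0)$, then every morphism $Y\to X$ factors trivially through $Y$ itself and hence lies in $\mathcal{I}(Y,X)$; but by item $1$ of Lemma \ref{lemaIdeal}, applied to the target $X\cong X_{F(e_i\Lambda)}$, every such morphism vanishes, contradicting irreducibility. Therefore $\mathrm{Cok}Y\neq 0$ and $\mathrm{Cok}Y_1\neq 0$, and by Lemma \ref{irreducibleInVandM} the morphisms $\mathrm{Cok}(Y\to X)$ and $\mathrm{Cok}(Y_1\to X)$ are irreducible in $\mathcal{V}$. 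Transporting along $F^{-1}$ yields irreducible morphisms $F^{-1}(\mathrm{Cok}Y)\to e_i\Lambda$ and $F^{-1}(\mathrm{Cok}Y_1)\to e_i\Lambda$ in $\mathcal{U}$.

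Finally, since $e_i\Lambda$ is projective, the sink morphism at $e_i\Lambda$ in $\mathcal{U}$ is the radical inclusion $\mathrm{rad}(e_i\Lambda)\hookrightarrow e_i\Lambda$, so the indecomposable sources of irreducible morphisms into $e_i\Lambda$ are exactly the indecomposable direct summands of $\mathrm{rad}(e_i\Lambda)$. Propositions \ref{radicalLambdar} and \ref{radicalLambdac} now supply the key structural input: in both $\Lambda^{(r)}$ and $\Lambda^{(c)}$, the radical $\mathrm{rad}(e_i\Lambda)$ has a unique indecomposable summand up to isomorphism (either it is itself indecomposable, or it is isomorphic to $T_i^{p}$). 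Consequently $F^{-1}(\mathrm{Cok}Y)\cong F^{-1}(\mathrm{Cok}Y_1)$, hence $\mathrm{Cok}Y\cong \mathrm{Cok}Y_1$, and since each of $Y,Y_1$ is a minimal projective presentation of its cokernel in $\mathcal{P}^1(\Lambda)$ (by Krull-Schmidt, indecomposables with nonzero cokernel are uniquely determined by it), we conclude $Y\cong X_{\mathrm{Cok}Y}\cong X_{\mathrm{Cok}Y_1}\cong Y_1$. The main obstacle is the exclusion of the pathological case $Y\cong (Q\to 0)$, which is exactly what item $1$ of Lemma \ref{lemaIdeal} was engineered to handle; once that is out of the way, the argument is essentially a uniform transport between the three categories $\mathcal{M}$, $\mathcal{V}$ and $\mathcal{U}$.
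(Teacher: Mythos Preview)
Your proof is correct and follows essentially the same route as the paper's: transport the problem through $\mathrm{Cok}$ and $F$ to irreducible morphisms into $e_i\Lambda$ in $\mathcal{U}$, then invoke Propositions \ref{radicalLambdar} and \ref{radicalLambdac} to see that $\mathrm{rad}(e_i\Lambda)$ has a unique indecomposable summand. You are slightly more explicit than the paper in the final step (recovering $Y\cong Y_1$ from $\mathrm{Cok}Y\cong\mathrm{Cok}Y_1$), which is cleanly justified by item~2 of Lemma~\ref{lemaIdeal}.
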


\begin{proof}
By Proposition \ref{XisPiffCokisP}, $\mathrm{Cok}X$ is a no null indecomposable projective object in $\mathcal{V}$, therefore $\mathrm{Cok}X=F(e_{i}\Lambda ) $ for some $i\in \mathscr{P}$. By $1.$ of  Lemma \ref{lemaIdeal}, $\mathrm{Cok}Y\neq 0$ and $\mathrm{Cok}(Y_{1})\neq 0$, then $\mathrm{Cok}Y\cong F(Z)$ and $\mathrm{Cok}Y_{1}\cong F(Z_{1})$ for some indecomposable objects $Z, Z_{1}\in \mathcal{U}$. There are irreducible
morphisms $\mathrm{Cok}Y\rightarrow \mathrm{Cok}X$ and $\mathrm{Cok}Y_{1}\rightarrow \mathrm{Cok}X$, because of 4. Lemma \ref{lemaIdeal} and (1) Lemma 1.7 of \cite{SL}. Since $F$ is an equivalence of categories, we have irreducible morphisms $Z\rightarrow e_{i}\Lambda $ and
$Z_{1}\rightarrow e_{i}\Lambda $ in $\mathcal{U}$, therefore $Z$ and $Z_{1}$ are direct summands of $\mathrm{rad}(e_{i}\Lambda )$. By Propositions \ref{radicalLambdar} and \ref{radicalLambdac}, $Z\cong Z_{1}$. That is $Y\cong Y_{1}$, as we wanted to prove.
\end{proof}

Now we are ready to construct some suitable sections within the Auslander-Reiten component.

\begin{theorem}\label{constructComponent}
Let $\hat{\mathcal{C}}$ be the Auslander-Reiten component of $X_{F(e_{\text{\large \Fontlukas m}}\Lambda )}$. Then there exists a set of sections $\{\mathcal{S}_{i}\}_{i\in I}$ in $\hat{\mathcal{C}}$, where I is either the set of natural numbers or $I=\{1,2,\ldots,n\}$, with the following properties 
\begin{enumerate}[(1)]
\item If $X\in \mathcal{S}_{i}$ and $X$ is not projective, then $i>1$ and $\tau X\in \mathcal{S}_{i-1}$.
\item If $X\in \mathcal{S}_{i}$ and $X$ is not injective then $\tau^{-1} X \in \mathcal{S}_{i+1}$.
\item If $X\rightarrow Y$ is an irreducible morphism with $Y\in \mathcal{S}_{i}$ projective, then
$X\in \mathcal{S}_{i}$.
\item If $i\neq j $, then $\mathcal{S}_{i}\cap \mathcal{S}_{j}=\emptyset$.
\item $\displaystyle \hat{\mathcal{C}}=\bigcup _{i\in I}\mathcal{S}_{i}.$
\end{enumerate} 
\end{theorem}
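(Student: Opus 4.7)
The plan is to build the sections $\mathcal{S}_i$ inductively by knitting the component forward from $X_{F(e_{\text{\large \Fontlukas m}}\Lambda)}$: repeatedly apply $\tau^{-1}$ to non-injective members and close each new section under the ``projective predecessor'' operation forced by condition (3). The key ingredients will be Proposition \ref{isomTroughIrreducible} (an indecomposable projective in $\mathcal{M}$ admits at most one indecomposable irreducible predecessor), Proposition \ref{psHereditaryEquivF} (pseudo hereditary projectives are exactly the $X_{F(e_j\Lambda)}$ with $e_j\Lambda$ hereditary), together with the no-oriented-cycle property of sections (Theorem 2.8 of \cite{SL}).

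For the base, take $\mathcal{S}_1=\{X_{F(e_{\text{\large \Fontlukas m}}\Lambda)}\}$. Since $e_{\text{\large \Fontlukas m}}\Lambda$ is the simple projective of $\mathcal{U}$ with zero radical, Lemma \ref{irreducibleInVandM} and the equivalence $F:\mathcal{U}\to\mathcal{V}$ combined with Proposition \ref{isomTroughIrreducible} show that $X_{F(e_{\text{\large \Fontlukas m}}\Lambda)}$ has no indecomposable irreducible predecessor in $\mathcal{M}$, so (3) holds vacuously. Given $\mathcal{S}_1,\dots,\mathcal{S}_i$, define $\mathcal{S}_{i+1}$ as the union of (a) $\{\tau^{-1}X : X\in\mathcal{S}_i \text{ non-injective}\}$ and (b) every indecomposable projective $P\in\hat{\mathcal{C}}$ reachable by a chain of irreducible morphisms $P\to P_r\to\cdots\to P_1\to Y$ to some $Y$ of type (a), in which each intermediate $P_k$ is projective and none of the $P_k$ already lies in $\bigcup_{j\leq i}\mathcal{S}_j$. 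By Proposition \ref{isomTroughIrreducible} the projective chain in (b) is totally ordered, and by Propositions \ref{radicalLambdar} and \ref{radicalLambdac} (transported through $F$), each backwards step replaces a projective $X_{F(e_j\Lambda)}$ by $X_{F(Z)}$ with $Z$ an indecomposable summand of $\mathrm{rad}(e_j\Lambda)$ sitting at a strictly larger point of $\mathscr{P}$, forcing termination after finitely many steps.

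Conditions (2) and (3) are built into the construction, and (1) holds because the only non-projective members of $\mathcal{S}_{i+1}$ are the objects $\tau^{-1}X$ with $X\in\mathcal{S}_i$. For disjointness (4), a non-projective $\tau^{-1}X$ newly added in $\mathcal{S}_{i+1}$ cannot coincide with any element of some earlier $\mathcal{S}_j$, as repeatedly applying (1) would produce an oriented walk inside $\bigcup_{j\leq i}\mathcal{S}_j$, contradicting the absence of oriented cycles in sections; projectives are excluded from earlier sections by the explicit restriction in (b). For covering (5), every indecomposable $Z\in\hat{\mathcal{C}}$ is connected to $X_{F(e_{\text{\large \Fontlukas m}}\Lambda)}$ by a sequence of irreducible morphisms, and the number of $\tau^{-1}$-steps along a suitable canonical path places $Z$ into a unique $\mathcal{S}_i$. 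I expect the main technical obstacle to be the simultaneous verification of (4) and (5): one must show that the ``level index'' assigned to each $Z$ is independent of the chosen path through the AR quiver, and that each $\mathcal{S}_i$ is a genuine section, i.e. every irreducible morphism $X\to Y$ with $X\in\mathcal{S}_i$ satisfies exactly one of $Y\in\mathcal{S}_i$ or $\tau Y\in\mathcal{S}_i$. This hinges precisely on Proposition \ref{isomTroughIrreducible} applied to projective summands of AR middle terms, combined with the chain-closure in step (b), and reduces ultimately to a combinatorial analysis of the predecessors of projectives within $\hat{\mathcal{C}}$.
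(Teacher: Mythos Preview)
Your base case is wrong: the singleton $\mathcal{S}_{1}=\{X_{F(e_{\text{\large \Fontlukas m}}\Lambda)}\}$ is almost never a section. Take any $j\in\mathscr{P}$ that is maximal in $\mathscr{P}\setminus\{\text{\large \Fontlukas m}\}$; then $\mathrm{rad}(e_{j}\Lambda)\cong (e_{\text{\large \Fontlukas m}}\Lambda)^{n}$ is projective, so $X_{F(e_{j}\Lambda)}$ is a projective object of $\mathcal{M}$ and there is an irreducible morphism $X_{F(e_{\text{\large \Fontlukas m}}\Lambda)}\to X_{F(e_{j}\Lambda)}$. Since the target is projective, $\tau$ of it is undefined, and it is not in your $\mathcal{S}_{1}$; the section axiom fails. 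This is exactly why the paper takes $\mathcal{S}_{1}$ to be the \emph{entire} set of pseudo hereditary projectives in $\hat{\mathcal{C}}$, and then uses Proposition~\ref{isomTroughIrreducible} to show this set is closed under the section condition.

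There is a second, related problem in your inductive step: the direction of the chains in (b) is reversed relative to what property~(3) requires. In the paper, $\mathcal{T}_{l}$ consists of projectives $Z$ reached by chains $X\to Z_{1}\to\cdots\to Z_{t}=Z$ \emph{starting} from $X\in\underline{\mathcal{S}}_{l}$ with all $Z_{k}$ projective. Then if $Y\in\mathcal{T}_{l}$ and $X\to Y$ is irreducible, Proposition~\ref{isomTroughIrreducible} forces $X$ to be the penultimate term of the defining chain, hence either in $\mathcal{T}_{l}$ or in $\underline{\mathcal{S}}_{l}$; this is precisely how (3) is obtained. Your chains run the other way ($P\to\cdots\to Y$ with $Y$ of type~(a)), so the unique irreducible predecessor of $P$ is not a term of the chain at all, and you have no mechanism to place a non-projective predecessor of $P$ into $\mathcal{S}_{i+1}$. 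Once you correct $\mathcal{S}_{1}$ and reverse the chain direction in (b), your construction collapses to the paper's, and the verification that each $\mathcal{S}_{l}$ is a section---which you flag as the main obstacle but do not carry out---is exactly the case analysis the paper performs.
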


\begin{proof}
Let $\mathcal{S}_{1}$ be the set of objects in $\hat{\mathcal{C}}$ which are pseudo hereditary projectives.
Clearly $X_{F(e_{\text{\large \Fontlukas m}}\Lambda )}\in \mathcal{S}_{1}$. 

We claim that $\mathcal{S}_{1}$ is a section. Consider $X\rightarrow Y$ an irreducible morphism with $X\in \mathcal{S}_{1}$ and $Y\in \hat{\mathcal{C}}$. If $Y$ is projective, take
$Z_{1}\rightarrow \cdots \rightarrow Z_{l}\rightarrow Y$ a chain of irreducible morphisms. By Proposition \ref{isomTroughIrreducible}, $X\cong Z_{l}$ and $X$ is pseudo hereditary projective, so $Z_{1},\ldots,Z_{l-1}$ are projectives, thus $Y\in \mathcal{S}_{1}$. Now, if $Y$ is not projective, there is an irreducible morphism $\tau Y\rightarrow X$, then $\tau Y$ is a pseudo hereditary projective, so $\tau Y\in \mathcal{S}_{1}$. This proves that $\mathcal{S}_{1}$ is a section.

Observe that $\mathcal{S}_{1}$ is connected because for any $X\in \mathcal{S}_{1}$, there is a chain of irreducible morphisms
from $X_{F(e_{\text{\large \Fontlukas m}}\Lambda )}$ to $X$.

Suppose now we have constructed $\mathcal{S}_{1}, \ldots ,\mathcal{S}_{l}$ with properties $(1)$, $(3)$ and $(4)$ for 
$i=1, \ldots ,l$ and $(2)$ for $i=1,\ldots,l-1$.

First, assume that not all objects in $\mathcal{S}_{l}$ are injective. Consider two sets
$$\underline{S}_{l}=\{Y\in \hat{\mathcal{C}} | \tau Y\in \mathcal{S}_{l}\};$$ 
and $\mathcal{T}_{l}$, the set of projective indecomposable objects $Z$ for which there is a chain of irreducible morphisms
$X\rightarrow Z_{1}\rightarrow \cdots \rightarrow Z_{t}=Z$ with  $Z_{1}, \ldots ,Z_{t-1}$ projectives, $t\in \mathbb{N}$ and $X\in \underline{\mathcal{S}}_{l}$.

Take 
$$\mathcal{S}_{l+1}=\underline{S}_{l}\cup \mathcal{T}_{l}.$$  
Suppose there is an object $X\in \mathcal{S}_{j}\cap \mathcal{S}_{l+1}$ for some $j<l+1$. If $X$ is not projective, $X\in \underline{S}_{l}$, then $\tau X\in \mathcal{S}_{l}$ and $\tau X\in \mathcal{S}_{j-1}$ which contradicts our induction hypothesis, therefore $X$ must be projective. But then $X\in \mathcal{T}_{l}$, so there is a chain of irreducible morphisms
  $Z\rightarrow X_{1}\rightarrow \cdots \rightarrow X_{l}=X$ where $X_{1}, \ldots ,X_{l}$ are projectives and 
  $Z\in \underline{\mathcal{S}}_{l}$. Here $X\in \mathcal{S}_{j}$, then by $(3)$, $X_{l-1}, \ldots , X_{1}, Z$ are in $\mathcal{S}_{j}$, so $Z\in \mathcal{S}_{l+1}\cap \mathcal{S}_{j}$ with $Z$ non-projective, but, by our previous case, this can not occur. Therefore $\mathcal{S}_{l+1}\cap \mathcal{S}_{j}=\emptyset $ for $j<l+1$.
  
Let us prove that $\mathcal{S}_{l+1}$ is a section. Take $X\in \mathcal{S}_{l+1}$, we have two cases $X\in \underline{\mathcal{S}}_{l}$ or $X\in \mathcal{T}_{l}$. Consider the first case and $X\rightarrow Y$ an irreducible morphism with $Y\in \hat{\mathcal{C}}$. If $Y$ is projective, then by definition $Y\in \mathcal{T}_{l}\subset \mathcal{S}_{l+1}$. If $Y$ is not projective, then there is an
  irreducible morphism $\tau X\rightarrow \tau Y$, with $\tau X\in \mathcal{S}_{l}$, then either
  $\tau ^{2}Y\in \mathcal{S}_{l}$ or $\tau Y\in \mathcal{S}_{l}$, but not both, so either
  $\tau Y\in \mathcal{S}_{l+1}$ or $Y\in \mathcal{S}_{l+1}$ but not both.
  
Now if $X\in \mathcal{T}_{l}$ there is a chain $Z\rightarrow X_{1}\rightarrow \cdots \rightarrow X_{t}=X$ of irreducible morphisms with $Z\in \underline{\mathcal{S}}_{l}$ and $X_{1},\ldots ,X_{t}$ projectives. Suppose there is an
  irreducible morphism $X\rightarrow Y$, with $Y\in \hat{\mathcal{C}}$. Then if $Y$ is projective
  $Y\in \mathcal{T}_{l}\subset \mathcal{S}_{l+1}$. If $Y$ is not projective there is an irreducible morphism
  $\tau Y\rightarrow X$, by Proposition \ref{isomTroughIrreducible}, $\tau Y=X_{t-1}\in \mathcal{T}_{l}\subset \mathcal{S}_{l+1}$.
  
Observe that it can not happen that both $Y$ and $\tau Y$ are in $\mathcal{S}_{l+1}$, otherwise $Y$ is not projective, so $Y\in \underline{\mathcal{S}}_{l}$, and $\tau Y\in \mathcal{S}_{l}\cap \mathcal{S}_{l+1}$, but we have already proved that the above intersection is empty.
  
Using the induction hypothesis and the following statements we prove that the set of sections $\mathcal{S}_{1},..,\mathcal{S}_{l+1}$ satisfies four of the required conditions:
%  $(1),(3),(4)$ for $i=1,...,l$ and $(2)$ for $i=1,...,l-1$.
  \begin{enumerate}[(1)]
  \item If $X\in \mathcal{S}_{l+1}$ and $X$ is not projective, then $X\in \underline{S}_{l}$, so
  $\tau X \in \mathcal{S}_{l}$.
  \item If $X\in \mathcal{S}_{l}$ is a not injective object, $\tau (\tau ^{-1}X)=X$, therefore
  $\tau ^{-1}X\in \underline{\mathcal{S}}_{l}\subset \mathcal{S}_{l+1}$.
  \item Suppose we have an irreducible morphism $X\rightarrow Y$ with $X\in \hat{\mathcal{C}}$ and $Y$ a projective which lies in $\mathcal{S}_{l+1}$. Then there is a chain of irreducible morphisms $Z\rightarrow Z_{1}\rightarrow \cdots \rightarrow Z_{t}\rightarrow Y$ with $Z\in \underline{\mathcal{S}}_l$ and $Z_{1},\ldots ,Z_{t}$ projectives, then by Proposition \ref{isomTroughIrreducible},
  $X=Z_{t}$, so $X\in \mathcal{T}_{l}\subset \mathcal{S}_{l+1}$.
  \item We have already proved that $\mathcal{S}_{i}\cap \mathcal{S}_{l+1}=\emptyset$, for $1\leq i<l+1$.   
\end{enumerate}  
If for some $l\in \mathbb{N}$, all objects in $\mathcal{S}_{l}$ are injectives we set $I=\{1,\ldots,l\}$. 

At last, let us prove that $\displaystyle \hat{\mathcal{C}}=\bigcup _{i\in I}\mathcal{S}_{i}$. It is enough to prove that if $\displaystyle Y\in \bigcup _{i\in I}\mathcal{S}_{i}$ and $W\rightarrow Y$ or $Y\rightarrow W$ are irreducible morphisms with $W\in \hat{\mathcal{C}}$, then $\displaystyle W\in \bigcup _{i\in I}\mathcal{S}_{i}.$
  
We have $Y\in \mathcal{S}_{i}$ for some 
$i\in I$, if $Y\rightarrow W$ is an irreducible morphism then either $W\in \mathcal{S}_{i}$ or $W$ is not projective and $\tau W \in \mathcal{S}_{i}$. 

In the second case, $\tau W$ is not injective, and by $(2)$, $W=\tau ^{-1}(\tau W)\in \mathcal{S}_{i+1}$. Notice that if all objects in $\mathcal{S}_{l}$ are injectives, $i\leq l-1$. Anyway, we have $\displaystyle W\in \bigcup _{i\in I}\mathcal{S}_{i}$.

If $W\rightarrow Y$ is an irreducible morphism and $Y$ is projective, by
$(3)$, $W\in \mathcal{S}_{i}$, if $Y$ is not projective, then by $(1)$, $\tau Y \in \mathcal{S}_{i-1}$ and
we have an irreducible morphism $\tau Y\rightarrow W$, then by the above case $\displaystyle W\in \bigcup _{i\in I}\mathcal{S}_{i}$.

This finishes our proof.
  \end{proof}

\begin{corollary}\label{tauNesProyectivo}
If $X\in \hat{\mathcal{C}}$, there is a non negative integer $n(X)$ such that
$\tau ^{n(X)}X$ is projective.
\end{corollary}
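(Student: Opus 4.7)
The strategy is to read the corollary directly off properties (1) and (5) of Theorem \ref{constructComponent}, together with the defining property of $\mathcal{S}_1$. The plan is as follows.

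First I would observe that by property (5), every $X\in\hat{\mathcal{C}}$ lies in some section $\mathcal{S}_i$ with $i\in I$. Next, I would recall from the construction in the proof of Theorem \ref{constructComponent} that $\mathcal{S}_1$ was defined to be the set of pseudo hereditary projective objects of $\hat{\mathcal{C}}$; in particular, every element of $\mathcal{S}_1$ is a projective object of $\mathcal{M}$. This gives the base case: if $X\in\mathcal{S}_1$, then $X$ is already projective and we may take $n(X)=0$.

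For the inductive step I would argue on the index $i$ of the section containing $X$. Suppose the statement holds for every $Y\in\mathcal{S}_j$ with $j<i$, and let $X\in\mathcal{S}_i$. If $X$ is projective, take $n(X)=0$ and we are done. If $X$ is not projective, then property (1) of Theorem \ref{constructComponent} guarantees that $i\geq 2$ and that $\tau X\in\mathcal{S}_{i-1}$. By induction there exists $m=n(\tau X)\geq 0$ with $\tau^{m}(\tau X)=\tau^{m+1}X$ projective, so $n(X)=m+1$ works.

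Equivalently, one can iterate property (1) without a formal induction: starting from $X\in\mathcal{S}_i$, the sequence $X,\tau X,\tau^2 X,\ldots$ drops by exactly one level of the filtration each time the current term fails to be projective, so after at most $i-1$ steps we are forced into $\mathcal{S}_1$, at which point the term is projective. There is no real obstacle here; the only thing to be careful about is the possibility that $X$ is itself projective (handled by $n(X)=0$) and the fact that property (1) only applies to non-projective objects, which is exactly the case where iteration continues. Hence $n(X)\leq i-1$ always suffices, and the corollary follows.
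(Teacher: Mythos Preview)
Your proof is correct and follows essentially the same approach as the paper: induction on the index $i$ of the section $\mathcal{S}_i$ containing $X$, using that $\mathcal{S}_1$ consists of (pseudo hereditary) projectives for the base case and property (1) of Theorem \ref{constructComponent} for the inductive step. The paper additionally invokes property (4) to phrase the induction via a well-defined function $i(X)$, but this is cosmetic; your argument is the same.
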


\begin{proof}
Given $X\in \hat{\mathcal{C}}$, by $(5)$ and $(4)$ of Theorem \ref{constructComponent} there is a unique $i(X)\in \mathbb{N}$ such that $X\in \mathcal{S}_{i(X)}$.

We will prove our corollary by induction on $i(X)$. If $i(X)=1$,  $X\in \mathcal{S}_{1}$,
then $X$ is projective and $n(X)=0$. Suppose now proved our statement for all $Y$ with $n(Y)<l$. We will prove our
corollary for $X$ with $i(X)=l$. We have that $X\in \mathcal{S}_{l}$; if $X$ is projective $n(X)=0$, otherwise $\tau X\in \mathcal{S}_{l-1}$, so there is a natural number $m=n(\tau X)$, such that $\tau ^{m}\tau X$ is projective. Then
$n(X)=n(\tau X)+1$.
\end{proof}

We can give another property of the Auslander-Reiten component $\hat{\mathcal{C}}$.

\begin{proposition}\label{NoCycles}
There are not directed cycles in $\hat{\mathcal{C}}$.
\end{proposition}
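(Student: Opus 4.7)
The plan is to exploit the indexed sections $\{\mathcal{S}_i\}_{i\in I}$ constructed in Theorem \ref{constructComponent}. For each $X\in \hat{\mathcal{C}}$, properties (4) and (5) give a unique index $i(X)\in I$ with $X\in \mathcal{S}_{i(X)}$; I would show that $i(\cdot )$ is weakly monotone along irreducible morphisms, and then combine this with the no-cycle property of sections.

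The key intermediate claim is that for every irreducible morphism $X\to Y$ in $\hat{\mathcal{C}}$, one has $i(Y)\in \{i(X),i(X)+1\}$. The argument splits on whether $Y$ is projective. If $Y$ is projective, property (3) of Theorem \ref{constructComponent} forces $X\in \mathcal{S}_{i(Y)}$, so $i(X)=i(Y)$. If $Y$ is not projective, there is an almost split sequence $\tau Y\to E\to Y$ in $\mathcal{M}$, so $\tau Y$ is a well-defined indecomposable in $\hat{\mathcal{C}}$ which is not injective. Since $\mathcal{S}_{i(X)}$ is a section and $X\to Y$ is irreducible with $X\in \mathcal{S}_{i(X)}$, exactly one of $Y$, $\tau Y$ lies in $\mathcal{S}_{i(X)}$. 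In the first case $i(Y)=i(X)$; in the second, property (2) of Theorem \ref{constructComponent} applied to $\tau Y\in \mathcal{S}_{i(X)}$ gives $Y=\tau^{-1}(\tau Y)\in \mathcal{S}_{i(X)+1}$, so $i(Y)=i(X)+1$.

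Next, suppose toward contradiction there is a directed cycle of irreducible morphisms $X_0\to X_1\to \cdots \to X_n=X_0$ in $\hat{\mathcal{C}}$. The monotonicity bound yields $i(X_0)\leq i(X_1)\leq \cdots \leq i(X_n)=i(X_0)$, forcing equality throughout. Hence the whole cycle lies inside a single section $\mathcal{S}_i$. Invoking Theorem 2.8 of \cite{SL}—already used in the paper to rule out oriented cycles inside sections of $\mathcal{U}$ and $\mathcal{V}$—one concludes that $\mathcal{S}_i$ cannot contain any oriented cycle, which is the desired contradiction.

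The one delicate point is the last invocation: the cited theorem was applied earlier only to $\mathcal{U}$ and $\mathcal{V}$, and here I need it for a section in $\mathcal{M}$. Since $\mathcal{M}$ is Krull--Schmidt and has almost split sequences (Proposition 8) as well as enough projectives and enough injectives, the proof of Theorem 2.8 of \cite{SL} goes through verbatim. If one prefers to avoid this appeal, the cycle can be pushed into the known setting via the functor $\mathrm{Cok}$: by Lemma \ref{lemaIdeal}(1) together with the presence of $X_{F(e_{\text{\large \Fontlukas m}}\Lambda )}$ in $\hat{\mathcal{C}}$, the indecomposables in $\hat{\mathcal{C}}$ with nonzero cokernel map, via Proposition \ref{ARseq} and Lemma \ref{irreducibleInVandM}, to an AR-component of $\mathcal{V}$ preserving irreducible arrows, and the resulting image of the cycle contradicts the no-cycle property of sections in $\mathcal{V}$.
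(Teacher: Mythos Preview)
Your reduction of the cycle to a single section $\mathcal{S}_l$ via the monotonicity of $i(\cdot)$ along irreducibles is exactly the paper's argument, and your case split on whether $Y$ is projective is in fact written more carefully than the paper's. The only divergence is in extracting the final contradiction once the cycle lies in one $\mathcal{S}_l$. You appeal to Theorem~2.8 of \cite{SL} for $\mathcal{M}$; this is clean, but, as you note, the paper only verifies the Auslander--Reiten hypotheses for $\mathcal{U}$ and $\mathcal{V}$, so a line establishing them for $\mathcal{M}$ is still owed. The paper instead finishes directly: if some $X_s$ in the cycle is non-projective one uses the irreducible $\tau X_s\to X_{s-1}$ together with Proposition~\ref{isomTroughIrreducible} to force $\tau X_s\cong X_{s-2}\in\mathcal{S}_l$, contradicting the section axiom since also $X_s\in\mathcal{S}_l$; hence every $X_i=X_{F(e_{j_i}\Lambda)}$ is projective, and the cycle transports to a cycle of irreducible monomorphisms among the $e_{j_i}\Lambda$ in $\mathcal{U}$, which is impossible. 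Your route is shorter and conceptually uniform; the paper's is self-contained and avoids checking that $\mathcal{M}$ is an Auslander--Reiten category. Your fallback via $\mathrm{Cok}$ is not fully convincing as stated, since objects $T(e_k\Lambda)$ with zero cokernel can lie in $\hat{\mathcal{C}}$ and would be collapsed by the functor; if you want to keep that alternative you should first argue, as the paper does, that the cycle consists of projectives so that all cokernels are nonzero.
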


\begin{proof}
Suppose there is $X_{1}\rightarrow X_{2}\rightarrow \cdots \rightarrow X_{u}\rightarrow  X_{1}$, a directed cycle of irreducible morphisms in $\hat{\mathcal{C}}$. Using the notation of the proof of Corollary \ref{tauNesProyectivo}, we have that $X_1\in \mathcal{S}_{i(X_1)}$ and $X_2\in \mathcal{S}_{i(X_1)}$ or $\tau X_2\in \mathcal{S}_{i(X_1)}$, in both cases, $i(X_1)\leq i(X_2)$. Following the same idea with all the irreducible morphisms of the cycle, $i(X_{1})=i(X_{2})= \cdots =i(X_{u})$.

Therefore all the $X_{i}$ are in the same $\mathcal{S}_{l}$ for some $l\in \mathbb{N}$. If some $X_{s}$ is not projective, then we have an irreducible morphism
$\tau X_{s}\rightarrow X_{v}$ with $v\equiv s-1$ module $u$. By Proposition \ref{isomTroughIrreducible}, $\tau X_{s}=X_{v^{\prime}}$ with $v^{\prime }\equiv s-2$ module $u$, therefore $X_{s}$ and $\tau X_{s}$ are objects in $\mathcal{S}_{l}$, which contradicts that it is a section. We conclude that all the $X_{i}$ in the cycle are projectives. 

Each $X_{i}=X_{F(e_{i}\Lambda )}$, therefore we have a cycle of irreducible morphisms between projective modules in $\mathcal{U}$. Every irreducible morphism between projectives in $\mathcal{U}$ is a monomorphism, so there are not such cycle. This implies that we can not have directed cycles of irreducibles in $\hat{\mathcal{C}}$.
\end{proof}

We relate two objects $X, Y\in \hat{\mathcal{C}}$ as follows: $X\leq Y$ if and only if $X=Y$ or there is a chain of irreducible morphisms from $X$ to $Y$. This is a partial order in $\hat{\mathcal{C}}$ which allows us to prove more properties of the component.

\begin{proposition}\label{compIrrmorphism}
Suppose $X\in \hat{\mathcal{C}}$, and $f:Y\rightarrow X$ a non-zero morphism in $\mathcal{M}$, with
  $Y$ indecomposable. Then, there exists $Y_{1}\in \hat{\mathcal{C}}$ such that $Y\cong Y_{1}$. Moreover if $f$ is not an isomorphism then $f$ is a sum of compositions of irreducible morphisms.
\end{proposition}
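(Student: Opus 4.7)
The plan is to prove both conclusions simultaneously by strong induction on the section index $i(X)$, namely the unique integer with $X \in \mathcal{S}_{i(X)}$ furnished by Theorem \ref{constructComponent}.

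For the base case $i(X) = 1$, $X$ is a pseudo hereditary projective in $\hat{\mathcal{C}}$, so by Proposition \ref{psHereditaryEquivF} we have $X \cong X_{F(e_i\Lambda)}$ with $e_i\Lambda$ a hereditary indecomposable projective $\Lambda$-module. Since only finitely many such projective modules exist, $\mathcal{S}_1$ is finite, and combined with Proposition \ref{NoCycles} it carries a finite acyclic partial order under the relation $\leq$. A secondary induction on this order disposes of the base case: if $f$ is non-iso, it factors through the sink morphism at $X$ (which exists because $\mathcal{M}$ has almost split sequences); by Proposition \ref{isomTroughIrreducible} the source of this sink has the form $Z_0^m$ for a single indecomposable $Z_0 \in \mathcal{S}_1$ that is strictly below $X$, since an irreducible endomorphism would contradict Proposition \ref{NoCycles}. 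Writing $f = \sum_{j=1}^{m} g_j h_j$ with $h_j : Y \to Z_0$, either some $h_j$ is an isomorphism---whence $Y \cong Z_0 \in \hat{\mathcal{C}}$ and each $g_j h_j$ is a length-one composition of irreducibles---or every non-zero $h_j$ is non-iso and the secondary inductive hypothesis applies to it.

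For the inductive step $i(X) > 1$, factor the non-iso $f$ through the sink morphism $g : E \to X$ and decompose $E = \bigoplus_{j} E_j$ into indecomposable summands; each $E_j \in \hat{\mathcal{C}}$ by the definition of Auslander-Reiten component, and $f = \sum_j g_j h_j$ with $g_j : E_j \to X$ irreducible and $h_j : Y \to E_j$. The critical verification, using properties (1) and (3) of Theorem \ref{constructComponent}, is that either $i(E_j) < i(X)$, or $E_j$ is a projective lying strictly earlier in the finite chain of $\mathcal{T}_{i(X)-1}$ inside $\mathcal{S}_{i(X)}$---which affords a secondary induction exactly as in the base case. Applying the inductive hypothesis to each non-zero $h_j$ then yields $Y$ isomorphic to an object of $\hat{\mathcal{C}}$ and expresses $h_j$ as a sum of compositions of irreducibles; composing with $g_j$ gives the desired presentation of $f$.

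The main obstacle lies in the subcase of the inductive step where $E_j$ stays in $\mathcal{S}_{i(X)}$ rather than descending to $\mathcal{S}_{i(X)-1}$. One must carefully argue, using the $\mathcal{T}_{i(X)-1}$ construction of Theorem \ref{constructComponent} together with the finiteness of indecomposable projective $\Lambda$-modules and the no-cycles property (Proposition \ref{NoCycles}), that the sink-morphism iteration through such projectives terminates, eventually yielding an $E_j$ with an irreducible morphism from an object of $\mathcal{S}_{i(X)-1}$ to which the primary induction hypothesis directly applies.
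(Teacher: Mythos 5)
There is a genuine gap, and it sits exactly at the point you yourself single out as the ``critical verification.'' The claimed dichotomy --- either $i(E_{j})<i(X)$, or $E_{j}$ is a projective lying in the chain of $\mathcal{T}_{i(X)-1}$ --- is false, because sections contain arrows: an indecomposable summand $E_{j}$ of the middle term of the sink morphism at $X$ can be a \emph{non-projective} object of the same section $\mathcal{S}_{i(X)}$. Two concrete failures, both visible inside the construction of Theorem \ref{constructComponent}: (a) if $X=Z_{1}$ is the first projective of a chain defining $\mathcal{T}_{l}\subset \mathcal{S}_{l+1}$, then by Proposition \ref{isomTroughIrreducible} its \emph{unique} irreducible predecessor is the chain-start $Z\in \underline{\mathcal{S}}_{l}\subset \mathcal{S}_{l+1}$, which is non-projective and satisfies $i(Z)=i(X)$ --- so in this case neither branch of your dichotomy occurs at all; (b) if $X\in \underline{\mathcal{S}}_{l}$ is non-projective and there is an arrow $W\rightarrow \tau X$ with both terms in $\mathcal{S}_{l}$ and $W$ non-injective, then $E_{j}=\tau ^{-1}W\in \underline{\mathcal{S}}_{l}\subset \mathcal{S}_{l+1}$ is again a non-projective same-section predecessor of $X$. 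Your closing paragraph proposes to terminate the descent by iterating the sink morphism ``through such projectives,'' but since the within-section predecessors need not be projective, that iteration is not guaranteed ever to reach an object of $\mathcal{S}_{i(X)-1}$, and your secondary induction is left without a well-founded parameter covering this case. (Your base case does not suffer from this defect only because \emph{all} of $\mathcal{S}_{1}$ consists of projectives, by Theorem \ref{constructComponent}(3) and Proposition \ref{psHereditaryEquivF}.)

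The gap is repairable --- each $\mathcal{S}_{i}$ is finite ($\mathcal{S}_{1}$ lies among the finitely many $X_{F(e_{i}\Lambda )}$, $\underline{\mathcal{S}}_{l}$ injects into $\mathcal{S}_{l}$ via $\tau$, and $\mathcal{T}_{l}$ consists of projectives, of which there are finitely many) and carries no oriented cycles, so descent within a section terminates whether or not the intermediate objects are projective --- but the repair shows the section bookkeeping is superfluous. The paper's proof dispenses with it entirely by inducting on the partial order defined on $\hat{\mathcal{C}}$ immediately before the proposition ($X\leq Y$ iff there is a chain of irreducible morphisms from $X$ to $Y$; antisymmetry is Proposition \ref{NoCycles}): every indecomposable summand $X_{i}$ of the middle term of the sink morphism satisfies $X_{i}<X$, the induction hypothesis applies uniformly to each component $t_{i}:Y\rightarrow X_{i}$ of the factorization $f=\sum _{i}u_{i}t_{i}$, and the base case is the unique minimal object $X_{F(e_{\text{\large \Fontlukas m}}\Lambda )}$, into which every non-zero morphism is an isomorphism. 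Your $\mathcal{S}_{1}$ argument is precisely this induction restricted to one section; running it on the whole component at once eliminates both the false dichotomy and the secondary induction.
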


\begin{proof}
We prove our proposition by induction on the order we just define in $\hat{\mathcal{C}}$. If $X\in \hat{\mathcal{C}}$ is a minimal object, $X=X_{F(e_{\text{\large \Fontlukas m}}\Lambda )}$, then $f$ is an isomorphism and $Y\cong X\in \hat{\mathcal{C}}$. 

Suppose now our proposition has been proved for all $Z<X$. Take
  $$\bigoplus _{i=1}^{r}X_{i}\xrightarrow{(u_{1}, \ldots ,u_{r})} X$$
a sink morphism with $X_{i}\in \hat{\mathcal{C}}$, for all $i\in\{1,2,\ldots, r\}$. We may assume $f$ is not an isomorphism, therefore there is a morphism $t=(t_{1},...,t_{r})^{T}:Y\rightarrow \bigoplus _{i=1}^{r}X_{i}$ such that $f=\sum _{i=1}^{r}u_{i}t_{i}.$

Since $u_{i}:X_{i}\rightarrow X$ is an irreducible morphism, $X_{i}<X$, for all $i\in\{1,2,\ldots, r\}$. Here $f$ is not zero, therefore $u_{i}\neq 0$, for some $i\in\{1,2,\ldots, r\}$. By our induction hypothesis there is a $Y_{1}\in \hat{\mathcal{C}}$ such that $Y_{1}\cong Y$. Moreover  each $t_{i}$ is  zero or an isomorphism or a sum of compositions of irreducible morphism, then $f$ is a sum of compositions of irreducible morphisms. The proof is complete.
\end{proof}

\begin{proposition}\label{ExtNulo}
If $X\in \hat{\mathcal{C}}$, then $\mathrm{Ext}_{\mathcal{M}}(X,X)=0$.
\end{proposition}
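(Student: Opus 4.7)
The plan is to split into two cases. If $X$ is projective in $\mathcal{M}$ the statement is immediate, so the real task is the non-projective case; there I fix the almost split sequence
$$\eta :\ 0\longrightarrow \tau X\stackrel{\iota }{\longrightarrow }F\longrightarrow X\longrightarrow 0$$
ending at $X$ and use the standard presentation of $\mathrm{Ext}_{\mathcal{M}}(X,X)$ available in any exact Krull--Schmidt category with almost split sequences: every non-split extension with kernel $X$ arises as the pushout of $\eta $ along some $g:\tau X\rightarrow X$, so there is a surjection $\mathrm{Hom}_{\mathcal{M}}(\tau X,X)\twoheadrightarrow \mathrm{Ext}_{\mathcal{M}}(X,X)$ whose kernel is exactly those $g$ that factor through $\iota $. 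The surjectivity uses the right-almost-split property of $F\rightarrow X$: for a non-split $\xi :0\rightarrow X\rightarrow E'\rightarrow X\rightarrow 0$ the map $E'\rightarrow X$ is not a retraction, hence factors through $F\rightarrow X$ and yields the required $g$. The task therefore reduces to showing that \emph{every} $g:\tau X\rightarrow X$ factors through $\iota $.

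The next step is the key combinatorial fact $\tau X\not\cong X$. By Theorem \ref{constructComponent}, $X$ lies in a unique section $\mathcal{S}_{i}$; since $X$ is non-projective, property $(1)$ of that theorem places $\tau X$ in $\mathcal{S}_{i-1}$, and by property $(4)$ the sections $\mathcal{S}_{i-1}$ and $\mathcal{S}_{i}$ are disjoint, so $\tau X\not\cong X$. Equivalently, $\tau X\cong X$ together with the two irreducible legs of $\eta $ would produce a directed cycle in $\hat{\mathcal{C}}$, contradicting Proposition \ref{NoCycles}.

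Given any $g:\tau X\rightarrow X$, the indecomposability of $X$ together with $\tau X\not\cong X$ prevents $\tau X$ from being a direct summand of $X$, so $g$ cannot be a section. Because $\iota $ is the left-hand map of an almost split sequence, it is left almost split in $\mathcal{M}$, so every non-section with source $\tau X$ factors through it; in particular $g=h\iota $ for some $h:F\rightarrow X$. Combined with the opening paragraph this gives $\mathrm{Ext}_{\mathcal{M}}(X,X)=0$. The main point that needs careful invocation is the pushout description of $\mathrm{Ext}_{\mathcal{M}}$ in the exact category $\mathcal{M}$; once that is in hand, the real content of the argument is the geometric fact that $\tau X$ and $X$ sit in two distinct consecutive sections of $\hat{\mathcal{C}}$, which forbids $g$ from ever being a section and thus forces the desired factorisation.
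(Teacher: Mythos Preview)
There is a genuine gap in your surjectivity step. Factoring the epimorphism $p:E'\to X$ of a non-split extension $\xi$ through the right almost split map $\pi:F\to X$ produces a morphism of short exact sequences from $\xi$ to $\eta$ which is the identity on the right-hand term; the induced map on kernels therefore runs from $X$ to $\tau X$, not from $\tau X$ to $X$. So this factorisation does \emph{not} exhibit $\xi$ as a pushout $g_{*}\eta$ for some $g:\tau X\to X$; what it actually shows is that $\eta=g'_{*}\xi$ in $\mathrm{Ext}_{\mathcal{M}}(X,\tau X)$ for the induced $g':X\to\tau X$. The surjectivity of the connecting map $\mathrm{Hom}_{\mathcal{M}}(\tau X,X)\to\mathrm{Ext}_{\mathcal{M}}(X,X)$ is equivalent to the vanishing of $\pi^{*}:\mathrm{Ext}_{\mathcal{M}}(X,X)\to\mathrm{Ext}_{\mathcal{M}}(F,X)$, and that is not a formal consequence of the almost split property in an exact category; you have not established it.

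The paper's argument avoids this altogether. Starting from a non-split $X\xrightarrow{f}E\xrightarrow{g}X$, it picks an indecomposable summand $Y$ of $E$ with $p_{Y}f\neq 0$, checks that then also $g\sigma_{Y}\neq 0$ (otherwise $Y$ would split off $X$ and force $\xi$ to split), and applies Proposition~\ref{compIrrmorphism} to the two non-zero non-isomorphisms $X\to Y$ and $Y\to X$ to obtain chains of irreducible morphisms in both directions, hence a directed cycle in $\hat{\mathcal{C}}$, contradicting Proposition~\ref{NoCycles}. Your own factorisation can in fact be pushed to the same endpoint: the induced $g':X\to\tau X$ is necessarily non-zero (if $g'=0$ then the lift $E'\to F$ factors through $p$, producing a section of $\pi$ and splitting $\eta$), and since $\tau X\not\cong X$ it is not an isomorphism, so Proposition~\ref{compIrrmorphism} yields a chain of irreducibles from $X$ to $\tau X$ which, together with an irreducible leg $\tau X\to F_{i}\to X$ of $\eta$, closes a cycle. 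But this repaired argument rests on Proposition~\ref{compIrrmorphism} and Proposition~\ref{NoCycles}, not on the pushout description you set out to use.
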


\begin{proof}
Suppose we have a non trivial exact sequence in $\mathcal{M}$:
\begin{equation}\label{exactSeq}
X\stackrel{f}{\longrightarrow }E\stackrel{g}{\longrightarrow }X.
\end{equation}
Take $Y$ an indecomposable summand of $E$ such that $fp_{Y}\neq 0$, where $p_{Y}:E\rightarrow Y$ is the projection. Consider now $\sigma _{Y}:Y\rightarrow E$ the inclusion. We claim that $g\sigma _{Y}\neq 0$. Otherwise there is a $s:Y\rightarrow X$ such that $fs=\sigma _{Y}$. Then $id_{Y}=p_{Y}\sigma _{Y}=p_{Y}fs$. Thus $Y$ is a direct summand of $X$ which is indecomposable. Therefore $s$ is an isomorphism, then $s^{-1}=p_{Y}f$. We have $sp_{Y}:E\rightarrow X$ and $(sp_{Y})f=s(p_{Y}f)=id_{X}$. But this can not happen, because $(\ref{exactSeq})$ is a non trivial exact sequence. Therefore $g\sigma _{Y}\neq 0$. 
 
Then we have a non zero non isomorphism from $Y$ to $X$, so by Proposition \ref{compIrrmorphism},
there are $Y_1\in \hat{\mathcal{C}}$ with $Y\cong Y_{1}$, and a finite chain of irreducible morphisms from $Y_1$ to $X$.

In a similar way, $p_{Y}f$ is not an isomorphism. We also have a non zero non isomorphism from $X$ to $Y$, so again by Proposition \ref{compIrrmorphism}, there is a finite chain of irreducible morphisms from $X$ to $Y_1$, so we have an oriented cycle in $\hat{\mathcal{C}}$, which contradicts Proposition \ref{NoCycles}. 
  
Therefore there are not non trivial exact sequences of the form $(\ref{exactSeq})$, which completes the proof.
  \end{proof}

In the next result we characterize an object of $\hat{\mathcal{C}}$, using its coordinates.

\begin{proposition}\label{determineObjectByCoordinates}
Let $X\in \hat{\mathcal{C}}$ and $Y$ be an indecomposable object in $\mathcal{M}$ such that
$\mathrm{cd}(X)=\mathrm{cd}(Y)$. Then
\begin{enumerate}
\item[(i)] If $Y\in \hat{\mathcal{C}}$, then $X\cong Y$.
\item[(ii)] If $Y$ is a projective or an injective object of $\mathcal{M}$, then $X\cong Y$
\item[(iii)] If $\mathrm{End}_{\mathcal{M}}(X)\cong \mathrm{End}_{\mathcal{M}}(Y)$, then $X\cong Y$.
\end{enumerate}
\end{proposition}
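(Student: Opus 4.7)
The plan is to reduce each of the three statements to an application of Proposition \ref{IsomObjects} (or its Corollary \ref{corIsomObjects}), using the fact proved in Proposition \ref{ExtNulo} that every object of the component $\hat{\mathcal{C}}$ has vanishing self-extensions in $\mathcal{M}$. The recurring idea is that once we have $\mathrm{Ext}_{\mathcal{M}}(X,X)=0$ and $\mathrm{Ext}_{\mathcal{M}}(Y,Y)=0$, the equality $\mathrm{cd}(X)=\mathrm{cd}(Y)$ already forces $X\cong Y$.

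For \textbf{(i)}, since both $X$ and $Y$ lie in $\hat{\mathcal{C}}$, Proposition \ref{ExtNulo} gives $\mathrm{Ext}_{\mathcal{M}}(X,X)=0$ and $\mathrm{Ext}_{\mathcal{M}}(Y,Y)=0$. Combining this with $\mathrm{cd}(X)=\mathrm{cd}(Y)$, Proposition \ref{IsomObjects} yields $X\cong Y$. For \textbf{(ii)}, recall that $\mathcal{M}$ is closed under extensions in $\mathcal{P}(\Lambda)$, so Ext in $\mathcal{M}$ agrees with Ext in $\mathcal{P}(\Lambda)$ (this was also exploited in the proof of Proposition \ref{IsomObjects}). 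Consequently, if $Y$ is projective in $\mathcal{M}$ then $\mathrm{Ext}_{\mathcal{M}}(Y,-)=0$; if $Y$ is injective in $\mathcal{M}$ then $\mathrm{Ext}_{\mathcal{M}}(-,Y)=0$. In either case $\mathrm{Ext}_{\mathcal{M}}(Y,Y)=0$, and combined with $\mathrm{Ext}_{\mathcal{M}}(X,X)=0$ from Proposition \ref{ExtNulo} (applied to $X\in\hat{\mathcal{C}}$), Proposition \ref{IsomObjects} again delivers $X\cong Y$.

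For \textbf{(iii)}, I would invoke Proposition \ref{ExtNulo} one more time to get $\mathrm{Ext}_{\mathcal{M}}(X,X)=0$. Since $\mathrm{End}_{\mathcal{M}}(X)\cong \mathrm{End}_{\mathcal{M}}(Y)$ as $\textsf{F}$-algebras, in particular $\dim_{\textsf{F}}\mathrm{End}_{\mathcal{M}}(X)=\dim_{\textsf{F}}\mathrm{End}_{\mathcal{M}}(Y)$, and together with the hypothesis $\mathrm{cd}(X)=\mathrm{cd}(Y)$ all the assumptions of Corollary \ref{corIsomObjects} are met, giving $X\cong Y$.

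There is no real obstacle here; the proposition is essentially a bookkeeping consequence of the two key inputs already built up, namely the vanishing of $\mathrm{Ext}_{\mathcal{M}}(X,X)$ for objects of $\hat{\mathcal{C}}$ (Proposition \ref{ExtNulo}, which depended on the absence of oriented cycles) and the rigidity statement that $\mathrm{cd}$ determines indecomposables with vanishing self-extensions (Proposition \ref{IsomObjects}, together with Corollary \ref{corIsomObjects}). The only point needing a one-line justification is the remark in part (ii) that ambient projectivity or injectivity in $\mathcal{M}$ forces vanishing self-Ext, which uses that $\mathcal{M}$ is closed under extensions in $\mathcal{P}(\Lambda)$.
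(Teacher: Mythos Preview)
Your proof is correct and follows essentially the same approach as the paper: invoke Proposition \ref{ExtNulo} to get $\mathrm{Ext}_{\mathcal{M}}(X,X)=0$, observe that in cases (i) and (ii) one also has $\mathrm{Ext}_{\mathcal{M}}(Y,Y)=0$, apply Proposition \ref{IsomObjects}, and handle (iii) via Corollary \ref{corIsomObjects}. The paper's proof is simply a more compressed version of exactly this argument.
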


\begin{proof}
By Proposition \ref{ExtNulo}, $\mathrm{Ext}_{\mathcal{M}}(X,X)=0$.  In $(i)$ and $(ii)$, $\mathrm{Ext}_{\mathcal{M}}(Y,Y)=0$. Then our result follows in these two cases by Proposition \ref{IsomObjects}. Item $(iii)$ follows from 
Corollary \ref{corIsomObjects}. 
\end{proof}

For $M$ and $N$, indecomposable objects in a Krull-Schmidt $\textsf{F}$-category $\mathcal{A}$, we denote
$$K(M)=\mathrm{End}_{\mathcal{A}}(M)/\mathrm{rad}\mathrm{End}_{\mathcal{A}}(M);$$
$$\mathrm{Irr}(M,N)=\mathrm{Hom}_{\mathcal{A}}(M,N)/\mathrm{rad}^{2}(M,N).$$
Clearly $\mathrm{Irr}(M,N)$ is a $K(N)$-$K(M)$-bimodule.

With the next definition and result, we will study some properties of $K(X)$ and $\mathrm{Irr}(X,Y)$, in the case $X, Y \in \hat{\mathcal{C}}$.

\begin{definition}
An irreducible morphism in a Krull-Schmidt category is called \textit{left homogeneous} if it has the form
$$M^{m}\rightarrow N;$$ 
with $M$ and $N$ indecomposable objects and $m\in \mathbb{N}$. 

Similarly an irreducible morphism is called \textit{right homogeneous} if it has the form 
$$M\rightarrow N^{n};$$ 
for some $M$ and $N$ indecomposable objects and $n\in \mathbb{N}$. 

A left homogeneous morphism $M^{m}\rightarrow N$ is called \textit{maximal} if for any irreducible morphism $M^{m^{\prime }}\rightarrow N$, we have $m^{\prime }\leq m$. 

The definition for a maximal right homogeneous morphism is similar.
  \end{definition}
  
From \cite{Bautista82}, if $M$ and $N$ are indecomposable objects in a Krull-Schmidt $\textsf{F}$-category $\mathcal{A}$, then there is a maximal left homogeneous morphism $M^{m}\rightarrow N$ if and only if $\mathrm{dim}_{K(M)}\mathrm{Irr}(M,N)=m$. Analogously, there is a maximal right homogeneous morphism $M\rightarrow N^{n}$ if and only if $n=\mathrm{dim}_{K(N)}\mathrm{Irr}(M,N)$. Moreover, we have the following result.
  
\begin{proposition}\label{dimDeBimodulos}
For $M$ and $N$, indecomposable objects in a Krull-Schmidt $\textsf{F}$-category $\mathcal{A}$, if there is an almost split sequence in $\mathcal{A}$ starting in $\tau N$ and ending in $N$, then
$$\mathrm{dim}_{K(M)}\mathrm{Irr}(M,N)=\mathrm{dim}_{K(M)}\mathrm{Irr}(\tau N,M).$$
\end{proposition}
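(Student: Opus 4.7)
The plan is to compute the multiplicity $a$ of $M$ as a direct summand of the middle term of the almost split sequence in two different ways, one from the sink side and one from the source side. Let
$$0\rightarrow \tau N \stackrel{f}{\longrightarrow } E \stackrel{g}{\longrightarrow } N\rightarrow 0$$
be the given almost split sequence, and decompose $E\cong M^{a}\oplus E'$ where $M$ is not a direct summand of $E'$. Matching this decomposition, write $f=(f_{1},f_{2})^{T}$ with $f_{1}:\tau N\rightarrow M^{a}$ and $f_{2}:\tau N\rightarrow E'$, and $g=(g_{1},g_{2})$ with $g_{1}:M^{a}\rightarrow N$ and $g_{2}:E'\rightarrow N$.

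Since $g$ is a sink morphism for $N$, every irreducible morphism $M^{a'}\rightarrow N$ factors through $g$, and, up to an automorphism of $E$, its image lies in the $M^{a}$ summand. This forces $g_{1}$ to be a maximal left homogeneous morphism from a power of $M$ into $N$. By the characterization from \cite{Bautista82} recalled just before the proposition statement, we obtain $a=\mathrm{dim}_{K(M)}\mathrm{Irr}(M,N)$.

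Dually, $f$ is a source morphism for $\tau N$, so the symmetric argument applied to morphisms $\tau N\rightarrow M^{a'}$ shows that $f_{1}$ is a maximal right homogeneous morphism from $\tau N$ to a power of $M$. Again by the result from \cite{Bautista82}, $a=\mathrm{dim}_{K(M)}\mathrm{Irr}(\tau N,M)$. Combining the two identifications with the common value $a$ yields the desired equality.

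The only delicate point in this plan is justifying that the components $g_{1}$ and $f_{1}$ realize maximal left and right homogeneous morphisms into $N$ and out of $\tau N$, respectively. This is a standard consequence of the universal properties of sink and source morphisms together with the fact that, because $E$ is the middle term of the almost split sequence, any irreducible map from a power of $M$ to $N$ (respectively, from $\tau N$ to a power of $M$) embeds, up to automorphism, as a direct summand of $g$ (respectively, of $f$); I expect no surprises beyond bookkeeping with the $K(M)$-action in verifying this.
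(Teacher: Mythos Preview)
Your proof is correct and follows essentially the same route as the paper's proof. Both arguments identify the multiplicity $a$ of $M$ in the middle term $E$ of the almost split sequence with each of the two $K(M)$-dimensions via the characterization from \cite{Bautista82} of maximal left and right homogeneous morphisms; the paper just states this identification in a single sentence rather than writing out the decomposition $E\cong M^{a}\oplus E'$ and the components $f_{1},g_{1}$ explicitly.
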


\begin{proof}
Observe that there is an irreducible morphism $\tau N\rightarrow M^{u}$ if and only if $M^{u}$ is a direct summand of the middle term of the almost split sequence ending in $\tau N$, and this happen if and only if $M^{u}\rightarrow N$ is an irreducible morphism, therefore $\tau N\rightarrow M^{m}$ is maximal right homogeneous irreducible morphism if and only if $M^{m}\rightarrow N$ is a maximal left homogeneous irreducible morphism.
\end{proof}

\begin{lemma}\label{IsoIrrCok}
Let $X, Y$ be indecomposable objects in the category $\mathcal{M}$ such that
  $\mathrm{Cok}(X)\neq 0$ and $\mathrm{Cok}(Y)\neq 0$, then the functor $\mathrm{Cok}:\mathcal{M}\rightarrow \mathcal{V}$ induces an isomorphism of $\textsf{F}$-vector spaces.
$$ \mathrm{Cok}:\mathrm{Irr}(X,Y)\rightarrow \mathrm{Irr}(\mathrm{Cok}(X),\mathrm{Cok}(Y)).$$
\end{lemma}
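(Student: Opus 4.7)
The plan is to combine two facts already available in the excerpt: the functor $\mathrm{Cok}$ induces an equivalence of categories $\mathcal{M}/\mathcal{I}\rightarrow \mathcal{V}$ (Proposition~35 of \cite{BD2017}, used in the proof of Proposition~\ref{ARseq}), and the ideal $\mathcal{I}$ is admissible in the sense of Definition~1.6 of \cite{SL} (Lemma~\ref{lemaIdeal}(4)).

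First I would unpack the equivalence. Because $\mathrm{Cok}$ is full and dense with kernel on morphisms exactly $\mathcal{I}$, it gives a surjective $\textsf{F}$-linear map
$$\mathrm{Cok}:\mathrm{Hom}_{\mathcal{M}}(X,Y)\longrightarrow \mathrm{Hom}_{\mathcal{V}}(\mathrm{Cok}(X),\mathrm{Cok}(Y))$$
whose kernel is $\mathcal{I}(X,Y)$. An equivalence of Krull-Schmidt $\textsf{F}$-categories preserves the radical and all its powers, so under the induced isomorphism $\mathrm{Hom}_{\mathcal{M}}(X,Y)/\mathcal{I}(X,Y)\cong \mathrm{Hom}_{\mathcal{V}}(\mathrm{Cok}(X),\mathrm{Cok}(Y))$ the subspace $\mathrm{rad}^{2}_{\mathcal{V}}(\mathrm{Cok}(X),\mathrm{Cok}(Y))$ corresponds to $\mathrm{rad}^{2}_{\mathcal{M}/\mathcal{I}}(X,Y)$, whose preimage in $\mathrm{Hom}_{\mathcal{M}}(X,Y)$ is $\mathrm{rad}^{2}_{\mathcal{M}}(X,Y)+\mathcal{I}(X,Y)$.

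Second, because $X$ and $Y$ are indecomposable with $\mathrm{Cok}(X)\neq 0$ and $\mathrm{Cok}(Y)\neq 0$, neither $X$ nor $Y$ is of the form $P\rightarrow 0$. Admissibility of $\mathcal{I}$ (established in the proof of Lemma~\ref{lemaIdeal}(4)) precisely says that every morphism in $\mathcal{I}(X,Y)$ lies in $\mathrm{rad}^{2}_{\mathcal{M}}(X,Y)$. Therefore the sum simplifies as $\mathrm{rad}^{2}_{\mathcal{M}}(X,Y)+\mathcal{I}(X,Y)=\mathrm{rad}^{2}_{\mathcal{M}}(X,Y)$. Passing to quotients yields
$$\mathrm{Irr}_{\mathcal{M}}(X,Y)=\mathrm{Hom}_{\mathcal{M}}(X,Y)/\mathrm{rad}^{2}_{\mathcal{M}}(X,Y)\xrightarrow{\ \mathrm{Cok}\ }\mathrm{Hom}_{\mathcal{V}}(\mathrm{Cok}(X),\mathrm{Cok}(Y))/\mathrm{rad}^{2}_{\mathcal{V}}=\mathrm{Irr}_{\mathcal{V}}(\mathrm{Cok}(X),\mathrm{Cok}(Y))$$
as an $\textsf{F}$-linear isomorphism, which is the statement of the lemma. (Equivalently, this is the content of Lemma~1.7 of \cite{SL} applied to the admissible ideal $\mathcal{I}$, essentially the same mechanism already invoked in Proposition~\ref{ARseq} and in Lemma~\ref{irreducibleInVandM}.)

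I expect no serious obstacle: the only point that requires any care is that the equivalence $\mathcal{M}/\mathcal{I}\rightarrow \mathcal{V}$ transports $\mathrm{rad}^{2}$ to $\mathrm{rad}^{2}$, but this is automatic for any $\textsf{F}$-linear equivalence of Krull-Schmidt categories, since the radical is defined purely in terms of non-isomorphisms between indecomposables. Everything else is a direct consequence of Lemma~\ref{lemaIdeal}(4) and the equivalence recalled above.
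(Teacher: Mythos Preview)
Your proof is correct and follows essentially the same approach as the paper: both invoke the equivalence $\mathcal{M}/\mathcal{I}\rightarrow\mathcal{V}$ together with the admissibility of $\mathcal{I}$ from Lemma~\ref{lemaIdeal}(4), the only difference being that you spell out the passage to $\mathrm{rad}^{2}$ explicitly while the paper defers to the proof of Proposition~2.9 of \cite{SL} (not Lemma~1.7, which concerns source/sink morphisms).
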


\begin{proof}
The functor $\mathrm{Cok}:\mathcal{M}\rightarrow \mathcal{V}$ induces an equivalence
$\mathrm{Cok}:\overline{\mathcal{M}}\rightarrow \mathcal{V}$, where $\overline{\mathcal{M}}$ is the category $\mathcal{M}$ module the ideal $\mathcal{I}$, generated by those morphisms which factorizes through objects of the form $P\rightarrow 0$. By Lemma \ref{lemaIdeal} the ideal $\mathcal{I}$ is admissible in the sense of \cite{SL}. Then our result follows from the proof of  Proposition $2.9$ of \cite{SL}.
\end{proof}

\begin{proposition}\label{KXisomKtauX}
If $X$ is a non projective indecomposable object in $\mathcal{M}$, then
$$K(X)\cong K(\tau X);$$
as $\textsf{F}$-algebras.
\end{proposition}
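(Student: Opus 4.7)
The plan is to split the argument according to whether $\mathrm{Cok}(X)$ is zero or not, reducing the generic case to the analogous statement in $\mathcal{V}$ (Proposition~\ref{EndXisomEndtauX}) via the functor $\mathrm{Cok}\colon\mathcal{M}\to\mathcal{V}$, and treating the remaining case $\mathrm{Cok}(X)=0$ by direct computation using the explicit almost split sequence of Proposition~\ref{ASsequence}.

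In the generic case $\mathrm{Cok}(X)\neq 0$ I would first invoke Proposition~\ref{XisPiffCokisP} to conclude that $\mathrm{Cok}(X)$ is non-projective in $\mathcal{V}$, so that the isomorphism $K(\mathrm{Cok}(X))\cong K(\tau\,\mathrm{Cok}(X))$ of Proposition~\ref{EndXisomEndtauX} is available. Next I would take the almost split sequence $\tau X\rightarrow Y\rightarrow X$ in $\mathcal{M}$ and check that all three cokernels are non-zero: by Proposition~\ref{InjectiveIsom} the only indecomposables with vanishing cokernel are the injective objects $T(e_j\Lambda)$, so $\tau X$, being non-injective, satisfies $\mathrm{Cok}(\tau X)\neq 0$; and the Snake Lemma applied to the corresponding short exact pair of rows in $\mathcal{P}(\Lambda)$ shows that $\mathrm{Cok}$ is right exact on the sequence, whence $\mathrm{Cok}(Y)$ surjects onto $\mathrm{Cok}(X)\neq 0$. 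Proposition~\ref{ARseq} then identifies $\mathrm{Cok}(\tau X)$ with $\tau\,\mathrm{Cok}(X)$, and two applications of Lemma~\ref{lemaIdeal}(7), namely $K(X)\cong K(\mathrm{Cok}(X))$ and $K(\tau X)\cong K(\mathrm{Cok}(\tau X))$, combine with the $\mathcal{V}$-side isomorphism to give the result as $\textsf{F}$-algebras.

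For the case $\mathrm{Cok}(X)=0$, any object $u\colon P\to(e_{0}\Lambda)^{\nu}$ of $\mathcal{M}$ has image in the radical of its target, so the cokernel can vanish only when $\nu=0$; hence $X$ indecomposable forces $X\cong T(e_{i}\Lambda)$ with $i\neq 0$. Here one computes $K(X)=\mathrm{End}_{\Lambda}(e_{i}\Lambda)=e_{i}\Lambda e_{i}$ directly: this is a field, either $\textsf{F}$ or $\textsf{G}$, depending on the equipment of $i$ and on the choice of $\Lambda^{(r)}$ or $\Lambda^{(c)}$. Proposition~\ref{ASsequence} then gives $\tau X=X_{D(\Lambda e_{i})}$, whose cokernel $D(\Lambda e_{i})$ is non-zero, so Lemma~\ref{lemaIdeal}(7) identifies $K(\tau X)$ with the endomorphism field of the injective module $D(\Lambda e_{i})$; the duality yields $\mathrm{End}_{\Lambda}(D(\Lambda e_{i}))\cong(e_{i}\Lambda e_{i})^{op}$, and since $e_{i}\Lambda e_{i}$ is a commutative field this coincides with $e_{i}\Lambda e_{i}\cong K(X)$ as $\textsf{F}$-algebras.

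The main obstacle is this second case: the bridge $\mathrm{Cok}$ to $\mathcal{V}$ collapses at $X$ itself, so the isomorphism must be established by hand, keeping careful track of the $\textsf{F}$-algebra structures under the duality identification $\mathrm{End}_{\Lambda}(D(\Lambda e_{i}))\cong(e_{i}\Lambda e_{i})^{op}$ and the subsequent collapse $(e_{i}\Lambda e_{i})^{op}\cong e_{i}\Lambda e_{i}$ afforded by commutativity of the residue field.
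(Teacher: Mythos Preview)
Your proposal is correct and follows essentially the same approach as the paper's proof: both split into the cases $\mathrm{Cok}(X)\neq 0$ and $\mathrm{Cok}(X)=0$, pass to $\mathcal{V}$ via Proposition~\ref{ARseq} and Lemma~\ref{lemaIdeal}(7) in the first case to invoke Proposition~\ref{EndXisomEndtauX}, and handle the second case directly from Proposition~\ref{ASsequence} by computing both sides as $e_i\Lambda e_i$. Your version is in fact slightly more careful than the paper's, since you explicitly justify that $\mathrm{Cok}(X)$ is non-projective in $\mathcal{V}$ (needed to apply Proposition~\ref{EndXisomEndtauX}) and spell out the duality argument $K(D(\Lambda e_i))\cong (e_i\Lambda e_i)^{op}=e_i\Lambda e_i$ in the degenerate case.
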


\begin{proof}
Consider  
$$\tau X\rightarrow Y\rightarrow X;$$
an almost split sequence in $\mathcal{M}$. Suppose $\mathrm{Cok}(X)\neq 0$. Since the above almost split sequence is an exact sequence, $Y$ is not of the form $P\rightarrow 0$, therefore $\mathrm{Cok}(Y)\neq 0$.  Here
   $\tau X$ is not injective, therefore $\mathrm{Cok}(\tau X)\neq 0$.
   
Proposition \ref{ARseq} gives us the following  almost split sequence in $\mathcal{V}$:
$$0\rightarrow \mathrm{Cok}(\tau X)\rightarrow \mathrm{Cok}(Y)\rightarrow \mathrm{Cok}(X)\rightarrow 0.$$
   
Then $K(\mathrm{Cok}(\tau X))\cong K(\mathrm{Cok}(X))$ as $\textsf{F}$-algebras, by Proposition \ref{EndXisomEndtauX}. Using 7 of Lemma \ref{lemaIdeal}, we have the following isomorphisms of $\textsf{F}$-algebras:
    $$K(\tau X)\cong K(\mathrm{Cok}(\tau X))\cong K(\mathrm{Cok}(X))\cong K(X).$$

Now if $\mathrm{Cok}(X)=0$, then $X=T(e_{i}\Lambda )$ for some $i\in \mathcal{P}$. Taking into account Proposition \ref{ASsequence}, $\tau X=X_{D(e_{i}\Lambda )}$. Therefore:
    $$K(X)\cong K(e_{i}\Lambda )\cong K(D(\Lambda e_{i}))\cong K(\tau X).$$
    The proof is complete. 
\end{proof}

\begin{proposition}\label{irrIgualtauIrr}
If $X$ and $Y$ are non projective indecomposable objects in $\mathcal{M}$
    $$\mathrm{dim}_{\textsf{F}}\mathrm{Irr}(X,Y)=\mathrm{dim}_{\textsf{F}}(\tau X, \tau Y).$$
\end{proposition}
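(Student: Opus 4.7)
The plan is to reduce the statement to two applications of Proposition \ref{dimDeBimodulos}, interpreted in terms of $\textsf{F}$-dimensions via the division $\textsf{F}$-algebras $K(X)$ and $K(\tau Y)$. The main observation is that for indecomposable $A, B$ in a Krull--Schmidt $\textsf{F}$-category, $\mathrm{Irr}(A,B)$ carries a natural $K(B)$-$K(A)$-bimodule structure, and since $K(A)$ is a division $\textsf{F}$-algebra we have the identity
\[
\dim_{\textsf{F}}\mathrm{Irr}(A,B)=[\dim_{\textsf{F}}K(A)]\cdot\dim_{K(A)}\mathrm{Irr}(A,B),
\]
where the $K(A)$-dimension on the right can be computed using either the right action (if $A$ is the source) or the left action (if $A$ is the target).

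First I would invoke Proposition \ref{dimDeBimodulos} with $M=X$ and $N=Y$; this is legal because $Y$ is non-projective, so the almost split sequence in $\mathcal{M}$ ending in $Y$ (guaranteed by the existence of almost split sequences in $\mathcal{M}$) provides the required hypothesis. This yields
\[
\dim_{K(X)}\mathrm{Irr}(X,Y)=\dim_{K(X)}\mathrm{Irr}(\tau Y,X).
\]
Multiplying both sides by $\dim_{\textsf{F}}K(X)$ and using the formula above (once with $X$ as source, once with $X$ as target), this promotes to
\[
\dim_{\textsf{F}}\mathrm{Irr}(X,Y)=\dim_{\textsf{F}}\mathrm{Irr}(\tau Y,X).
\]

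Second, I would apply Proposition \ref{dimDeBimodulos} again, this time with $M=\tau Y$ and $N=X$; here the hypothesis is satisfied because $X$ is non-projective, so there is an almost split sequence ending in $X$. This gives
\[
\dim_{K(\tau Y)}\mathrm{Irr}(\tau Y,X)=\dim_{K(\tau Y)}\mathrm{Irr}(\tau X,\tau Y),
\]
and the same promotion via $\dim_{\textsf{F}}K(\tau Y)$ yields
\[
\dim_{\textsf{F}}\mathrm{Irr}(\tau Y,X)=\dim_{\textsf{F}}\mathrm{Irr}(\tau X,\tau Y).
\]
Chaining the two equalities completes the proof.

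There is essentially no obstacle: the whole argument is a formal manipulation once one has Proposition \ref{dimDeBimodulos} in hand. The only subtle point worth stating explicitly is the passage from the $K(M)$-dimensions of Proposition \ref{dimDeBimodulos} to absolute $\textsf{F}$-dimensions, which works because $K(M)$ is a finite-dimensional division $\textsf{F}$-algebra and so its left- and right-dimensions against any bimodule agree when passed to $\textsf{F}$-dimension. One could also invoke Proposition \ref{KXisomKtauX} to rewrite intermediate quantities, but it is not strictly needed: the two applications of Proposition \ref{dimDeBimodulos} glue through the common factor $\dim_{\textsf{F}}\mathrm{Irr}(\tau Y, X)$ without any identification of the $K$-algebras at the two ends.
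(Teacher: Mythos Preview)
Your proposal is correct and follows essentially the same approach as the paper's proof: two applications of Proposition \ref{dimDeBimodulos} (first with $M=X$, $N=Y$, then with $M=\tau Y$, $N=X$), converting each $K(\cdot)$-dimension equality to an $\textsf{F}$-dimension equality via multiplication by $\dim_{\textsf{F}}K(\cdot)$. Your write-up is in fact more explicit than the paper's about why the hypotheses of Proposition \ref{dimDeBimodulos} are met and about the bimodule-to-$\textsf{F}$-dimension passage, but the argument is the same.
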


\begin{proof}
Using twice  Proposition \ref{dimDeBimodulos} we have
$$\mathrm{dim}_{\textsf{F}}\mathrm{Irr}(X,Y)=\mathrm{dim}_{\textsf{F}}K(X)\, \mathrm{dim}_{K(X)}\mathrm{Irr}(\tau Y,X) = \mathrm{dim}_{\textsf{F}}\mathrm{Irr}(\tau Y, X)$$
$$= \mathrm{dim}_{\textsf{F}}K(\tau Y)\, \mathrm{dim}_{K(\tau Y)}\mathrm{Irr}(\tau X,\tau Y) =\mathrm{dim}_{\textsf{F}}(\tau X, \tau Y).$$ 
\end{proof}

\begin{proposition}\label{oneDim}
If $X$ and $Y$ are in $\hat{\mathcal{C}}$, then $\mathrm{Irr}(X,Y)$ is zero, or one-dimensional over $K(X)$ or over $K(Y)$.
\end{proposition}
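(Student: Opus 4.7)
The plan is to reduce to the case where one of $X$ or $Y$ is projective in $\mathcal{M}$, and then to read off the dimension of $\mathrm{Irr}(X,Y)$ from the known description of the radicals of the indecomposable projective $\Lambda$-modules. If both $X$ and $Y$ are non-projective, Propositions \ref{KXisomKtauX} and \ref{irrIgualtauIrr} together give
$$\dim_{K(X)}\mathrm{Irr}(X,Y)=\dim_{K(\tau X)}\mathrm{Irr}(\tau X,\tau Y),\qquad \dim_{K(Y)}\mathrm{Irr}(X,Y)=\dim_{K(\tau Y)}\mathrm{Irr}(\tau X,\tau Y),$$
so by Corollary \ref{tauNesProyectivo} we may iterate $\tau$ exactly $\min(n(X),n(Y))$ times and replace $(X,Y)$ by the corresponding pair of AR-translates, after which at least one component is projective. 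If this projective component turns out to be $X$ rather than $Y$, Proposition \ref{dimDeBimodulos} gives $\dim_{K(X)}\mathrm{Irr}(X,Y)=\dim_{K(X)}\mathrm{Irr}(\tau Y,X)$, and comparing $\textsf{F}$-dimensions together with $K(Y)\cong K(\tau Y)$ yields the corresponding identity on the $K(Y)$ side; hence the claim for $(X,Y)$ is equivalent to the claim for the pair $(\tau Y,X)$, in which the second component is projective. We may therefore assume $\mathrm{Irr}(X,Y)\neq 0$ with $Y$ projective in $\mathcal{M}$.

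Under this reduction, Proposition \ref{XisPiffCokisP} combined with the equivalence $F:\mathcal{U}\to\mathcal{V}$ gives $\mathrm{Cok}(Y)\cong F(e_{i}\Lambda)$ for some $i\neq 0$, and Proposition \ref{isomTroughIrreducible} shows that the indecomposable $X$ admitting an irreducible morphism $X\to Y$ is unique up to isomorphism, with $\mathrm{Cok}(X)\cong F(Q)$ for some indecomposable direct summand $Q$ of $\mathrm{rad}(e_{i}\Lambda)$. Lemma \ref{IsoIrrCok} and the equivalence $F$ transport the bimodule into $\mathcal{U}$:
$$\mathrm{Irr}_{\mathcal{M}}(X,Y)\cong\mathrm{Irr}_{\mathcal{U}}(Q,e_{i}\Lambda),$$
with $K(X)=\mathrm{End}_{\Lambda}(Q)/\mathrm{rad}$ and $K(Y)=\mathrm{End}_{\Lambda}(e_{i}\Lambda)$, and the standard identification recalled before Proposition \ref{dimDeBimodulos} identifies $\dim_{K(X)}\mathrm{Irr}_{\mathcal{U}}(Q,e_{i}\Lambda)$ with the multiplicity of $Q$ in $\mathrm{rad}(e_{i}\Lambda)$. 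Propositions \ref{radicalLambdar} and \ref{radicalLambdac} then split the problem into two subcases. In every case except one, $\mathrm{rad}(e_{i}\Lambda)$ is indecomposable, so $Q\cong\mathrm{rad}(e_{i}\Lambda)$ with multiplicity one and $\dim_{K(X)}\mathrm{Irr}(X,Y)=1$. The remaining case is $\Lambda=\Lambda^{(r)}$ with $i$ weak and $i<^{p}j$ for every $j>i$, where $\mathrm{rad}(e_{i}\Lambda)\cong T_{i}^{p}$, $K(T_{i})=\textsf{F}$ and $K(e_{i}\Lambda)=\textsf{G}$; here $\dim_{K(X)}\mathrm{Irr}(X,Y)=p$, whence $\dim_{\textsf{F}}\mathrm{Irr}(X,Y)=p$ and $\dim_{K(Y)}\mathrm{Irr}(X,Y)=p/[\textsf{G}:\textsf{F}]=1$. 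Either way, one of the two $K$-dimensions equals one.

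The principal obstacle is the careful bookkeeping of the bimodule structures throughout the reductions: one has to verify that the left and right actions of $K(X)$ and $K(Y)$ on $\mathrm{Irr}_{\mathcal{M}}(X,Y)$ are transported to the corresponding actions of $\mathrm{End}_{\Lambda}(Q)/\mathrm{rad}$ and $\mathrm{End}_{\Lambda}(e_{i}\Lambda)$ through the successive applications of $\mathrm{Cok}$ and $F^{-1}$. Once this is done, the numerical coincidence $[\textsf{G}:\textsf{F}]=p$ exactly cancels the multiplicity $p$ appearing in the exceptional splitting of $\mathrm{rad}(e_{i}\Lambda)$, and forces the claimed dichotomy.
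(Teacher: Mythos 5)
Your proposal is correct and follows essentially the same route as the paper's proof: reduce via $\tau$-translations (Propositions \ref{KXisomKtauX}, \ref{irrIgualtauIrr}, and the flip through Proposition \ref{dimDeBimodulos}) to the case where $Y$ is projective, transport $\mathrm{Irr}$ to $\mathcal{U}$ via $\mathrm{Cok}$ and $F$, and then read the dimension off Propositions \ref{radicalLambdar} and \ref{radicalLambdac}, with the exceptional case $\mathrm{rad}(e_{i}\Lambda^{(r)})\cong T_{i}^{p}$ yielding $\dim_{K(Y)}\mathrm{Irr}(X,Y)=p/[\textsf{G}:\textsf{F}]=1$. The only cosmetic differences are that you quote $\mathrm{End}_{\Lambda}(T_{i})=\textsf{F}$ directly from Proposition \ref{radicalLambdar} where the paper recomputes it via the socle, and the bimodule-transport issue you flag is discharged exactly as the paper does it, by comparing $\textsf{F}$-dimensions and dividing by $\dim_{\textsf{F}}K(-)$, since all the $K$'s are division rings.
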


\begin{proof}
Assume $\mathrm{Irr}(X,Y)\neq 0$. By Corollary \ref{tauNesProyectivo} we know that there are non negative integers $n(X)$ and $n(Y)$ such that
    $\tau ^{n(X)}X$ and $\tau ^{n(Y)}Y$ are projectives.  Then we have two cases: 
    
    (1) $n(X)\geq n(Y)$, 
    
    (2) $n(X)<n(Y)$.
    
Consider case (1). Applying Proposition \ref{irrIgualtauIrr} $n(Y)$ times we obtain:
$$\mathrm{dim}_{\textsf{F}}\mathrm{Irr}(X,Y)=\mathrm{dim}_{\textsf{F}}\mathrm{Irr}(Z,X_{F(e_{i}\Lambda )});$$
where $Z=\tau ^{n(Y)}X$ and $X_{F(e_{i}\Lambda )}=\tau ^{n(Y)}Y$, for some $i\in \mathscr{P}$. By Lemma \ref{lemaIdeal}, $\mathrm{Cok}Z\neq 0$ and 
$$\mathrm{dim}_{\textsf{F}}\mathrm{Irr}(Z,X_{F(e_{i}\Lambda )})=\mathrm{dim}_{\textsf{F}}\mathrm{Irr}(\mathrm{Cok}(Z), F(e_{i}\Lambda ))=\mathrm{dim}_{\textsf{F}}\mathrm{Irr}(W,e_{i}\Lambda );$$ 
where $F(W)=\mathrm{Cok}(Z)$.

Therefore $W$ is a direct summand of $\mathrm{rad}(e_{i}\Lambda)$ and by Propositions \ref{radicalLambdar} and \ref{radicalLambdac}, $\mathrm{rad}(e_{i}\Lambda )\cong W^{\ell}$ with $\ell=1$ or $\ell=p$.

Then 
$$\mathrm{dim}_{\textsf{F}}\mathrm{Irr}(X,Y)=\dim _{\textsf{F}}\mathrm{Irr}(W,e_{i}\Lambda )=\ell\mathrm{dim}_{\textsf{F}}K(W).$$
    
Moreover 
    $$\mathrm{dim}_{\textsf{F}}\mathrm{Irr}(X,Y)=\mathrm{dim}_{\textsf{F}}K(X)\mathrm{dim}_{K(X)}\mathrm{Irr}(X,Y);$$
    and $\mathrm{dim}_{\textsf{F}}K(X)=\mathrm{dim}_{\textsf{F}}K(W)$, so
    $$\mathrm{dim}_{K(X)}\mathrm{Irr}(X,Y)=\ell.$$

Then if $\ell=1$, we have that $\mathrm{Irr}(X,Y)$ is one-dimensional over $K(X)$. If $\ell=p$, then $\Lambda =\Lambda ^{(r)}$, with $i$ a weak element of $\mathscr{P}$, and 
    $$\mathrm{dim}_{\textsf{F}}K(e_{i}\Lambda )=\mathrm{dim}_{\textsf{F}}e_{i}\Lambda e_{i}=p.$$
    
We have 
    $$p\,\mathrm{dim}_{\textsf{F}}\mathrm{soc}(W)=\mathrm{dim}_{\textsf{F}}(W^{p})=\mathrm{dim}_{\textsf{F}}\mathrm{soc}\,\mathrm{rad}(e_{i}\Lambda )$$
  $$=\mathrm{dim}_{\textsf{F}}\mathrm{soc}(e_{i}\Lambda )=\mathrm{dim}_{\textsf{F}}(e_{i}\Lambda e_{\text{\large \Fontlukas m}})=p.$$

This implies $\mathrm{dim}_{\textsf{F}}\mathrm{soc}(W)=1$, moreover $\mathrm{soc}(W)$ is a $K(W)$-space, thus $K(W)=\textsf{F}$ and consequently $K(X)=\textsf{F}$. Then 
$$\mathrm{dim}_{\textsf{F}}\mathrm{Irr}(X,Y)=p=\mathrm{dim}_{\textsf{F}}K(e_{i}\Lambda )=\mathrm{dim}_{\textsf{F}}K(Y);$$
  therefore $\mathrm{Irr}(X,Y)$ is one-dimensional over $K(Y)$.

Now consider $n(Y)>n(X)$, then $Y$ is not projective. By Proposition \ref{dimDeBimodulos} we have
  $$\mathrm{dim}_{\textsf{F}}\mathrm{Irr}(X,Y)=\mathrm{dim}_{\textsf{F}}\mathrm{Irr}(\tau Y,X)$$
  here $n(\tau Y)\geq n(X)$, then by $(1)$, $\mathrm{dim}_{\textsf{F}}\mathrm{Irr}(\tau Y,X)$ is equal to
  $\mathrm{dim}_{\textsf{F}}K(X)$ or $\mathrm{dim}_{\textsf{F}}K(\tau Y)$. Therefore $\mathrm{dim}_{\textsf{F}}\mathrm{Irr}(X,Y)$
  is equal to $\mathrm{dim}_{\textsf{F}}K(Y)$ or $\mathrm{dim}_{\textsf{F}}K(X)$ (see Proposition \ref{KXisomKtauX}), this implies that
  $\mathrm{Irr}(X,Y)$ is one-dimensional over $K(X)$ or over $K(Y)$, as we wanted to prove.
\end{proof}

%%%%%%%%%%%%%%%%%%%%%%%%%%%%%%%%%%%%%%%%%%%%%%%%%%%%
%%%%%%%%%%%%%%%%%%%%%%%%%%%%%%%%%%%%%%%%%%%%%%%%%%%%
%%%%%%%%%%%%%%%%%%%%%%%%%%%%%%%%%%%%%%%%%%%%%%%%%%%%
%%%%%%%%%%%%%%%%%%%%%%%%%%%%%%%%%%%%%%%%%%%%%%%%%%%%

\section{Representations and Corepresentations}

Let $\mathscr{P}$ be a $p$-equipped partially ordered set and $\textsf{G}/\textsf{F}$ a normal extension of fields of degree equal to $p$. We have the algebras $\Lambda ^{(r)}=\Lambda (\mathcal{R}^{(r)})$ and  $\Lambda ^{(c)}=\Lambda (\mathcal{R}^{(c)})$, where $\mathcal{R}^{(r)}$ and $\mathcal{R}^{(c)}$ are the admissible systems defined in section $2$. Moreover we
have the categories $\mathcal{U}^{(r)}$, $\mathcal{V}^{(r)}$, and $\mathcal{M}^{(r)}$, associated to $\Lambda ^{(r)}$ and the corresponding categories $\mathcal{U}^{(c)}$, $\mathcal{V}^{(c)}$, $\mathcal{M}^{(c)}$ associated to
$\Lambda ^{(c)}$. Denote by $\hat{\mathcal{C}}^{(r)}$, the Auslander-Reiten component in $\mathcal{M}^{(r)}$ of the object
$X_{F(e_{\text{\large \Fontlukas m}}\Lambda ^{(r)})}$, and by $\hat{\mathcal{C}}^{(c)}$ the component of  $X_{F(e_{\text{\large \Fontlukas m}}\Lambda ^{(c)})}$ in $\mathcal{M}^{(c)}$. The main purpose of this section is to prove the existence of a bijection $\kappa :\hat{\mathcal{C}}^{(r)}\rightarrow \hat{\mathcal{C}}^{(c)}$ which is an isomorphism among the underlying graphs.

\begin{definition}
An indecomposable object $Z$ in $\mathcal{U}^{(r)}$ or  in $\mathcal{V}^{(r)}$ or in $\mathcal{M}^{(r)}$ is called strong if $K(Z)\cong \textsf{F}$ and weak if $K(Z)\cong \textsf{G}$. 

If $W$ is an indecomposable object in 
$\mathcal{U}^{(c)}$ or  in $\mathcal{V}^{(c)}$ or in $\mathcal{M}^{(c)}$, it is called strong if $K(W)\cong \textsf{G}$ and weak if $K(W)\cong \textsf{F}$.
\end{definition}

Take $\Lambda $ equal to $\Lambda ^{(r)}$ or $\Lambda ^{(c)}$. Then
for $i\in \mathscr{P}$ we have $\mathrm{End}_{\mathcal{M}}(0\rightarrow e_{i}\Lambda )\cong e_{i}\Lambda e_{i}$
and $\mathrm{End}_{\mathcal{M}}(e_{i}\Lambda \rightarrow 0)\cong e_{i}\Lambda e_{i}$. Therefore $T(e_{i}\Lambda )=(0\rightarrow e_{i}\Lambda )$ is strong (weak) if and only if $i$ is strong (weak) and also $I(e_{i}\Lambda )=(e_{i}\Lambda \rightarrow 0)$ is strong (weak) if and only if $i$ is strong (weak).

\begin{proposition}\label{objectWS}
If $X$ is an object of $\hat{\mathcal{C}}^{(r)}$ or $\hat{\mathcal{C}}^{(c)}$ then $X$ is strong or weak.
\end{proposition}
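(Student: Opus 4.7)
The plan is to reduce the claim to the projective objects of the component via the $\tau$-shift, and then read off $K$ directly from the endomorphism ring of an indecomposable projective of $\Lambda$.

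First I would invoke Corollary~\ref{tauNesProyectivo} to pick, for any $X \in \hat{\mathcal{C}}$, a non-negative integer $n = n(X)$ such that $\tau^{n} X$ is projective in $\mathcal{M}$. Applying Proposition~\ref{KXisomKtauX} iteratively along the chain $X, \tau X, \tau^{2}X, \ldots, \tau^{n}X$ yields an isomorphism of $\mathsf{F}$-algebras
\[
K(X) \;\cong\; K(\tau^{n} X).
\]
So it suffices to determine $K(P)$ for $P$ an indecomposable projective in $\hat{\mathcal{C}}$.

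Next, I would identify these projectives. By Proposition~\ref{XisPiffCokisP}, an indecomposable $P \in \mathcal{M}$ is projective if and only if $\mathrm{Cok}\, P$ is a non-zero projective object of $\mathcal{V}$; via the equivalence $F:\mathcal{U} \to \mathcal{V}$, such a $\mathrm{Cok}\, P$ is isomorphic to $F(e_i\Lambda)$ for some $i \in \mathscr{P}$, $i \neq 0$. Consequently $P \cong X_{F(e_i\Lambda)}$. Then item~(7) of Lemma~\ref{lemaIdeal} (applicable since $\mathrm{Cok}\, P \neq 0$) together with the equivalence $F$ gives
\[
K(P) \;\cong\; K(\mathrm{Cok}\, P) \;=\; K(F(e_i\Lambda)) \;\cong\; K(e_i\Lambda) \;=\; e_i\Lambda e_i \big/ \mathrm{rad}(e_i\Lambda e_i).
\]

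Finally, I would compute $e_i\Lambda e_i$ case by case using the definition of $\mathcal{R}^{(r)}$ and $\mathcal{R}^{(c)}$ recalled in Section~\ref{SectionPequipped}. For $\Lambda = \Lambda^{(r)}$ we have $e_i\Lambda^{(r)} e_i \cong \mathcal{R}^{(r)}_i$, which equals $\mathsf{F}$ when $i$ is strong and $\mathsf{G}$ when $i$ is weak; for $\Lambda = \Lambda^{(c)}$ we have $e_i\Lambda^{(c)} e_i \cong \mathcal{R}^{(c)}_i$, which is $\mathsf{G}$ when $i$ is strong and $\mathsf{F}$ when $i$ is weak. In every case this ring is already a field, so no radical quotient is needed, and $K(P) \in \{\mathsf{F}, \mathsf{G}\}$. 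Combined with $K(X) \cong K(\tau^{n}X)$, this shows $X$ is strong or weak.

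The only point requiring care is to make sure the projective $\tau^{n}X$ we land on is of the form $X_{F(e_i\Lambda)}$ and not, say, some $T(e_i\Lambda)$; but Proposition~\ref{XisPiffCokisP} rules the latter out since $\mathrm{Cok}\, T(e_i\Lambda) = 0$. Given that, the argument is a clean three-step reduction, and I expect no further obstacle.
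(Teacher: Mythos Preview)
Your proposal is correct and follows essentially the same route as the paper's proof: reduce via Corollary~\ref{tauNesProyectivo} and Proposition~\ref{KXisomKtauX} to a projective $X_{F(e_i\Lambda)}$, then pass through $\mathrm{Cok}$ and the equivalence $F$ (item~(7) of Lemma~\ref{lemaIdeal}) to identify $K(X)$ with $e_i\Lambda e_i$, which is $\mathsf{F}$ or $\mathsf{G}$. You are in fact slightly more careful than the paper in justifying why the projective $\tau^{n}X$ must be of the form $X_{F(e_i\Lambda)}$ rather than some $T(e_j\Lambda)$.
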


\begin{proof}
By Corollary \ref{tauNesProyectivo} there is a non-negative integer $n(X)$ such that $\tau ^{n(X)}X=X_{F(e_{i}\Lambda )}$ for some $i\in \mathscr{P}$, using Propositions \ref{EndXisomEndtauX} and \ref{KXisomKtauX},
$$K(X)\cong K(X_{F(e_{i}\Lambda )})\cong K(F(e_{i}\lambda ))\cong K(e_{i}\lambda )\cong e_{i}\Lambda e_{i};$$
as $\textsf{F}$-algebras, therefore $K(X)\cong \textsf{F}$ or
$K(X)\cong \textsf{G}$ as $\textsf{F}$-algebras. This proves our assertion.
\end{proof}

For a $p$-equipped poset $\mathscr{P}$ and a point $i\in \mathscr{P}$, let us denote by $\mathscr{P}^{\geq i}$ the subposet of $\mathscr{P}$ consisting in all the points greater or equal than $i$.

\begin{lemma}\label{lemmaSlender}
Let $\Lambda $ be $\Lambda ^{(r)}$ or $\Lambda ^{(c)}$, then for $i\in \mathscr{P}$, the object
$\mathrm{rad}(e_{i}\Lambda )$ is projective if and only if in the Hasse diagram of $\mathscr{P}$ there is only one arrow $i\rightarrow j$, for some $j\in \mathscr{P}$, and for all $u\geq j$, $i\leq ^{\ell}u$ if and only if $j\leq ^{\ell}u$.
\end{lemma}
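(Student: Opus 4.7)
The plan is to exploit the explicit structural descriptions of $\mathrm{rad}(e_i\Lambda)$ given in Propositions \ref{radicalLambdar} and \ref{radicalLambdac}, and then match simple tops, supports and dimensions against those of direct sums of indecomposable projectives $e_j\Lambda$. The starting observation is that as a right $\Lambda$-module $\mathrm{rad}(e_i\Lambda) = \bigoplus_{u > i} e_i\Lambda e_u$, and its semisimple top $\mathrm{rad}(e_i\Lambda)/\mathrm{rad}^2(e_i\Lambda)$ is concentrated on the Hasse successors of $i$. Hence, if $\mathrm{rad}(e_i\Lambda)$ is projective it must be a direct sum of indecomposable projectives $e_k\Lambda$ with $k$ ranging over the Hasse successors of $i$, with appropriate multiplicities.

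For the direction $(\Rightarrow)$, I would split according to the two structural alternatives of Propositions \ref{radicalLambdar} and \ref{radicalLambdac}. When $\mathrm{rad}(e_i\Lambda)$ is indecomposable (always for $\Lambda^{(c)}$, and in the non-special case of $\Lambda^{(r)}$), it must be isomorphic to a single $e_j\Lambda$, which forces a unique Hasse arrow $i \to j$. The support equality $\{u : u > i\} = \{u : u \geq j\}$ follows, and the equipment equality is deduced by comparing $\dim_{\textsf{F}} e_i\Lambda e_u$ with $\dim_{\textsf{F}} e_j\Lambda e_u$ for every $u \geq j$, using the formulas $\ell^{(r)}_{i,u}$ and $\ell^{(c)}_{i,u}$ from Section 4. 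In the remaining case ($\Lambda^{(r)}$ with $i$ weak and all relations $i < u$ strong) one has $\mathrm{rad}(e_i\Lambda) \cong T_i^p$, and projectivity forces $T_i \cong e_j\Lambda$; matching endomorphism rings $\mathrm{End}(T_i)\cong\textsf{F}$ forces $j$ to be strong, a simple-top count gives the unique Hasse arrow, and the equipment equality is automatic because all upward relations from $i$ and $j$ are strong.

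For the direction $(\Leftarrow)$, I would use the equipment condition at $u=j$ (namely $i\leq^\ell j$ iff $j\leq^\ell j$) to read off the weak/strong type of $j$ from that of the single relation $i\leq j$, and then exhibit an explicit map $e_j\Lambda \to \mathrm{rad}(e_i\Lambda)$ (or $(e_j\Lambda)^p \to T_i^p$ in the special $\Lambda^{(r)}$ case) by lifting a generator of the top concentrated at $j$ to a suitable element of $e_i\Lambda e_j$. Propositions \ref{radicalLambdar} and \ref{radicalLambdac} tell us that the target is either indecomposable or of the form $T_i^p$, so checking surjectivity reduces to a support and dimension count; the matching equipment values then guarantee that $\dim_{\textsf{F}} e_i\Lambda e_u = \dim_{\textsf{F}} e_j\Lambda e_u$ for every $u$ in the common support, so the map is an isomorphism.

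The main obstacle will be the case analysis in the $\Lambda^{(r)}$ setting, where the four combinations of weak/strong for $i$ and $j$ interact differently with the dimension formulas and with the indecomposable-versus-$T_i^p$ dichotomy. A useful organizing principle is that the endomorphism rings $\mathrm{End}_\Lambda(\mathrm{rad}(e_i\Lambda))$ computed in Proposition \ref{radicalLambdar} must equal $\mathrm{End}_\Lambda(e_j\Lambda)\cong e_j\Lambda e_j$ whenever $\mathrm{rad}(e_i\Lambda)$ is projective-indecomposable; this pins down the weak/strong type of $j$ in each case and reduces both implications to straightforward bookkeeping with $\ell^{(r)}_{i,u}$ and $\ell^{(c)}_{i,u}$.
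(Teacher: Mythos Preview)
Your proposal is correct and follows essentially the same approach as the paper: both invoke Propositions \ref{radicalLambdar} and \ref{radicalLambdac} to write $\mathrm{rad}(e_i\Lambda)\cong Z^n$ with $Z$ indecomposable and $n\in\{1,p\}$, read off the unique Hasse successor $j$ from the simple top, and then establish the equipment condition by comparing $\dim_{\textsf{F}} e_i\Lambda e_u$ with $\dim_{\textsf{F}} e_j\Lambda e_u$ for all $u\geq j$ (and run the same dimension count in reverse for the converse). The endomorphism-ring matching you single out as an organizing principle is a nice touch, but it does not constitute a different strategy from the paper's case analysis on $n$ and on $\ell$.
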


\begin{proof}
We have a minimal projective presentation of $\mathrm{rad}(e_{i}\Lambda )$:
$$\bigoplus _{i\rightarrow s}(e_{s}\Lambda )^{n(s)}\rightarrow \mathrm{rad}(e_{i}\Lambda )\rightarrow 0;$$
where $n(s)=\mathrm{dim}_{\textsf{F}}(e_{i}\Lambda e_{s})/\mathrm{dim}_{\textsf{F}}(e_{s}\Lambda e_{s})$. From Propositions \ref{radicalLambdar} and \ref{radicalLambdac}, we have $\mathrm{rad}(e_{i}\Lambda )=Z^{n}$, with $Z$ indecomposable and $n=1$ or $n=p$.

Suppose $\mathrm{rad}(e_{i}\Lambda )$ is a projective module. Therefore, for some $j\in \mathscr{P}$, in the Hasse diagram of $\mathscr{P}$ there is only one arrow $i\rightarrow j$  with $\mathrm{rad}(e_{i}\Lambda )\cong (e_{j}\Lambda)^{n(j)}$, that is $Z=e_{j}\Lambda$ and $n(j)=n$. There are two options for $n$:

\begin{enumerate}
\item When $n=1$, we are in the case $\mathrm{rad}(e_{i}\Lambda )\cong e_{j}\Lambda $, and
$$\mathrm{dim}_\textsf{F}(e_{i}\Lambda e_{\text{\large \Fontlukas m}})=\mathrm{dim}_\textsf{F}(e_{j}\Lambda e_{\text{\large \Fontlukas m}}).$$
Then $i$ is weak (strong) if and only if $j$ is weak (strong), and for all $u\geq j$ we have
$$\mathrm{dim}_\textsf{F}(e_{i}\Lambda e_{u})=\mathrm{dim}_\textsf{F}(e_{j}\Lambda e_{u}).$$
The right side of the above equality coincides with $\ell$, where $i\leq ^{\ell}u$ and the left side coincides with
$\ell^{\prime }$ where $j\leq ^{\ell^{\prime }}u$. Therefore for all $u\geq j$, $i\leq ^{\ell}u$ if and only if
$i\leq ^{\ell}u$.

\item In the case $\ell=p$, by Propositions \ref{radicalLambdar} and \ref{radicalLambdac}, $\Lambda = \Lambda ^{(r)}$, the point $i$ is weak, $j$ is strong and for all $u\geq j$, $i\leq ^{p}u$, and since $j$ is strong for all $u\geq j$, $j\leq ^{p}u$. 
\end{enumerate}
In both cases, we have proved that $\mathscr{P}$ satisfies the conditions of our proposition.

Now suppose that in the Hasse diagram of $\mathscr{P}$ there is only one arrow $i\rightarrow j$, for some $j\in \mathscr{P}$, and for all $u\geq j$, $i\leq ^{\ell}u$ if and only if $j\leq ^{\ell}u$. We claim that $\mathrm{rad}(e_{i}\Lambda )$ is projective. First, we have an epimorphism
$$e_{j}\Lambda ^{n(j)}\rightarrow \mathrm{rad}(e_{i}\Lambda ).$$
As $i\leq ^{\ell}j $ if and only if $j\leq ^{\ell}j$, then $\ell=1$ or $\ell=p$.

If $\ell=1$, the points $i$ and $j$ are weak, then by Propositions \ref{radicalLambdar} and \ref{radicalLambdac}, $\mathrm{rad}(e_{i}\Lambda )$ is indecomposable. In this case $n(j)=1$. Consider $u\in \mathscr{P}$ such that $u\geq j$, we have $i\leq ^{l}u$ if and only if $j\leq ^{l}u$, then $\mathrm{dim}_\textsf{F}(e_{i}\Lambda e_{u})=\mathrm{dim}_\textsf{F}(e_{j}\Lambda e_{u})$. Thus
$$\mathrm{rad}(e_{i}\Lambda )\cong e_{j}\Lambda .$$

If $\ell=p$, then $j$ is strong. Therefore $i\leq^p u$ and $j\leq^p u$ for every $u\geq j$. 

When $i$ is strong, $\mathrm{dim}_\textsf{F}(e_{i}\Lambda e_{u})=\mathrm{dim}_\textsf{F}(e_{j}\Lambda e_{u})$, also $n(j)=1$ and there is an isomorphism $e_{j}\Lambda \rightarrow \mathrm{rad}(e_{i}\Lambda)$.

Now suppose $i$ is a weak point and $\Lambda =\Lambda ^{(r)}$. Then
$\mathrm{dim}_\textsf{F}(e_{i}\Lambda e_{j})=p$ and $\mathrm{dim}_\textsf{F}(e_{j}\Lambda e_{j})=1$, so $n(j)=p$ and we have an epimorphism $(e_{j}\Lambda )^{p}\rightarrow \mathrm{rad}(e_{i}\Lambda )$. For $u>j$ and $u$ weak, we have $p^{2}=\mathrm{dim}_\textsf{F}(e_{i}\Lambda e_{u})=p\mathrm{dim}_\textsf{F}(e_{j}\Lambda e_{u})$. In the case $u\geq j$, with $u$ strong,
one has $p=\mathrm{dim}_\textsf{F}(e_{i}\Lambda e_{u})=p\mathrm{dim}_\textsf{F}(e_{j}\Lambda e_{u})$. Therefore in this case
$(e_{j}\Lambda )^{p}\cong \mathrm{rad}(e_{i}\Lambda )$. 

If $\Lambda =\Lambda ^{c}$ then $p=\mathrm{dim}_\textsf{F}(e_{i}\Lambda e_{j})=\mathrm{dim}_\textsf{F}(e_{j}\Lambda e_{j})$. So there is an epimorphism
$e_{j}\Lambda \rightarrow \mathrm{rad}(e_{i}\Lambda )$. Moreover for $u\geq j$, we have $p=\mathrm{dim}_\textsf{F}(e_{i}\Lambda e_{u})=\mathrm{dim}_\textsf{F}(e_{i}\Lambda e_{u})$. This implies 
$$e_{j}\Lambda \cong \mathrm{rad}(e_{i}\Lambda).$$
\end{proof}

Let us relate the hereditary projective $\Lambda$-modules, with the shape of a $p$-equipped poset.

\begin{definition}
A subposet $\mathscr{T}$ of a $p$-equipped poset $\mathscr{P}$, is called \textit{slender} if its Hasse diagram has the form
$$\mathscr{T}=\{i_{1}\rightarrow i_{2}\rightarrow ...\rightarrow i_{s}\rightarrow j_{1}\rightarrow ...\rightarrow j_{t}\};$$
with $i_a\leq^1 i_b$, for $a, b\in \{1,2,\ldots, s\}$ such that $a<b$, and for $c\in \{1,2,\ldots, t\}$ the point $j_c$ is strong.

Notice that $i_a$ is a weak point for all $a\in \{1,2,\ldots, s\}$.
\end{definition}

\begin{proposition}\label{equivSlender}
Let $\mathscr{P}$ be a $p$-equipped poset and $i\in \mathscr{P}$. Denote by $\mathscr{P}^{\geq i}$ the subposet of $\mathscr{P}$ consisting in all the points greater or equal than $i$. The following sentences are equivalent:
\begin{enumerate}[(1)]
\item $e_i\Lambda^{(r)}$ is an hereditary projective $\Lambda^{(r)}$-module.
\item $e_i\Lambda^{(c)}$ is an hereditary projective $\Lambda^{(c)}$-module.
\item The subposet $\mathscr{P}^{\geq i}$ is slender.
\end{enumerate}
\end{proposition}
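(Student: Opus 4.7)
The plan is to prove $(1)\Leftrightarrow(3)$ and $(2)\Leftrightarrow(3)$ simultaneously by induction on $|\mathscr{P}^{\geq i}|$, taking advantage of the fact that Lemma \ref{lemmaSlender} imposes the same combinatorial condition on $\mathscr{P}$ for both $\Lambda^{(r)}$ and $\Lambda^{(c)}$. At each step the driving observation is that a projective module $e_i\Lambda$ is hereditary if and only if $\mathrm{rad}(e_i\Lambda)$ is projective and every indecomposable summand of $\mathrm{rad}(e_i\Lambda)$ is itself hereditary. The base case is $i=\text{\large \Fontlukas m}$: then $e_i\Lambda$ is the simple projective (trivially hereditary) and $\mathscr{P}^{\geq i}=\{\text{\large \Fontlukas m}\}$ is a one-point slender poset.

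For the direction $(1)\Rightarrow(3)$ (identically for $(2)\Rightarrow(3)$), suppose $e_i\Lambda$ is hereditary. Then $\mathrm{rad}(e_i\Lambda)$ is projective, so by Lemma \ref{lemmaSlender} there is a unique cover $i\to j$ in the Hasse diagram of $\mathscr{P}$ with $i\leq^{\ell}u$ iff $j\leq^{\ell}u$ for every $u\geq j$, and $\mathrm{rad}(e_i\Lambda)\cong (e_j\Lambda)^{n}$ with $n\in\{1,p\}$. Since $e_j\Lambda$ is then hereditary, the inductive hypothesis applied to $j$ yields that $\mathscr{P}^{\geq j}$ is slender, and because $i$ has $j$ as its only cover in $\mathscr{P}$, we have $\mathscr{P}^{\geq i}=\{i\}\cup\mathscr{P}^{\geq j}$. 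A case analysis on the strong/weak types of $i$ and $j$ (using the lemma's label equivalence at $u=j$) shows that only three configurations are possible and each extends the slender chain: both weak forces $i\leq^{1}j$ and $i$ prepends to the weak part; $i$ weak and $j$ strong makes $i$ the terminal weak point preceding an all-strong tail; both strong extends the strong part. The remaining combination, $i$ strong and $j$ weak, is excluded since strong--to--anything relations are forced to be $\leq^{p}$ by the $p$-equipped axiom, in conflict with the lemma's requirement $i\leq^{1}j$.

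Conversely, suppose $\mathscr{P}^{\geq i}$ is slender with $|\mathscr{P}^{\geq i}|>1$ and let $j$ be the unique cover of $i$ in the slender chain (which is also the unique cover in $\mathscr{P}$). The slender definition rigidly fixes each label $i\leq^{\ell}u$ according to whether $u$ is weak or strong, and in every case the same label occurs for $j\leq^{\ell}u$, so the hypothesis of Lemma \ref{lemmaSlender} is satisfied. Hence $\mathrm{rad}(e_i\Lambda)$ is projective and isomorphic to a power of $e_j\Lambda$, while $\mathscr{P}^{\geq j}$ (the slender chain with its initial node removed) is still slender; the inductive hypothesis gives that $e_j\Lambda$ is hereditary, and the recursive criterion then yields that $e_i\Lambda$ is hereditary. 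The main technical point is the rigidity of the label structure in both directions, where repeated use of the $p$-equipped axiom $n\geq\min\{\ell+m-1,p\}$ is needed to rule out incompatible strong/weak transitions and to confirm that the composed labels match exactly what the slender definition prescribes.
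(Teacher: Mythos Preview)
Your proposal is correct and follows essentially the same approach as the paper: induction on $|\mathscr{P}^{\geq i}|$, using Lemma \ref{lemmaSlender} to pass between projectivity of $\mathrm{rad}(e_i\Lambda)$ and the combinatorial cover-and-label condition, together with the recursive characterization of hereditary projectives via their radicals. Your case analysis on the strong/weak types of $i$ and $j$ is more explicit than the paper's (which simply notes that $i$ is weak if $j$ is weak and leaves the rest implicit), but the underlying argument is the same.
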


\begin{proof}
We will prove first the equivalence of (1) and (3).

The proof will be done by induction on the cardinality of $\mathscr{P}^{\geq i}$. If this cardinality is one, then $e_{i}\Lambda^{(r)}$ is simple and clearly $(1)$ is equivalent to $(3)$. Suppose our claim is proved for those $j \in \mathscr{P}$ with cardinality of $\mathscr{P}^{\geq j}$ smaller than the cardinality of $\mathscr{P}^{\geq i}$. We will prove the equivalence of
$(1)$ and $(3)$ for $\mathscr{P}^{\geq i}$.

If $e_{i}\Lambda ^{(r)}$ is hereditary, $\mathrm{rad}\left(e_{i}\Lambda ^{(r)}\right)$ is projective. By Lemma \ref{lemmaSlender}, there is only one arrow $i\rightarrow j$, for some $j\in \mathscr{P}$ and $i\leq^\ell j$ if and only if $j\leq^\ell j$, so $i$ is weak if $j$ is weak. Then $\mathscr{P}^{\geq i}$ is slender because $\mathscr{P}^{\geq j}$ is slender by our induction hypothesis. 

Suppose now that $\mathscr{P}^{\geq i}$ is a slender subset of $\mathscr{P}$. Clearly, in the Hasse diagram of $\mathscr{P}^{\geq i}$ there is only one arrow $i\rightarrow j$, for some $j\in \mathscr{P}$, and for all $u\geq j$, $i\leq ^{\ell}u$ if and only if $j\leq ^{\ell}u$. From Lemma \ref{lemmaSlender} we have that $\mathrm{rad}\left(e_{i}\Lambda^{(r)}\right)$ is projective and it is isomorphic to $\left(e_{j}\Lambda^{(r)}\right)^{n}$ for $n=1$ or $n=p$. But $\mathscr{P}^{\geq j}$ is slender, so by induction hypothesis $e_{j}\Lambda ^{(r)}$ is hereditary, this implies that $e_{i}\Lambda^{(r)}$ is hereditary projective. We conclude that (1) and (3) are equivalent.

Using the same method, one can prove the equivalence of (2) and (3), which implies the equivalence of (1) and (2). The proof is complete.
\end{proof}    
  
\begin{remark}
From Propositions \ref{oneDim} and \ref{objectWS}, for every object $X\in \hat{\mathcal{C}}$, we have $\mathrm{rad}(\mathrm{End}_{\mathcal{M}}(X))=0$. 

For every object $M\in \mathcal{U}$ with $X_{F(M)}\in \hat{\mathcal{C}}$, then $\mathrm{rad}(\mathrm{End}_{\Lambda }(M))=0$ and $\mathrm{End}_{\Lambda }(M)\cong \textsf{F}$ or $\mathrm{End}_{\Lambda }(M)\cong \textsf{G}$, as $\textsf{F}$-algebras. Moreover, if $M$ is indecomposable, we have that $\mathrm{dim}_\textsf{F}(\mathrm{End}_{\Lambda }(M))$ divides $\mathrm{dim}_\textsf{F}(Me_{i})$ for all $i\in \mathscr{P}$.
\end{remark}

In the following for $\mathbb{Q}^{|\mathscr{P}|}$ we set, for $i\in \mathscr{P}$. the function
$\mathfrak{e}_{i}:\mathscr{P}\rightarrow \mathbb{Q}$ such that
$$\mathfrak{e}_{i}(j)=
\begin{cases}
i & \text{ if } j=i;\\
0 & \text{otherwise.}\\
\end{cases}$$

Consider two linear functions $w, s:\mathbb{Q}^{|\mathscr{P}|}\rightarrow \mathbb{Q}^{|\mathscr{P}|}$ defined as follows
$$w(\mathfrak{e}_{i})=
\begin{cases}
\vspace{3mm}\mathfrak{e}_{i} & \text{ if } i \text{ is weak};\\
\dfrac{1}{p}\mathfrak{e}_{i} & \text{ if } i \text{ is strong};\\
\end{cases}$$
$$s(\mathfrak{e}_{i})=
\begin{cases}
\mathfrak{e}_{i} & \text{ if } i \text{ is strong};\\
p\mathfrak{e}_{i} & \text{ if } i \text{ is weak}.\\
\end{cases}$$

\begin{proposition}\label{radicalCoordinates}
For $i\in \mathscr{P}$ consider $\mathrm{rad}\left(e_{i}\Lambda ^{(r)}\right)=T_{i}^{n}$, where $T_{i}$ is
indecomposable and $n =1$ or $n =p$. Suppose $X_{F\left(e_{i}\Lambda ^{(r)}\right)}\in \hat{\mathcal{C}}^{(r)}$, and
$X_{F\left(e_{i}\Lambda ^{(c)}\right)}\in \hat{\mathcal{C}}^{(c)}$. Then if $T_{i}$ is strong $\mathrm{rad}\left(e_{i}\Lambda ^{(c)}\right)$ is strong and
$$s\left(\mathrm{cd}\left(X_{F(T_{i})}\right)\right)=\mathrm{cd}\left(X_{F\left(\mathrm{rad}\left(e_{i}\Lambda ^{(c)}\right)\right)}\right).$$
If $T_i$ is weak then $\mathrm{rad}(e_{i}\Lambda )$ is weak and
$$ w\left(\mathrm{cd}\left(X_{F(T_{i})}\right)\right)=\mathrm{cd}\left(X_{F\left(\mathrm{rad}\left(e_{i}\Lambda{(c)}\right)\right)}\right).$$
\end{proposition}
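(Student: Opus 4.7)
The plan is to analyse the three mutually exclusive cases arising from Propositions \ref{radicalLambdar} and \ref{radicalLambdac}: (i) $i$ is strong, so $T_{i}=\mathrm{rad}(e_{i}\Lambda^{(r)})$ with $n=1$ and $T_{i}$ strong; (ii) $i$ is weak and every $j>i$ satisfies $i<^{p}j$, so $\mathrm{rad}(e_{i}\Lambda^{(r)})\cong T_{i}^{p}$ with $T_{i}$ strong; (iii) $i$ is weak and some $j>i$ has $i<^{\ell}j$ with $\ell<p$, so $T_{i}=\mathrm{rad}(e_{i}\Lambda^{(r)})$ is indecomposable and weak. In each case I would first pin down the strength of $\mathrm{rad}(e_{i}\Lambda^{(c)})$, then compute the coordinates $\mathrm{cd}(X_{F(T_{i})})$ and $\mathrm{cd}(X_{F(\mathrm{rad}(e_{i}\Lambda^{(c)}))})$ explicitly, and finally match them by direct inspection under the linear map $s$ (cases (i)--(ii)) or $w$ (case (iii)).

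First I would establish the strength of $\mathrm{rad}(e_{i}\Lambda^{(c)})$ by a direct argument parallel to that of Proposition \ref{radicalLambdar}: since $\mathrm{soc}(\mathrm{rad}(e_{i}\Lambda^{(c)}))=e_{i}\Lambda^{(c)}e_{\text{\large \Fontlukas m}}\cong \textsf{G}$ is simple, Lemma \ref{functorres} gives an embedding $\mathrm{End}_{\Lambda^{(c)}}(\mathrm{rad}(e_{i}\Lambda^{(c)}))\hookrightarrow \textsf{G}$. In cases (i) and (ii), every $j>i$ satisfies $i<^{p}j$, so every $e_{i}\Lambda^{(c)}e_{j}=\mathcal{R}^{(c)}_{i,j}$ is all of $\textsf{G}$ and left multiplication by $\textsf{G}$ lands inside $\mathrm{End}_{\Lambda^{(c)}}(\mathrm{rad}(e_{i}\Lambda^{(c)}))$, forcing equality with $\textsf{G}$ and so $\mathrm{rad}(e_{i}\Lambda^{(c)})$ is strong. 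In case (iii) some $\mathcal{R}^{(c)}_{i,j}=\textsf{F}\langle 1,\xi,\dots,\xi^{\ell-1}\rangle$ is a proper $\textsf{F}$-subspace of $\textsf{G}$ not stable under multiplication by $\textsf{G}$, so the endomorphism ring collapses to $\textsf{F}$ (mimicking the argument of Proposition \ref{radicalLambdar} with $\vartheta$ replaced by the corresponding $\xi$-scaling), making $\mathrm{rad}(e_{i}\Lambda^{(c)})$ weak.

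Second, I would read off the coordinates. For $M\in \mathcal{U}$ the component $\mathrm{cd}(X_{F(M)})(j)$ with $j\neq 0$ is the multiplicity of $e_{j}\Lambda$ in the minimal projective cover of $F(M)$, determined by the $\mathcal{R}_{j}$-dimension of $(Me_{j})/(Me_{j})\cdot\mathrm{rad}$, and $\mathrm{cd}(X_{F(M)})(0)$ is fixed by $\dim_{\textsf{F}}\mathrm{soc}(M)$. Since $\dim_{\textsf{F}}(e_{i}\Lambda^{(r)}e_{j})=\ell^{(r)}_{i,j}$ and $\dim_{\textsf{F}}(e_{i}\Lambda^{(c)}e_{j})=\ell^{(c)}_{i,j}$, one gets explicit numerical formulas for the coordinates of $T_{i}$ (on the $(r)$-side, via $\mathrm{cd}(X_{F(T_i)})(j)=\frac{1}{n}\mathrm{cd}(X_{F(\mathrm{rad}(e_i\Lambda^{(r)}))})(j)$) and of $\mathrm{rad}(e_{i}\Lambda^{(c)})$. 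Plugging in the identity $\ell^{(c)}_{i,j}=\ell^{(r)}_{i,j}\cdot p/(\ell^{(r)}_{i,i}\,\ell^{(r)}_{j,j})$ (which follows from the definitions of $\ell^{(r)}$ and $\ell^{(c)}$) gives precisely the multiplication-by-$p$ on weak coordinates (respectively division-by-$p$ on strong ones) dictated by $s$ or $w$.

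The main obstacle is bookkeeping the $0$-component of the presentations and the factor $n\in\{1,p\}$: in case (ii) the splitting $\mathrm{rad}(e_{i}\Lambda^{(r)})=T_{i}^{p}$ forces $\mathrm{cd}(X_{F(T_i)})$ to be $\frac{1}{p}\mathrm{cd}(X_{F(\mathrm{rad}(e_i\Lambda^{(r)}))})$, and one must check this divisibility is compatible with the weak/strong factors introduced by $s$. Once this is tabulated, the cases reduce to two elementary identities between $\ell^{(r)}$ and $\ell^{(c)}$, and the proposition follows by direct comparison component-by-component.
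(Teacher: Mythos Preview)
Your proposal is correct and follows essentially the same strategy as the paper: the same three-case split coming from Propositions \ref{radicalLambdar}--\ref{radicalLambdac}, the same strength argument for $\mathrm{rad}(e_{i}\Lambda^{(c)})$ via Lemma \ref{functorres} combined with left $\textsf{G}$-multiplication (and the $B_{\ell}$-argument in the weak case), and the same direct coordinate-by-coordinate comparison. One minor slip to fix: for $j\neq 0$, the coordinate $\mathrm{cd}(X_{F(M)})(j)$ records the multiplicity of $e_{j}\Lambda$ in the \emph{first} term $X^{1}$ of the minimal presentation of $F(M)$---equivalently in the projective cover of $M$, since $\ker\big((e_{0}\Lambda)^{\nu}\to F(M)\big)=M$---and not in the projective cover of $F(M)$ (which is $(e_{0}\Lambda)^{\nu}$); your subsequent formula via $(Me_{j})/(Me_{j})\cdot\mathrm{rad}$ is the right one for that.
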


\begin{proof}
Suppose $T_{i}$ is strong, then if $i$ is strong $\mathrm{rad}\left(e_{i}\Lambda ^{(c)}\right)$ is indecomposable with 
simple socle. Here $e_{i}\Lambda ^{(c)}e_{j}=\textsf{G}e_{i,j}$, for every $j\geq i$. The  left multiplication by elements of $\textsf{G}$ gives an
injective morphism of $\textsf{F}$-algebras, $\textsf{G}\rightarrow \mathrm{End}_{\Lambda }\left(\mathrm{rad}\left(e_{i}\Lambda ^{(c)}\right)\right)$. By Lemma \ref{functorres} we have an injective morphism of $\textsf{F}$-algebras 
$$\mathrm{End}_{\Lambda ^{(c)}}\left(\mathrm{rad}\left(e_{i}\Lambda ^{(c)}\right)\right) \rightarrow \mathrm{End}_{\Lambda ^{(c)}}\left(\mathrm{soc}\left(\mathrm{rad}\left(e_{i}\Lambda ^{(c)}\right)\right)\right)=\textsf{G};$$
then $\mathrm{End}_{\Lambda ^{(c)}}\left(\mathrm{rad}\left(e_{i}\Lambda ^{(c)}\right)\right)=\textsf{G}$, and $\mathrm{rad}\left(e_{i}\Lambda ^{(c)}\right)$ is strong. 

If $i$ is weak, since $T_{i}$ is strong, by Proposition \ref{radicalLambdar}, $i<^{p}j$ for all $j>i$, therefore $\mathrm{rad}\left(e_{i}\Lambda ^{(c)}\right)e_{j}=\textsf{G}e_{i,j}$. Then as before we obtain that
the endomorphism ring of $\mathrm{rad}\left(e_{i}\Lambda ^{(c)}\right)$ is $\textsf{G}$, so $\mathrm{rad}\left(e_{i}\Lambda ^{(c)}\right)$ is strong.

Now suppose $T_{i}$ is weak, then $i$ is weak, and there is a $j>i$ with $i<^{\ell}j$ and $\ell<p$ (see Proposition \ref{radicalLambdar}). 

We have  $\mathrm{rad}\left(e_{i}\Lambda^{(c)}\right)e_{j}=B_{\ell}e_{i,j}$, where $B_{\ell}=\textsf{F}\oplus \textsf{F}\xi \oplus ...\oplus \textsf{F}\xi ^{\ell-1}$. Take $u\in \mathrm{End}_{\Lambda ^{(c)}}\left(\mathrm{rad}\left(e_{i}\Lambda ^{(c)}\right)\right)$, for all $z\in B_{\ell}$ there is a $\textsf{F}$-linear map $f:B_{\ell}\rightarrow B_{\ell}$ such that $u(ze_{i,j})=f(z)e_{i,j}$. Moreover there is a $g\in \textsf{G}$ such that $u(xe_{i,\text{\large \Fontlukas m}})=gxe_{i,{\text{\large \Fontlukas m}}}$ for all $x\in \textsf{G}$. Then
$$u(ze_{i,j}e_{j,\text{\large \Fontlukas m}})=u(ze_{i,j})e_{j, \text{\large \Fontlukas m}}=f(z)e_{i,\text{\large \Fontlukas m}},$$
on the other hand,
$$u(ze_{i,j}e_{j,\text{\large \Fontlukas m}})=u(ze_{i,\text{\large \Fontlukas m}})=gze_{i,\text{\large \Fontlukas m}}.$$

Therefore $f(z)=gz$, wich implies $g=f_{0}+\cdots +f_{r}\xi ^{r}$ for some $f_0,\ldots, f_r \in \textsf{F}$ and $r< \ell$. Notice that $r>0$ contradicts $gz\in B_\ell$. Then $g\in\textsf{F}$ and 
$$\mathrm{End}_{\Lambda^{(c)}}\left(\mathrm{rad}\left(e_{i}\Lambda ^{(c)}\right)\right)=\textsf{F}.$$ 
That is $\mathrm{rad}\left(e_{i}\Lambda ^{(c)}\right)$ is weak.

For $\mathrm{rad}\left(e_{i}\Lambda ^{(r)}\right)\cong T_{i}^{p}$ with $T_{i}$ indecomposable, $i$ is weak and for all $j>i$, $i<^{p}j$. Then
$$\mathrm{cd}\left(X_{F\left(\mathrm{rad}\left(e_{i}\Lambda ^{(r)}\right)\right)}\right)=p\,\mathrm{cd}\left(X_{F(T_{i})}\right);$$
and 
$$\mathrm{cd}\left(X_{F\left(\mathrm{rad}\left(e_{i}\Lambda ^{(r)}\right)\right)}\right)=\sum _{i< z}n_{z}\mathfrak{e}_{z}+p\,\mathfrak{e}_{0};$$
where $n_{z}=\mathrm{dim}_{\textsf{F}}\left(e_{i}\Lambda ^{(r)}e_{z}\right)/\mathrm{dim}_{\textsf{F}}\left(e_{z}\Lambda ^{(r)}e_{z}\right)$. We have $n_{z}=p$, for any $z>i$. Consequently
$$\mathrm{cd}\left(X_{F(T_{i})}\right)=\sum _{i< z}\mathfrak{e}_{z}+\mathfrak{e}_{0}.$$

Calling $\mathcal{F}$ the set of strong points of $\mathscr{P}$ and $\mathcal{W}$ the set of its weak points
$$s\left(\mathrm{cd}\left(X_{F(T_{i})}\right)\right)=\sum _{\substack{i<z\\ z \in \mathcal{F}}} \mathfrak{e}_{z}+\sum _{\substack{i<z\\ z \in \mathcal{W}}}p\,\mathfrak{e}_{z}+\mathfrak{e}_{0}.$$

Now 
$$\mathrm{cd}\left(X_{F\left(\mathrm{rad}\left(e_{i}\Lambda ^{(c)}\right)\right)}\right)=\sum _{i< z}m_{z}\mathfrak{e}_{z}+\mathfrak{e}_{0};$$
where $m_{z}=\mathrm{dim}_\textsf{F}\left(e_{i}\Lambda ^{(c)}e_{z}\right)/\mathrm{dim}_\textsf{F}\left(e_{z}\Lambda ^{(c)}e_{z}\right)$. We have $i< ^{p}z$ for each $z$ greater than $i$. Then for $z$ weak, $m(z)=p$ and for $z$ strong $m(z)=1$. Therefore
$$s\left(\mathrm{cd}\left(X_{F(T_{i})}\right)\right)=\mathrm{cd}\left(\mathrm{rad}\left(e_{i}\Lambda ^{(c)}\right)\right).$$

If $\mathrm{rad}\left(e_{i}\Lambda ^{(r)}\right)$ is indecomposable, it is weak and
$$\mathrm{cd}\left(X_{F\left(\mathrm{rad} \left(e_{i}\Lambda ^{(r)}\right)\right)}\right)=\sum _{\substack{i<z\\ z \in \mathcal{F}}}p\,\mathfrak{e}_{z}+\sum _{\substack{i<^{\ell}z\\ z \in \mathcal{W}}}\ell\mathfrak{e}_{z}+p\,\mathfrak{e}_{0};$$
then
$$w\left(\mathrm{cd}\left(X_{F\left(\mathrm{rad} \left(e_{i}\Lambda ^{(r)}\right)\right)}\right)\right)=\sum _{\substack{i<z\\ z \in \mathcal{F}}}\mathfrak{e}_{z}+\sum _{\substack{i<^{\ell}z\\ z \in \mathcal{W}}}\ell\mathfrak{e}_{z}+\mathfrak{e}_{0};$$
thus $w\left(\mathrm{cd}\left(X_{F\left(\mathrm{rad}\left(e_{i}\Lambda ^{(r)}\right)\right)}\right)\right)=\mathrm{cd}\left(X_{F\left(\mathrm{rad}\left(e_{i}\Lambda ^{(c)}\right)\right)}\right)$.

When the point $i$ is strong, $\mathrm{rad}\left(e_{i}\Lambda ^{(r)}\right)$ is strong, as before:
$$s\left(\mathrm{cd}\left(X_{F\left(\mathrm{rad}\left(e_{i}\Lambda ^{(r)}\right)\right)}\right)\right)=\mathrm{cd}\left(X_{F\left(\mathrm{rad}\left(e_{i}\Lambda ^{(c)}\right)\right)}\right).$$

The proof is complete.
\end{proof}

\begin{proposition}\label{relatedSequences}
Let $(a):\tau X \rightarrow E \rightarrow X$ be an almost split sequence in $\mathcal{M}^{(r)}$ and
$(b):\tau X ^{\prime }\rightarrow E^{\prime }\rightarrow X^{\prime }$ be an almost split sequence in $\mathcal{M}^{(c)}$, with $X\in \hat{\mathcal{C}}^{(r)}$ and $X^{\prime }\in \hat{\mathcal{C}}^{(c)}$ such that:
\begin{enumerate}
\item The initial object $\tau X$ is strong if and only if $\tau X^{\prime }$ is strong, and  $s\left(\mathrm{cd}(\tau X)\right)=\mathrm{cd}(\tau X^{\prime })$ if $\tau X$ is strong, or $w\left(\mathrm{cd}(\tau X)\right)=\mathrm{cd}(\tau X^{\prime })$ if $\tau X$ is weak.
\item There is a bijective correspondence $\sigma $ between the isomorphism classes of the indecomposable sumands
of $E$ and those of $E^{\prime }$ such that if $W$ is an indecomposable direct summand of $E$ and 
$W^{\prime }$ is a direct summand of $E^{\prime }$ such that the isomorphism class of $W^{\prime }$ is the image by $\sigma $ of
the isomorphism class of $W$, then $s\left(\mathrm{cd}(W)\right)=\mathrm{cd}(W^{\prime })$ if $W$ is strong and
$w\left(\mathrm{cd}(W)\right)=\mathrm{cd}(W^{\prime })$ if $W$ is weak.
\end{enumerate} 

Then $s\left(\mathrm{cd}(X)\right)=\mathrm{cd}(X^{\prime })$ if $X$ is strong, and $w\left(\mathrm{cd}(X)\right)=\mathrm{cd}(X^{\prime })$ if $X$ is weak.
\end{proposition}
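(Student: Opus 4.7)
The plan is to combine the additivity of coordinates on exact sequences in $\mathcal{M}$ with the fact that an indecomposable in $\hat{\mathcal{C}}$ and its Auslander--Reiten translate share the same strong/weak type. Since almost split sequences in $\mathcal{M}$ are exact in $\mathcal{P}(\Lambda)$, both the domain and codomain components are short exact sequences of projective right $\Lambda$-modules and therefore split, which yields the additivity
\[
\mathrm{cd}(E) = \mathrm{cd}(\tau X) + \mathrm{cd}(X), \qquad \mathrm{cd}(E') = \mathrm{cd}(\tau X') + \mathrm{cd}(X').
\]
By Proposition \ref{KXisomKtauX}, $K(X) \cong K(\tau X)$ (and likewise on the primed side), while hypothesis $(1)$ matches the types of $\tau X$ and $\tau X'$. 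Hence all four of $\tau X$, $X$, $\tau X'$, $X'$ share a common strong/weak type; denote the associated linear map by $\epsilon$, i.e.\ $\epsilon = s$ in the strong case and $\epsilon = w$ in the weak case, so that hypothesis $(1)$ reads $\epsilon(\mathrm{cd}(\tau X)) = \mathrm{cd}(\tau X')$.

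The proof then reduces to establishing $\mathrm{cd}(E') = \epsilon(\mathrm{cd}(E))$, since combining this with the displays above gives
\[
\mathrm{cd}(X') = \mathrm{cd}(E') - \mathrm{cd}(\tau X') = \epsilon(\mathrm{cd}(E)) - \epsilon(\mathrm{cd}(\tau X)) = \epsilon(\mathrm{cd}(X)).
\]
To prove the reduced claim, decompose $E = \bigoplus W^{n_W}$ and $E' = \bigoplus (\sigma(W))^{n'_{W}}$, and use hypothesis $(2)$ to write $\mathrm{cd}(\sigma(W)) = \epsilon_W(\mathrm{cd}(W))$, where $\epsilon_W \in \{s, w\}$ is determined by the type of $W$. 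The multiplicity of an indecomposable summand $W$ in the middle term of an AR-sequence ending at $X$ equals $\dim_{K(W)}\mathrm{Irr}(W,X)$, which by Proposition \ref{oneDim} is forced to be either $1$ or $p$. A short case analysis on the pairs of types of $X$ and $W$ then yields: when the types of $X$ and $W$ agree, $n_W = n'_{W} = 1$; when they disagree, exactly one of $n_W, n'_W$ equals $1$ and the other equals $p$, with the roles swapped between the $(r)$- and $(c)$-categories because the convention of strong/weak swaps $\textsf{F}$ and $\textsf{G}$ in the two settings.

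Finally, using the identity $s = p \cdot w$, one verifies summand by summand that $n'_W \epsilon_W(\mathrm{cd}(W)) = n_W \epsilon(\mathrm{cd}(W))$: in the equal-type case $\epsilon_W = \epsilon$ and both multiplicities equal $1$, while in the mixed cases the factor of $p$ introduced by replacing $w$ by $s$ (or conversely) is exactly absorbed by the ratio $n_W / n'_W$. Summation over $W$ gives $\mathrm{cd}(E') = \epsilon(\mathrm{cd}(E))$, completing the argument. The chief obstacle is this case-by-case bookkeeping of multiplicities: one has to use Proposition \ref{oneDim} to restrict the possible values of $n_W$ and $n'_W$, and then check in each of the four strong/weak pairings that the factor $p$ coming from $s = p \cdot w$ is consistently compensated by the multiplicity ratio rather than producing a residual discrepancy.
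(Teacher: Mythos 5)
Your proposal is correct and follows essentially the same route as the paper's proof: additivity of $\mathrm{cd}$ along the almost split sequences, type-matching of $X$, $\tau X$, $X'$, $\tau X'$ via Proposition \ref{KXisomKtauX} and hypothesis $(1)$, multiplicities $1$ or $p$ extracted from Proposition \ref{oneDim} together with the strong/weak dichotomy, and the identity $s=p\,w$ absorbing the factor $p$ in the mixed-type cases. The only difference is organizational: the paper argues in two global cases ($\tau X$ strong or weak) with explicit lists of strong and weak summands, whereas you reduce to $\mathrm{cd}(E')=\epsilon(\mathrm{cd}(E))$ and verify summand by summand, and your multiplicity bookkeeping agrees with the paper's in all four strong/weak pairings.
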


\begin{proof}
Let $X_{1},\ldots ,X_{l}, Y_{1},\ldots ,Y_{t}$ be representatives of the isomorphism classes of the indecomposable
direct summands of $E$ with $X_{1},\ldots ,X_{l}$ strong and $Y_{1},\ldots,Y_{t}$ weak and let 
$Z_{1},\ldots ,Z_{l}, W_{1},\ldots ,W_{t}$ representatives of the isomorphism classes of the indecomposable direct summands
of $E^{\prime }$ with $Z_{1},\ldots ,Z_{l}$ strong and $W_{1},\ldots ,W_{t}$ weak such that
$s\left(\mathrm{cd}(X_{i})\right)=\mathrm{cd}(Z_{i})$ for $i\in \{1,\ldots ,l\}$ and $w\left(\mathrm{cd}(Y_{i})\right)=\mathrm{cd}(W_{i})$ for
$i\in \{1,\ldots ,t \}$.

 Suppose $\tau X$ strong, then $\tau X^{\prime }$, $X$ and $X^{\prime }$ are  strong. For each $i\in \{1,\ldots ,l\}$ we have $\mathrm{Irr}(\tau X, Z_{i})$ is one dimensional over $K(\tau X)$ or over $K(Z_i)$, but both $\tau X$ and $Z_i$ are strong, then $K(\tau X)=\textsf{F}$ and $K(Z_i)=\textsf{F}$. Therefore $\mathrm{dim}_{K(Z_i)}\mathrm{Irred}(\tau X,Z_i)=1$.

Now for $i\in \{1,\ldots ,t \}$ we have $K(Y_{i})\cong \textsf{G}$, so $\mathrm{dim}_{K(Y_{i})}\mathrm{Irr}(\tau X, Y_i)=1$. Then $E\cong X_{1}\oplus ...X_{l}\oplus Y_{1}\oplus ...\oplus Y_{t}$. Moreover 
$K(\tau X^{\prime })\cong \textsf{G}$ and $K(Z_{i})\cong \textsf{G}$, therefore 
$\mathrm{dim}_{K(Z_{i})}\mathrm{Irr}(\tau X^{\prime },Z_{i})=1$, and $\mathrm{dim}_{K(W_{i})}\mathrm{Irr}(\tau X^{\prime }, W_{i})=p$, because $K(W_{i})=\textsf{F}$.

Thus 
$E^{\prime }\cong Z_{1}\oplus \cdots \oplus Z_{l}\oplus W_{1}^{p}\oplus \cdots \oplus W_{t}^{p}$. From this we obtain
$$\mathrm{cd}(X)=\sum _{i=1}^{l}\mathrm{cd}(X_{i})+\sum _{i=1}^{t}\mathrm{cd}(Y_{i})-\mathrm{cd}(\tau X);$$
$$\mathrm{cd}(X^{\prime })=\sum _{i=1}^{l}\mathrm{cd}(Z_{i})+\sum _{i=1}^{t}p\,\mathrm{cd}(W_{i})-\mathrm{cd}(\tau X^{\prime }).$$

Now observe that $s=pw$, then
$$s(\mathrm{cd}(X))=\sum _{i=1}^{l}s(\mathrm{cd}(X_{i}))+\sum _{i=1}^{t}s(\mathrm{cd}(Y_{i}))-s(\mathrm{cd}(\tau X))$$
$$=\sum _{i=1}^{l}\mathrm{cd}(Z_{i})+\sum _{i=1}^{t}pw(\mathrm{cd}(Y_{i}))-\mathrm{cd}(\tau X^{\prime})$$
$$=\sum _{i=1}^{l}\mathrm{cd}(Z_{i})+\sum _{i=1}^{t}p\,\mathrm{cd}(W_{i})-\mathrm{cd}(\tau X^{\prime})=\mathrm{cd}(X).$$

When $\tau X$ is weak,
$$\mathrm{dim}_{K(X_{i})}\mathrm{Irr}(\tau X, X_{i})=p,\hspace{5mm}\mathrm{dim}_{K(Y_{i})}\mathrm{Irr}(\tau X, Y_{i})=1;$$
and
$$\mathrm{dim}_{K(Z_{i})}\mathrm{Irr}(\tau X^{\prime },Z_{i})=1,\hspace{5mm}\mathrm{dim}_{K(Z_{i})}\mathrm{Irr}(\tau X^{\prime },Z_{i})=1.$$

In this case $E\cong X_{1}^{p}\oplus\cdots\oplus X_{l}^{p}\oplus Y_{1}\oplus\cdots\oplus Y_{t}$ and
$E^{\prime }=Z_{1}\oplus\cdots\oplus Z_{l}\oplus W_{1}\oplus\cdots\oplus W_{t}.$
Therefore
$$\mathrm{cd}(X)=\sum _{i=1}^{l}p\,\mathrm{cd}(X_{i})+\sum _{i=1}^{t}\mathrm{cd}(Y_{i})-\mathrm{cd}(\tau X);$$
$$\mathrm{cd}(X^{\prime })=\sum _{i=1}^{l}p\,\mathrm{cd}(Z_{i})+\sum _{i=1}^{t}\mathrm{cd}(W_{i})-\mathrm{cd}(\tau X ^{\prime }).$$

From this we obtain :
$$w(\mathrm{cd}(X))=\sum _{i=1}^{l}p\,w(\mathrm{cd}(X_{i}))+\sum _{i=1}^{t}w(\mathrm{cd}(Y_{i}))-w(\mathrm{cd}(\tau X))$$
$$=\sum _{i=1}^{l}s(\mathrm{cd}(X_{i}))+\sum _{i=1}^{t}w(\mathrm{cd}(Y_{i}))-w(\mathrm{cd}(\tau X))$$
$$=\sum _{i=1}^{l}\mathrm{cd}(Z_{i})+\sum _{i=1}^{t}\mathrm{cd}(W_{i})-\mathrm{cd}(\tau X ^{\prime })=\mathrm{cd}(X^{\prime }).$$

The proof is complete.
\end{proof}

Now we can formulate the main result of this section:

\begin{theorem}\label{mainBijection}
For a $p$-equipped poset $\mathscr{P}$, with associated algebras $\Lambda^{(r)}$ and $\Lambda^{(c)}$, there is a  bijection 
$$\kappa :\hat{\mathcal{C}}^{(r)}\rightarrow \hat{\mathcal{C}}^{(c)};$$
which is
an isomorphism between the underlying graphs and has the following properties.
\begin{enumerate}[(i)]
\item $X\in \hat{\mathcal{C}}^{(r)}$ is projective  (injective) if and only if $\kappa (X)$ is projective (injective). Moreover $\kappa \left(X_{F\left(e_{i}\Lambda ^{(r)}\right)}\right)=X_{F\left(e_{i}\Lambda ^{(c)}\right)}$.
\item If $X$ is not projective then $\tau (\kappa (X))=\kappa (\tau X)$.
\item If $X$ is not injective then $\tau ^{-1}\kappa (X)=\kappa (\tau ^{-1}X)$.
\item For any $X\in \hat{\mathcal{C}}^{(r)}$,
$$ \mathrm{cd}(\kappa (X))={\it s }(\mathrm{cd}(X)) \textrm{ if } X \textrm{ is strong};$$
$$ \mathrm{cd}(\kappa (X))={\it w }(\mathrm{cd}(X)) \textrm{ if } X \textrm{ is weak}.$$
\end{enumerate}
\end{theorem}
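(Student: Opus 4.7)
The strategy is to construct $\kappa$ inductively using the stratification $\hat{\mathcal{C}}^{(r)}=\bigcup_{i\in I}\mathcal{S}_i^{(r)}$ from Theorem \ref{constructComponent}; the same stratification is available on the $c$-side, and I will arrange for $\kappa$ to restrict to bijections $\mathcal{S}_i^{(r)}\to\mathcal{S}_i^{(c)}$ for every $i$. Property (iv) is the workhorse of the argument: once coordinates are shown to correspond under $s$ or $w$, well-definedness up to isomorphism follows from Proposition \ref{determineObjectByCoordinates} (combined with Proposition \ref{KXisomKtauX} to match endomorphism rings), the projective case of (i) and the entire property (iv) come bundled with the construction, and properties (ii) and (iii) are built into the definition on non-projective objects. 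The injective half of (i) is then recovered from Proposition \ref{InjectiveIsom} and the coordinate identity, while the graph-isomorphism claim follows from Lemma \ref{irreducibleInVandM} and Proposition \ref{relatedSequences}.

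For the base case $i=1$, the section $\mathcal{S}_1^{(r)}$ consists, by Proposition \ref{psHereditaryEquivF}, of objects of the form $X_{F(e_j\Lambda^{(r)})}$ with $e_j\Lambda^{(r)}$ hereditary; by Proposition \ref{equivSlender} this happens exactly when $\mathscr{P}^{\geq j}$ is slender, a condition intrinsic to $\mathscr{P}$ and equivalent to $e_j\Lambda^{(c)}$ being hereditary. Setting $\kappa(X_{F(e_j\Lambda^{(r)})})=X_{F(e_j\Lambda^{(c)})}$ produces a tautological bijection $\mathcal{S}_1^{(r)}\to\mathcal{S}_1^{(c)}$, and a direct computation from the slender shape (analogous to that performed in the first half of Proposition \ref{radicalCoordinates}) verifies (iv) on this layer.

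For the inductive step, assume $\kappa$ is defined with properties (i)-(iv) on $\mathcal{S}_1^{(r)}\cup\cdots\cup\mathcal{S}_l^{(r)}$ and takes values bijectively in $\mathcal{S}_1^{(c)}\cup\cdots\cup\mathcal{S}_l^{(c)}$. Let $X\in\mathcal{S}_{l+1}^{(r)}$. If $X$ is non-projective, then $\tau X\in\mathcal{S}_l^{(r)}$; I set $\kappa(X)=\tau^{-1}\kappa(\tau X)$, noting that $\kappa(\tau X)$ is non-injective because by Proposition \ref{determineObjectByCoordinates}(ii) and the coordinate identity it would otherwise force $\tau X$ to be injective, contradicting the existence of the AR sequence starting at $\tau X$ in $\mathcal{M}^{(r)}$. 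Property (iv) for this $X$ then follows by applying Proposition \ref{relatedSequences} to the AR sequences ending at $X$ and at $\kappa(X)$: hypothesis (1) is the inductive hypothesis on $\tau X$, and hypothesis (2) is supplied by the already-established graph isomorphism on the preceding sections together with the strong/weak matching. If instead $X\in\mathcal{S}_{l+1}^{(r)}$ is projective, then $X\cong X_{F(e_j\Lambda^{(r)})}$ for some $j$ such that $e_j\Lambda^{(r)}$ is pseudo hereditary at level $l+1$ but not hereditary; Proposition \ref{isomTroughIrreducible} together with the inductive hypothesis on its immediate predecessors, Proposition \ref{radicalCoordinates}, and Proposition \ref{equivSlender}, identifies $X_{F(e_j\Lambda^{(c)})}$ as the corresponding object in $\mathcal{S}_{l+1}^{(c)}$, and I set $\kappa(X)=X_{F(e_j\Lambda^{(c)})}$. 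Running the symmetric construction from the $c$-side, or applying Proposition \ref{determineObjectByCoordinates}, gives bijectivity onto $\mathcal{S}_{l+1}^{(c)}$.

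The main obstacle is the bookkeeping in the non-projective sub-step, specifically the middle-term correspondence (2) of Proposition \ref{relatedSequences}: an AR sequence in $\mathcal{M}^{(r)}$ and its partner in $\mathcal{M}^{(c)}$ can have middle terms of quite different \emph{shapes}, since a weak indecomposable on one side may appear with multiplicity $p$ where its counterpart on the other side appears with multiplicity $1$. This asymmetry is exactly what the distinction between the linear maps $s$ and $w$ is designed to absorb, and verifying the multiplicities rests on the dimension calculations of Propositions \ref{dimDeBimodulos} and \ref{oneDim}, which force $\mathrm{Irr}(M,N)$ to be one-dimensional on the appropriate side. Carrying this bookkeeping faithfully through the induction is the only technically delicate point; everything else is either a direct coordinate computation or an appeal to the structural propositions already proved.
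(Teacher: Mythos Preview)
Your overall strategy matches the paper's: define $\kappa$ on projectives by $X_{F(e_i\Lambda^{(r)})}\mapsto X_{F(e_i\Lambda^{(c)})}$, extend to non-projectives via $\kappa(X)=\tau^{-1}\kappa(\tau X)$, and propagate property (iv) inductively using Proposition~\ref{relatedSequences}, with Proposition~\ref{determineObjectByCoordinates} and Corollary~\ref{corIsomObjects} pinning objects down from their coordinates. The paper organizes the intermediate bookkeeping into four auxiliary facts (A)--(D), but the architecture is the same as yours.

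There is, however, a genuine gap in your inductive scheme. You induct on the section index and, for non-projective $X\in\mathcal{S}_{l+1}^{(r)}$, assert that hypothesis (2) of Proposition~\ref{relatedSequences} is ``supplied by the already-established graph isomorphism on the preceding sections''. But the indecomposable summands $W$ of the middle term of $\tau X\to E\to X$ need not lie in $\mathcal{S}_1^{(r)}\cup\cdots\cup\mathcal{S}_l^{(r)}$: since $\tau X\in\mathcal{S}_l^{(r)}$ and $\mathcal{S}_l^{(r)}$ is a section, the irreducible $\tau X\to W$ forces $W\in\mathcal{S}_l^{(r)}$ or $\tau W\in\mathcal{S}_l^{(r)}$, and in the second case $W\in\underline{\mathcal{S}}_l^{(r)}\subset\mathcal{S}_{l+1}^{(r)}$. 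Such a $W$ sits at the same level as $X$, so (iv) has not yet been established for it when you invoke Proposition~\ref{relatedSequences}. The fix is precisely what the paper does: induct on the partial order in $\hat{\mathcal{C}}^{(r)}$ (given by chains of irreducibles, well-founded by Proposition~\ref{NoCycles} and Corollary~\ref{tauNesProyectivo}) rather than on the section index; every middle-term summand $W$ then satisfies $W<X$, and (iv) for $W$ is available when you reach $X$. A secondary point: the paper's fact (D) also handles the converse direction---showing that every indecomposable summand of the middle term on the $c$-side arises from one on the $r$-side, including the delicate case of a projective summand, which requires Proposition~\ref{radicalCoordinates} together with Corollary~\ref{corIsomObjects}---and this is where much of the work hides; your remark that bijectivity follows ``by running the symmetric construction from the $c$-side'' understates what is needed there.
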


\begin{proof}
For $X=X_{F\left(e_{i}\Lambda ^{(r)}\right)}$, we define $\kappa (X)=X_{F\left(e_{i}\Lambda ^{(c)}\right)}$.

If $X$ is strong, we have $\mathrm{cd}\left(X_{F\left(e_{i}\Lambda ^{(r)}\right)}\right)=\mathfrak{e}_{i}+\mathfrak{e}_{0}$, then
$$s(\mathrm{cd}(X))=\mathfrak{e}_{i}+\mathfrak{e}_{0}=\mathrm{cd}\left(X_{F\left(e_{i}\Lambda ^{(c)}\right)}\right).$$

In the case $X$ weak, $\mathrm{cd}\left(X_{F\left(e_{i}\Lambda ^{(r)}\right)}\right)=\mathfrak{e}_{i}+p\mathfrak{e}_{0}$ and 
$$w(\mathrm{cd}(X))=\mathfrak{e}_{i}+\mathfrak{e}_{0}=\mathrm{cd}\left(X_{F\left(e_{i}\Lambda ^{(c)}\right)}\right).$$

Thus for $X\in \hat{\mathcal{C}}^{(r)}$ a projective object, condition $(iv)$ holds.

If $X\in \hat{\mathcal{C}}^{(r)}$ is an arbitrary object, there is a non negative number $n(X)$ such that $\tau ^{n(X)}X=\underline{P}(X)$ with
$\underline{P}(X)$ projective. We say that $\kappa $ is well defined in $X$ if 
$\kappa (\underline{P}(X)) \in \hat{\mathcal{C}}^{(c)}$ and $\tau ^{-n(X)}\kappa (\underline{P}(X))$ is defined. In this case
we set 
$$\kappa (X)=\tau ^{-n(X)}\kappa (\underline{P}(X)).$$ 
Observe that if $\kappa $ is well defined in a non-projective object $X\in \hat{\mathcal{C}}^{(r)}$, then $\kappa $ is well  defined in $\tau (X)$ and $\tau \kappa (X)=\kappa (\tau X)$.

By Proposition \ref{KXisomKtauX}, if $\kappa $ is well defined in $X$, then $X$ is strong (weak) if and only if $\kappa (X)$ is strong (weak). Notice that $\kappa $ is well defined in a projective $X=X_{F\left(e_{i}\Lambda ^{(r)}\right)}$ if $\kappa (X)=X_{F\left(e_{i}\Lambda ^{(c)}\right)}\in \hat{\mathcal{C}}^{(c)}$.

In the following to each $X\in \hat{\mathcal{C}}^{(r)}$ we associate $\phi _{X}$, a $\mathbb{Q}$ linear endomorphism of
$\mathbb{Q}^{\mathscr{P}}$, which is equal to $s$ if $X$ is strong and equal to $w$ if $X$ is weak.

We will prove by induction on the order in $\hat{\mathcal{C}}^{(r)}$, that $\kappa $ is well defined in any $X\in \hat{\mathcal{C}}^{(r)}$ and $\mathrm{cd}(\kappa (X))=\phi _{X}(\mathrm{cd}(X))$.

If $X$ is minimal $X=X_{F\left(e_{\text{\large \Fontlukas m}}\Lambda ^{(r)}\right)}$ is projective, $\kappa (X)=X_{F\left(e_{\text{\large \Fontlukas m}}\Lambda ^{(c)}\right)}\in \hat{\mathcal{C}}^{(c)}$. Therefore
$\kappa  $ is well defined in $X$ and $\mathrm{cd}(\kappa (X))=\phi _{X}(\mathrm{cd}(X))$.  Assume  our claim is true for all $X<W$. To prove it for $W$, we need the following four facts:

$ $

$(A)$  Suppose $X\in \hat{\mathcal{C}}^{(r)}$ and $X<W$, we want to prove that $X$ is injective (projective) if and only if
$\kappa (X)$ is injective (projective). 

If $J$ is the injective $T\left(e_{i}\Lambda ^{(r)}\right)$ or the injective $X_{D\left(\Lambda ^{(r)}e_{0}\right)}$ we denote by $\underline{J}$ the injective  $T\left(e_{i}\Lambda ^{(c)}\right)$ in the first case and $X_{D\left(\Lambda ^{(c)}e_{0}\right)}$ in the second case (see Proposition \ref{InjectiveIsom}). Observe that for $\Lambda$ equal to $\Lambda ^{(r)}$ or to $\Lambda ^{(c)}$, we have
$X_{D(\Lambda e_{0})}=X_{F(\mathrm{rad}(e_{0}\Lambda ))}$. Then using  Proposition \ref{radicalCoordinates} we deduce 
the equality $\phi _{J}(\mathrm{cd}(J))=\mathrm{cd}(\underline{J})$. Then if $X=J$ is injective we have
$\phi _{X}(\mathrm{cd}(X))=\mathrm{cd}(\kappa (X))$ and $\phi _{X}(\mathrm{cd}(X))=\mathrm{cd}(\underline{J})$. Therefore by $(ii)$ of Proposition \ref{determineObjectByCoordinates} we get $\kappa (X)\cong \underline{J}$, so $\kappa (X)$ is injective.

Conversely assume $\kappa (X)$ is injective, then by Proposition \ref{InjectiveIsom}, $\kappa (X)\cong \underline{J}$ for some injective $J$ of $\mathcal{M}^{(r)}$. Then $\mathrm{cd}(\kappa (X))=\phi _{X}(\mathrm{cd}(X))=\phi _{X}(\mathrm{cd}(J) )$, so $\mathrm{cd}(X)=\mathrm{cd}(J)$, again by $(ii)$ of Proposition \ref{determineObjectByCoordinates}, $X\cong J$.

In a similar way it is proved that $X$ is projective if and only if $\kappa (X)$ is projective.

$ $

$(B)$   We prove that if $X_{1}\rightarrow X_{2}$ is an irreducible morphism with $X_{1}, X_{2}\in \hat{\mathcal{C}}^{(r)}$ and $X_{2}<W$, then there is an irreducible morphism $\kappa (X_{1})\rightarrow \kappa (X_{2})$. First assume $X_{2}=X_{F\left(e_{i}\Lambda ^{(r)}\right)}$, then $\mathrm{rad}\left(e_{i}\Lambda ^{(r)}\right)\cong Z^{l}$ with $Z$ indecomposable and $l=1$ or $l=p$, so $X_{1}=X_{F(Z)}$. By Proposition \ref{radicalCoordinates}, 
$\phi _{X_{1}}(\mathrm{cd}(X_{1}))=\mathrm{cd}\left(X_{F(\mathrm{rad}\left(e_{i}\Lambda ^{(c)}\right)}\right)$.
 Here by induction hypothesis
$\phi _{X_{1}}\mathrm{cd}(X_{1})=\mathrm{cd}(\kappa (X_{1}))$. 

Then 
$\mathrm{cd}(\kappa (X_{1}))=\mathrm{cd}\left(X_{F\left(\mathrm{rad}\left(e_{i}\Lambda ^{(c)}\right)\right)}\right)$, and by $(iii)$ of Proposition \ref{determineObjectByCoordinates},
$\kappa (X_{1})=X_{F(\mathrm{rad}(e_{i}\Lambda ^{(c)}))}$. Therefore there is an irreducible morphism $\kappa (X_{1})\rightarrow \kappa (X_{2})$.

Now if $X_{2}$ is not projective and $X_{1}$ is projective we have an irreducible morphism 
$\tau (X_{2})\rightarrow X_{1}$. By the above, there is an irreducible morphism
$\kappa (\tau (X_{2}))\rightarrow \kappa (X_{1})$. We have $\tau (\kappa X_{2})=\kappa (\tau (X_{2}))$. Then one obtains an irreducible morphism $\tau \kappa (X_{2})\rightarrow \kappa  (X_{1})$, therefore there
is an irreducible morphism $\kappa (X_{1})\rightarrow \kappa (X_{2})$. Now take $n(X_{1})$ and $n(X_{2})$ such that
$\tau ^{n(X_{1})}X_{1}$ is projective and $\tau ^{n(X_{2})}X_{2}$ is projective. 

If $n(X_{2})\leq n(X_{1})$, there
is an irreducible morphism $\tau ^{n(X_{2})}(X_{1})\rightarrow \tau ^{n(X_{2}})(X_{2})$, by the previous case
there is an irreducible morphism 
$$\tau ^{n(X_{1})}(\kappa (X_{1}))=\kappa (\tau ^{n(X_{1}}(X_{1}))\rightarrow \kappa (\tau ^{n(X_{1})}(X_{2})) 
=\tau ^{n(X_{1})}(\kappa (X_{1}));$$
so we obtain an irreducible morphism $\kappa (X_{1})\rightarrow \kappa (X_{2})$. In case $n(X_{1})<n(X_{2})$, $X_{2}$ is not a projective object, so we have an irreducible morphism $\tau X_{2}\rightarrow X_{1}$ and
$n(X_{1})\leq n(X_{1})-1=n(\tau X_{1})$, so by the above case there is an irreducible morphism
$\tau (\kappa (X_{2}))=\kappa (\tau X_{2})\rightarrow X_{1}$, so we obtain an irreducible morphism
$\kappa (X_{1})\rightarrow \kappa (X_{2})$.

$  $

$(C)$ Suppose  $X\in  \hat{\mathcal{C}}^{(r)}$ with $X<W$ and
$E\rightarrow X$, $E^{\prime }\rightarrow \kappa (X)$ are sink morphisms.  We will prove that $\kappa $ induces
a bijection between the isomorphism classes of the indecomposable summands of $E$ and those of $E^{\prime }$.

Suppose first $X=X_{F\left(e_{i}\Lambda ^{(r)}\right)}$, then $\kappa (X)=X_{F\left(e_{i}\Lambda ^{(c)}\right)}$. We have a
sink morphism $\mathrm{rad}\left(e_{i}\Lambda ^{(r)}\right)\rightarrow e_{i}\Lambda ^{(r)}$. As $F$ is an equivalence, there is a sink morphism $F\left(\mathrm{rad}\left(e_{i}\Lambda ^{(r)}\right)\right)\rightarrow F\left(e_{i}\Lambda ^{(r)}\right)$. By $6$ of Lemma \ref{lemaIdeal}, there is a sink morphism  $X_{F(\mathrm{rad}(e_{i}\Lambda ^{(r)}))}\oplus T\rightarrow X_{F(e_{i}\Lambda ^{(r)})}$ with $T$ of the form $Q\rightarrow 0$. By $1$ of  Lemma \ref{lemaIdeal}, $T=0$. Here
$\mathrm{rad}\left(e_{i}\Lambda ^{(r)}\right)\cong Z^{l}$ with $Z$ an indecomposable $\Lambda ^{(r)}$-module and $l=1$ or
$l=p$. Therefore we have the sink morphism $X_{F(Z)}^{l}\rightarrow X_{F\left(e_{i}\Lambda ^{(r)}\right)}$. Similarly we
have the sink morphism $X_ {F\left(\mathrm{rad}\left(e_{i}\Lambda ^{(c)}\right)\right)}\rightarrow X_{F\left(e_{i}\Lambda ^{(c)}\right)}$.
So in this case $E\cong X_{F(Z)}^{l}$ and $E^{\prime }=X_ {F\left(\mathrm{rad}\left(e_{i}\Lambda ^{(c)}\right)\right)}$. Clearly our statement holds.

When $X$ is not projective, by $(A)$,
$\kappa (X)$ is not projective. Then there are almost split sequences:
$$\tau X\rightarrow E\rightarrow X, \quad  \tau \kappa (X)\rightarrow E^{\prime }\rightarrow \kappa (X).$$
Let $X_{1},\ldots,X_{l},Y_{1},\ldots,Y_{t}$ representatives of the isomorphism classes of the indecomposable direct summands 
of $E$.  We have irreducible morphisms $X_{i}\rightarrow X$, $Y_{j}\rightarrow X$, and then irreducible morphisms
$\kappa (X_{i})\rightarrow \kappa (X)$, $\kappa (Y_{j})\rightarrow \kappa (X)$. Therefore 
$\kappa (X_{1}),\ldots,\kappa (X_{l}), \kappa (Y_{1}),\ldots,\kappa (Y_{t})$ are direct summands of $E^{\prime}$.
If $X$ is strong, then $\kappa (X)$ is strong, in this case we have:
$$E=X_{1}\oplus \cdots\oplus X_{l}\oplus Y_{1}\oplus \cdots\oplus Y_{t}$$
$$E^{\prime }=\kappa (X_{1})\oplus \cdots \oplus \kappa (X_{l})\oplus \kappa (Y_{1})^{p}\oplus \cdots \oplus \kappa (Y_{t})^{p}\oplus E^{\prime }_{0}.$$

From the above we obtain:
$$\sum _{i=1}^{l}\mathrm{cd}(X_{i})+\sum _{i=1}^{t}\mathrm{cd}(Y_{i})=\mathrm{cd}(X)+\mathrm{cd}(\tau X).$$
Applying $s$ to the previous equality we get:
$$\sum _{i=1}^{l}\mathrm{cd}(\kappa (X_{i}))+\sum _{i=1}^{t}p\mathrm{cd}(\kappa (Y_{i}))=\mathrm{cd}(\kappa (X))+\mathrm{cd}(\tau \kappa (X))$$
$$=\sum _{i=1}\mathrm{cd}(\kappa (X_{i}))+\sum _{i=1}^{t}p\mathrm{cd}(\kappa (Y_{i})+\mathrm{cd}(E^{\prime }_{0}).$$
Then we have $\mathrm{cd}(E_{0}^{\prime })=0$, consequently $E_{0}^{\prime }=0$ and we have proved our statement in this case.  If $X$ is weak one proves in a similar way that $\kappa $ induces a bijection between the isomorphism classes of the indecomposable direct summands of $E$ and those of $E^{\prime }$.

$ $

$(D)$    Take source morphisms:
$$\tau W\rightarrow E, \quad \kappa (\tau W)\rightarrow E^{\prime };$$
we want to prove that $\kappa $ induces a bijection between the isomorphism classes of the indecomposable summands of $E$ and those of $E^{\prime }$.

Suppose $X$ is an indecomposable direct summand of $E^{\prime}$, this implies the existence of an
irreducible morphism $\kappa (\tau W)\rightarrow X$. If $X=X_{F\left(e_{i}\Lambda ^{(c)}\right)}$ is a projective object, then
$\kappa (\tau W)=X_{F\left(\mathrm{rad}\left(e_{i}\Lambda ^{(c)}\right)\right)}$. We have $\mathrm{rad}\left(e_{i}\Lambda ^{(r)}\right)=Z^{l}$
for some indecomposable object $Z$, and $l=1$ or $l=p$. By Proposition \ref{radicalCoordinates}, $\phi _{X_{F(Z)}}\left(\mathrm{cd}\left(X_{F(Z)}\right)\right)=\mathrm{cd}(\kappa (\tau W))$, now $\tau W$ is strong (weak) if and only if $X_{F(Z)}$ is strong (weak).
Then $\phi _{X_{F(Z)}}=\phi _{\tau W}$ and $\phi _{\tau W}\left(\mathrm{cd}\left(X_{F(Z)}\right)\right)=\phi _{\tau W}(\mathrm{cd}(\tau W))$. Thus we obtain $\mathrm{cd}\left(X_{F(Z)}\right)=\mathrm{cd}(\tau W)$. From Propositions \ref{radicalLambdar} and \ref{radicalLambdac}, we obtain $\mathrm{End}(X_{F(Z)})=K(X_{F(Z)})\cong K(\tau W)=\mathrm{End}_{\mathcal{M}}(\tau W)$.  By Corollary \ref{corIsomObjects}
$X_{F(Z)}=\tau (W)$, so we have an irreducible morphism $X_{F(Z)}\rightarrow X_{F\left(e_{i}\Lambda ^{(r)}\right)}$, where 
$X_{F\left(e_{i}\Lambda ^{(c)}\right)}$ is a direct summand of $E^{\prime }$ and
$\kappa \left(X_{F\left(e_{i}\Lambda ^{(r)}\right)}\right)=X_{F\left(e_{i}\Lambda ^{(c)}\right)}=X$. 

If $X$ is not projective, therefore we have an irreducible morphism $\tau X \rightarrow \tau \kappa ( W)$.  By $(C)$ there is an irreducible morphism $Z\rightarrow \tau W$ such that $\kappa (Z)=\tau X$.  Here $\kappa (Z)$ is not injective, then by $(A)$,  $Z$ is not injective, so we have an irreducible morphism
$\tau W\rightarrow \tau ^{-1}Z$, we have $\tau ^{-1}Z<W$. By our  induction hypothesis $\tau \kappa (\tau ^{-1}Z)=\kappa ( Z)$, so $\kappa (\tau ^{-1}Z)=\tau ^{-1}\kappa (Z)$. Therefore $\kappa $ induces a bijection between the isomorphism classes of the indecomposable summands of $E$ and those of $E^{\prime}$.

$ $

Now we are ready to prove that $\kappa $ is well defined in $W$. 

Suppose first $W=X_{F\left(e_{i}\Lambda ^{(r)}\right)}$ is a projective object of $\mathcal{M}$. Take $Z$ an indecomposable such that $\mathrm{rad}\left(e_{i}\Lambda ^{(r)}\right)\cong Z^{l}$ with $l=1$ or $l=p$.
We have an irreducible morphism $X_{F(Z)}\rightarrow W$. By our induction hypothesis $\kappa $ is well defined in $X_{F(Z)}$, then we have $\kappa \left(X_{F(Z)}\right)\in \hat{\mathcal{C}}^{(c)}$ and 
$\mathrm{cd}\left(\kappa \left(X_{F(Z)}\right)\right)=\phi _{X_{F(Z)}}\left(\mathrm{cd}\left(X_{F(Z)}\right)\right)$. By Proposition \ref{radicalCoordinates}, $X_{F(Z)}$ is strong (weak) if and only if $X_{F\left(\mathrm{rad}\left(e_{i}\Lambda ^{(c)}\right)\right)}$ is
strong (weak) and $\phi _{X_{F(Z)}}\left(\mathrm{cd}\left(X_{F(Z)}\right)\right)=\mathrm{cd}\left(X_{F\left(\mathrm{rad}\left(e_{i}\Lambda ^{(c)}\right)\right)}\right)$. Therefore:
$$\mathrm{cd}\left(\kappa \left(X_{F(Z)}\right)\right)=\mathrm{cd}\left(X_{F\left(\mathrm{rad}\left(e_{i}\Lambda ^{(c)}\right)\right)}\right). $$
Here $\mathrm{End}_{\mathcal{M}}\left(\kappa \left(X_{F(Z)}\right)\right)\cong \mathrm{End}_{\mathcal{M}}\left(X_{F\left(\mathrm{rad}\left(e_{i}\Lambda ^{(c)}\right)\right)}\right)$ and  $\kappa \left(X_{F(Z)}\right)\in \hat{\mathcal{C}}^{(c)}$, then
$$\mathrm{Ext}_{\mathcal{M}}\left(\kappa \left(X_{F(Z)}\right), \kappa \left(X_{F(Z)}\right)\right)=0.$$ 
Using Corollary \ref{corIsomObjects}, $\kappa \left(X_{F(Z)}\right)=X_{F\left(\mathrm{rad}\left(e_{i}\Lambda ^{(c)}\right)\right)}$, therefore
$X_{F\left(e_{i}\Lambda ^{(c)}\right)}\in \hat{\mathcal{C}}^{(c)}$. We conclude that $\kappa $ is well defined in
$X_{F\left(e_{i}\Lambda ^{(r)}\right)}$ and as we saw  at the beginning of the proof  $\phi _{W}(\mathrm{cd}(W))=\mathrm{cd}(\kappa (W))$.

Suppose $W$ is not projective, then by $(A)$, $\kappa (\tau W)$ is not an injective object of $\mathcal{M}^{(c)}$.

By induction hypothesis $\kappa $ is well defined in $\tau W$, therefore there is a non negative integer
$m$ such that $\tau ^{m}\tau W=\underline{P}(\tau W)$ is a projective object in $\hat{\mathcal{C}}^{(r)}$,
$\kappa (\underline{P}(W))\in \hat{\mathcal{C}}^{(c)}$, $\tau ^{-m}\kappa (\underline{P}(\tau W))$ is defined and
 $\kappa (\tau W)=\tau ^{-m}\kappa (\underline{P}(W)) $. Here $\kappa (\tau W)$ is not injective, so $\tau ^{-1}\kappa (\tau W)=
\kappa ^{-(m+1)}\underline{P}(\tau W)$ is defined. We have $\tau ^{m+1}W=\underline{P}(\tau W)$, then
$\kappa $ is well defined in $W$ and $\kappa (W)=\tau ^{-1}(\kappa (\tau W))$. Therefore we have almost split sequences

$$\tau W \rightarrow E\rightarrow W, \quad \kappa (\tau W)\rightarrow E^{\prime }\rightarrow \kappa (W).$$

By $(D)$, there exists a bijection between the isomorphism classes of the indecomposable direct summands of $E$ and
those of $E^{\prime }$. From Proposition \ref{relatedSequences}, we obtain $\phi _{W}(\mathrm{cd}(W))=\mathrm{cd}(\kappa (W))$. This proves that $\kappa $ is well defined in $W$. 

We deduce from  $(B)$, $(C)$ and $(D)$ that $\kappa $ induces an isomorphism between the underlying graphs of $\hat{\mathcal{C}}^{(r)}$ and $\hat{\mathcal{C}}^{(c)}$. Property $(i)$ follows from $(A)$, 
property $(ii)$ follows from the definition. To prove $(iii)$, take $X$ non injective in $\hat{\mathcal{C}}^{(r)}$, 
take $\tau ^{-1}X$, this is not projective,  so by $(ii)$, $\tau \kappa (\tau ^{-1}X)=\kappa (X)$, therefore,
$\kappa (\tau ^{-1}X)=\tau ^{-1}(\kappa (X))$. Finally $(iv)$ follows from $\kappa $ is well defined in the objects of $\hat{\mathcal{C}}^{(r)}$. The proof of the Theorem is complete.
  \end{proof}

%%%%%%%%%%%%%%%%%%%%%%%%%%%%%%%%%%%%%%%%%%%%%%%%%%%%
%%%%%%%%%%%%%%%%%%%%%%%%%%%%%%%%%%%%%%%%%%%%%%%%%%%%
%%%%%%%%%%%%%%%%%%%%%%%%%%%%%%%%%%%%%%%%%%%%%%%%%%%%
%%%%%%%%%%%%%%%%%%%%%%%%%%%%%%%%%%%%%%%%%%%%%%%%%%%%

\section{Socle-projective and top-injective modules}

We formulate the results of the previous section for the category $\mathcal{U}$ of the socle projective $\Lambda $-modules and the category $\mathcal{V}$ of the top-injective $\Lambda $-modules, where $\Lambda $ is $\Lambda ^{(r)}$ or $\Lambda ^{(c)}$, associated to a $p$-equipped poset $\mathscr{P}$.

\begin{theorem}\label{sectionTopInj}
Suppose  $\mathcal{C}_{\mathcal{V}}$ is the component of the Auslander-Reiten graph of
$\mathcal{V}$ containing $e_{0}\Lambda /\mathrm{soc}(e_{0}\Lambda )$. Then there is a set $I$ which is the the set
of the natural numbers or $\{1,\ldots ,n\}$ and for each $i\in I$ there is a section $\mathcal{L}_{i}$ of  $\,\mathcal{C}_{\mathcal{V}}$ with the following properties:
\begin{enumerate} 
\item If $N\in \mathcal{L}_{i}$ is not projective, then $i>1$ and $\tau N\in \mathcal{L}_{i-1}$
\item If $N\in \mathcal{L}_{i}$ is not injective, then $\tau ^{-1}N\in \mathcal{L}_{i+1}$.
\item If $Q$ is projective in $\mathcal{L}_{i}$ and there is an irreducible morphism $N\rightarrow Q$, then
$N\in \mathcal{L}_{i}$
\item $\mathcal{L}_{i}\cap \mathcal{L}_{j}=\emptyset $ for $i\neq j$
\item $\mathcal{C}_{\mathcal{V}}=\bigcup _{i\in I}\mathcal{L}_{i}$.
\item There are not oriented cycles in $\mathcal{C}_{\mathcal{V}}$.
\item If $I=\{1,\ldots ,n\}$, then  $\mathcal{C}_{\mathcal{V}}$ is the Auslander-Reiten graph of $\mathcal{V}$. 
\end{enumerate}
\end{theorem}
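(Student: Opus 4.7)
The plan is to transfer Theorem \ref{constructComponent} from $\hat{\mathcal{C}}\subset\mathcal{M}$ to $\mathcal{C}_{\mathcal{V}}\subset\mathcal{V}$ along the cokernel functor $\mathrm{Cok}:\mathcal{M}\to\mathcal{V}$, which by Proposition 35 of \cite{BD2017} induces an equivalence $\mathcal{M}/\mathcal{I}\to\mathcal{V}$. The preliminary point to check is that every $X\in\hat{\mathcal{C}}$ has $\mathrm{Cok}(X)\neq 0$: the seed $X_{F(e_{\text{\large \Fontlukas m}}\Lambda)}$ has non-zero cokernel; by item 1 of Lemma \ref{lemaIdeal} no object of the form $Q\to 0$ admits a non-zero morphism into any $X_{F(N)}$; and Proposition \ref{ARseq} applied to the inductive construction of the sections $\mathcal{S}_i$ in Theorem \ref{constructComponent} shows by induction on $i$ that every indecomposable in $\hat{\mathcal{C}}$ is of the form $X_{F(N)}$ for some non-zero $N\in\mathcal{U}$.

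Having done this, I would set $\mathcal{L}_i:=\{\mathrm{Cok}(X):X\in\mathcal{S}_i\}$ (modulo isomorphism). Properties (1)--(5) then transfer from Theorem \ref{constructComponent} by combining Proposition \ref{XisPiffCokisP} together with its dual, which guarantees that $\mathrm{Cok}$ preserves and reflects projectivity and injectivity; Lemma \ref{irreducibleInVandM}, which does the same for irreducible morphisms between objects of non-zero cokernel; and Proposition \ref{ARseq}, which transfers almost split sequences, ensuring compatibility of $\tau$ between $\mathcal{M}$ and $\mathcal{V}$. Disjointness in (4) and the union in (5) are preserved because the equivalence $\mathcal{M}/\mathcal{I}\to\mathcal{V}$ is faithful on the subcategory of objects of non-zero cokernel, by the first step. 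Property (6) is a direct transfer of Proposition \ref{NoCycles} via Lemma \ref{irreducibleInVandM}.

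The delicate step is verifying that $\mathrm{Cok}(\hat{\mathcal{C}})$ is actually the AR component $\mathcal{C}_{\mathcal{V}}$ containing $e_0\Lambda/\mathrm{soc}(e_0\Lambda)$. For this I would compute the minimal projective presentation of $e_0\Lambda/\mathrm{soc}(e_0\Lambda)$, namely $(e_{\text{\large \Fontlukas m}}\Lambda)^{t}\to e_0\Lambda$, which does belong to $\mathcal{M}$ because $(e_{\text{\large \Fontlukas m}}\Lambda)^{t}e_0=0$, and then use the description of the equivalence $F:\mathcal{U}\to\mathcal{V}$ from \cite{BD2017} together with the AR sequence in $\mathcal{M}$ from Proposition \ref{ASsequence} at $i=0$ to connect this object to $X_{F(e_{\text{\large \Fontlukas m}}\Lambda)}$ by a chain of irreducible morphisms. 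This identification of the component is expected to be the main obstacle, since it requires the concrete form of $F$ rather than its formal properties.

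Finally, for (7): if $I=\{1,\ldots,n\}$ then $\mathcal{C}_{\mathcal{V}}=\bigcup_{i=1}^{n}\mathcal{L}_i$ is a finite connected AR component of the Krull-Schmidt category $\mathcal{V}$, which has sink and source morphisms everywhere by the analogue in $\mathcal{V}$ of the Auslander--Reiten proposition proved for $\mathcal{U}$. A standard Auslander-type argument then shows that any such finite connected component must exhaust the AR quiver: an indecomposable outside would admit non-zero morphisms from or to the component (connectedness of the category $\mathcal{V}$) which could not factor through the sink or source morphisms of the finite component, giving a contradiction.
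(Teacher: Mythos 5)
Your preliminary step is false, and it is the load-bearing step of the whole transfer. You claim that every object of $\hat{\mathcal{C}}$ has non-zero cokernel, i.e.\ that $\hat{\mathcal{C}}$ consists entirely of objects of the form $X_{F(N)}$. In fact $\hat{\mathcal{C}}$ can, and typically does, contain the injective objects $T(e_{i}\Lambda)=(e_{i}\Lambda \to 0)$, whose cokernel is zero: by Proposition \ref{ASsequence}, the almost split sequence ending at $T(e_{i}\Lambda)$ starts at $X_{D(\Lambda e_{i})}$, so as soon as $D(\Lambda e_{i})$ lies in $\mathcal{C}_{\mathcal{V}}$ (which by item (7) is automatic for every $i\neq 0$ whenever $I$ is finite), the object $\tau^{-1}X_{D(\Lambda e_{i})}=T(e_{i}\Lambda)$ lies in $\hat{\mathcal{C}}$. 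Your induction does not exclude this because item 1 of Lemma \ref{lemaIdeal} only forbids non-zero maps \emph{from} objects $P\to 0$ \emph{into} the objects $X_{F(e_{i}\Lambda)}$; it says nothing about irreducible morphisms \emph{into} $T$-type objects, and by items 5 and 6 of Lemma \ref{lemaIdeal} sink and source morphisms in $\mathcal{M}$ do acquire summands of the form $Q\to 0$ that are invisible in $\mathcal{V}$. Since the component is closed under irreducible morphisms in both directions, such objects are reached. This is exactly why the paper defines $\mathcal{L}_{i}=\{N\mid N=\mathrm{Cok}(X)\neq 0,\ X\in \mathcal{S}_{i}\}$, discarding the zero cokernels, and why Section 7 records that $\mathrm{Cok}(X)=0$ iff $\mathrm{Cok}(\kappa(X))=0$ for $X\in \hat{\mathcal{C}}^{(r)}$; your definition $\mathcal{L}_{i}:=\{\mathrm{Cok}(X):X\in\mathcal{S}_{i}\}$ would literally put the zero module into each section containing a $T$-type object.

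The repair is what the paper does: keep only non-zero cokernels and re-verify (1)--(5) with a small case analysis at the zero-cokernel objects. For instance, for (2) one must check that if $N=\mathrm{Cok}(X)\in\mathcal{L}_{i}$ is non-injective then $\mathrm{Cok}(\tau^{-1}X)\neq 0$: if $\tau^{-1}X$ were $T(e_{j}\Lambda)$, Proposition \ref{ASsequence} would force $X\cong X_{D(\Lambda e_{j})}$, making $N\cong D(\Lambda e_{j})$ injective, a contradiction; only then does Proposition \ref{ARseq} give $\tau^{-1}N\in\mathcal{L}_{i+1}$. A secondary weakness is your argument for (7): the appeal to ``connectedness of the category $\mathcal{V}$'' is not established anywhere in the paper and is not a formal property one can invoke. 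The paper's proof instead starts from the concrete fact that every indecomposable $N\in\mathcal{V}$ receives a non-zero morphism from $e_{0}\Lambda/\mathrm{soc}(e_{0}\Lambda)\cong F(e_{\text{\large \Fontlukas m}}\Lambda)$ --- because the socle of every module in $\mathcal{U}$ is a direct sum of copies of the simple projective $e_{\text{\large \Fontlukas m}}\Lambda$ and $F$ is an equivalence --- and then runs a maximality argument inside the finite component using the order furnished by (6) and factorization through a source morphism. Your sink/source factorization idea is the same in spirit, but without this Hom-nonvanishing input it has no morphism to factor. The remaining transfers (sections via Lemma \ref{irreducibleInVandM}, projectivity via Proposition \ref{XisPiffCokisP}, no cycles via Proposition \ref{NoCycles}, and the identification $F(e_{\text{\large \Fontlukas m}}\Lambda)\cong e_{0}\Lambda/\mathrm{soc}(e_{0}\Lambda)$, which you rightly flag as needing the explicit form of $F$) are in line with the paper's proof.
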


\begin{proof}
We define 
$$\mathcal{L}_{i}=\{N\in \mathcal{V}| N=\mathrm{Cok}(X)\neq 0, X\in \mathcal{S}_{i}\};$$ 
for each $i\in I$, where $\mathcal{S}_{i}$ are the sections constructed in Theorem \ref{constructComponent}.

We first see that $\mathcal{L}_{i}$ is a section. Indeed if $N\in \mathcal{L}_{i}$, and there is an irreducible
morphism $N\rightarrow N_{1}$, then $N=\mathrm{Cok}(X)$, for some $X\in \mathcal{S}_{i}$. Now $N_{1}\cong \mathrm{Cok}(X_{1})$, where $X_{1}\in \mathcal{M}$, then by Lemma \ref{irreducibleInVandM} there is an irreducible morphism $X\rightarrow X_{1}$, but this implies that
$X_{1}\in \mathcal{S}_{i}$ or $\tau X_{1}\in \mathcal{S}_{i}$. Therefore $N_{1}=\mathrm{Cok}(X_{1})\in \mathcal{L}_{i}$ or $\tau N_{1}=\mathrm{Cok}(\tau X_{1})\in \mathcal{L}_{i}$. Clearly   $\tau N_{1}$ and $N_{1}$ can not be both in $\mathcal{L}_{i}$.

Items $(1)$, $(2)$, $(3)$, $(4)$ and $(5)$ follow from the corresponding items of
Theorem \ref{constructComponent}. Item $(6)$ follows from Propositions \ref{psHereditaryEquivF} and \ref{NoCycles}.

In case $I$ is finite, and $N$ is an indecomposable in $\mathcal{V}$, there is a non zero morphism $e_{0}\Lambda /\mathrm{soc}(e_{0}\Lambda )  \rightarrow N$. We have $e_{0}\Lambda /\mathrm{soc}(e_{0}\Lambda )\in \mathcal{C}_{\mathcal{V}}$. Suppose $N$ is not isomorphic to any module in
$\mathcal{C}_{\mathcal{V}}$.  By $(6)$, there are not oriented cycles in $\mathcal{C}_{\mathcal{V}}$, so we have an order in $\mathcal{C}_{\mathcal{V}}$, where $N<N_{1}$ if there is a chain of irreducible morphisms from $N$ to $N_{1}$. In this order take $Z$ maximal in $\mathcal{C}_{\mathcal{V}}$, such that there is a non zero morphism from $Z$ to $N$. But $N$ is not isomorphic to $Z$, so this morphism factorizes through a source morphism $Z\rightarrow E$. This implies that there is a non zero morphism $N_{1}\rightarrow N$ with $N_{1}\in \mathcal{C}_{\mathcal{V}}$ and
$N_{1}$ direct summand of $E$. Then there is an irreducible morphism $Z\rightarrow N_{1}$, so
$Z<N_{1}$, a contradiction. Therefore $N$ is isomorphic to some module in $\mathcal{C}_{\mathcal{V}}$.
This proves $(7)$, and the proof of our theorem is complete.
\end{proof}

The next result follows from the previous theorem, using the equivalence of categories $F:\mathcal{U}\rightarrow \mathcal{V}$.

\begin{theorem}\label{sectionSocProj}
Let $\mathcal{C}_{\mathcal{U}}$ be the component of the Auslander-Reiten graph of
$\mathcal{U}$ containing $e_{\text{\large \Fontlukas m}}\Lambda $, the simple projective in $\mathrm{mod}\, \Lambda $. Then there is a set $I$ which is the the set of the natural numbers or $\{1,\ldots , n\}$ and for each $i\in I$ there is a section $\mathcal{T}_{i}$ of
$\mathcal{C}_{\mathcal{U}}$ with the following properties:
\begin{enumerate} 
\item If $N\in \mathcal{T}_{i}$ is a not projective, then $i>1$ and $\tau N\in \mathcal{T}_{i-1}$
\item If $N\in \mathcal{T}_{i}$ is not injective, then $\tau ^{-1}N\in \mathcal{T}_{i+1}$.
\item If $P$ is a projective $\Lambda $-module  in $\mathcal{T}_{i}$ and there is an irreducible morphism $M\rightarrow Q$, then
$M\in \mathcal{T}_{i}$
\item $\mathcal{T}_{i}\cap \mathcal{T}_{j}=\emptyset $ for $i\neq j$
\item $\mathcal{C}_{\mathcal{U}}=\bigcup _{i\in I}\mathcal{T}_{i}$.
\item There are not oriented cycles in $\mathcal{C}_{\mathcal{U}}$.
\item If $I=\{1,\ldots ,n\}$, then  $\mathcal{C}_{\mathcal{U}}$ is the Auslander-Reiten graph of $\mathcal{U}$. 
\end{enumerate}
\end{theorem}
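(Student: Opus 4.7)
The plan is to transport the structure from Theorem \ref{sectionTopInj} back to $\mathcal{U}$ via the exact equivalence $F\colon \mathcal{U}\rightarrow \mathcal{V}$. Because $F$ is an exact equivalence of Krull-Schmidt categories with almost split sequences, it preserves indecomposability, irreducible morphisms, sink and source morphisms, almost split sequences (hence commutes with $\tau$ up to isomorphism), and it takes projective objects of $\mathcal{U}$ to projective objects of $\mathcal{V}$ and injective objects to injective objects. Consequently $F$ induces a bijection between Auslander-Reiten components. I would first identify the component of $e_{\text{\large \Fontlukas m}}\Lambda$ in $\mathcal{U}$ with the component $\mathcal{C}_{\mathcal{V}}$ of Theorem \ref{sectionTopInj}: indeed $X_{F(e_{\text{\large \Fontlukas m}}\Lambda )}$ lies in the pseudo hereditary projective section $\mathcal{S}_1$ of $\hat{\mathcal{C}}$, so $F(e_{\text{\large \Fontlukas m}}\Lambda )=\mathrm{Cok}(X_{F(e_{\text{\large \Fontlukas m}}\Lambda )})\in \mathcal{L}_1\subseteq \mathcal{C}_{\mathcal{V}}$, and together with the connectedness of $\mathcal{C}_{\mathcal{V}}$ and the fact that $F$ maps connected components onto connected components, this gives $F(\mathcal{C}_{\mathcal{U}})=\mathcal{C}_{\mathcal{V}}$.

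Having fixed the indexing set $I$ from Theorem \ref{sectionTopInj}, I would then define
\[
\mathcal{T}_{i}=\{M\in \mathcal{C}_{\mathcal{U}}\mid F(M)\in \mathcal{L}_{i}\}
\]
for each $i\in I$. Since $F$ is an equivalence, each $\mathcal{T}_i$ consists of a set of non-isomorphic indecomposables of $\mathcal{U}$, and for any irreducible morphism $M\rightarrow M'$ in $\mathcal{C}_{\mathcal{U}}$, its image $F(M)\rightarrow F(M')$ is irreducible in $\mathcal{C}_{\mathcal{V}}$; so the section property transported via $F$ from $\mathcal{L}_i$ yields the section property for $\mathcal{T}_i$. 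Property (1) and (2) are immediate once one notes that $F(\tau M)\cong \tau F(M)$ for non-projective (resp.\ non-injective) $M$, a consequence of $F$ being an exact equivalence carrying the almost split sequence ending (starting) in $M$ to the almost split sequence ending (starting) in $F(M)$. Property (3) transports verbatim since $F$ takes projectives in $\mathcal{U}$ to projectives in $\mathcal{V}$. Properties (4) and (5) follow because $F$ is a bijection on isomorphism classes in the respective components.

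For (6), the absence of oriented cycles in $\mathcal{C}_{\mathcal{U}}$ is obtained by pushing an hypothetical cycle forward through $F$ to an oriented cycle in $\mathcal{C}_{\mathcal{V}}$, which is forbidden by item (6) of Theorem \ref{sectionTopInj}. For (7), if $I=\{1,\ldots ,n\}$ is finite, any indecomposable $M\in \mathcal{U}$ gives rise to an indecomposable $F(M)\in \mathcal{V}$, which by item (7) of Theorem \ref{sectionTopInj} lies in $\mathcal{C}_{\mathcal{V}}$; therefore $M\in \mathcal{C}_{\mathcal{U}}$, showing that $\mathcal{C}_{\mathcal{U}}$ exhausts the Auslander-Reiten graph of $\mathcal{U}$.

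The only delicate step I anticipate is the verification that $F(\mathcal{C}_{\mathcal{U}})=\mathcal{C}_{\mathcal{V}}$, that is, that the component containing $e_{\text{\large \Fontlukas m}}\Lambda$ is carried to the component containing $e_0\Lambda /\mathrm{soc}(e_0\Lambda )$. This reduces to checking that $e_0\Lambda /\mathrm{soc}(e_0\Lambda )$ arises as $F(M)$ for some $M$ in the AR-orbit of $e_{\text{\large \Fontlukas m}}\Lambda $ under $F^{-1}$, which can be read off from the construction of $\mathcal{S}_1$ as the set of pseudo hereditary projectives in $\hat{\mathcal{C}}$ together with Proposition \ref{psHereditaryEquivF}; once this identification is made, the remainder is a mechanical transport of the seven properties across the equivalence.
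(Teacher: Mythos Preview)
Your proposal is correct and follows exactly the approach the paper takes: the paper's entire proof is the single sentence ``follows from the previous theorem, using the equivalence of categories $F:\mathcal{U}\rightarrow \mathcal{V}$,'' and you have supplied the details that the paper leaves implicit. Your verification that $F(\mathcal{C}_{\mathcal{U}})=\mathcal{C}_{\mathcal{V}}$ via $F(e_{\text{\large \Fontlukas m}}\Lambda )\in \mathcal{L}_{1}\subseteq \mathcal{C}_{\mathcal{V}}$ is exactly the right way to pin down the component correspondence, and the remaining transport of properties (1)--(7) is, as you say, mechanical.
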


\begin{proposition}
If $M\in \mathcal{C}_{\mathcal{V}}$ or in $M\in \mathcal{C}_{\mathcal{U}}$, then
$M$ is strong or weak. Moreover $K(M)=\mathrm{End}_{\Lambda }(M)$, therefore $\mathrm{End}_{\Lambda }(M)$
is isomorphic to $\textsf{F}$ or to $\textsf{G}$.
\end{proposition}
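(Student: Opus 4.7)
The plan is to derive both assertions from the corresponding statements already available for $\hat{\mathcal{C}}^{(r)}$ and $\hat{\mathcal{C}}^{(c)}$ inside $\mathcal{M}$, and then transfer them along the functor $\mathrm{Cok}:\mathcal{M}\rightarrow\mathcal{V}$ (for $\mathcal{C}_{\mathcal{V}}$) and the equivalence $F:\mathcal{U}\rightarrow\mathcal{V}$ of \cite{BD2017} (for $\mathcal{C}_{\mathcal{U}}$).

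First I would treat $M\in\mathcal{C}_{\mathcal{V}}$. By the very construction of the sections $\mathcal{L}_{i}$ in Theorem \ref{sectionTopInj}, every indecomposable $M\in\mathcal{C}_{\mathcal{V}}$ has the form $M=\mathrm{Cok}(X)\neq 0$ for some $X\in\hat{\mathcal{C}}$. Proposition \ref{objectWS} gives $K(X)\cong\textsf{F}$ or $K(X)\cong\textsf{G}$, and the Remark preceding this section (a consequence of Propositions \ref{oneDim} and \ref{objectWS}) yields $\mathrm{rad}\,\mathrm{End}_{\mathcal{M}}(X)=0$, so in fact $\mathrm{End}_{\mathcal{M}}(X)=K(X)$ is already isomorphic to $\textsf{F}$ or $\textsf{G}$.

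To pass from $\mathrm{End}_{\mathcal{M}}(X)$ to $\mathrm{End}_{\Lambda}(M)=\mathrm{End}_{\mathcal{V}}(M)$ I would invoke that $\mathrm{Cok}$ induces an equivalence $\mathcal{M}/\mathcal{I}\rightarrow\mathcal{V}$ (Proposition 35 of \cite{BD2017}, used already in Proposition \ref{ARseq}). This gives a surjective $\textsf{F}$-algebra homomorphism $\mathrm{End}_{\mathcal{M}}(X)\twoheadrightarrow\mathrm{End}_{\mathcal{V}}(\mathrm{Cok}(X))=\mathrm{End}_{\Lambda}(M)$ with kernel $\mathcal{I}(X,X)$, and since $\mathcal{I}(X,X)\subseteq\mathrm{rad}\,\mathrm{End}_{\mathcal{M}}(X)=0$ (cf.\ item 7 of Lemma \ref{lemaIdeal}), it is an isomorphism. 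Consequently $\mathrm{End}_{\Lambda}(M)$ is itself a division $\textsf{F}$-algebra isomorphic to $\textsf{F}$ or to $\textsf{G}$; in particular $\mathrm{rad}\,\mathrm{End}_{\Lambda}(M)=0$, so $K(M)=\mathrm{End}_{\Lambda}(M)$ and $M$ is strong or weak.

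For $N\in\mathcal{C}_{\mathcal{U}}$ the argument is immediate once the case above is in hand: the equivalence $F:\mathcal{U}\rightarrow\mathcal{V}$ is precisely the bridge through which Theorem \ref{sectionSocProj} is deduced from Theorem \ref{sectionTopInj}, so $F(N)\in\mathcal{C}_{\mathcal{V}}$, and the functor yields $\mathrm{End}_{\Lambda}(N)\cong\mathrm{End}_{\Lambda}(F(N))$, which we have just shown is $\textsf{F}$ or $\textsf{G}$. The only small subtlety I expect to check explicitly is that the assignment $X\mapsto\mathrm{Cok}(X)$ really exhausts $\mathcal{C}_{\mathcal{V}}$ on indecomposables, but this is built into item (5) of Theorem \ref{sectionTopInj} (no object of the form $P\rightarrow 0$ can arise in $\hat{\mathcal{C}}$, thanks to item 1 of Lemma \ref{lemaIdeal} applied at $X_{F(e_{\text{\large \Fontlukas m}}\Lambda)}$ and Corollary \ref{tauNesProyectivo}).
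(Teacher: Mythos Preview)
Your proposal is correct and follows essentially the same route as the paper: lift $M$ to $X_{M}\in\hat{\mathcal{C}}$, invoke Proposition~\ref{objectWS} for the strong/weak dichotomy, and push the result back through $\mathrm{Cok}$ (and then $F$ for $\mathcal{C}_{\mathcal{U}}$). The only difference is in the justification of $K(M)=\mathrm{End}_{\Lambda}(M)$: the paper simply cites item~(6) of Theorems~\ref{sectionTopInj} and~\ref{sectionSocProj} (no oriented cycles forces $\mathrm{rad}\,\mathrm{End}_{\Lambda}(M)=0$), whereas you obtain it by showing the surjection $\mathrm{End}_{\mathcal{M}}(X)\to\mathrm{End}_{\Lambda}(M)$ has kernel $\mathcal{I}(X,X)\subseteq\mathrm{rad}\,\mathrm{End}_{\mathcal{M}}(X)=0$ --- a slightly more explicit variant of the same idea.
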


\begin{proof}
If $M\in \mathcal{C}_{\mathcal{V}}$, then $X_{M}\in \hat{\mathcal{C}}$ and $X_{M}$ is strong or weak (see Proposition \ref{objectWS}). By $7.$ of Lemma \ref{lemaIdeal}, $K(M)\cong K(X_{M})$, so $M$ is strong or weak. 

If $M\in \mathcal{C}_{\mathcal{U}}$, then $F(M)\in \mathcal{C}_{\mathcal{V}}$, so $M$ is strong or weak. 

Finally, $\mathrm{End}_{\Lambda }(M)=K(M)$, by $6.$ of Theorems \ref{sectionTopInj} and \ref{sectionSocProj}.
\end{proof}

\begin{lemma}
\begin{enumerate}
\item[(i)] If $N,N_{1}$ lie in $\mathcal{C}_{\mathcal{V}}$, and $N\rightarrow N_{1}$ is an irreducible morphism then
$\mathrm{Irr}(N,N_{1})$ is one-dimensional over $K(N)$ or over $K(N_{1})$.
\item[(ii)] If there is an irreducible morphism $M\rightarrow M_{1}$, where $M,M_{1}\in \mathcal{C}_{\mathcal{U}}$, then
$\mathrm{Irr}(M,M_{1})$ is one dimensional over $K(M)$ or over $K(M_{1})$.
\end{enumerate}
\end{lemma}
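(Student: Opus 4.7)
The plan is to reduce both (i) and (ii) to the already-proved Proposition~\ref{oneDim} by transporting the statement through the two equivalences available: the one induced by $\mathrm{Cok}: \mathcal{M}/\mathcal{I} \to \mathcal{V}$ (used in Lemma~\ref{IsoIrrCok}) to bring $\mathcal{V}$-data back to $\mathcal{M}$, and the equivalence $F: \mathcal{U} \to \mathcal{V}$ (from \cite{BD2017}, Proposition~31) to reduce (ii) to (i).

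For (i), given $N, N_1 \in \mathcal{C}_{\mathcal{V}}$, take minimal projective presentations $X_N, X_{N_1} \in \mathcal{M}$; by the way $\mathcal{C}_{\mathcal{V}}$ is defined (as the image under $\mathrm{Cok}$ of $\hat{\mathcal{C}}$, cf.\ the proof of Theorem~\ref{sectionTopInj}), both $X_N$ and $X_{N_1}$ belong to $\hat{\mathcal{C}}$, and moreover $\mathrm{Cok}(X_N) = N \neq 0$, $\mathrm{Cok}(X_{N_1}) = N_1 \neq 0$. By Lemma~\ref{IsoIrrCok} the functor $\mathrm{Cok}$ induces an $\textsf{F}$-linear isomorphism
\[
\mathrm{Irr}(X_N, X_{N_1}) \xrightarrow{\ \sim\ } \mathrm{Irr}(N, N_1),
\]
and by item (7) of Lemma~\ref{lemaIdeal} there are $\textsf{F}$-algebra isomorphisms $K(X_N) \cong K(N)$ and $K(X_{N_1}) \cong K(N_1)$. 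These isomorphisms are compatible with the bimodule structure, since the functor $\mathrm{Cok}$ preserves composition. Now Proposition~\ref{oneDim} gives that $\mathrm{Irr}(X_N, X_{N_1})$ is zero or one-dimensional over $K(X_N)$ or over $K(X_{N_1})$; transporting via the two isomorphisms, $\mathrm{Irr}(N, N_1)$ is one-dimensional over $K(N)$ or over $K(N_1)$ (and it is not zero, since we have assumed an irreducible morphism between them).

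For (ii), I simply apply the equivalence $F: \mathcal{U} \to \mathcal{V}$. Since $F$ is an exact equivalence of Krull--Schmidt categories, it induces a bijection between the Auslander--Reiten components, sending $\mathcal{C}_{\mathcal{U}}$ to $\mathcal{C}_{\mathcal{V}}$ (note that $F(e_\text{\large \Fontlukas m}\Lambda)$ is precisely the simple in $\mathcal{V}$ whose presentation is $X_{F(e_\text{\large \Fontlukas m}\Lambda)}$, whose $\mathcal{C}_{\mathcal{V}}$-component is the one used throughout). Therefore $F(M), F(M_1) \in \mathcal{C}_{\mathcal{V}}$ and $F$ induces $\textsf{F}$-algebra isomorphisms $K(M) \cong K(F(M))$, $K(M_1) \cong K(F(M_1))$, together with a $K(F(M_1))$--$K(F(M))$-bimodule isomorphism $\mathrm{Irr}(M, M_1) \cong \mathrm{Irr}(F(M), F(M_1))$. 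Applying (i) to the pair $F(M), F(M_1)$ finishes the proof.

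There is essentially no obstacle: everything is a bookkeeping reduction, with all the real content already contained in Proposition~\ref{oneDim}. The only point requiring a little care is checking that the isomorphisms provided by Lemma~\ref{IsoIrrCok} and by item (7) of Lemma~\ref{lemaIdeal} really do intertwine the left and right actions so that one-dimensionality over $K(X_N)$ translates to one-dimensionality over $K(N)$; this is immediate from the functoriality of $\mathrm{Cok}$ on endomorphism rings and on $\mathrm{Hom}$-spaces modulo radicals.
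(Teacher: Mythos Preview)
Your proof is correct and follows essentially the same route as the paper: both reduce to Proposition~\ref{oneDim} via Lemma~\ref{IsoIrrCok} and item~(7) of Lemma~\ref{lemaIdeal}, and then pass through the equivalence $F$ for part~(ii). The only cosmetic difference is that the paper compares $\textsf{F}$-dimensions directly (arguing $\dim_{\textsf{F}}\mathrm{Irr}(N,N_1)=\dim_{\textsf{F}}K(N)$ or $\dim_{\textsf{F}}K(N_1)$, which suffices since each $K(\,\cdot\,)$ is a field), whereas you invoke bimodule compatibility of the $\mathrm{Cok}$-induced isomorphisms; both are valid and amount to the same thing.
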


\begin{proof}
By Lemma \ref{IsoIrrCok} we have an isomorphism of $\textsf{F}$-vector spaces
$$\mathrm{Irr}(X_N, X_{N_1})\rightarrow \mathrm{Irr}(N, N_1);$$
and the equivalence $F$ of $\textsf{F}$-categories gives isomorphisms of $\textsf{F}$-vector spaces
$$\mathrm{Irr}\left(X_{F(M)}, X_{F(M_1)} \right)\rightarrow \mathrm{Irr}(F(M), F(M_1))\rightarrow 
\mathrm{Irr}(M, M_1).$$ 

Using $7.$ of Lemma \ref{lemaIdeal},
$$\mathrm{dim}_{\textsf{F}}K\left(X_N \right)=\mathrm{dim}_{\textsf{F}}K(N) \text{ and } \mathrm{dim}_{\textsf{F}}K\left(X_{N_1} \right)=\mathrm{dim}_{\textsf{F}}K(N_1);$$
using also $F$
$$\mathrm{dim}_{\textsf{F}}K\left(X_ {F(M)} \right)=\mathrm{dim}_{\textsf{F}}K(F(M))=\mathrm{dim}_{\textsf{F}}K(M);$$
and  
$$\mathrm{dim}_{\textsf{F}}K\left(X_ {F(M_1)} \right)=\mathrm{dim}_{\textsf{F}}K(F(M_1))=\mathrm{dim}_{\textsf{F}}K(M_1).$$

We know that if $\mathrm{Irr}(X_N,X_{N_1})\neq 0$, then $\mathrm{dim}_{\textsf{F}}\mathrm{Irr}(X_N,X_{N_1}) $ is equal to $\mathrm{dim}_{\textsf{F}}K(X_N)$ or to $\mathrm{dim}_{\textsf{F}}K(X_{N_1})$, by Proposition \ref{oneDim}. So $\mathrm{dim}_{\textsf{F}}\mathrm{Irr}(N,N_1)\neq 0$,  implies this is equal to $\mathrm{dim}_{\textsf{F}}K(N)$ or to $\mathrm{dim}_{\textsf{F}}K(N_1)$, proving $(i)$. In the same way if $\mathrm{dim}_{\textsf{F}}\mathrm{Irr}(M,M_1)\neq 0$, then this is equal to $\mathrm{dim}_{\textsf{F}}K(M)$ or to $\mathrm{dim}_{\textsf{F}}K(M_1)$ which proves $(ii)$.
\end{proof}

In the following we denote by $\mathcal{C}_{\mathcal{V}}^{(r)}$, the component of the Auslander-Reiten graph of $\mathcal{V}^{(r)}$ containing $D\left(\Lambda ^{(r)}e_{0}\right)$.  Denote by $\mathcal{C}^{(r)}_{\mathcal{U}}$, the component of the Auslander-Reiten graph of $\mathcal{U}^{(r)}$ containing $e_{\text{\large \Fontlukas m}}\Lambda ^{(r)}$. Similarly, we have the corresponding graphs $\mathcal{C}_{\mathcal{V}}^{(c)}$ and $\mathcal{C}_{\mathcal{U}}^{(c)}$
when $\Lambda =\Lambda ^{(c)}$.
 
Take now $\kappa :\hat{\mathcal{C}}^{(r)}\rightarrow \hat{\mathcal{C}}^{(c)}$ from Theorem \ref{mainBijection}. For any
$X\in \hat{\mathcal{C}}^{(r)}$ we have $\mathrm{cd}(\kappa (X))=\phi _{X}(\mathrm{cd}(X))$, where
$\phi _{X}=s$ if $X$ is strong and $\phi _{X}=w$ in  case $X$ is weak. We have that $X$ is of the form $T(e_{i}\Lambda ^{(r)})$
if and only if $\kappa (X)$ is of the form $T(e_{i}\Lambda ^{(c)})$, for some $i\in \mathscr{P}$. Therefore $\mathrm{Cok}(X)=0$ if and only if
$\mathrm{Cok}(\kappa (X))=0$ for $X\in \hat{\mathcal{C}}^{(r)}$. Then $\kappa$ induces a bijection
of graphs:
$$\beta : \mathcal{C}_{\mathcal{V}}^{(r)}\rightarrow \mathcal{C}_{\mathcal{V}}^{(c)};$$
such that $\beta (\mathrm{Cok}(X))=\mathrm{Cok}(\kappa (X))$. Now using the equivalence of categories
$F:\mathcal{U}\rightarrow \mathcal{V}$, there is a bijection of graphs:
$$\alpha :\mathcal{C}_{\mathcal{U}}^{(r)}\rightarrow \mathcal{C}_{\mathcal{U}}^{(c)};$$
such that for $M\in \mathcal{C}_{\mathcal{U}}^{(r)}$ 
$$F(\alpha (M))=\mathrm{Cok}\left(\kappa \left(X_{F(M)}\right)\right)=\beta  (F(M)).$$

\begin{definition}
Suppose $\Lambda $ is $\Lambda ^{(r)}$ or $\Lambda ^{(c)}$, then for
$N\in \mathcal{V}$ we define $\mathrm{cd}(N)=\mathrm{cd}(X_{N})$ and for
$M\in \mathcal{U}$, we define $\mathrm{cd}(M)=\mathrm{cd}(X_{F(M)})$. For $M$ a $\Lambda $-module
we define: $\underline{\mathrm{dim}}(M) \in \mathbb{Q}^{\mathscr{P}}$ as the function such that for $i\in \mathscr{P}$,
$\underline{\mathrm{dim}}(M)(i)=\mathrm{dim}_{e_{i}\Lambda e_{i}}e_{i}M$, and $\underline{\mathrm{dim}}_{F}(M)(i)=\mathrm{dim}_{F}(e_{i}M)$.
\end{definition}

Observe that for $M\in \Lambda ^{(r)}\, \mathrm{mod}$ one has $\underline{\mathrm{dim}}_{F}(M)=s(\underline{\mathrm{dim}}(M))$,
and for $M^{\prime }\in \Lambda ^{(c)}\, \mathrm{mod}$, we have 
$\underline{\mathrm{dim}}_{F}(M^{\prime })=w^{-1}(\underline{\mathrm{dim}}(M^{\prime }))$.

\begin{theorem}\label{BijectionU}
The function 
$$\alpha :\mathcal{C}_{\mathcal{U}}^{(r)}\rightarrow \mathcal{C}_{\mathcal{U}}^{(c)};$$
is an isomorphism of the underlying graphs with the following properties:
\begin{enumerate}
\item For $i\in \mathscr{P}$, $i\neq 0$, $e_{i}\Lambda ^{(r)}\in \mathcal{C}_{\mathcal{U}}^{(r)}$ if and only if $e_{i}\Lambda ^{(c)}\in \mathcal{C}_{\mathcal{U}}^{(c)}$. In this case
$\alpha \left(e_{i}\Lambda ^{(r)}\right)=e_{i}\Lambda ^{(c)}$.

\item If $M\in \mathcal{C}_{\mathcal{U}}^{(r)}$ is not projective, $\alpha (M)$ is not projective and $\tau (\alpha (M))=\alpha (\tau (M))$.

\item Let $M$ be in $\mathcal{C}_{\mathcal{U}}^{(r)}$, then
$$\mathrm{cd}(\alpha (M))=\phi _{M}(\mathrm{cd}(M));$$
$$\underline{\mathrm{dim}}(\alpha (M))=\phi _{M}(\underline{\mathrm{dim}}(M));$$
where $\phi _{M}=s$ for $M$ strong and $\phi _{M}=w$ for $M$ weak.

\item For every $M\in \mathcal{C}_{\mathcal{U}}^{(r)}$, if $M$ is strong
$$\underline{\mathrm{dim}}_{F}(\alpha (M))=w^{-1}(\underline{\mathrm{dim}}_{F}(M));$$
if $M$ is weak:
$$\underline{\mathrm{dim}}_{F}(\alpha (M))=s^{-1}(\underline{\mathrm{dim}}_{F}(M)).$$
\end{enumerate}
\end{theorem}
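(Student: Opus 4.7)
The plan is to derive the four properties in turn by transferring information from Theorem \ref{mainBijection} through the two equivalences at play: the exact equivalence $F:\mathcal{U}\to\mathcal{V}$ (from Proposition $31$ of \cite{BD2017}) and the functor $\mathrm{Cok}:\mathcal{M}\to\mathcal{V}$, which descends to an equivalence $\mathcal{M}/\mathcal{I}\to\mathcal{V}$. The map $\alpha$ is already forced by its defining relation $F(\alpha(M))=\mathrm{Cok}(\kappa(X_{F(M)}))$; the task is to check that it inherits the four stated properties from those of $\kappa$. That $\alpha$ is a bijection of graphs follows because $\kappa$ is such a bijection and both $F$ and the induced $\mathrm{Cok}$ are category equivalences (so they preserve and reflect indecomposables, irreducible morphisms, and almost split sequences, by items $3$--$5$ of Lemma \ref{lemaIdeal} and Proposition \ref{ARseq}).

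For property (1), the indecomposable projectives in $\mathcal{U}$ are the $e_i\Lambda$ with $i\neq 0$; by Proposition \ref{XisPiffCokisP} combined with Proposition \ref{psHereditaryEquivF}, such a projective lies in $\mathcal{C}_{\mathcal{U}}^{(r)}$ exactly when it is hereditary, since the projectives of $\hat{\mathcal{C}}^{(r)}$ constitute the section $\mathcal{S}_1$ of pseudo-hereditary projectives. Proposition \ref{equivSlender} then shows that this condition is independent of whether we work with $\Lambda^{(r)}$ or $\Lambda^{(c)}$, so the "iff" holds; applying Theorem \ref{mainBijection}(i) one gets $\kappa(X_{F(e_i\Lambda^{(r)})})=X_{F(e_i\Lambda^{(c)})}$, whence $\alpha(e_i\Lambda^{(r)})=e_i\Lambda^{(c)}$ after applying $\mathrm{Cok}$ and $F^{-1}$. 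For property (2), almost split sequences in $\mathcal{U}$ transfer to $\mathcal{V}$ via the exact equivalence $F$ and from $\mathcal{V}$ to $\mathcal{M}$ via Proposition \ref{ARseq} (the relevant cokernels are nonzero because modules in $\mathcal{C}_{\mathcal{V}}$ are nonzero and non-projective objects have non-injective $\tau$); combining with Theorem \ref{mainBijection}(ii) gives $\tau\alpha(M)=\alpha(\tau M)$.

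For property (3), the first equation is immediate from the definition: $\mathrm{cd}(\alpha(M))=\mathrm{cd}(X_{F(\alpha(M))})=\mathrm{cd}(\kappa(X_{F(M)}))$, which Theorem \ref{mainBijection}(iv) identifies with $\phi_{X_{F(M)}}(\mathrm{cd}(X_{F(M)}))=\phi_M(\mathrm{cd}(M))$. Here one must verify that $M$ is strong/weak exactly when $X_{F(M)}$ is, but this follows from item $7$ of Lemma \ref{lemaIdeal} combined with the fact that the equivalence $F$ preserves endomorphism rings. The second equation in (3), relating $\underline{\mathrm{dim}}(\alpha(M))$ to $\phi_M(\underline{\mathrm{dim}}(M))$, is where the main obstacle lies: one must compare the module-theoretic dimension vector $\underline{\mathrm{dim}}(M)(i)=\mathrm{dim}_{e_i\Lambda e_i}(e_iM)$ with the coordinate $\mathrm{cd}(M)(i)$, which records multiplicities in a minimal projective presentation of $F(M)$ and not of $M$ itself. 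My plan is to prove, using the explicit description of $F$ from Proposition $31$ of \cite{BD2017} together with the socle-projective structure of modules in $\mathcal{U}$, that $\underline{\mathrm{dim}}(M)$ and $\mathrm{cd}(M)$ agree on $\mathcal{C}_{\mathcal{U}}$; once this coincidence is established, the $\underline{\mathrm{dim}}$ equation is just a restatement of the $\mathrm{cd}$ equation.

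Finally, (4) reduces to (3) via the remark preceding the theorem, together with the identity $s=p\,w$ noted in the proof of Proposition \ref{relatedSequences}. Indeed, for a strong $M\in\mathcal{C}_{\mathcal{U}}^{(r)}$, one computes
\[
\underline{\mathrm{dim}}_F(\alpha(M))=w^{-1}\bigl(\underline{\mathrm{dim}}(\alpha(M))\bigr)=w^{-1}\bigl(s(\underline{\mathrm{dim}}(M))\bigr)=w^{-1}\bigl(\underline{\mathrm{dim}}_F(M)\bigr),
\]
and for a weak $M$,
\[
\underline{\mathrm{dim}}_F(\alpha(M))=w^{-1}\bigl(w(\underline{\mathrm{dim}}(M))\bigr)=\underline{\mathrm{dim}}(M)=s^{-1}\bigl(\underline{\mathrm{dim}}_F(M)\bigr),
\]
which gives the two formulas of (4). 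The crux of the whole argument is the dimension-coordinate comparison flagged above; everything else is bookkeeping that transports Theorem \ref{mainBijection} along the chain $\mathcal{U}\xrightarrow{F}\mathcal{V}\xleftarrow{\mathrm{Cok}}\mathcal{M}$.
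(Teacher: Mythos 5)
There are two genuine gaps. The more serious one concerns the second equation of property (3): your plan is to show that $\underline{\mathrm{dim}}(M)$ and $\mathrm{cd}(M)$ agree on $\mathcal{C}_{\mathcal{U}}$ and then read the $\underline{\mathrm{dim}}$-identity off the $\mathrm{cd}$-identity, but this coincidence is false. Already for $M=e_i\Lambda$ with $i\neq 0$ one has $\mathrm{cd}(M)=\mathrm{cd}\left(X_{F(e_i\Lambda)}\right)=\mathfrak{e}_i+\ell\,\mathfrak{e}_0$ with $\ell\in\{1,p\}$ (these are multiplicities in the minimal projective presentation of $F(e_i\Lambda)$, not of $M$), whereas $\underline{\mathrm{dim}}(e_i\Lambda)$ is nonzero at every $j\geq i$ and vanishes at $0$, since $e_i\Lambda e_0=0$ for $i\neq 0$. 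So the two vectors differ even on projectives, and no identification argument can close this step. The paper proves $\underline{\mathrm{dim}}(\alpha(M))=\phi_M(\underline{\mathrm{dim}}(M))$ by a separate induction along the order of $\mathcal{C}^{(r)}_{\mathcal{U}}$: first an explicit entrywise comparison of $\underline{\mathrm{dim}}\left(e_i\Lambda^{(r)}\right)(j)$ and $\underline{\mathrm{dim}}\left(e_i\Lambda^{(c)}\right)(j)$ in the four strong/weak cases (base case); then, for non-projective $M$, additivity of $\underline{\mathrm{dim}}$ on the two almost split sequences ending in $M$ and in $\alpha(M)$, the multiplicity pattern ($1$ versus $p$) of strong and weak summands in their middle terms, and the identity $s=pw$. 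You need this (or an equivalent) argument; nothing in Theorem \ref{mainBijection} yields the $\underline{\mathrm{dim}}$-statement for free.

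The second gap is in your proof of the ``if and only if'' of property (1): you assert that $e_i\Lambda$ lies in the component exactly when it is hereditary, ``since the projectives of $\hat{\mathcal{C}}^{(r)}$ constitute the section $\mathcal{S}_1$.'' That is not what Theorem \ref{constructComponent} says: $\mathcal{S}_1$ is only the set of pseudo hereditary projectives, while each later section $\mathcal{S}_{l+1}$ contains the further projectives collected in $\mathcal{T}_l$, which need not be pseudo hereditary --- indeed, when $I$ is finite the component is the whole Auslander--Reiten graph by item (7) of Theorem \ref{sectionSocProj}, hence contains every $e_i\Lambda$. So Proposition \ref{equivSlender} cannot deliver the equivalence. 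The paper instead gets both directions of (1) directly from Theorem \ref{mainBijection}(i): the forward direction from $\kappa\left(X_{F(e_i\Lambda^{(r)})}\right)=X_{F(e_i\Lambda^{(c)})}$, and the converse from the fact that any projective of $\hat{\mathcal{C}}^{(c)}$ is the $\kappa$-image of some projective $X_{F(e_j\Lambda^{(r)})}$, which forces $j=i$. Your graph-isomorphism argument, property (2), and the deduction of (4) from (3) via $\underline{\mathrm{dim}}_F=s(\underline{\mathrm{dim}})$ on $\Lambda^{(r)}$-modules and $\underline{\mathrm{dim}}_F=w^{-1}(\underline{\mathrm{dim}})$ on $\Lambda^{(c)}$-modules do agree with the paper, but as proposed the proofs of (1) and of the second half of (3) would fail.
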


\begin{proof}
Take $M, M_{1}\in \mathcal{C}_{\mathcal{U}}^{(r)}$, then there is an irreducible morphism
$M\rightarrow M_{1}$ if and only if there is an irreducible morphism $F(M)\rightarrow F(M_{1})$. By Lemma \ref{irreducibleInVandM}
there is an irreducible morphism $F(M)\rightarrow F(M_{1})$  if and only if there is an irreducible morphism $X_{F(M)}\rightarrow X_{F(M_{1})}$, and
by Theorem \ref{mainBijection}, this occurs if and only if there is an irreducible morphism 
$\kappa \left(X_{F(M)}\right)\rightarrow \kappa \left(X_{F(M_{1})}\right)$. Again by Lemma \ref{irreducibleInVandM}, there is an irreducible morphism
$\kappa \left(X_{F(M)}\right)\rightarrow \kappa \left(X_{F(M_{1})}\right)$ if and only if there is an irreducible morphism
$\mathrm{Cok}\left(\kappa \left(X_{F(M)}\right)\right)\rightarrow \mathrm{Cok}\left(\kappa \left(X_{F(M_{1})}\right)\right).$ By definition
$F(\alpha (M))=\mathrm{Cok}\left(\kappa \left(X_{F(M)}\right)\right)$ and $F(\alpha (M_{1}))=\mathrm{Cok}\left(\kappa \left(X_{F(M_{1})}\right)\right)$, therefore there is an irreducible morphism $M\rightarrow M_{1}$ if and only if there is an irreducible morphism
$\alpha (M)\rightarrow \alpha (M_{1})$. This proves that $\alpha $ is an isomorphism between the corresponding underlying graphs. 

Now, let us prove the properties.

\begin{enumerate}
\item If $e_{i}\Lambda ^{(r)}\in \mathcal{C}_{\mathcal{U}}^{(r)}$,  
$$F\left(\alpha \left(e_{i}\Lambda ^{(r)}\right)\right)=\mathrm{Cok}\left(\kappa \left(X_{F\left(e_{i}\Lambda ^{(r)}\right)}\right)\right)=\mathrm{Cok}\left(X_{F\left(e_{i}\Lambda ^{(c)}\right)}\right)=F\left(e_{i}\Lambda ^{(c)}\right);$$
therefore $\alpha \left(e_{i}\Lambda ^{(r)}\right)=e_{i}\Lambda ^{(c)}$. Conversely, for every $e_{i}\Lambda ^{(c)}\in \mathcal{C}^{(c)}_{\mathcal{U}}$, we have $X_{F\left(e_{i}\Lambda ^{(c)}\right)}\in \hat{\mathcal{C}}^{(c)}$. By $(i)$ of Theorem \ref{mainBijection} there is a projective $X_{F\left(e_{j}\Lambda ^{(r)}\right)}\in \hat{\mathcal{C}}^{(r)}$ such that $\kappa \left(X_{F\left(e_{j}\Lambda ^{(r)}\right)}\right)
=X_{F\left(e_{j}\Lambda ^{(c)}\right)}=X_{F\left(e_{i}\Lambda ^{(c)}\right)}$. Therefore
$j=i$ and $X_{F\left(e_{i}\Lambda ^{(c)}\right)}\in \hat{\mathcal{C}}^{(c)}$ implies $e_{i}\Lambda ^{(c)}\in \mathcal{C}_{\mathcal{U}}^{(c)}$. 

\item Follows from $(ii)$ of Theorem \ref{mainBijection}.

\item The first part follows from $(iv)$ of Theorem \ref{mainBijection}. To prove the second part consider an object of the form $e_{i}\Lambda ^{(r)}$. For $\Lambda $ equal to $\Lambda ^{(r)}$ or to $\Lambda ^{(c)}$, we have $\underline{\mathrm{dim}}(e_{i}\Lambda )(j)=\mathrm{dim}_{e_{j}\Lambda e_{j}}(e_{i}\Lambda e_{j})$. Then if $i$ is strong and $j$ strong 
$$\underline{\mathrm{dim}}\left(e_{i}\Lambda ^{(r)}\right)(j)=1, \quad \underline{\mathrm{dim}}\left(e_{i}\Lambda ^{(c)}\right)(j)=1;$$
if $i$ is strong and $j$ weak
$$\underline{\mathrm{dim}}\left(e_{i}\Lambda ^{(r)}\right)(j)=1, \quad \underline{\mathrm{dim}}\left(e_{i}\Lambda ^{(c)}\right)(j)=p.$$
Therefore if $e_{i}\Lambda ^{(r)}$ is strong  
$$\underline{\mathrm{dim}}\left(e_{i}\Lambda ^{(c)}\right)=s\left(\underline{\mathrm{dim}}\left(e_{i}\Lambda ^{(r)}\right)\right).$$
Now if $i$ is weak and $j$ strong
$$\underline{\mathrm{dim}}\left(e_{i}\Lambda ^{(r)}\right)(j)=p, \quad \underline{\mathrm{dim}}\left(e_{i}\Lambda ^{(c)}\right)(j)=1;$$
if $i$ is weak and $j$ weak with $i<^{l}j$, 
$$\underline{\mathrm{dim}}\left(e_{i}\Lambda ^{(r)}\right)(j)=l, \quad \underline{\mathrm{dim}}\left(e_{i}\Lambda ^{(c)}\right)(j)=l.$$
Then, for $e_{i}\Lambda ^{(r)}$ weak:
$$\underline{\mathrm{dim}}\left(e_{i}\Lambda ^{(c)}\right)=w\left(\underline{\mathrm{dim}}\left(e_{i}\Lambda ^{(r)}\right)\right).$$

Now we will prove the second part of our item by induction on the order in $\mathcal{C}^{(r)}_{\mathcal{U}}.$

If $M$ is a minimal element, then $M$ is the simple projective of $\mathrm{mod}\, \Lambda ^{(r)}$ and 
$\alpha (M) $ is the simple projective in $\mathrm{mod}\, \Lambda ^{(c)}$. Then our result follows from above.

Suppose the result is proved for all $M_{1}<M$. If $M=e_{i}\Lambda ^{(r)}$,
then $\alpha (M)=e_{i}\Lambda ^{(c)}$ and our result is already proved. So suppose $M$ is not projective, this implies that $\alpha (M)$ is not projective, then we have almost split sequences:
$$0\rightarrow \tau M\rightarrow E\rightarrow M\rightarrow 0;$$
$$0\rightarrow \tau (\alpha (M))\rightarrow E^{\prime }\rightarrow \alpha (M)\rightarrow 0.$$

Now $M$ is strong (weak) if and only if $\alpha (M)$ is strong (weak). If $M$ is strong

$$E=\bigoplus _{i=1}^{t}M_{i}\oplus \bigoplus _{i=1}^{z}U_{i}, \quad E^{\prime }=\bigoplus _{i=1}^{t}\alpha (M_{i})\oplus \bigoplus _{i=1}^{z}\alpha (U_{i})^{p};$$
where $M_{1},\ldots , M_{t}$ are strong indecomposable $\Lambda ^{(r)}$-modules and $U_{1},\ldots ,U_{z}$ are weak indecomposable $\Lambda ^{(r)}$-modules, $\alpha (M_{1}),\ldots ,\alpha (M_{t})$ are strong indecomposable
$\Lambda^{(c)}$-modules and $\alpha (U_{1}),\ldots ,\alpha (U_{z})$ are weak indecomposable $\Lambda ^{(c)}$-modules.

In the case $M$ is weak then $\alpha (M)$ is weak and we have:
$$E=\bigoplus _{i=1}^{t}M_{i}^{p}\oplus \bigoplus _{i=1}^{z}U_{i}, \quad E^{\prime }=\bigoplus _{i=1}^{t}\alpha (M_{i}) \oplus \bigoplus _{i=1}^{z}\alpha (U_{i});$$
where as before $M_{1},\ldots ,M_{t}$ are strong indecomposable $\Lambda ^{(r)}$-modules and $U_{1},\ldots ,U_{z}$ are weak indecomposable modules $\Lambda ^{(r)}$-modules.

For every $i\in \{1,\ldots, t\}$ and $j\in \{1,\ldots, z\}$ we have that $\tau (M), M_i$ and $U_j$ are smaller than $M$. Then as in the proof of Theorem \ref{mainBijection},  if $M$ is strong 
$$\underline{\mathrm{dim}}(\alpha (M))=\sum _{i=1}^{t}\underline{\mathrm{dim}}(\alpha (M_{i})+\sum _{i=1}^{z}p\,\underline{\mathrm{dim}}(\alpha (U_{i}))-\underline{\mathrm{dim}}(\alpha (\tau (M))$$
$$=\sum _{i=1}^{t}s(\underline{\mathrm{dim}}(M_{i}))+\sum _{i=1}^{z}p\,w(\underline{\mathrm{dim}}(U_{i}))-s(\underline{\mathrm{dim}}(\tau (M)))$$
$$=\sum _{i=1}^{t}s(\underline{\mathrm{dim}}(M_{i}))+\sum _{i=1}^{z}s(\underline{\mathrm{dim}}(U_{i}))-s(\underline{\mathrm{dim}}(\tau (M)))$$
$$=s(\underline{\mathrm{dim}}(M)).$$

In the case $M$ is weak we have:
$$\underline{\mathrm{dim}}(\alpha (M))=\sum _{i=1}^{t}\underline{\mathrm{dim}}(\alpha (M_{i}))+\sum _{i=1}^{z}\underline{\mathrm{dim}}(\alpha (U_{i}))-\underline{\mathrm{dim}}(\alpha (\tau (M)))$$
$$=\sum _{i=1}^{t}s(\underline{\mathrm{dim}}(M_{i}))+\sum _{i=1}^{z}w(\underline{\mathrm{dim}}(U_{i}))-w(\underline{\mathrm{dim}}(\tau (M)))$$
$$=\sum _{i=1}^{t}p\,w(\underline{\mathrm{dim}}(M_{i}))+\sum _{i=1}^{z}w(\underline{\mathrm{dim}}(U_{i}))-w(\underline{\mathrm{dim}}(\tau (M)))$$
$$=w(\underline{\mathrm{dim}}(M)).$$
The item is proved.

\item Take $M\in \mathcal{C}_{\mathcal{U}}^{(r)}$, then if $M$ is strong:
$$\underline{\mathrm{dim}}_{F}(\alpha (M))=w^{-1}(\underline{\mathrm{dim}}(\alpha (M)))=w^{-1}(s(\underline{\mathrm{dim}}(M)))=w^{-1}(\underline{\mathrm{dim}}_{F}(M)).$$

If $M$ is weak:
$$\underline{\mathrm{dim}}_{F}(\alpha (M))=w^{-1}(\underline{\mathrm{dim}}(\alpha (M)))=\underline{\mathrm{dim}}(M)=s^{-1}(\underline{\mathrm{dim}}_{F}(M)).$$
\end{enumerate}
The proof is complete.
\end{proof}

For $\mathcal{C}_{\mathcal{V}}^{(r)}$ we have a similar Theorem.

\begin{theorem}
The function 
$$\beta :\mathcal{C}_{\mathcal{V}}^{(r)}\rightarrow \mathcal{C}_{\mathcal{V}}^{(c)};$$
is an isomorphism of the underlying graphs with the following properties:

\begin{enumerate}
\item For $i\in \mathscr{P}$, $i\neq 0$, we have $F\left(e_{i}\Lambda ^{(r)}\right)\in \mathcal{C}_{\mathcal{V}}^{(r)}$ if and only if
$F\left(e_{i}\Lambda ^{(c)}\right)\in \mathcal{C}_{\mathcal{V}}^{(c)}$. In this case
$\beta \left(F\left(e_{i}\Lambda ^{(r)}\right)\right)=F\left(e_{i}\Lambda ^{(c)}\right)$.

\item Suppose $N\in \mathcal{C}_{\mathcal{V}}^{(r)}$, then if $N$ is not projective, $\beta  (N)$ is not projective and $\tau (\beta  (N))=\beta (\tau (N))$.

\item Let $N$ be in $\mathcal{C}_{\mathcal{V}}^{(r)}$, then
$$\mathrm{cd}(\beta (N))=\phi _{N}(\mathrm{cd}(N));$$
$$\underline{\mathrm{dim}}(\beta (N))=\phi _{N}(\underline{\mathrm{dim}}(N));$$
where $\phi _{N}=s$ for $N$ strong and $\phi _{N}=w$ for $N$ weak.

\item Suppose $N\in \mathcal{C}_{\mathcal{V}}^{(r)}$, then if $N$ is strong
$$\underline{\mathrm{dim}}_{F}(\beta  (N))=w^{-1}(\underline{\mathrm{dim}}_{F}(N));$$
if $N$ is weak:
$$\underline{\mathrm{dim}}_{F}(\beta (N))=s^{-1}(\underline{\mathrm{dim}}_{F}(N)).$$
\end{enumerate}
\end{theorem}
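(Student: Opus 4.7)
The plan is to reduce the theorem to the already-established bijection $\kappa:\hat{\mathcal{C}}^{(r)}\to\hat{\mathcal{C}}^{(c)}$ of Theorem \ref{mainBijection}, using that $\beta$ was defined so as to make the square $\mathrm{Cok}\circ\kappa=\beta\circ\mathrm{Cok}$ commute. Since $\mathrm{Cok}:\mathcal{M}\to\mathcal{V}$ induces an equivalence $\mathcal{M}/\mathcal{I}\simeq\mathcal{V}$ (as used in Proposition \ref{ARseq} via Lemma \ref{lemaIdeal}), everything that holds for $\kappa$ modulo the ideal $\mathcal{I}$ descends to a statement for $\beta$, provided one verifies at each step that the objects involved are not of the form $(P\to 0)$.

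First I would establish that $\beta$ is a graph isomorphism. The bijection on vertices follows from Theorem \ref{mainBijection}(i), which says $\kappa$ restricts to a bijection between the non-$T$-type objects of $\hat{\mathcal{C}}^{(r)}$ and of $\hat{\mathcal{C}}^{(c)}$. For arrows, Lemma \ref{irreducibleInVandM} gives a bijection $\mathrm{Irr}(X_{N},X_{N'})\leftrightarrow\mathrm{Irr}(N,N')$ whenever $\mathrm{Cok}(X_{N}),\mathrm{Cok}(X_{N'})\neq 0$, so the graph-isomorphism property of $\kappa$ translates directly. Item (1) is then a one-line calculation: $\beta(F(e_{i}\Lambda^{(r)}))=\mathrm{Cok}(\kappa(X_{F(e_{i}\Lambda^{(r)})}))=\mathrm{Cok}(X_{F(e_{i}\Lambda^{(c)})})=F(e_{i}\Lambda^{(c)})$. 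For item (2) I combine Proposition \ref{XisPiffCokisP} (non-projectivity of $N$ iff non-projectivity of $X_{N}$) with Proposition \ref{ARseq} (the AR-sequence $\tau X_{N}\to E\to X_{N}$ descends to an AR-sequence $\tau N\to\mathrm{Cok}(E)\to N$ in $\mathcal{V}$, so $\tau N=\mathrm{Cok}(\tau X_{N})$), obtaining $\tau\beta(N)=\mathrm{Cok}(\tau\kappa(X_{N}))=\mathrm{Cok}(\kappa(\tau X_{N}))=\beta(\tau N)$ from Theorem \ref{mainBijection}(ii).

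For item (3), the coordinate identity is immediate: $\mathrm{cd}(\beta(N))=\mathrm{cd}(\kappa(X_{N}))=\phi_{X_{N}}(\mathrm{cd}(X_{N}))=\phi_{N}(\mathrm{cd}(N))$, where $\phi_{X_{N}}=\phi_{N}$ because $K(X_{N})\cong K(N)$ by Lemma \ref{lemaIdeal}(7). The $\underline{\mathrm{dim}}$ identity is obtained by induction along chains of irreducible morphisms in $\mathcal{C}_{\mathcal{V}}^{(r)}$, in complete analogy with the proof of Theorem \ref{BijectionU}(3): the base case is a direct computation on the hereditary projectives $F(e_{i}\Lambda^{(r)})$, and the inductive step uses the AR-sequence $\tau N\to E\to N$ in $\mathcal{V}^{(r)}$ together with its $\beta$-image, decomposing $E$ into strong and weak indecomposables whose multiplicities are $1$ or $p$ in the pattern dictated by Proposition \ref{oneDim}. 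Lemma \ref{IsoIrrCok} ensures these multiplicities coincide with those in the corresponding AR-sequences in $\mathcal{M}$, so the bookkeeping of Proposition \ref{relatedSequences} applies verbatim. Item (4) then drops out by substituting $\underline{\mathrm{dim}}_{F}=s\circ\underline{\mathrm{dim}}$ on $\Lambda^{(r)}$-modules and $\underline{\mathrm{dim}}_{F}=w^{-1}\circ\underline{\mathrm{dim}}$ on $\Lambda^{(c)}$-modules, which were recorded just before the theorem.

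The main obstacle is the base case of the $\underline{\mathrm{dim}}$-induction. Unlike $\underline{\mathrm{dim}}(e_{i}\Lambda)$, which is read off the equipment of $\mathscr{P}$ directly (and was computed in the proof of Theorem \ref{BijectionU}(3)), the analogous quantity $\underline{\mathrm{dim}}(F(e_{i}\Lambda))$ for the top-injective projective must be extracted from the explicit construction of the equivalence $F:\mathcal{U}\to\mathcal{V}$ in \cite{BD2017}; one cannot simply transport the $\mathcal{U}$-calculation through $F$ because $F$ is only an equivalence of $\textsf{F}$-categories and need not respect the idempotent-wise decomposition governing $\underline{\mathrm{dim}}$. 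Once this explicit base-case computation is performed, the AR-induction and the rest of the argument are a transparent mirror of the $\mathcal{U}$-case and no additional new ideas are needed.
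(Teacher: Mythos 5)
Your reduction to $\kappa$ is sound for the graph isomorphism, items (1), (2), the $\mathrm{cd}$ half of item (3), and item (4), and it is essentially the same argument as the paper's: the paper phrases everything through $\alpha$ and the equivalence $F$ (writing $N=F(M)$ and using $\beta(F(M))=F(\alpha(M))$), but since $\beta$ was defined from $\kappa$ via $\mathrm{Cok}$, your reading through Lemma \ref{irreducibleInVandM}, Proposition \ref{XisPiffCokisP} and Proposition \ref{ARseq} amounts to the same thing. The genuine gap sits exactly where you flag ``the main obstacle'': you never actually compute $\underline{\mathrm{dim}}(F(e_i\Lambda))$, deferring it to an unspecified extraction from \cite{BD2017}, so your proposed induction for the $\underline{\mathrm{dim}}$ identity has no base case. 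Worse, as in the proof of Theorem \ref{BijectionU}, projectives of $\mathcal{V}$ can occur at any height of the component, so the unproved projective case re-enters at every such vertex, not only at the start; as written, the induction cannot get off the ground.

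The paper closes this with a one-line device you missed, which also makes your entire AR-induction unnecessary: for every $M\in\mathcal{U}$ there is an exact sequence $0\rightarrow M\rightarrow \left(e_{0}\Lambda\right)^{\mathrm{cd}(M)(0)}\rightarrow F(M)\rightarrow 0$ (this is how $F$ is built -- a socle-projective module embeds in a power of $e_{0}\Lambda$ with top-injective cokernel), and $\underline{\mathrm{dim}}$ is additive on it, giving the closed formula $\underline{\mathrm{dim}}(F(M))=\mathrm{cd}(M)(0)\,\underline{\mathrm{dim}}(e_{0}\Lambda)-\underline{\mathrm{dim}}(M)$ for \emph{every} $M$, projective or not. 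Substituting Theorem \ref{BijectionU}(3), i.e.\ $\underline{\mathrm{dim}}(\alpha(M))=\phi_{M}(\underline{\mathrm{dim}}(M))$ and $\mathrm{cd}(\alpha(M))=\phi_{M}(\mathrm{cd}(M))$, together with the facts that $0$ is a strong point -- so $\underline{\mathrm{dim}}\left(e_{0}\Lambda^{(c)}\right)=s\left(\underline{\mathrm{dim}}\left(e_{0}\Lambda^{(r)}\right)\right)$, and $\mathrm{cd}(\alpha(M))(0)=\mathrm{cd}(M)(0)$ when $M$ is strong while $\mathrm{cd}(\alpha(M))(0)=(1/p)\,\mathrm{cd}(M)(0)$ when $M$ is weak -- and the relation $s=pw$, yields $\underline{\mathrm{dim}}(\beta(N))=\phi_{N}(\underline{\mathrm{dim}}(N))$ in both cases in three lines. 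Your observation that $F$ ``need not respect the idempotent-wise decomposition governing $\underline{\mathrm{dim}}$'' is correct as stated, but the defining exact sequence is precisely the bridge that transports the $\mathcal{U}$-computation through $F$; once you use it, no induction in $\mathcal{V}$ and no further appeal to the explicit construction in \cite{BD2017} are needed.
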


\begin{proof}
Here if $N\in \mathcal{C}_{\mathcal{V}}^{(r)}$, then $N=F(M)$ for some $M\in \mathcal{C}_{\mathcal{U}}^{(r)}$ and $F(\alpha (M))=\beta (F(M))$. From here we conclude that $\beta $ is an isomorphism between the corresponding underlying graphs, with the properties 1., 2., and the first part of 3.

To prove the second part of $(3)$, take $M\in \mathcal{C}_{\mathcal{U}}^{(r)}$ such that $N=F(M)$. We
have exact sequences:
$$0\rightarrow M\rightarrow \left(e_{0}\Lambda ^{(r)}\right)^{\mathrm{cd}(M)(0)}\rightarrow F(M)\rightarrow 0;$$
$$0\rightarrow \alpha (M)\rightarrow \left(e_{0}\Lambda ^{(c)}\right)^{\mathrm{cd}(\alpha (M))(0)}\rightarrow F(\alpha (M))\rightarrow 0.$$
Then
$$\underline{\mathrm{dim}}\beta (F(M))=\underline{\mathrm{dim}}(F(\alpha (M)))=$$
$$\mathrm{cd}(\alpha (M))(0)\underline{\mathrm{dim}}\left(e_{0}\Lambda ^{(c)}\right)-\underline{\mathrm{dim}}(\alpha (M)).$$

If $N=F(M)$ is strong, $M$ is strong and:
$$\underline{\mathrm{dim}}(F(\alpha (M)))=\mathrm{cd}(\alpha (M))(0)\underline{\mathrm{dim}}\left(e_{0}\Lambda ^{(c)}\right)-\underline{\mathrm{dim}}(\alpha (M))$$
$$=\mathrm{cd}(M)(0)s\left(\underline{\mathrm{dim}}\left(e_{0}\Lambda ^{(r)}\right)\right)-s(\underline{\mathrm{dim}}(M))=s(\underline{\mathrm{dim}}(F(M))).$$

If $N=F(M)$ is weak:
$$\underline{\mathrm{dim}}(F(\alpha (M)))=\mathrm{cd}(\alpha (M))(0)\underline{\mathrm{dim}}\left(e_{0}\Lambda ^{(c)}\right)-\underline{\mathrm{dim}}(\alpha (M))$$
$$=\mathrm{cd}(M)(0)(1/p)s\left(\underline{\mathrm{dim}}\left(e_{0}\Lambda ^{(r)}\right)\right)-w(\underline{\mathrm{dim}}(M))=w(\underline{\mathrm{dim}}(F(M))).$$

The proof of $4.$ is the same as the proof of $4.$ of Theorem \ref{BijectionU}.

The proof is complete.
\end{proof}
%%%%%%%%%%%%%%%%%%%%%%%%%%%%%%%%%%%

%\subsubsection{Representations of $p$-equipped posets}\label{reps}

%%%%%%%%%%%%%%%%%%%%%%%%%%%%%%%%%%%

%\subsubsection{Corepresentations}\label{coreps}

\begin{bibdiv}
\begin{biblist}
%\bibselect{BIBidorado}

\bib{ASS}{book}{
title={Elements of the Representation Theory of Associative Algebras} 
author={Assem, I.} author={Simson, D.} author={Skowro\'nski}
date={2006}
publisher={Cambridge Univ. Press}
}

\bib{Bautista82}{article}{
title={Irreducible morphisms and the radical of a category.} author={Bautista, R.} 
journal={An.Inst. Mat. Univ. Aut\'onoma de M\'exico} volume={22} date={1982}
pages={83--135} }

\bib{Bautista04}{article}{
title={The Category of Morphisms Between Projective Modules} author={Bautista, R.} 
journal={Communications in Algebra} volume={32:11} date={2004}
pages={4303--4331} }

\bib{BD2017}{article}{
title={Algebraically equipped posets} 
author={Bautista, R.} author={Dorado, I.}
journal={Bolet\'in de la Sociedad Matem\'atica Mexicana} volume={23:2} date={2017}
pages={557--609} }

\bib{BM}{article}{
title={Representations Of Partially Ordered Sets and $1$-Gorenstein Artin Algebras} 
author={Bautista, R.} author={Mart\'inez-Villa, R.}
journal={ Ring theory, Proc. Antwerp. Conf., Lect. Notes Pure Appl. Math.}volume={51} date={1979}
pages={385--433}}

\bib{BSZ}{book}{
title={Differential Tensor Algebras and their Module Categories} 
author={Bautista, R.} author={Salmer\'on, L.} author={Zuazua, R.}
journal={London Math. Society Lecture Note Series} volume={362}
date={2009}
publisher={Cambridge Univ. Press} }

\bib{KS}{article}{
title={Schurian sp-representation-finite right peak PI-rings and
their indescomposable socle proyective modules} author={Klemp,
B.}author={Simson, D.} journal={Journal of Algebra} volume={134:2}
date={1990} pages={390--468}}

\bib{SL}{article}{
title={Auslander-Reiten Theory in a Krull-Schmidt Category } author={Shiping Liu}
journal={Sao Paulo J. Math. Sc.} volume={4}
date={2010} pages={425--475}
}

\end{biblist}
\end{bibdiv}

%%%%%%%%%%%%%%%%%%%%%%%%%%%%%%%%%%%%%%%%%%%%%%%%%%%%%%%%%%%%%%%%%%%%%%%%%%%%%%%%%%%%%%%%%
%%%%%%%%%%%%%%%%%%%%%%%%%%%%%%%%%%%%%%%%%%%%%%%%%%%%%%%%%%%%%%%%%%%%%%%%%%%%%%%%%%%%%%%%%

\end{document}